\documentclass[10pt]{article}
\usepackage{amsmath,amsfonts,amssymb,latexsym,epic,eepic}
\usepackage[overload]{empheq}
\usepackage[dvips]{graphicx,graphics,epsfig,color}
\usepackage{ifthen}
\usepackage{geometry}
\geometry{scale=0.75, nohead}
\usepackage{amsthm}
\usepackage{mathrsfs}
\usepackage{tikz,pstricks,fp}
\usepackage{multirow}
\usepackage{array}
\usepackage{dsfont}
\newtheorem{thrm}{Theorem}[section]
\newtheorem{lem}[thrm]{Lemma}

\newtheorem{prop}[thrm]{Proposition}
\newtheorem{defi}{Definition}[section]
\newtheorem{rmrk}{Remark}[section]
\usepackage[colorlinks=true, pdfstartview=FitV, linkcolor=blue, citecolor=red, urlcolor=blue]{hyperref}
%

\def\dsp{\displaystyle}

\newcommand{\mcal}[1]{\mathcal{#1}}
\newcommand{\ds}{\displaystyle}

\newcommand{\weak}{\rightharpoonup}

\definecolor{vertf}{rgb}{0,0.55,0.1}
\definecolor{bclairf}{rgb}{0.40,0.65,0.89}
\definecolor{or}{rgb}{0.98,0.6,.1}

\newcommand{\disc}{\mathcal{D}}
\newcommand{\edge}{\sigma}

\newcommand{\edges}{{\mathcal E}}
\newcommand{\edgesd}{\tilde {\edges}}
\newcommand{\edgesint}{{\mathcal E}_{{\rm int}}}
\newcommand{\edgesext}{{\mathcal E}_{{\rm ext}}}
\newcommand{\edgesdint}{{\edgesd}_{{\rm int}}}
\newcommand{\edgesdext}{{\edgesd}_{{\rm ext}}}

\newcommand{\mesh}{{\mathcal M}}
\newcommand{\edged}{\epsilon}
\newcommand{\edgeedgeprime}{D_\edge|D_{\edge'}}
\newcommand{\fluxK}{F_{K,\edge}}

\newcommand{\fluxd}{F_{\edge,\edged}}
\newcommand{\rhomin}{\rho_{\rm min}}
\newcommand{\rhomax}{\rho_{\rm max}}

\newcommand{\eps}{\varepsilon}
%
\def\xR{{\mathbb R}}
\def\xC{{\rm C}}
\def\xN{{\mathbb N}}
\def\xH{{\rm H}}
\def\xW{{\rm W}}
\def\xL{{\rm L}} 
 
%

\newcommand{\bfn}{{\boldsymbol n}}
\newcommand{\bfu}{{\boldsymbol u}}
\newcommand{\bfa}{{\boldsymbol a}}
\newcommand{\bfb}{{\boldsymbol b}}
\newcommand{\bfphi}{{\boldsymbol \phi}}
\newcommand{\bfv}{{\boldsymbol v}}
\newcommand{\bfw}{{\boldsymbol w}}
\newcommand{\bfx}{{\boldsymbol x}}
\newcommand{\bfy}{{\boldsymbol y}}

\newcommand{\bfD}{{\boldsymbol D}}
%
\newcommand{\dv}{\partial}
\newcommand{\partd}{\eth}
\newcommand{\gradi}{{\boldsymbol \nabla}}
\newcommand{\lapi}{{\boldsymbol \Delta}}
\newcommand{\dive}{{\rm div}}
\newcommand{\divv}{\boldsymbol{\rm div}}
\newcommand{\norm}[1]{{\lVert #1 \rVert}}
\newcommand{\Ent}[1]{{\lfloor #1 \rfloor}}
\newcommand{\snorm}[1]{{| #1 |}}

\newcommand{\fv}{{\edges,{\rm FV}}}

\newcommand{\brok}{{\edges,{\rm b}}}

%

\newcommand{\m}{^{(m)}}
\newcommand{\dx}{\,{\rm d}\bfx}
\newcommand{\dedge}{\,{\rm d}\gamma}

\newcommand{\dt}{\,{\rm d}t}

\newcommand{\eg}{\emph{e.g.}}
\newcommand{\ie}{\emph{i.e.}}

\newcommand{\Ds}{{\scalebox{0.6}{$D_\edge$}}}
\newcommand{\Dsp}{{\scalebox{0.6}{$D_{\edge'}$}}}

%

\title
{\textbf{A Convergent Staggered Scheme for the Variable Density Incompressible Navier-Stokes Equations}}

\author{J.C. Latch\'e\footnote{Institut de Radioprotection et de S\^{u}ret\'{e} Nucl\'{e}aire (IRSN). Email: jean-claude.latche@irsn.fr} \hspace{2ex} K. Saleh\footnote{Universit\'e de Lyon, CNRS UMR 5208, Universit\'e Lyon 1, Institut Camille Jordan, 43 bd 11 novembre 1918; F-69622 Villeurbanne cedex, France. Email: saleh@math.univ-lyon1.fr}}

\date{}

\begin{document}

\maketitle

\begin{abstract}
In this paper, we analyze a scheme for the time-dependent variable density Navier-Stokes equations.
The algorithm is implicit in time, and the space approximation is based on a low-order staggered non-conforming finite element, the so-called Rannacher-Turek element.
The convection term in the momentum balance equation is discretized by a finite volume technique, in such a way that a solution obeys a discrete kinetic energy balance, and the mass balance is approximated by an upwind finite volume method.
We first show that the scheme preserves the stability properties of the continuous problem ($\xL^\infty$-estimate for the density, $\xL^\infty(\xL^2)$- and $\xL^2(\xH^1)$-estimates for the velocity), which yields, by a topological degree technique, the existence of a solution.
Then, invoking compactness arguments and passing to the limit in the scheme, we prove that any sequence of solutions (obtained with a sequence of discretizations the space and time step of which tend to zero) converges up to the extraction of a subsequence to a weak solution of the continuous problem.
\end{abstract}

\noindent \textbf{Key-words\,:} Variable density, Navier-Stokes equations, Staggered schemes, Convergence analysis

%
%
%
%
\section{Introduction}

Since seminal papers published from the middle of the sixties \cite{har-65-num,har-68-num,har-71-num}, low-order staggered schemes for fluid flow computations have received a considerable attention.
This interest is essentially motivated by the fact that they combine a low computational cost with the so-called {\em inf-sup} or LB stability condition (see \eg\ \cite{gir-86-fin}), which prevents from the odd-even decoupling of the pressure in the incompressible limit.
In addition, they may be combined, still keeping basically the same order of accuracy, with finite volume approximations for possible additional conservation equations, which allows, thanks to standard techniques, to obtain discrete convection operators satisfying maximum principles (\eg\ \cite{lar-91-how}).

\medskip
Two different types of space discretizations fall in the class of staggered approximations.
The first one, essentially able to cope with structured meshes (with cell faces normal to the coordinate axes), is the well-known MAC scheme \cite{har-65-num,har-68-num,har-71-num}; it is characterized by the fact that the unknowns for the $i^{th}$ component of the velocity are associated with the cell faces normal to the $i^{th}$ coordinate axis.
The second type of approximation has been developed in the finite-element framework; it is based on general simplices (for the so-called Crouzeix-Raviart element \cite{cro-73-con}) or on general quadrilaterals or hexahedra (for the so-called Rannacher-Turek element \cite{ran-92-sim}).
The velocity unknowns are the same for each component, and are associated with all the faces of the mesh (so, compared to the MAC scheme, the price to pay for the generality of the mesh is a multiplication by the space dimension $d$ of the number of unknowns).

\medskip
Recently, for MAC, Crouzeix-Raviart and Rannacher-Turek approximations, discretizations of the convection operator in the momentum balance equation have been developed with the aim to obtain a scheme preserving the kinetic energy balance \cite{her-10-kin, ans-11-anl, boy-14-sta}.
These techniques, implemented in the open-source software ISIS \cite{isis}, have brought many outcomes, both from the theoretical and the practical points of view.
First, the kinetic energy conservation property yields stability estimates (see \cite{ans-11-anl, boy-14-sta} for quasi-incompressible flow and \cite{gal-08-unc, gra-16-unc} for barotropic and non-barotropic compressible Navier-Stokes equations), and has been observed in numerical experiments to actually dramatically increase the reliability of the scheme.
Second, the non-dissipation of the kinetic energy is a prerequisite for numerical schemes for Large Eddy Simulation (\eg\ \cite{nic-00-con, des-08-hig, mor-10-ske, boy-14-sta}), and a theoretical proof of this feature thus strongly supports this kind of application.
Finally, for Euler's equations, having at hand a discrete kinetic energy balance has been a key point in \cite{her-14-imp, gra-16-unc} to derive a consistent staggered scheme preserving the convex set of admissible states.

\medskip
The discrete form of the convection operator, which is similar in all these applications and for all the considered space discretizations, may thus be seen as a decisive building block of a class of schemes able to cope with all regimes, \ie\ from incompressible to compressible high Mach number flows.
It is a finite volume form (see \cite{sch-89-non,sch-96-ano} for a similar development for the finite element context, restricted to constant density flows), written on {\em dual cells}, \ie\ cells centered at the location of the velocity unknowns, namely the faces.
The difficulty for its construction lies in the fact that, as in the continuous case, the derivation of the kinetic energy identity needs that a mass balance equation be satisfied on the same (dual) cells, while the mass balance in the scheme is naturally written on the primal cells.
We thus have developed a procedure to define the density on the dual meshes and the mass fluxes through the dual faces from the primal cell density and the primal faces mass fluxes, which ensures a discrete mass balance.
However, especially for the Rannacher-Turek approximation, the quantities associated with the dual mesh are defined only through necessary conditions to obtain the desired mass conservation, in a way which is somehow reminiscent of the techniques used for the derivation of the mimetic schemes.
As a consequence, we obtain a convection operator the definition of which is not in closed form, at least at first glance, and the consistency of which is far from obvious.

\medskip
The aim of this paper is to prove this consistency property.
More precisely speaking, on a model problem and with a given scheme, we prove that the limit of a converging sequence of solutions obtained with a sequence of discretizations with vanishing space and time steps is necessarily a weak solution to the problem at hand.
For this latter, we choose the time-dependent variable density incompressible Navier-Stokes equations, which (from the consistency point of view) retain the essential mathematical difficulties of compressible flows; indeed, the partial differential equations in which we have to pass to the limit, namely the mass and momentum balance equations, are the same as for compressible flows.
For the scheme, we focus here on the Rannacher-Turek discretization, and on an implicit time discretization.

\medskip
In addition, we also prove estimates on the solution and, by compactness arguments, the existence of converging sequences of solutions (which, of course, would be more difficult for compressible Navier-Stokes equations).
The result presented here is thus in fact a convergence result for the proposed scheme on time-dependent variable density incompressible Navier-Stokes equations, which seems to be rather new in the literature; indeed, only one similar result is known to us, for a different (Discontinuous Galerkin) space approximation \cite{liu-07-conv}. 
Note also that, as a by-product, we obtain the existence of weak solutions to the continuous problem, without invoking arguments of the  continuous theory \cite{lio-96-mat} itself, except a result issued from the analysis of renormalized solutions of the transport equation \cite{dip-89-ord}.
Various extensions of this work are ongoing: for instance, the same convergence result may be proven for the MAC scheme, with rather simpler arguments, and consistency may be extended for Euler's equations.

\medskip
This paper is organized as follows.
We state the continuous problem and recall its essential properties in Section \ref{sec:cont}, then the space discretization and the scheme are given in Sections \ref{sec:meshes} and \ref{sec:scheme} respectively.
The convergence theorem is stated in Section \ref{sec:conv}, and the three next sections are devoted to its proof: we first gather in Section \ref{sec:math} some useful mathematical tools, then establish the estimates satisfied by the discrete solution and its existence (Section \ref{sec:prop}), and, finally, prove the theorem (Section \ref{sec:proof}).
%
%
\section{The continuous problem}\label{sec:cont}

The continuous problem addressed in this paper reads, in its strong form:
\begin{subequations}\label{eq:pb}
\begin{align} \displaystyle \label{eq:mass} &
\partial_t\rho +\dive (\rho\bfu)=0,
\\[0ex]\label{eq:mom} &
\partial_t (\rho\bfu)+\divv (\rho\bfu \otimes \bfu)-\lapi \bfu +\gradi p=0,
\\[0ex] \label{eq:div} &
\dive \bfu = 0.
\end{align}
\end{subequations}
This problem is posed for $(\bfx,t)$ in $\Omega\times(0,T)$ where $T\in\xR_+^*$ and $\Omega$ is an open bounded connected subset of $\xR^d$, with $d\in\lbrace2,3\rbrace$, which is polygonal if $d=2$ and polyhedral if $d=3$.
The variables $\rho$, $\bfu=(u_1,\ldots, u_d)^T$ and $p$ are respectively the density, the velocity and the pressure of the flow.
The three above equations respectively express the mass conservation, the momentum balance and the incompressibility of the fluid.
This system is supplemented with initial and boundary conditions:
\[
\bfu|_{\dv\Omega}=0, \qquad \bfu|_{t=0}=\bfu_0, \qquad \rho|_{t=0}=\rho_0. 
\]
Let us suppose that the initial data satisfies the following properties:
\begin{subequations}\label{eq:H_ini}
\begin{align} \label{eq:H_rho} &
\begin{matrix}
\rho_0 \in \xL^\infty(\Omega),\ 0< \rhomin \leq \rho_0 \leq \rhomax,\mbox{ with } \hfill \\
\rhomin = {\rm ess~min}_{\bfx \in \Omega} \rho_0(\bfx), \ \rhomax = {\rm ess~sup}_{\bfx \in \Omega} \rho_0(\bfx),
\hfill \end{matrix}
\\ \label{eq:H_u} &
\bfu_0 \in \xL^2(\Omega)^d.
\end{align}
\end{subequations}
A well-known consequence of equations \eqref{eq:mass} and \eqref{eq:div} is the following maximum principle:
\[
\rhomin \leq \rho(\bfx,t) \leq \rhomax, \qquad  \text{for a.e.~} (\bfx,t) \in \Omega\times(0,T), 
\]
which shows that the natural regularity for $\rho$ is $\rho \in \xL^{\infty}(\Omega\times (0,T))$.
For the velocity $\bfu$, a classical formal calculation allows to derive natural estimates for smooth solutions.
Taking the scalar product of \eqref{eq:mom} by $\bfu$ and using twice the mass conservation equation \eqref{eq:mass} yields
\[
\dv_t(\frac 1 2 \rho |\bfu|^2)+\dive (\frac 1 2 \rho |\bfu|^2 \bfu)-\lapi \bfu \cdot \bfu  + \gradi p \cdot \bfu  =0. 
\]
Integrating over $\Omega$, one gets, since $\dive \bfu=0$ and $\bfu|_{\dv\Omega}=0$, that, for all $t \in (0,T)$,
\[
\frac d {dt} \int_\Omega \frac 1 2 \rho(\bfx,t)\, |\bfu(\bfx,t)|^2 \dx
+ \int_\Omega \gradi \bfu(\bfx,t) :\gradi \bfu(\bfx,t) \dx =0.
\]
Integrating over the time yields
\[
\int_\Omega \frac 1 2 \, \rho(\bfx,\tilde t)\, |\bfu(\bfx,\tilde t)|^2 \dx
+ \int_0^{\tilde t} \int_\Omega |\gradi \bfu(\bfx,t)|^2 \dx \dt =
\int_\Omega \frac 1 2 \, \rho_0(\bfx)\, |\bfu_0(\bfx)|^2 \dx, \quad \forall \tilde t\in (0,T). 
\]
Since the density is bounded from below by a positive constant, this shows that the natural regularity for $\bfu$ is $\bfu \in \xL^{\infty}((0,T);\xL^2(\Omega)^d) \cap \xL^2((0,T);\xH_0^1(\Omega)^d)$.
This leads to define the weak solutions to problem \eqref{eq:pb} as follows.

\begin{defi} \label{def:weaksol}
Let $\rho_0 \in \xL^{\infty}(\Omega)$ such that  $\rho_0 >0$ for \emph{a.e.} $\bfx \in \Omega$, and let $\bfu_0\in \xL^2(\Omega)^d$.
A pair $(\rho,\bfu)$ is a weak solution of problem \eqref{eq:pb} if it satisfies the following properties:
\begin{enumerate}
\item[(i)] $\rho \in \lbrace \rho \in \xL^{\infty}(\Omega\times(0,T)),~\rho >0 \text{ a.e. in } \Omega\times(0,T) \rbrace$.

\medskip
\item[(ii)] $\bfu \in \lbrace \bfu \in \xL^{\infty}((0,T);\xL^2(\Omega)^d) \cap \xL^2((0,T);\xH_0^1(\Omega)^d),~\dive\bfu =0 \rbrace$.

\medskip
\item[(iii)] For all $\phi$ in $ \xC^{\infty}_c(\Omega \times [0,T) )$,
\[
- \int_0^T   \int_\Omega \rho(\bfx,t) \Big (\dv_t \phi(\bfx,t) + \bfu(\bfx,t) \cdot  \gradi \phi(\bfx,t)\Big) \dx \dt 
 = \int_\Omega \rho_0(\bfx) \phi(\bfx,0) \dx. 
\]

\medskip
\item[(iv)]  For all $\bfv$ in $ \lbrace \bfv \in \xC^{\infty}_c(\Omega \times [0,T) )^d, \dive\bfv=0 \rbrace $, 
\begin{multline*}
\int_0^T  \int_\Omega \Bigl( - \rho(\bfx,t) \bfu(\bfx,t) \cdot \dv_t \bfv(\bfx,t)
- (\rho(\bfx,t) \bfu(\bfx,t) \otimes \bfu(\bfx,t)) : \gradi \bfv(\bfx,t)
\\
+ \gradi \bfu(\bfx,t) : \gradi \bfv(\bfx,t) \Bigr) \dx \dt
= \int_\Omega \rho_0(\bfx) \bfu_0(\bfx) \cdot \bfv(\bfx,0) \dx. 
\end{multline*}
\end{enumerate}
\end{defi}

\medskip

\begin{rmrk}
Thanks to a theorem due to de Rham, one can actually prove that problem \eqref{eq:pb} is satisfied in the distributional sense, with $p\in\xW^{-1,\infty}((0,T);\xL^2_0(\Omega))$ where $\xL^2_0(\Omega)=\xL^2(\Omega)/\xR$. See for instance \cite{boy-13-math}.
\end{rmrk}

%
%
\section{Definition of the meshes} \label{sec:meshes}

Let $\Omega$, the computational domain, be an open bounded subset of $\xR^d$, for $d\in\lbrace2,3\rbrace$, and let us suppose that $\Omega$ is polygonal, for $d=2$ and polyhedral, for $d=3$.
We denote by $\dv \Omega =  \overline{\Omega}\setminus\Omega$ its boundary.
In the following, the notation $|K|$ or $|\edge|$ stands indifferently for the $d$-dimensional or the $(d-1)$-dimensional measure of the subset $K$ of $\xR^d$ or $\edge$ of $\xR^{d-1}$ respectively.

\begin{defi}[Staggered discretization] \label{def:disc}
A staggered discretization of $\Omega$, denoted by $\disc$, is given by $\disc=(\mesh,\edges)$, where:
\begin{itemize}
\item[-] $\mesh$, the primal mesh, is a finite family of non empty convex quadrilaterals (d=2) or hexahedra (d=3) of $\Omega$ such that $\overline{\Omega}= \dsp{\cup_{K \in \mesh} \overline K}$.

\medskip
\item[-] For any $K\in\mesh$, let $\dv K  = \overline K\setminus K$ be the boundary of $K$.
The surface $\dv K$ is the union of bounded subsets of hyperplanes of $\xR^d$, which we call faces.
We denote by $\edges$ the set of faces of the mesh, and we suppose that two neighboring cells share a whole face: for all $\edge\in\edges$, either $\edge\subset \dv\Omega$ or there exists $(K,L)\in \mesh^2$ with $K \neq L$ such that $\overline K \cap \overline L  = \overline \edge$; we denote in the latter case $\edge = K|L$.
We denote by $\edgesext$ and $\edgesint$ the set of external and internal faces: $\edgesext=\lbrace \edge \in \edges, \edge \subset \dv \Omega \rbrace$ and $\edgesint=\edges \setminus \edgesext$.
For $K \in \mesh$, $\edges(K)$ stands for the set of faces of $K$.

\medskip
\item[-] We define a dual mesh associated with the faces $\edge\in\edges$ as follows.
When $K\in\mesh$ is a rectangle or a cuboid, for $\edge \in \edges(K)$, we define $D_{K,\edge}$ as the cone with basis $\edge$ and with vertex the mass center of $K$ (see Figure \ref{fig:mesh}).
We thus obtain a partition of $K$ in $2d$ sub-volumes, each sub-volume having the same measure $| D_{K,\edge}|= |K|/(2d)$.
We extend this definition to general quadrangles and hexahedra, by supposing that we have built a partition still of equal-volume sub-cells, and with the same connectivities.
For $\edge \in \edgesint$, $\edge=K|L$, we now define the dual (or diamond) cell $D_\edge$ associated with $\edge$ by $D_\edge=D_{K,\edge} \cup D_{L,\edge}$.
For $\edge\in\edges(K)\cap\edgesext$, we define $D_\edge=D_{K,\edge}$.
\end{itemize}
\end{defi}

\begin{rmrk}[Dual mesh and general cells]
Note that, for a general mesh, the shape of the dual cells does not need to be specified.
In addition, for a general quadrangle $K$, the definition of the volumes $\{D_{K,\edge},\ \edge \in \edges(K)\}$ is of course possible, but $D_{K,\edge}$ may be no longer a cone; indeed, if $K$ is far from a parallelogram, it may not be possible to build a cone having $\edge$ as basis, the opposite vertex lying in $K$ and a volume equal to $|K|/(2d)$.
\end{rmrk}

We denote by $\edgesd(D_\edge)$ the set of faces of $D_\edge$, and by $\edged=D_\edge|D_{\edge'}$ the face separating two diamond cells $D_\edge$ and $D_{\edge'}$.
As for the primal mesh, we denote by $\edgesdint$ the set of dual faces included in the domain and by $\edgesdext$ the set of dual faces lying on the boundary $\dv \Omega$.
In this latter case, there exists $\edge\in\edgesext$ such that $\edged=\edge$.
The unit vector normal to $\edge \in \edges(K)$ outward $K$ is denoted by $\bfn_{K,\edge}$.

\begin{figure}[ht]
\begin{center}
\newgray{grayml}{.8}
\newgray{grayc}{.97}
\psset{unit=1.0cm}
\begin{pspicture}(-2,0)(12,7)
\rput[bl](0,1){
   \pspolygon*[linecolor=grayml](3.244,3.04)(6,2)(7.137931034,4.275862069)(5.4,5)
   \rput[bl](4.6,3.5){{$D_\edge$}}
   \pspolygon*[linecolor=grayc](1,1)(3.244,3.04)(6,2)(3.7,0.5)
   \rput[bl](3.2,0.8){{$D_{\edge'}$}}
   \rput[bl]{10}(2.6,1.4){$\edge'=K|M$}
   \psline[linecolor=black, linewidth=2pt]{-}(1,1)(6,2)(5.4,5)(0.7,4)(1,1)
   \psline[linecolor=black, linewidth=0.5pt]{-}(1,1)(5.4,5)
   \psline[linecolor=black, linewidth=0.5pt]{-}(6,2)(0.7,4)
   \rput[bl](1.1,1.6){{$K$}}
   \psline[linecolor=black, linewidth=2pt]{-}(6,2)(9,3.5)(8,6)(5.4,5)
   \psline[linecolor=black, linewidth=0.5pt]{-}(6,2)(8,6)
   \psline[linecolor=black, linewidth=0.5pt]{-}(9,3.5)(5.4,5)
   \rput[bl](7.3,3){{$L$}}
   \psline[linecolor=black, linewidth=2pt]{-}(1,1)(6,2)(6.4,0)(1.4,-1)(1,1)
   \psline[linecolor=black, linewidth=0.5pt]{-}(1,1)(6.4,0)
   \psline[linecolor=black, linewidth=0.5pt]{-}(6,2)(1.4,-1)
   \rput[bl](1.5,-0.5){{$M$}}
   \psline[linecolor=black, linewidth=2pt]{-}(6,2)(9,3.5)(8,0)(6.4,0)
   \psline[linecolor=black, linewidth=0.5pt]{-}(6,2)(8,0)
   \psline[linecolor=black, linewidth=0.5pt]{-}(9,3.5)(6.4,0)
   \rput[bl](7.5,1){{$N$}}
   \rput[bl]{-79}(5.65,4.15){$\edge=K|L$}
   \rput[bl]{-19}(3.8,2.8){$\edged=D_\edge|D_{\edge'}$}
}
\end{pspicture}
\end{center}
\caption{Notations for a staggered discretization}\label{fig:mesh}
\end{figure}
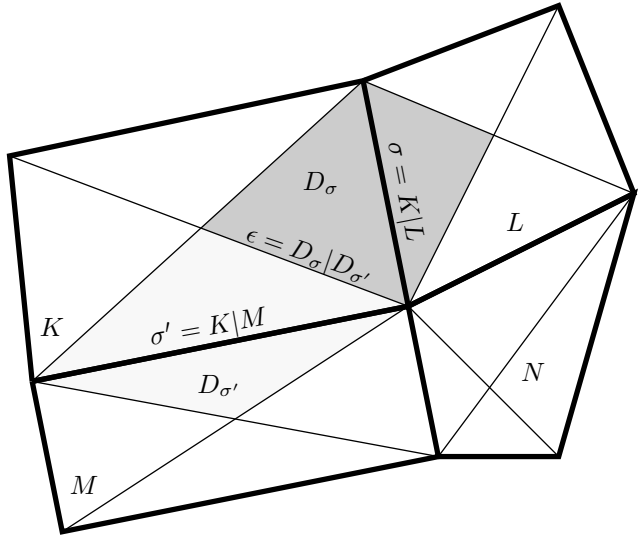

\medskip
For $K\in\mesh$, we denote by $h_K$ the diameter of $K$.
The size of the discretization is defined by
\begin{equation} \label{eq:def_h}
h_\disc= \sup\,\bigl\lbrace h_K,\ K\in \mesh \bigr\rbrace.
\end{equation}
In addition, for the consistency of the finite element approximation of the diffusion term, we need a measure of the difference between the cells of $\mesh$ and parallelograms ($d=2$) or parallelotopes ($d=3$), as defined in \cite{ran-92-sim}.
For $K \in \mesh$, we denote by $\bar \alpha_K$ the maximum of the angles between the normal vectors of opposite faces, choosing the orientation which maximize the angle, and set $\alpha_K = \pi - \bar \alpha_K$ (so $\alpha_K=0$ if $K$ is a parallelogram or a parallelotope, and $\alpha_K> 0 $ otherwise).
Then we define $\alpha_\disc$ as:
\begin{equation} \label{eq:def_alpha}
\alpha_\disc = \max\, \bigl\lbrace \alpha_K,\ K \in \mesh \bigr\rbrace.
\end{equation}
For $K\in\mesh$, we denote by $r_K$ the diameter of the largest ball included in $K$.
We define the real number $\theta_\mesh$ by:
\[
\theta_\mesh = \max\, \Bigl\lbrace \frac {h_K}{r_K},\ K\in \mesh \Bigr\rbrace.
\]
For $\edge \in \edges$, we denote by $h_\Ds$ the diameter of $D_\edge$ and by $r_\Ds$ the diameter of the largest ball included in $D_\edge$, and we define $\theta_{\edges,1}$ by:
\begin{equation} \label{eq:theta_edges1}
\theta_{\edges,1} = \max\, \Bigl\lbrace \frac {r_\Ds}{h_{\Dsp}},
\ \edge,\edge' \in \edges \mbox{ such that } \dv D_\edge \cap \dv D_{\edge'}\neq \emptyset  \Bigr\rbrace .
\end{equation}
The parameter $\theta_{\edges,2}$ is defined by:
\begin{equation} \label{eq:theta_edges2}
\theta_{\edges,2}=\max\, \Bigl\lbrace \frac{|\dv D_{K,\edge}|}{|\dv K|},\ K \in \mesh,\ \edge \in \edges(K) \Bigr\rbrace .
\end{equation}
Finally, we also need to introduce the following quantity:
\[
\theta_{\edges,3}=\max\, \Bigl\lbrace \frac{h_\disc}{r_\Ds},\ \edge \in \edges \Bigr\rbrace .
\]

The regularity of the discretization is measured through the following positive real number:
\begin{equation}\label{eq:reg}
 \theta_\disc = \max\, \bigl\lbrace \theta_\mesh,\ \theta_{\edges,1},\ \theta_{\edges,2}, \ \theta_{\edges,3}\bigr\rbrace.
\end{equation}
The real number $\theta_\mesh$ is a classical measure of the regularity of the primal mesh.
In 2D, an inequality of the form $\theta_\mesh\leq C$ is the classical uniform-shape condition for ${\mathcal Q}_1$-elements (see \cite{gir-86-fin}).
Observe that, by construction of the dual mesh which imposes to the half-diamond cells $D_{K,\edge}$ to be of equal volume in every $K$, if $\theta_\mesh$ is bounded, we may suppose that a similar measure of the uniform-shape regularity for the dual mesh is also bounded.
The real number $\theta_{\edges,1}$ is an additional measure of the regularity of the dual mesh, which characterizes the difference of size between two neighboring cells.
The parameter $\theta_{\edges,2}$  measures the regularity of the dual faces.
As shown in the following, the boundedness of $\theta_{\edges,1}$ and $\theta_{\edges,2}$ for a sequence of discretizations is used to obtain compactness results for the numerical scheme.
Finally, still for a sequence of discretizations, imposing to $\theta_{\edges,3}$ to be bounded is a quasi-uniformity assumption of the mesh, which is necessary to justify inverse inequalities used in the proof of Lemma \ref{lmm:convconv}.
%
%
\section{The scheme} \label{sec:scheme}

\subsection{General form of the scheme}

Let us consider a uniform partition $0=t_0 < t_1 <\ldots < t_N=T$ of the time interval $(0,T)$, and let $\delta t=t_n-t_{n-1}$ for $n=1,\ldots,N$ be a constant time step. The discretization of problem \eqref{eq:pb} is staggered in the following sense. The degrees of freedom for the density and the pressure are associated with the primal mesh $\mesh$ while the degrees of freedom of the velocity are associated with the dual mesh, or equivalently with the set of faces $\edges$. Correspondingly, the initial discrete density and velocity are defined by
\begin{equation}\label{eq:inicond}
\begin{array}{ll} 
\displaystyle \rho_K^0 = \frac 1 {|K|} \int_K \rho_0(\bfx) \dx, & \qquad K \in \mesh, \\[4ex]
\displaystyle \bfu_\edge^0 = \frac 1 {|D_\edge|} \int_{D_\edge} \bfu_0(\bfx) \dx, & \qquad \edge \in \edgesint,
\end{array}
\end{equation}
and the Dirichlet boundary condition is taken into account by setting $\bfu_\edge^n=0$ for all $\edge \in \edgesext$ and all $n$ in $\lbrace0,1,..,N \rbrace$.

\medskip
The time advancement is defined by induction as follows. 

\medskip
\noindent \emph{For $1 \leq n \leq N$, let us suppose that $(\rho_K^{n-1})_{K\in\mesh} \subset \xR$,  $(\bfu_\edge^{n-1})_{\edge\in\edgesint}\subset\xR^d$ and $(p_K^{n-1})_{K\in\mesh} \subset \xR$ are known families of real numbers, and find $(\rho_K^n)_{K\in\mesh} \subset \xR$,  $(\bfu_\edge^n)_{\edge\in\edgesint}\subset\xR^d$ and $(p_K^n)_{K\in\mesh} \subset \xR$ such that
\[
\sum_{K \in \mesh} |K|\, p_K^n=0,
\]
and}
\begin{subequations} \label{eq:impl_scheme} 
\begin{align} 
& \displaystyle \dfrac 1 {\delta t}(\rho^n_K-\rho_K^{n-1}) + \frac 1 {|K|}\sum_{\edge \in\edges(K)} \fluxK^n=0, 
& K \in \mesh, \label{eq:sch_mass} \\
&
 \displaystyle \dfrac 1 {\delta t}(\rho^n_\Ds \bfu^n_\edge-\rho^{n-1}_\Ds \bfu_\edge^{n-1})
+ \frac 1 {|D_\edge|}\sum_{\edged \in\edgesd(D_\edge)} \fluxd^n \bfu^n_\edged
- (\lapi \bfu)_\edge^n
+ (\gradi p)_\edge^n =0, 
&  \edge \in \edgesint, \label{eq:sch_mom} \\ 
& (\dive \bfu)_K^n  =0,
& K \in \mesh. \label{eq:sch_div}
\end{align}
\end{subequations}

\medskip
Equation \eqref{eq:sch_mass} is obtained by discretization of the mass balance over the primal mesh, and $\fluxK^n$ stands for the mass flux across $\edge$ outward $K$, which, because of the Dirichlet boundary condition on the velocity, vanishes on external faces and is given on the internal faces by:
\[
\fluxK^n= |\edge| \ \rho^n_\edge\ \bfu_\edge^n\cdot \bfn_{K,\edge}, \qquad \edge=K|L\in\edgesint.
\]
The density at the face $\edge=K|L$ is approximated by the upwind technique:
\begin{equation}\label{eq:rho_upwind}
\rho^n_\edge=\left|
\begin{aligned}
& \rho^n_K \qquad \mbox{if } \bfu_\edge^n\cdot \bfn_{K,\edge} \geq 0, \\
& \rho^n_L \qquad \mbox{otherwise}.
\end{aligned} \right.
\end{equation}
The discretization of the discrete velocity divergence is built in a similar way:
\begin{equation}\label{eq:defdive}
(\dive \bfu)_K^n= \frac 1 {|K|} \sum_{\edge \in\edges(K)} |\edge| \ \bfu_\edge^n\cdot \bfn_{K,\edge}, \qquad K \in \mesh.
\end{equation}

\medskip
Let us now turn to the discretization \eqref{eq:sch_mom} of the momentum balance equation \eqref{eq:mom}.
The first two terms correspond to a finite volume approximation of the convection operator, the description of which is given below (see Section \ref{subsec:vel_conv}).
The space discretization of the diffusion term in the momentum equation relies on the \emph{parametric} Rannacher-Turek 
(or rotated bilinear) element associated with the primal mesh $\mesh$ (see \cite{ran-92-sim}).
The reference element $\widehat K$ for the rotated bilinear element is the unit $d$-cube and the discrete functional space on $\widehat K$ is $\tilde{Q}_1(\widehat K)$:
\[
\tilde{Q}_1(\widehat K)= {\rm span}\, \bigl\lbrace 1,\,(\bfx_{i})_{i=1,\ldots,d},\,(\bfx_{i}^2-\bfx_{i+1}^2)_{i=1,\ldots,d-1}\bigr\rbrace.
\]
The mapping from the reference element to the actual discretization cell is the standard $Q_1$ mapping, and the space of discrete functions over a cell $K$, let us say $\tilde Q_1(K)$, is obtained from $\tilde{Q}_1(\widehat K)$ by composition.
The set of shape functions over $K$ is the set $\lbrace \zeta_\edge,~\edge \in \edges(K) \rbrace$, such that $\zeta_\edge|_K$ belongs to $\tilde Q_1(K)$ and
\begin{equation} \label{eq:def_zeta}
\int_\edge \zeta_{\edge'}|_K(\bfx) \dedge(\bfx) = \delta_\edge^{\edge'}\ |\edge|,
\qquad \edge, \edge' \in \edges(K),
\end{equation}
with $\delta_\edge^{\edge'}=1$ if $\edge=\edge'$ and $\delta_\edge^{\edge'}=0$ otherwise.
The continuity of the average value of a discrete function $\tilde v$ across each face of the mesh is required, which is consistent with a location of the degrees of freedom at the center of the faces:
\begin{equation} \label{eq:def_saut_EF}
\int_\edge [\tilde v]_\edge(\bfx) \dedge(\bfx) =0,
\qquad [\tilde v]_\edge(\bfx) = \lim_{\substack{\bfy \to \bfx \\ \bfy\in L}} \tilde v(\bfy)-
\lim_{\substack{\bfy \to \bfx \\ \bfy\in K}} \tilde v(\bfy),
\quad \bfx \in \edge,
\quad \edge=K|L.
\end{equation}
The discretization of the diffusion term reads
\[
-(\lapi \bfu)_\edge^n  = \frac 1 {|D_\edge|} \sum_{K \in \mesh}\ \sum_{\edge'\in \edges(K)}
\bfu_{\edge'}^n \, \int_K \gradi \zeta_{\edge'} \cdot \gradi \zeta_\edge \dx.
\]

\medskip
Finally, the discretization of the discrete pressure gradient term reads as follows:
\begin{equation}
\label{eq:defgradp} 
(\gradi p)^n_\edge= \frac{|\edge|}{|D_\edge|}\ (p^n_L-p^n_K)\ \bfn_{K,\edge}, \qquad \edge=K|L \in \edgesint.
\end{equation}

%
%
\subsection{The velocity convection operator}\label{subsec:vel_conv}

In this section, we describe the approximation of the convection operator $\partial_t(\rho \bfu) + \dive(\rho \bfu \otimes \bfu)$ which appears in the momentum balance equation.
As mentioned in the introduction, this discrete operator has already been used as a building brick for various schemes: variable density low Mach number flows \cite{ans-11-anl} (as here), barotropic and non-barotropic \cite{gal-08-unc, her-14-imp} compressible flows, drift-flux two-phase flow model \cite{gas-10-unc, gas-11-dis, her-13-pre}.
It is of finite volume type, and takes the general form given by the first two terms of \eqref{eq:sch_mom}.
The quantity $\rho_\Ds$ is an approximation of the density on the dual cell $D_\edge$, while $\fluxd$ is the mass flux across the edge $\edged$ of the dual cell $D_\edge$. These quantities are built so that a finite volume discretization of the mass balance \eqref{eq:sch_mass} holds over the internal dual cells:
\begin{equation}\label{eq:mass_D}
\frac{|D_\edge|}{\delta t} \ (\rho^n_\Ds-\rho^{n-1}_\Ds)
+ \sum_{\edged\in\edgesd(D_\edge)} \fluxd^n=0, \qquad  \edge\in\edgesint.
\end{equation}
This is crucial in order to reproduce, at the discrete level, the derivation of a kinetic energy balance equation (see Section \ref{sec:prop} below), a consequence of which are discrete analogues of the usual $\xL^\infty(\xL^2)$- and $\xL^2(\xH^1)$- stability estimates for the velocity.

\medskip
Let us first begin with the time derivative term.
The values $\rho^n_\Ds$ and $ \rho^{n-1}_\Ds$ are approximations of the density on the dual cell $D_\edge$ at time $t_n$ and $t_{n-1}$ respectively.  For $\edge$ in $\edgesint$ such that $\edge=K|L$, the approximate densities on the dual cell $D_\edge$ are given by the following weighted average:
\begin{equation} \label{eq:pd2}
|D_\edge|\ \rho^k_\Ds= \xi_K^\edge|K|\ \rho^k_K + \xi_L^\edge|L|\ \rho^k_L, \qquad \mbox{ for } k=n-1 \mbox{ and } k=n,
\end{equation}
where
\begin{equation} \label{eq:xiksigma}
\xi_K^\edge = \frac{|D_{K,\edge}|}{|K|}, \qquad K\in\mesh, ~\edge\in\edges(K).
\end{equation}

\medskip
The set of dual fluxes $\fluxd^n$ with $\edged$ included in the primal cell $K$, is computed by solving a linear system depending on the primal fluxes $(\fluxK^n)_{\edge\in\edges(K)}$, appearing in the discrete mass balance \eqref{eq:sch_mass}.
More precisely, we have the following definition for the dual fluxes, in which we omit for short the time dependence on $n$.

\begin{defi}[Definition of the dual fluxes from the primal ones]\label{def:dualfluxes}
The fluxes through the faces of the dual mesh are defined so as to satisfy the following three constraints:
\begin{itemize}
\item[(H1)] The discrete mass balance over the half-diamond cells is satisfied, in the following sense. For all primal cell $K$ in $\mesh$, the set $(\fluxd)_{\edged\subset K}$ of dual fluxes included in $K$ solves the following linear system
\begin{equation}\label{eq:F_syst}
F_{K,\edge} + \sum_{\edged \in \edgesd(D_\sigma),\ \edged \subset K} F_{\edge,\edged}= \xi_K^\edge \ \sum_{\edge' \in \edges(K)} F_{K,\edge'}, \qquad \edge \in \edges(K).
\end{equation}
\item[(H2)] The dual fluxes are conservative, \ie\ for any dual face $\edged=D_\edge|D_\edge'$, we have $F_{\edge,\edged}=-F_{\edge',\edged}$.
\item[(H3)] The dual fluxes are bounded with respect to the primal fluxes $(F_{K,\edge})_{\edge \in \edges(K)}$, in the sense that there exists a universal constant real number $C$ such that:
\begin{equation}\label{eq:F_bounded}
|F_{\edge,\epsilon}| \leq C \ \max \,\left \lbrace |F_{K,\edge}|,\ \edge \in \edges(K) \right \rbrace, \quad K \in \mesh,\  \edge \in \edges(K),\  \epsilon \in \edgesd(D_\sigma),\ \edged\subset K .
\end{equation}
\end{itemize}
\end{defi}

In fact, the definition \ref{def:dualfluxes} is not complete, since the system of equations \eqref{eq:F_syst} has an infinite number of solutions, which makes necessary to impose in addition the constraint \eqref{eq:F_bounded}; however, assumptions (H1)-(H3) are sufficient for the subsequent developments of this paper (and thus, in particular, imply the consistency of the discrete convection operator).
Note that, since \eqref{eq:F_syst} is linear with respect to the $F_{\edge,\edged},\ \edge \in \edges(K), \edged \in \edgesd(D_{\edge}),\ \edged \subset K$, a solution of \eqref{eq:F_syst} may be expressed as:
\[
F_{\edge,\edged} = \sum_{\edge' \in \edges(K)} (\alpha_K)_\edge^{\edge'} F_{K,\edge'},\qquad \edge \in \edges(K),\
\edged \in \edgesd(D_{\edge}) \mbox{ and } \edged \subset K,
\]
and the constraint \eqref{eq:F_bounded} amounts to requiring to the coefficients $((\alpha_K)_\edge^{\edge'})_{\edge,\edge'\in \edges(K)}$ to be bounded by a universal constant. In practice, one has $|(\alpha_K)_\edge^{\edge'}|\leq 1$ for all $\edge,\edge'\in \edges(K)$ and all $K\in\mesh$ (see \cite{BLPS-fvca7}).

\medskip
We thus would be able to cope with a quite general definition of the diamond cells, since, up to now, even their volume is not fixed.
In practice, as said in Definition \ref{def:disc}, we however choose to impose that $|D_{K,\edge}|=|K|/(2d)$; in other words, the real number $\xi_K^\edge$ in \eqref{eq:xiksigma} is given by $\xi_K^\edge = 1/(2d)$ for all $K \in \mesh$ and $\edge \in \edges(K)$.
In these conditions, System \eqref{eq:F_syst} is now completely independent from the cell $K$ under consideration.
We may thus consider a particular geometry for $K$, let us say $K=(0,1)^d$, and find an expression for the coefficients $((\alpha_K)_\edge^{\edge'})_{\edge,\edge'\in \edges(K)}$ which we will apply to all the cells, thus automatically satisfying the constraint \eqref{eq:F_bounded}.
A technique for this computation is described in \cite[Section 3.2]{ans-11-anl}.
The idea is to build a momentum field $\bfw$ with constant divergence, such that:
\[
\int_\edge \bfw \cdot \bfn_{K,\edge} \dedge(\bfx) = F_{K,\edge}, \qquad \forall \edge \in \edges(K).
\]
Then an easy computation shows that the definition
\[
F_{\edge,\edged} = \int_\edged \bfw \cdot \bfn_{\edge,\edged} \dedge(\bfx),
\]
where the unit vector normal to $\edged $ outward $D_\edge$ is denoted by $\bfn_{\edge,\edged}$, satisfies \eqref{eq:F_syst} (see \cite[Lemma 3.2]{ans-11-anl}).
The set of coefficients $((\alpha_K)_\edge^{\edge'})_{\edge,\edge'\in \edges(K)}$ obtained for a quadrangle is given in \cite[Section 3.2]{ans-11-anl}; extension to the three-dimensional case is straightforward.

\bigskip
To complete the definition of the convective flux, we just have now to give the expression of the velocity at the dual face, \ie\ of the quantity $\bfu_\edged^n$ in \eqref{eq:sch_mom}.
As already said, a dual face lying on the boundary is also a primal face, and the flux across that face is zero.
Therefore, the values $\bfu_\edged^n$ are only needed at the internal dual faces; we choose them to be centered:
\[
\bfu_\edged^n = \frac 1 2 ( \bfu_\edge^n + \bfu_{\edge'}^n), \qquad \mbox{for } \edged=D_\edge|D_\edge'.
\]
%
%
\section{The convergence theorem} \label{sec:conv}

We begin by associating functions with the discrete unknowns of the scheme.
To this purpose, we first define the following sets of discrete functions of the space variable.

\begin{defi}[Discrete spaces]\label{def:disc_space}
Let $\disc=(\mesh,\edges)$ be a staggered discretization of $\Omega$ in the sense of Definition \ref{def:disc}.
We denote by $\xH_\mesh(\Omega)\subset \xL^\infty(\Omega)$ the space of functions which are piecewise constant on each primal mesh cell $K\in\mesh$.
For all $w\in \xH_\mesh(\Omega)$ and for all $K\in\mesh$, we denote by $w_K$ the constant value of $w$ in $K$, so the function $w$ reads:
\[
w(\bfx)= \sum_{K \in \mesh} w_K\, \mathcal{X}_K(\bfx) \qquad \mbox{for a.e. } \bfx \in \Omega,
\]
where $\mathcal{X}_K$ stands for the characteristic function of $K$.\\[0.5ex]
Similarly, we denote by $\xH_\edges(\Omega)\subset \xL^\infty(\Omega)$ the space of functions which are piecewise constant on each diamond cell of the dual mesh $D_\edge,~\edge\in\edges$.
For all $u \in \xH_\edges(\Omega)$ and for all $\edge\in\edges$, we denote by $u_\edge$ the constant value of $u$ in $D_\edge$, so the function $u$ reads:
\[
u(\bfx)= \sum_{\edge\in\edges} u_\edge\, \mathcal{X}_{D_\edge}(\bfx) \qquad \mbox{for a.e. } \bfx \in \Omega,
\]
where $\mathcal{X}_{D_\edge}(\bfx)$ stands for the characteristic function of $D_\edge$.
Finally we denote $\xH_{\edges,0}(\Omega)=\bigl\lbrace u\in \xH_\edges(\Omega),\ u_\edge=0 \text{ for all } \edge \in \edgesext \bigr\rbrace$. 
\end{defi}

Then, with the discrete unknowns computed by induction through the scheme, we associate piecewise constant functions on each time interval $(t_{n-1},t_n]$ as follows:
\[
\rho(\bfx,t) = \rho^n(\bfx),\quad p(\bfx,t)=p^n(\bfx),\quad \bfu(\bfx,t)=\bfu^n(\bfx),\quad \mbox{for a.e. } t \in (t_{n-1},t_n],
\]
where $\rho^n \in\xH_\mesh(\Omega)$, $p^n \in\xH_\mesh(\Omega)$ and $\bfu \in \xH_{\edges,0}(\Omega)^d$ are the discrete functions defined by $(\rho_K^n)_{K \in \mesh}$, $(p_K^n)_{K \in \mesh}$ and $(\bfu_\edge^n)_{\edge \in \edges}$ respectively.
Definition \ref{def:disc_space} thus yields:
\begin{equation}\label{eq:def_rho_u_e_p}
\begin{array}{ll} \displaystyle
\rho(\bfx,t) = \sum_{n=1}^N \sum_{K \in \mesh} \rho_K^n\, \mathcal{X}_K(\bfx)\, \mathcal{X}_{(n-1,n]}(t),
\\ \displaystyle
p(\bfx,t) =\sum_{n=1}^N \sum_{K\in\mesh} p_K^n\,\mathcal{X}_K(\bfx)\, \mathcal{X}_{(n-1,n]}(t),
\\[3ex] \displaystyle
\bfu(\bfx,t)=\sum_{n=1}^N \sum_{\edge\in\edges} \bfu_\edge^n\, \mathcal{X}_{D_\edge}(\bfx)\, \mathcal{X}_{(n-1,n]}(t),
\end{array}
\end{equation}
where $\mathcal{X}_{(n-1,n]}$ stands for the characteristic function of the time interval $(t_{n-1},t_n]$.

\medskip
We then define a regular sequence of discretizations as follows.
\begin{defi}[Regular sequence of discretizations]\label{def:reg_disc}
Let $(\disc\m,\delta t\m)_{m\in\xN}$ be a sequence of staggered discretizations (in the sense of Definition \ref{def:disc}) and time steps.
For $m \in \xN$, let $h\m$ and $\theta\m$ be the space step and the regularity parameter associated with $\disc\m$  by equations \eqref{eq:def_h} and \eqref{eq:reg} respectively, and let $\alpha\m$ be the measure of the deviation of the geometry of the cells from parallelograms, as defined by \eqref{eq:def_alpha}.
Then this sequence $(\disc\m,\delta t\m)_{m\in\xN}$ is said regular if:
\begin{itemize}
\item[$(i)$] for all $m \in \xN,\ \theta\m \leq \theta_0$ for some positive real number $\theta_0$,
\item[$(ii)$] the sequences of space steps $(h\m)_{m \in \xN}$ and time steps $(\delta t\m)_{m \in \xN}$ tend to zero when $m$ tends to $+ \infty$,
\item[$(iii)$] the sequence of parameters $(\alpha\m)_{m \in \xN}$ tends to zero when $m$ tends to $+ \infty$.
\end{itemize}
\end{defi}
\begin{rmrk}[A particular construction of regular sequence of discretizations]
For $d=2$, a sequence of discretizations satisfying the assumption $(iii)$ above is obtained by successively dividing each quadrangle in four sub-quadrangles, splitting it along the lines joining the mid-points of opposite faces.
The extension of this construction to the three-dimensional case (splitting now each hexahedron in 8 subvolumes) is not as easy as it seems, the difficulty being to keep the faces plane.
\end{rmrk}

The following theorem is the main result of this paper; its proof is the purpose of the rest of the paper.
\begin{thrm}
\label{thrm:convergence}
Let $(\disc\m,\delta t\m)_{m\in\xN}$ be a regular sequence of staggered discretizations and time steps.
Then, under assumptions \eqref{eq:H_rho} and \eqref{eq:H_u} for the initial data, for $m \in \xN$, there exists a discrete solution to the scheme \eqref{eq:impl_scheme}.
Let us denote by $(\rho\m,\bfu\m,p\m)_{m\in \xN}$ the corresponding discrete functions as defined in \eqref{eq:def_rho_u_e_p}.
Then, there exists $(\bar \rho, \bar \bfu)$ weak solution of problem \eqref{eq:pb} in the sense of Definition \ref{def:weaksol}, such that, up to a subsequence, $\rho\m$ strongly  converges to $\bar \rho$ in $\xL^q((0,T)\times \Omega)$ for all $q$ in $[1,\infty)$ and $\bfu \m$ strongly converges to $\bar \bfu$ in $\xL^q((0,T);\xL^2(\Omega)^d)$ for all $q$ in $[1,\infty)$.
\end{thrm}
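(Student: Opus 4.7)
The plan has three main ingredients: a priori estimates together with existence of a discrete solution, compactness to extract converging subsequences, and passage to the limit to identify the limit as a weak solution. For the first, since \eqref{eq:sch_mass} is discretized by an upwind finite volume scheme, a standard argument (multiplying by a suitable truncation) yields the discrete maximum principle $\rhomin \le \rho_K^n \le \rhomax$ for all $n$ and $K$. Next, the dual mass balance \eqref{eq:mass_D}, which holds thanks to assumption (H1) on the dual fluxes, enables the discrete analogue of the kinetic energy identity derived above Definition \ref{def:weaksol}: testing \eqref{eq:sch_mom} by $\bfu_\edge^n$ and summing over $\edge$, the time-derivative term produces $\tfrac{1}{2\delta t}|D_\edge|(\rho_\Ds^n |\bfu_\edge^n|^2-\rho_\Ds^{n-1} |\bfu_\edge^{n-1}|^2)$ plus a non-negative numerical dissipation, the convective term vanishes thanks to \eqref{eq:mass_D} and the centred choice of $\bfu_\edged^n$, the diffusion term produces the square of the discrete broken $\xH^1_0$-seminorm, and the pressure-gradient term vanishes against the constraint \eqref{eq:sch_div}. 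Combined with $\rho \ge \rhomin>0$, this yields the $\xL^\infty(\xL^2)$- and $\xL^2(\xH^1)$-bounds on $\bfu$. Existence of a solution to the nonlinear algebraic system \eqref{eq:impl_scheme} at each time step then follows from a topological degree argument, these a priori estimates ensuring that no solution of a convenient homotopy path escapes to infinity.

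For the compactness step, the discrete $\xL^2(\xH^1)$ estimate combined with the regularity bound $\theta^{(m)}\le\theta_0$ controls space translates of $\bfu^{(m)}$, while the momentum equation \eqref{eq:sch_mom}, tested against a smooth compactly supported function, controls the time translates of $\rho^{(m)}\bfu^{(m)}$. An Aubin--Simon type argument then delivers, up to a subsequence, strong convergence of $\bfu^{(m)}$ to some $\bar\bfu$ in $\xL^q((0,T);\xL^2(\Omega)^d)$ for every $q<\infty$, while $\rho^{(m)}$ converges only weakly-$*$ a priori to some $\bar\rho\in\xL^\infty((0,T)\times\Omega)$ with $\rhomin\le\bar\rho\le\rhomax$. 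Passing to the limit in the upwind discrete mass balance \eqref{eq:sch_mass} shows that $(\bar\rho,\bar\bfu)$ satisfies the transport equation in the distributional sense with $\dive\bar\bfu=0$ and $\bar\bfu\in\xL^2((0,T);\xH^1_0(\Omega)^d)$. The DiPerna--Lions renormalization theory \cite{dip-89-ord} then applies to identify the weak limit of $(\rho^{(m)})^2$ with $\bar\rho^{\,2}$, which upgrades the weak convergence of $\rho^{(m)}$ to strong convergence in $\xL^q((0,T)\times\Omega)$ for every $q<\infty$.

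For the passage to the limit in the momentum equation, given a smooth divergence-free $\bfv\in\xC^\infty_c(\Omega\times[0,T))^d$, I would test the scheme against the natural dual-cell interpolate $\bfv^n_\edge$ of $\bfv$ and sum over $\edge$ and $n$. The pressure term disappears up to a vanishing consistency error, since the discrete divergence of the interpolate of $\bfv$ tends to $\dive\bfv=0$; the diffusion term converges to $\int_0^T\!\!\int_\Omega\gradi\bar\bfu:\gradi\bfv$, and this is precisely where the assumption $\alpha^{(m)}\to 0$ enters decisively, ensuring the consistency of the parametric Rannacher--Turek element on general quadrangles and hexahedra. For the convection term, using (H1)--(H3) and the dual mass balance \eqref{eq:mass_D} I would rewrite the two first terms of \eqref{eq:sch_mom} in conservative form with dual fluxes $\fluxd^n \bfu^n_\edged$, then, by discrete integration by parts and an explicit manipulation of the centred dual velocities, recast the result as a discretization of $-\int\rho\bfu\otimes\bfu:\gradi\bfv-\int\rho\bfu\cdot\dv_t\bfv$ up to a consistency remainder. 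The strong convergences of $\rho^{(m)}$ and $\bfu^{(m)}$ then allow passage to the limit in the products, and the initial-data term is handled similarly.

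The two main obstacles are: first, proving consistency of the discrete convection operator, whose dual-mesh construction through \eqref{eq:F_syst} is defined only implicitly by the requirements (H1)--(H3), so that no closed-form expression for $\fluxd$ is available and the analysis must rely entirely on the dual mass balance \eqref{eq:mass_D} together with the boundedness \eqref{eq:F_bounded}; second, the strong convergence of the density, which cannot be obtained from Aubin--Simon arguments alone because of the very weak regularity available on $\partial_t\rho^{(m)}$, and which I would therefore derive via the DiPerna--Lions renormalization of the limit transport equation. A subsidiary technical difficulty is the consistency of the non-conforming diffusion on skew cells, which is the reason why $\alpha^{(m)}\to 0$ is required in Definition \ref{def:reg_disc}.
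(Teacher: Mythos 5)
Your overall architecture coincides with the paper's: discrete maximum principle plus a kinetic energy identity built on the dual mass balance \eqref{eq:mass_D}, existence by topological degree, compactness, passage to the limit in the mass then momentum equations, and DiPerna--Lions renormalization to upgrade the density convergence from weak-$\star$ to strong. Two points of divergence are worth flagging. First, for the velocity compactness you propose an Aubin--Simon argument based on time translates of $\rho\m\bfu\m$; the paper instead estimates the time translates of $\bfu\m$ itself (Lemma \ref{lmm:time-trans}), by testing the momentum equation against $\bfv=\bfu(\cdot,t)-\bfu(\cdot,t-\tau)$ and using $\rho\geq\rhomin>0$ to absorb the density weight, and then applies Kolmogorov's theorem directly to $\bfu\m$. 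Your route is the classical Lions-type one and can be made to work, but as stated it hides a genuinely nontrivial step: compactness of $\rho\m\bfu\m$ does not by itself yield strong convergence of $\bfu\m$; one must combine the time compactness of $\rho\m\bfu\m$ with the space compactness of $\bfu\m$ (a compensated-compactness or product lemma) to identify $\lim\int\rho\m|\bfu\m|^2$ and only then invoke $\rho\geq\rhomin$ to conclude $\bfu\m\to\bar\bfu$ in $\xL^2$. The paper's direct estimate avoids this entirely. Second, in the density step, DiPerna--Lions only controls the \emph{limit} equation, giving $\norm{\bar\rho(\cdot,t)}_{\xL^2(\Omega)}=\norm{\rho_0}_{\xL^2(\Omega)}$; to conclude you also need the discrete counterpart $\norm{\rho\m(\cdot,t)}_{\xL^2(\Omega)}\leq\norm{\rho_0}_{\xL^2(\Omega)}$, which does not follow from the maximum principle but from the discrete $\rho^2$ transport identity (Lemma \ref{lmm:rho2} and estimate \eqref{eq:rho_bv}); you should state this ingredient explicitly, since it is the half of the argument that lives at the discrete level. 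With these two steps supplied, your proof is complete and matches the paper's in all essential respects, including the correct identification of the two main difficulties (consistency of the implicitly defined dual convection fluxes, and strong density convergence) and of the role of $\alpha\m\to0$ for the nonconforming diffusion term.
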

%
%
\section{Preliminary lemmata} \label{sec:math}

We gather in this section some preliminary mathematical results which are useful for the analysis of the scheme.

%
%
\subsection{Properties of the discrete divergence and gradient operators}

We define the discrete divergence and gradient operators in the following way:
\begin{eqnarray} \label{eq:divdisc}
& \dive_\mesh: 
&\left \lbrace 
\begin{array}{ccll}
\xH_{\edges,0}(\Omega)^d & \longrightarrow & \xH_\mesh(\Omega) & \\
\bfu & \longmapsto & \dive_{\mesh} \bfu(\bfx) =  (\dive \bfu)_K, &\forall \bfx \in K,~K\in\mesh, 
\end{array}
\right.
\\[2ex] \label{eq:graddisc}
& \gradi_\edges : 
& \left \lbrace 
\begin{array}{ccll}
\xH_\mesh(\Omega)  & \longrightarrow & \xH_{\edges,0}(\Omega)^d \\
p & \longmapsto  & \gradi_\edges p(\bfx) = (\gradi p)_\edge, & \forall \bfx \in D_\edge, ~ \edge \in\edgesint,
\end{array} 
\right.
\end{eqnarray}
where $(\dive \bfu)_K$ and $(\gradi p)_\edge$ are defined in \eqref{eq:defdive} and \eqref{eq:defgradp} respectively.
The following lemma gives two first important properties of these operators.

\begin{lem} \label{lmm:dual_divgrad} 
Let $\bfv\in\xH_0^1(\Omega)^d$.
For a given discretization $\disc=(\mesh,\edges)$, for $\edge\in\edges$, let 
\[
\bfv_\edge= \frac 1 {|\edge|}\ \int_\edge \bfv(\bfx)\dedge(\bfx),
\]and let $P_{\edges}\bfv$ be the function of $\xH_{\edges,0}(\Omega)^d$ defined by $P_{\edges}\bfv(\bfx)=\bfv_\edge$ for every $\bfx$ in $D_\edge$ and every $\edge \in \edges$.
Then for all $p$ in $\xH_{\mesh}(\Omega)$,
\begin{equation} \label{eq:div_stab}
\int_\Omega p(\bfx)\, \dive_\mesh (P_\edges\bfv)(\bfx)  \dx = \int_\Omega p(\bfx)\, \dive \bfv(\bfx) \dx.
\end{equation}
In addition, the discrete divergence and the discrete gradient are dual in the following sense.
For any $\bfv$ in $\xH_{\edges,0}(\Omega)^d$ and any $p$ in $\xH_\mesh(\Omega)$, one has
\begin{equation} \label{eq:dual_divgrad1} 
\int_\Omega p(\bfx)\, \dive_\mesh\bfv(\bfx)  \dx + \int_\Omega \bfv(\bfx) \cdot \gradi_\edges p(\bfx) \dx =0.
\end{equation}
\end{lem}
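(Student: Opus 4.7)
\medskip
\noindent\emph{Plan of proof.} The two identities are essentially discrete reformulations of the divergence theorem and a summation by parts, respectively, relying only on the definitions \eqref{eq:defdive} and \eqref{eq:defgradp} together with the fact that functions in $\xH_\mesh(\Omega)$ are piecewise constant on $\mesh$.

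\medskip
For the first identity \eqref{eq:div_stab}, the plan is to expand the left-hand side using Definition \ref{def:disc_space} and the formula \eqref{eq:defdive} for the discrete divergence of $P_\edges\bfv$:
\[
\int_\Omega p(\bfx)\, \dive_\mesh (P_\edges\bfv)(\bfx)\dx
= \sum_{K\in\mesh} p_K \sum_{\edge\in\edges(K)} |\edge|\,\bfv_\edge\cdot\bfn_{K,\edge}.
\]
The key observation is that, by the definition of the face average $\bfv_\edge$ given in the statement, $|\edge|\,\bfv_\edge\cdot\bfn_{K,\edge} = \int_\edge \bfv\cdot\bfn_{K,\edge}\dedge$. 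Summing over the faces of $K$ reconstructs $\int_{\dv K} \bfv\cdot\bfn_K \dedge$, and the divergence theorem applied on $K$ (valid since $\bfv\in\xH_0^1(\Omega)^d$) turns this into $\int_K \dive\bfv\dx$. Since $p$ is constant on $K$, summing over $K\in\mesh$ recovers $\int_\Omega p\,\dive\bfv\dx$.

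\medskip
For the duality identity \eqref{eq:dual_divgrad1}, the strategy is to reorganize both integrals as sums over internal faces. Expanding the first term exactly as above (but now with $\bfv\in\xH_{\edges,0}(\Omega)^d$, so $\bfv_\edge$ denotes the constant value of $\bfv$ on $D_\edge$) and regrouping by face, each internal face $\edge=K|L$ contributes $|\edge|\,(p_K-p_L)\,\bfv_\edge\cdot\bfn_{K,\edge}$ (using $\bfn_{L,\edge}=-\bfn_{K,\edge}$), while external faces contribute nothing since $\bfv_\edge=0$ there. For the second term, using \eqref{eq:defgradp},
\[
\int_\Omega \bfv\cdot\gradi_\edges p \dx
= \sum_{\edge\in\edgesint,\,\edge=K|L} |D_\edge|\,\bfv_\edge\cdot (\gradi p)_\edge
= \sum_{\edge\in\edgesint,\,\edge=K|L} |\edge|\,(p_L-p_K)\,\bfv_\edge\cdot\bfn_{K,\edge}.
\]
The two face sums are opposite, and the identity follows.

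\medskip
There is no real obstacle: both statements are direct consequences of the definitions, together with the divergence theorem (for the first one) and a reordering of the sum by faces (for the second one). The only points requiring a little care are (i) fixing once and for all an orientation $\edge=K|L$ to avoid double-counting in the face-wise summation, and (ii) handling the boundary faces, which is done by noting that $\bfv_\edge=0$ for $\edge\in\edgesext$ in both parts of the lemma (in part one, since $\bfv\in\xH_0^1(\Omega)^d$ has vanishing trace; in part two, by the very definition of $\xH_{\edges,0}(\Omega)^d$).
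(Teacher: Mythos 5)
Your proof is correct and follows essentially the same route as the paper, which simply notes that both identities follow from the definitions \eqref{eq:defdive} and \eqref{eq:defgradp}, the piecewise constancy of $p$, and a reordering of the sums (the paper records the intermediate identity \eqref{eq:dual_divgrad2} that your face-by-face regrouping establishes). Your version merely spells out the divergence theorem on each cell and the face-wise cancellation in full detail.
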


\begin{proof}
The first relation is an obvious consequence of relation \eqref{eq:defdive} defining the discrete divergence operator and of the fact that $p$ is piecewise constant on the cells $K \in \mesh$. For the second relation, the same observations yield that for any pair $(\bfv,p)$ in $\xH_{\edges,0}(\Omega)^d\times \xH_\mesh(\Omega)$, one has
\begin{equation} \label{eq:dual_divgrad2}
\sum_{K \in \mesh} |K| \ p_K \ ( \dive \bfv)_K + \sum_{\edge \in \edges} |D_\edge|\ \bfv_\edge \cdot (\gradi p)_\edge=0. 
\end{equation}
Note that, because of the homogeneous Dirichlet boundary condition on $\bfv$, the discrete gradient does not need to be defined at the external faces and the second sum in \eqref{eq:dual_divgrad2} is actually a sum over $\edgesint$.
\end{proof}

We also have the following convergence property for the discrete gradient defined in \eqref{eq:graddisc}.

\begin{lem}[Weak convergence of the discrete gradient] \label{lmm:compact_fv}
Let $(\disc\m)_{m\in\xN}$ be a sequence of regular discretizations of $\Omega$ in the sense of Definition \ref{def:reg_disc}.
For $m\in\xN$, let $p\m \in \xH_{\mesh\m}(\Omega)$ and assume that there exists $C$ in $\xR_+$ such that, for all $m \in \xN$, $\norm{\gradi_{\edges \m}p\m}_{\xL^q(\Omega)^d} \leq C$ for some $q$ in $[1, \infty]$.
Assume also that there exists $\bar p$ in $\xW^{1,q}(\Omega)$ such that $p\m$ converges to $\bar p$ in the distribution sense as $m$ tends to $+\infty$, \ie:
\[
\forall \bfphi\in \xC_c^\infty(\Omega), \quad
\lim_{m \to +\infty} \int_\Omega p\m(\bfx) \cdot \bfphi(\bfx) \dx = \int_\Omega \bar p(\bfx) \cdot \bfphi(\bfx) \dx.
\]
Then $\gradi_{\edges\m}p\m$ converges to $\gradi \bar p$ in the distribution sense:
\[
\forall \bfphi\in \xC_c^\infty(\Omega)^d, \quad
\lim_{m \to +\infty} \int_\Omega \gradi_{\edges\m}p\m(\bfx) \cdot \bfphi(\bfx) \dx = \int_\Omega \gradi \bar p(\bfx) \cdot \bfphi(\bfx) \dx.
\]
In addition, for $q\in (1,\infty)$ (resp. $q=+\infty$), if $p\m$ weakly (resp. weakly-$\star$) converges to $\bar p$ in $\xL^q(\Omega)$ (resp. in $\xL^\infty(\Omega)$), then $\gradi_{\edges\m}p\m$ also converges to $\gradi \bar p$ weakly (resp. weakly-$\star$) in $\xL^q(\Omega)^d$ (resp. in $\xL^\infty(\Omega)^d$).
\end{lem}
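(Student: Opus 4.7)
The plan is to exploit the two properties of the discrete gradient/divergence pair from Lemma \ref{lmm:dual_divgrad}: the exact consistency identity \eqref{eq:div_stab} and the discrete integration-by-parts \eqref{eq:dual_divgrad1}. Combining them, I transfer the discrete gradient onto a smooth test function, reducing everything to the distributional convergence of $p\m$ itself.

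Fix $\bfphi \in \xC_c^\infty(\Omega)^d$. For each $m$, define the face projection $P_{\edges\m}\bfphi \in \xH_{\edges\m,0}(\Omega)^d$ as in Lemma \ref{lmm:dual_divgrad}; the boundary values vanish automatically because $\bfphi$ has compact support, so $\bfphi \equiv 0$ on every external face once $h\m$ is small enough. Write
\[
\int_\Omega \gradi_{\edges\m} p\m \cdot \bfphi \dx
=\int_\Omega \gradi_{\edges\m} p\m \cdot P_{\edges\m}\bfphi \dx
+\int_\Omega \gradi_{\edges\m} p\m \cdot (\bfphi - P_{\edges\m}\bfphi)\dx.
\]
Applying first \eqref{eq:dual_divgrad1} and then \eqref{eq:div_stab}, the first term on the right-hand side equals $-\int_\Omega p\m \,\dive\bfphi \dx$. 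By the assumed distributional convergence $p\m \to \bar p$ and the fact that $\bar p \in \xW^{1,q}(\Omega)$, this tends to $-\int_\Omega \bar p\,\dive\bfphi\dx = \int_\Omega \gradi \bar p \cdot \bfphi\dx$.

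The second term is a consistency remainder that I will control by Hölder's inequality and a standard first-order approximation estimate for the face-average operator. On each dual cell $D_\edge$, one has $(P_{\edges\m}\bfphi)|_{D_\edge} = |\edge|^{-1}\int_\edge \bfphi\dedge$, so $\|\bfphi - P_{\edges\m}\bfphi\|_{\xL^\infty(D_\edge)} \leq h_\Ds \, \|\gradi \bfphi\|_{\xL^\infty}$, which yields $\|\bfphi - P_{\edges\m}\bfphi\|_{\xL^{q'}(\Omega)} \leq C\, h\m\, \|\gradi\bfphi\|_{\xL^\infty}$. Combined with the uniform bound $\|\gradi_{\edges\m}p\m\|_{\xL^q} \leq C$ and Hölder's inequality, the remainder is of order $h\m$ and therefore vanishes as $m\to\infty$. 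This proves the distributional convergence statement.

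For the weak/weak-$\star$ assertion, I argue by a standard subsequence/uniqueness argument. When $q\in(1,\infty)$, the boundedness of $\gradi_{\edges\m}p\m$ in the reflexive space $\xL^q(\Omega)^d$ gives a weakly convergent subsequence; by the distributional convergence just established, any such weak limit must coincide with $\gradi \bar p$, so the whole sequence converges weakly to $\gradi \bar p$. The case $q=\infty$ is identical using weak-$\star$ sequential compactness of the unit ball of $\xL^\infty(\Omega)^d = (\xL^1(\Omega)^d)^\star$. The only point that requires genuine attention is the approximation estimate for $P_{\edges\m}\bfphi$, but this is a routine consequence of the mesh regularity built into Definition \ref{def:reg_disc}; no deeper obstacle is expected.
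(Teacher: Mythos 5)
Your proof is correct and follows essentially the same route as the paper's: the same decomposition via the face-average interpolate $P_{\edges\m}\bfphi$, the same use of the duality identity \eqref{eq:dual_divgrad1} together with \eqref{eq:div_stab} to reduce to the distributional convergence of $p\m$, and the same H\"older-plus-first-order-consistency bound on the remainder. The only (immaterial) difference is in the last step, where you obtain the weak/weak-$\star$ convergence by subsequence extraction and uniqueness of the limit while the paper invokes density of $\xC_c^\infty$ in $\xL^{q'}$; both are standard and equivalent here.
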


\begin{proof}
Let $\bfphi\in \xC_c^\infty(\Omega)^d$.
For a given discretization $\disc=(\mesh,\edges)$, for $\edge\in\edges$, let
\[
\bfphi_\edge=\frac 1 {|\edge|}\ \int_\edge \bfphi(\bfx) \dedge(\bfx),
\]
and let $P_{\edges}\bfphi$ be the function of $\xH_{\edges,0}(\Omega)^d$ defined by $P_{\edges}\bfphi(\bfx)=\bfphi_\edge$ if $\bfx\in D_\edge$, for every $\edge \in \edges$.
Let $q'=+\infty$ if $q=1$, $q'$ be given by $1/q+1/q'=1$ if $q \in (1,\infty)$, and $q'=1$ if $q=+\infty$.
With the assumptions of the lemma, an easy calculation shows that $\norm{P_{\edges\m}\bfphi-\bfphi}_{\xL^{q'}(\Omega)^d}\leq 2\, |\Omega|^{1/q'}\,\norm{\gradi \bfphi}_{\xL^\infty(\Omega)^{d\times d}}\ h\m$ (with $|\Omega|^{1/q'}=1$ for $q'=+\infty$).
We may write
\[
\int_\Omega \gradi_{\edges\m}p\m(\bfx) \cdot \bfphi(\bfx) \dx =
\int_\Omega \gradi_{\edges\m}p\m(\bfx) \cdot (P_{\edges\m}\bfphi)(\bfx) \dx + R,
\]
with $|R|\leq \norm{\gradi_{\edges \m}p\m}_{\xL^q(\Omega)^d} \norm{P_{\edges\m}\bfphi-\bfphi}_{\xL^{q'}(\Omega)^d} \leq 2 C\ |\Omega|^{1/q'}\norm{\gradi \bfphi}_{\xL^\infty(\Omega)^{d\times d}}\ h\m$ which tends to zero as $m\to+\infty$.
Now invoking successively  \eqref{eq:dual_divgrad1}, \eqref{eq:div_stab} and the convergence of $p\m$ to $\bar p$ in the distribution sense, we get
\begin{multline*}
\int_\Omega \gradi_{\edges\m}p\m(\bfx) \cdot (P_{\edges\m}\bfphi)(\bfx) \dx =
-\int_\Omega p\m(\bfx)\, \dive \bfphi(\bfx) \dx
\\ 
\to -\int_\Omega \bar p(\bfx)\, \dive \bfphi(\bfx) \dx
= \int_\Omega \gradi \bar p(\bfx) \cdot \bfphi(\bfx) \dx, \qquad \text{as $m \to +\infty$,}
\end{multline*}
which shows that $\gradi_{\edges\m}p\m$ tends to $\gradi \bar p$ in the distributional sense.
The weak or weak-$\star$ convergence of $\gradi_{\edges\m}p\m$, for $q\in(1,\infty)$ or $q=+\infty$ respectively, follows by density.
\end{proof}
%
%
\subsection{Properties of the Rannacher-Turek element}

We gather in this section consistency and stability results for the Rannacher-Turek element, most of them given in \cite{ran-92-sim}, which are used in the analysis of the scheme.

\medskip
With every function $\bfu$ in $\xH_\edges(\Omega)^d$ (or, equivalently, with every set of degrees of freedom $(\bfu_\edge)_{\edge \in \edges}$), one classically associates, in the finite element context, the function $\tilde \bfu$ from $\Omega$ to $\xR^d$:
\[
\tilde \bfu(\bfx) = \sum_{\edge\in\edges} \bfu_\edge \zeta_\edge (\bfx), \qquad \mbox{for a.e. } \bfx \in \Omega,
\]
with $\zeta_\edge$ the shape function defined by \eqref{eq:def_zeta}.
This identification allows to introduce the broken Sobolev $\xH^1$ semi-norm $\norm{.}_\brok$, given for any $\bfu \in \xH_{\edges}(\Omega)^d$ by:
\[
\norm{\bfu}_\brok^2=\sum_{K\in \mesh} \int_K \gradi \tilde{\bfu}(\bfx):\gradi \tilde{\bfu}(\bfx)\dx.
\]
The semi-norm $\norm{\bfu}_\brok$ is in fact a norm on the space $\xH_{\edges,0}(\Omega)^d$, thanks to the discrete Poincar\'e inequality (see \cite{ran-92-sim}) stated in the following lemma.

\begin{lem}[Discrete Poincar\'e inequality] \label{lmm:poincare_brok}
Let $\disc=(\mesh,\edges)$ be a staggered discretization of $\Omega$ in the sense of Definition \ref{def:disc}, such that $\theta_\disc \leq \theta_0$, with $\theta_\disc$ defined by \eqref{eq:reg}.
Then there exists $C$, only depending on $d$, $\Omega$ and $\theta_0$ such that
\[
\norm{\bfu}_{\xL^2(\Omega)^d} \leq C\, \norm{\bfu}_\brok, \quad \forall \bfu \in \xH_{\edges,0}(\Omega)^d.
\]
\end{lem}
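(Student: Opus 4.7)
The plan is to reduce the claim to the discrete Poincar\'e inequality for the (vector-valued) Rannacher-Turek finite-element space proved in \cite{ran-92-sim}. Given $\bfu\in\xH_{\edges,0}(\Omega)^d$, I would first work with the finite-element representative $\tilde \bfu=\sum_{\edge\in\edges}\bfu_\edge\zeta_\edge$ introduced in Section~\ref{sec:math}. Property \eqref{eq:def_zeta} ensures $\bfu_\edge=|\edge|^{-1}\int_\edge \tilde\bfu\,\dedge(\bfx)$, and the boundary condition $\bfu_\edge=0$ for $\edge\in\edgesext$ becomes the homogeneous non-conforming boundary condition on $\tilde\bfu$. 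By the very definition of the broken seminorm, $\norm{\bfu}_\brok=\norm{\tilde\bfu}_\brok$, so it is enough to control $\norm{\bfu}_{\xL^2(\Omega)^d}$ by $\norm{\tilde\bfu}_\brok$.

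Next, I would establish a cell-wise $\xL^2$ equivalence between $\bfu$ and $\tilde\bfu$, namely
\[
c_0(\theta_0)\,\norm{\bfu}_{\xL^2(K)^d}\leq\norm{\tilde\bfu}_{\xL^2(K)^d}\leq c_1(\theta_0)\,\norm{\bfu}_{\xL^2(K)^d}.
\]
The left-hand side simplifies immediately: since $|D_{K,\edge}|=|K|/(2d)$, one has $\norm{\bfu}_{\xL^2(K)^d}^2=(|K|/(2d))\sum_{\edge\in\edges(K)}|\bfu_\edge|^2$. For the right-hand side, I would pull back to the reference cube $\hat K=(0,1)^d$: all norms on the finite-dimensional space $\tilde Q_1(\hat K)^d$ are equivalent, in particular the $\xL^2(\hat K)^d$-norm and the Euclidean norm of the face-average degrees of freedom. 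The $Q_1$ mapping $\hat K\to K$ has Jacobian bounded above and below by constants depending only on $\theta_\mesh\leq\theta_0$, so the equivalence transfers to $K$. Summing over $K\in\mesh$ produces the global equivalence $\norm{\bfu}_{\xL^2(\Omega)^d}\sim\norm{\tilde\bfu}_{\xL^2(\Omega)^d}$.

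The final step is to invoke the discrete Poincar\'e inequality of \cite{ran-92-sim} applied componentwise to $\tilde\bfu$, which gives $\norm{\tilde\bfu}_{\xL^2(\Omega)^d}\leq C\,\norm{\tilde\bfu}_\brok$ with $C$ depending only on $\Omega$, $d$ and $\theta_0$. The underlying argument relies on the zero-mean jump condition \eqref{eq:def_saut_EF} together with the vanishing boundary averages to compare $\tilde\bfu$ to a conforming regularization, the discrepancy being controlled by the broken seminorm through a trace inequality exploiting the zero-mean property. Chaining the three estimates yields the desired bound.

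The main obstacle is Step~2: one must check that the reference-to-physical scaling constants depend only on $\theta_0$, which is non-trivial because the $Q_1$ mapping from $\hat K$ to a general (non-parallelogram) quadrilateral or hexahedron is not affine and its Jacobian has to be bounded in terms of the uniform-shape regularity $\theta_\mesh$ alone. Once this scaling is secured, Step~1 is pure bookkeeping and Step~3 can be quoted directly from \cite{ran-92-sim}.
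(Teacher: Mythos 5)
The paper offers no proof of this lemma; it simply refers to \cite{ran-92-sim}, and your argument is the natural expansion of that citation: reduce the claim to the finite-element Poincar\'e inequality for $\tilde\bfu$ by establishing the cell-wise $\xL^2$ equivalence between the piecewise-constant representative on the dual cells and the Rannacher--Turek representative, using the equal-volume property $|D_{K,\edge}|=|K|/(2d)$, unisolvence of the face-average degrees of freedom on the reference element, and the $\theta_0$-controlled Jacobian bounds of the $Q_1$ map. This is correct and is essentially the intended route; the only point you rightly flag as delicate (uniform two-sided Jacobian bounds for non-affine $Q_1$ maps on convex shape-regular cells) is standard and poses no obstruction.
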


Let us now denote by $r_\edges$ the following natural interpolation operator from $\xH^1_0(\Omega)^d$ to $\xH_{\edges,0}(\Omega)^d$:
\begin{equation}\label{eq:rh}
\begin{array}{l|lcl}
r_\edges : \qquad
&
\xH^1_0(\Omega)^d  & \longrightarrow & \xH_{\edges,0}(\Omega)^d
\\ &
\bar \bfu & \mapsto & \displaystyle 
r_\edges \bar \bfu(\bfx) = \sum_{\edge \in \edges} |\edge|^{-1} \left(\int_\edge \bar \bfu(\bfx) \dedge(\bfx) \right)
\,\mathcal{X}_{D_\edge}(\bfx),
\end{array}
\end{equation}
where $\mathcal{X}_{D_\edge}(\bfx)$ is the characteristic function of the dual cell $D_\edge$.
We have the following stability and approximation properties of $r_\edges$.

\begin{lem} \label{lmm:RT}
Let $\disc=(\mesh,\edges)$ be a staggered discretization of $\Omega$ in the sense of Definition \ref{def:disc}, such that $\theta_\disc \leq \theta_0$, with $\theta_\disc$ defined by \eqref{eq:reg}.
The interpolation operator $r_\edges$ enjoys the following properties:
\begin{itemize}
\item[--] Stability:
\[
\forall \bar \bfu \in \xH^1_0(\Omega)^d,\quad
\norm{r_\edges \bar \bfu}_\brok \leq C\ \norm{\bar \bfu}_{\xH^1(\Omega)^d},
\]
with $C$ only depending on $\Omega$ and $\theta_0$.
\medskip
\item[--] Approximation properties:
\[
 \begin{aligned} 
&\forall \bar \bfu \in \xH^2(\Omega)^d \cap \xH^1_0(\Omega)^d,\ \forall K \in \mesh, 
\\[1ex]
&\norm{\bar \bfu-\widetilde{r_\edges \bar \bfu}}_{\xL^2(K)^d} + h_K\, \norm{\gradi (\bar \bfu-\widetilde{r_\edges \bar \bfu})}_{\xL^2(K)^{d\times d}} \leq
C\, h_K\,(h_K + \alpha_K)\, |\bar \bfu|_{\xH^2(K)^d},
\end{aligned}
\]
with $C$ only depending on $\Omega$ and $\theta_0$.
\end{itemize}
\end{lem}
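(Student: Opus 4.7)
I would follow the standard reference-element analysis of Rannacher--Turek \cite{ran-92-sim}. Let $\widehat K = (0,1)^d$ and let $T_K : \widehat K \to K$ denote the $Q_1$ map whose pull-back defines $\tilde Q_1(K)$. Consider on $\widehat K$ the reference interpolation
$$\widehat r : \xH^1(\widehat K)^d \longrightarrow \tilde Q_1(\widehat K)^d, \qquad \widehat r \bar v = \sum_{\widehat \edge \in \edges(\widehat K)} |\widehat \edge|^{-1} \Bigl(\int_{\widehat \edge} \bar v \dedge\Bigr) \widehat \zeta_{\widehat \edge}.$$
The face-average functionals are continuous on $\xH^1(\widehat K)$ by the trace theorem, so $\widehat r$ is a bounded map from $\xH^1(\widehat K)^d$ into the finite-dimensional space $\tilde Q_1(\widehat K)^d$, on which all norms are equivalent.

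\textbf{Stability.} I would transport the continuity of $\widehat r$ from $\widehat K$ to $K$ via $T_K$: the chain rule, together with the uniform bounds on the Jacobian of $T_K$ and $T_K^{-1}$ (which follow from $\theta_\mesh \leq \theta_0$), yields a cellwise estimate of the form $\norm{\widetilde{r_\edges \bar\bfu}}_{\xH^1(K)^{d \times d}} \leq C \, \norm{\bar\bfu}_{\xH^1(K)^d}$, with $C$ depending only on $\theta_0$. Squaring and summing over $K \in \mesh$ gives the claimed global bound on $\norm{r_\edges \bar\bfu}_\brok$.

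\textbf{Approximation.} The key observation is that $r_\edges$ reproduces affine functions on parallelotope cells. Indeed, if $K$ is a parallelogram or parallelotope, $T_K$ is affine and every $P_1(K)$ function pulls back into $\tilde Q_1(\widehat K)$; moreover, the face-mean coincides with the face-centroid value for any affine function, so the moment identities \eqref{eq:def_zeta} force $\widetilde{r_\edges \bar v} = \bar v$ for every affine $\bar v$. On $\widehat K$ the Bramble--Hilbert lemma applied to $\xH^2(\widehat K)^d \to \xL^2(\widehat K)^d$ and to $\xH^2(\widehat K)^d \to \xH^1(\widehat K)^{d \times d}$, followed by a standard scaling back to $K$, delivers the estimate with $\alpha_K = 0$. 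For a general quadrangle or hexahedron, one writes $T_K = A_K + B_K$ with $A_K$ affine and the perturbation $B_K$ satisfying $\norm{B_K}_{\xL^\infty} + h_K \norm{\gradi B_K}_{\xL^\infty} \leq C \alpha_K h_K$ (this is the geometric content of the definition of $\alpha_K$). A perturbation argument then shows that $r_\edges$ reproduces affine functions only up to an $\xL^2$-error of order $\alpha_K h_K$ on $K$ and an $\xH^1$-error of order $\alpha_K$, which combined with the parallelotope estimate produces the extra $\alpha_K$ term in the bound.

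\textbf{Main obstacle.} The stability claim is a routine scaling exercise. The real work is quantifying the loss of affine reproduction on non-parallelotope cells: one must carefully expand $\bar v \circ T_K^{-1} - \bar v \circ A_K^{-1}$ for affine $\bar v$, estimate the resulting residual in terms of the second derivatives of $T_K$ (which are controlled by $\alpha_K$), and ensure all scaling factors combine correctly. This delicate but well-understood calculation is exactly the one performed in \cite{ran-92-sim}, to which I would refer for the detailed bookkeeping.
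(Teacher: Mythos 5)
The paper does not actually prove this lemma --- it is quoted from \cite{ran-92-sim} --- and your outline is exactly the reference-element argument carried out there, so there is no divergence of approach to report. Two points in your sketch nevertheless need repair. First, in the stability part: transporting the bound $\norm{\widehat r\,\widehat v}_{\xH^1(\widehat K)}\leq C\norm{\widehat v}_{\xH^1(\widehat K)}$ back to $K$ by the chain rule alone produces $\snorm{\widetilde{r_\edges \bar\bfu}}_{\xH^1(K)}\leq C\bigl(\snorm{\bar\bfu}_{\xH^1(K)}+h_K^{-1}\norm{\bar\bfu}_{\xL^2(K)}\bigr)$, because the $\xL^2$-part and the gradient part scale differently; summing this over $K$ does not give a bound by $\norm{\bar\bfu}_{\xH^1(\Omega)^d}$. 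You must first use that $\widehat r$ reproduces constants and apply Poincar\'e--Wirtinger on $\widehat K$ to replace the full $\xH^1(\widehat K)$-norm by the seminorm; only then does scaling give the cellwise seminorm-to-seminorm estimate whose sum is the $\norm{\cdot}_\brok$ bound.

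Second, and more substantively, your own perturbation analysis is inconsistent with the inequality as printed. For the \emph{parametric} rotated element an affine $p(\bfx)=c+g\cdot\bfx$ pulls back under $T_K$ (in 2D, say) to an affine function plus $(g\cdot b_K)\,\widehat x_1\widehat x_2$, where $b_K$ is the second difference of the vertices and $|b_K|\sim\alpha_K h_K$; since $\widehat x_1\widehat x_2\notin\tilde Q_1(\widehat K)$, affine functions do \emph{not} belong to $\tilde Q_1(K)$ on a non-parallelogram cell, and for suitable $g$ the interpolation error of $p$ is bounded below by $c\,\alpha_K h_K\norm{\gradi p}_{\xL^2(K)^{d\times d}}>0$ while $\snorm{p}_{\xH^2(K)^d}=0$. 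Consequently no argument can deliver a right-hand side of the form $C\,h_K(h_K+\alpha_K)\snorm{\bar\bfu}_{\xH^2(K)^d}$; what your perturbation step (and \cite{ran-92-sim}) actually yields is the estimate with the $\alpha_K$-contribution multiplying $\snorm{\bar\bfu}_{\xH^1(K)^d}$ (equivalently, with the full $\xH^2(K)$-norm on the right). That corrected form is all the paper ever uses, since the lemma is only invoked together with the hypothesis $\alpha\m\to0$ of Definition \ref{def:reg_disc}, but you should state and prove that version rather than reproduce the displayed one.
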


We also have the following classical stability property, which is used when passing to the limit in the momentum equation (see Section \ref{subsec:convergence_u}).

\begin{lem} \label{lmm:controle_sauts_EF}
Let $\disc=(\mesh,\edges)$ be a staggered discretization of $\Omega$ in the sense of Definition \ref{def:disc}, such that $\theta_\disc \leq \theta_0$, with $\theta_\disc$ defined by \eqref{eq:reg}.
For $\edge$ in $\edgesint$, let $[\tilde \bfu]_\edge$ be the jump of $\tilde \bfu$ across $\edge$ as defined in \eqref{eq:def_saut_EF}, and for $\edge$ in $\edgesext\cap\edges(K)$, let $\displaystyle [\tilde \bfu]_\edge(\bfx)=\lim_{\substack{\bfy\to\bfx \\ \bfy \in K}}\tilde \bfu(\bfx)$.\\
Then there exists $C$, only depending on $d$, $\Omega$ and $\theta_0$ such that, for all $\bfu$ in $\xH_{\edges}(\Omega)^d$,
\begin{equation} \label{eq:controle_sauts_EF}
\Big ( \sum_{\edge \in \edges}\frac 1 {h_\edge} \int_\edge [\tilde \bfu]_\edge^2(\bfx) \dedge(\bfx) \Big )^{\frac 12}
\leq C\, \norm{\bfu}_\brok,
\end{equation}
where, for all $\edge$ in $\edges$, $h_\edge={\rm diam}(\edge)$.
\end{lem}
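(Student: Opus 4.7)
The plan is to reduce the face-sum estimate to a local Poincaré–trace inequality on each primal cell, exploiting the defining zero-jump-average property \eqref{eq:def_saut_EF} of the Rannacher–Turek element. I will work face by face: for an internal face $\edge=K|L$, I denote by $P_\edge v = |\edge|^{-1}\int_\edge v\dedge$ the face average. The identity $\int_\edge \zeta_{\edge'}|_K = \delta_\edge^{\edge'}|\edge|$ gives $P_\edge\tilde\bfu|_K = \bfu_\edge$, and \eqref{eq:def_saut_EF} forces $P_\edge\tilde\bfu|_K = P_\edge\tilde\bfu|_L$, so that
\[
[\tilde\bfu]_\edge = \bigl(\tilde\bfu|_L - P_\edge\tilde\bfu|_L\bigr) - \bigl(\tilde\bfu|_K - P_\edge\tilde\bfu|_K\bigr).
\]
For an external face $\edge\in\edgesext\cap\edges(K)$, since the statement is only useful when $\bfu\in\xH_{\edges,0}(\Omega)^d$ (otherwise constants show the bound cannot hold), the same structure $\tilde\bfu|_K - P_\edge\tilde\bfu|_K = \tilde\bfu|_K$ is available because $\bfu_\edge=0$.

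The core lemma I would establish is the local inequality: for every $K\in\mesh$, every $\edge\in\edges(K)$, and every $v\in\tilde Q_1(K)$ with $\int_\edge v\,\dedge = 0$,
\[
\frac{1}{h_\edge}\,\|v\|_{\xL^2(\edge)}^2 \;\le\; C\,\|\gradi v\|_{\xL^2(K)^d}^2,
\]
with $C$ depending only on $d$ and $\theta_0$. I would prove this by pulling back to the reference hypercube $\widehat K=(0,1)^d$ via the $Q_1$ diffeomorphism, where the subspace
$\{\hat v\in\tilde Q_1(\widehat K):\int_{\hat\edge}\hat v=0\}$
is finite-dimensional, and the two norms $\|\cdot\|_{\xL^2(\hat\edge)}$ and $\|\gradi\cdot\|_{\xL^2(\widehat K)}$ are equivalent there (the only element with zero gradient and zero face-average is zero). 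Standard volume/surface scaling yields the factor $h_\edge$, while the regularity parameter $\theta_\disc\le\theta_0$ (in particular $h_K\simeq h_\edge$ and bounded Jacobian of the $Q_1$ map) keeps the equivalence constant uniform.

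With the local inequality at hand, assembly is direct: applying it to $v=\tilde\bfu|_K - P_\edge\tilde\bfu|_K$ on each adjacent cell gives
\[
\frac{1}{h_\edge}\int_\edge [\tilde\bfu]_\edge^2\dedge \;\le\; C\sum_{\substack{M\in\mesh\\ \edge\in\edges(M)}}\|\gradi\tilde\bfu\|_{\xL^2(M)^d}^2,
\]
and summing over $\edge\in\edges$, using that each cell has at most $2d$ faces, recovers $C\,\norm{\bfu}_\brok^2$.

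The main obstacle is the local inequality for genuinely non-affine quadrilaterals and hexahedra: $\tilde Q_1(K)$ is defined by composition with the bilinear $Q_1$ map, which is not affine, so simple homogeneity arguments fail. I would control the distortion by a compactness/continuity argument on the (compact) set of $Q_1$ maps whose shape-regularity parameter is bounded by $\theta_0$, thus obtaining a constant $C(\theta_0)$ that is uniform over the family of admissible cells. This is exactly where the regularity bound $\theta_\disc\le\theta_0$ enters, and it is the only non-routine ingredient; the rest is bookkeeping.
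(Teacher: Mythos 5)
The paper does not actually prove this lemma: it is stated as a ``classical stability property'' of the Rannacher--Turek element (the natural reference being \cite{ran-92-sim}), so there is no in-paper argument to compare against. Your proof is the standard one and its structure is sound: the two traces on an internal face share the same average $\bfu_\edge$ by \eqref{eq:def_zeta} and \eqref{eq:def_saut_EF}, so the jump is the difference of the two zero-face-mean deviations; each deviation is controlled by a local trace--Poincar\'e inequality for $\tilde Q_1(K)$ functions with vanishing face average, proved on the reference cube by finite-dimensional norm equivalence and transported to the physical cell by scaling plus a compactness argument over the shape-regular $Q_1$ maps; summation over faces then gives \eqref{eq:controle_sauts_EF} with a factor ${\rm card}\,\edges(K)=2d$. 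Your side remark about external faces is also correct: as literally stated for all $\bfu\in\xH_\edges(\Omega)^d$ the inequality fails (a nonzero constant has $\norm{\bfu}_\brok=0$ but nonzero boundary traces), and the lemma is indeed only invoked for elements of $\xH_{\edges,0}(\Omega)^d$, for which $\bfu_\edge=0$ makes the external-face terms fit the same pattern.

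Three details should be made explicit to close the argument. First, for a non-affine $Q_1$ map the physical condition $\int_\edge v\dedge=0$ pulls back to a \emph{weighted} zero-mean condition on the reference face (the surface Jacobian is not constant), so the finite-dimensional subspace on $\widehat K$ varies with the cell; your compactness argument covers this, but one must check that the kernel of $\|\gradi\cdot\|_{\xL^2(\widehat K)}$ intersected with the weighted constraint is trivial uniformly over the admissible family, which holds because the weight is bounded above and below by $\theta_0$-dependent constants. Second, the comparability $h_\edge\simeq h_K$, which you need to convert the naturally obtained factor $1/h_K$ into the stronger $1/h_\edge$ of the statement, is \emph{not} a consequence of the bound on $\theta_\mesh$ alone (a shape-regular quadrilateral may have an arbitrarily short edge); it follows from the quasi-uniformity parameter $\theta_{\edges,3}$ combined with $|D_{K,\edge}|=|K|/(2d)$, since $c\,(h_\disc/\theta_0)^d\leq |D_\edge|\leq C\,|\edge|\,h_\disc\leq C\,h_\edge^{d-1}h_\disc$ forces $h_\edge\geq c'\,h_\disc\geq c'\,h_K$. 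Third, ``the two norms are equivalent'' on the constrained subspace is a slight overstatement ($\|\cdot\|_{\xL^2(\hat\edge)}$ is only a seminorm there); what you need and what holds is the one-sided domination of that seminorm by the $\xH^1$ seminorm, whose kernel on the constrained subspace is trivial. None of these points invalidates the approach; they are exactly the places where the hypothesis $\theta_\disc\leq\theta_0$, with all four components of \eqref{eq:reg}, is consumed.
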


Finally, the following lemma states that the pair of approximation spaces $\xH_{\edges,0}(\Omega)^d$ for the velocity (endowed with the finite element broken norm) and $\xH_\mesh(\Omega)$ for the pressure is \textit{inf-sup} stable.

\begin{lem} \label{lmm:inf-sup}
Let $\disc=(\mesh,\edges)$ be a staggered discretization of $\Omega$ in the sense of Definition \ref{def:disc}, such that $\theta_\disc \leq \theta_0$, with $\theta_\disc$ defined by \eqref{eq:reg}. 
Then there exists $\beta>0$, depending only on $\Omega$ and $\theta_0$, such that for all $p$ in $ \xH_{\mesh}(\Omega)$, there exists $\bfu$ in $\xH_{\edges,0}(\Omega)^d$ satisfying
\[
\norm{\bfu}_\brok=1 \text{ and } \int_\Omega p(\bfx)\,\dive_{\mesh} \bfu (\bfx) \dx \geq \beta\, \norm{p-m(p)}_{L^2(\Omega)},
\]
where $m(p)$ stands for the mean value of $p$ over $\Omega$.
\end{lem}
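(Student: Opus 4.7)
The plan is to use the Fortin criterion: it suffices to exhibit, for any pressure $p$, a discrete velocity $\bfv \in \xH_{\edges,0}(\Omega)^d$ whose broken $\xH^1$ seminorm is controlled by $\norm{p-m(p)}_{\xL^2(\Omega)}$ and for which $\int_\Omega p\,\dive_\mesh \bfv\,\dx$ is bounded below by $\norm{p-m(p)}_{\xL^2(\Omega)}^2$. The operator $r_\edges$ of \eqref{eq:rh}, together with the commutation identity \eqref{eq:div_stab} and the stability bound of Lemma \ref{lmm:RT}, supplies exactly the needed ingredients.

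Concretely, I would set $q = p - m(p)$, so that $\int_\Omega q\,\dx = 0$. Since $\Omega$ is a bounded connected Lipschitz domain, the classical surjectivity of the divergence from $\xH^1_0(\Omega)^d$ onto the zero-mean subspace of $\xL^2(\Omega)$ (Ne\v{c}as' theorem) provides $\bar\bfv \in \xH^1_0(\Omega)^d$ with $\dive\bar\bfv = q$ and $\norm{\bar\bfv}_{\xH^1(\Omega)^d} \leq C_\Omega\,\norm{q}_{\xL^2(\Omega)}$, where $C_\Omega$ depends only on $\Omega$. Setting $\bfv = r_\edges\bar\bfv \in \xH_{\edges,0}(\Omega)^d$, which by \eqref{eq:rh} coincides with the function $P_\edges\bar\bfv$ of Lemma \ref{lmm:dual_divgrad}, identity \eqref{eq:div_stab} yields
\[
\int_\Omega p(\bfx)\,\dive_\mesh \bfv(\bfx)\,\dx = \int_\Omega p(\bfx)\,\dive\bar\bfv(\bfx)\,\dx = \int_\Omega p(\bfx)\,q(\bfx)\,\dx = \norm{q}_{\xL^2(\Omega)}^2,
\]
the last equality using $\int_\Omega q\,\dx = 0$. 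Simultaneously, the stability estimate of Lemma \ref{lmm:RT} gives $\norm{\bfv}_\brok \leq C_1\,\norm{\bar\bfv}_{\xH^1(\Omega)^d} \leq C_1 C_\Omega\,\norm{q}_{\xL^2(\Omega)}$, with $C_1$ depending only on $\Omega$ and $\theta_0$.

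All that remains is a rescaling. If $q\not\equiv 0$, the element $\bfu = \bfv/\norm{\bfv}_\brok$ lies in $\xH_{\edges,0}(\Omega)^d$, satisfies $\norm{\bfu}_\brok = 1$, and obeys
\[
\int_\Omega p(\bfx)\,\dive_\mesh\bfu(\bfx)\,\dx = \frac{\norm{q}_{\xL^2(\Omega)}^2}{\norm{\bfv}_\brok} \geq \frac{1}{C_1 C_\Omega}\,\norm{p-m(p)}_{\xL^2(\Omega)},
\]
so one may take $\beta = (C_1 C_\Omega)^{-1}$. When $q\equiv 0$ the inequality is trivial for any unit-norm $\bfu\in\xH_{\edges,0}(\Omega)^d$ (which exists as soon as $\edgesint\neq\emptyset$).

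No serious obstacle is anticipated here: the whole proof hinges on $r_\edges$ being a genuine Fortin operator for the $\tilde Q_1$--$P_0$ pair, in the precise sense that it commutes with the divergence when tested against piecewise constant pressures \emph{and} is $\xH^1$-stable. Both of these facts have just been recorded in the previous two subsections (respectively \eqref{eq:div_stab} and Lemma \ref{lmm:RT}). The only ingredient imported from outside is the continuous Ne\v{c}as lifting, whose constant depends only on $\Omega$; hence $\beta$ depends only on $\Omega$ and $\theta_0$, as claimed.
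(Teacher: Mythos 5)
Your argument is correct. The paper itself does not prove Lemma \ref{lmm:inf-sup}: it is collected among the ``stability results for the Rannacher-Turek element, most of them given in \cite{ran-92-sim}'' and is simply cited, so there is no in-paper proof to compare against. What you supply is the standard Fortin-operator argument, and it is complete with the tools already recorded in the paper: the continuous Ne\v{c}as/Bogovskii lifting gives $\bar\bfv\in\xH^1_0(\Omega)^d$ with $\dive\bar\bfv=p-m(p)$ and an $\xH^1$ bound depending only on $\Omega$; the commutation identity \eqref{eq:div_stab} (valid precisely because the test pressure is piecewise constant, and because $r_\edges\bar\bfv$ is the same object as $P_\edges\bar\bfv$) transfers the divergence identity to the discrete level; and the stability estimate of Lemma \ref{lmm:RT} controls $\norm{r_\edges\bar\bfv}_\brok$ with a constant depending only on $\Omega$ and $\theta_0$. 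Two minor points worth making explicit: the normalization $\bfu=\bfv/\norm{\bfv}_\brok$ is legitimate because the identity $\int_\Omega p\,\dive_\mesh\bfv\,\dx=\norm{p-m(p)}_{\xL^2(\Omega)}^2>0$ already forces $\bfv\neq 0$ whenever $p$ is nonconstant; and in the degenerate case $p\equiv m(p)$ the inequality reads $0\geq 0$ since $\int_\Omega\dive_\mesh\bfu\,\dx=0$ for any $\bfu\in\xH_{\edges,0}(\Omega)^d$ by conservativity of the face fluxes and the vanishing of $\bfu$ on external faces.
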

%
%
\subsection{Discrete functional analysis}

We introduce the following finite volume discrete $\xH^1$-norm on the space $\xH_{\edges,0}(\Omega)^d$:
\[
\norm{\bfu}_\fv^2 = \sum_{K \in \mesh} h_K ^{d-2} \sum_{\edge,\edge' \in \edges(K)}\ |\bfu_\edge-\bfu_{\edge'}|^2,
\]
which, by an easy computation, may be shown to be equivalent, over a regular sequence of discretizations, to the usual finite volume $\xH^1$-norm defined in \cite{eym-00-book}.
The following lemma is obtained by using standard properties of the $Q_1$ mapping, and then invoking a norm equivalence argument for the finite dimensional space of discrete functions on the reference element.
It allows to inherit all the discrete functional analysis results associated with the finite volume $\xH^1$-norm.

\begin{lem} \label{lmm:H1ns}
Let $\disc=(\mesh,\edges)$ be a staggered discretization of $\Omega$ in the sense of Definition \ref{def:disc}, such that $\theta_\disc \leq \theta_0$, with $\theta_\disc$ defined by \eqref{eq:reg}.
Then :
\[
\norm{\bfu}_\fv \leq C\ \norm{\bfu}_\brok, \quad \forall \bfu \in \xH_{\edges,0}(\Omega)^d,
\]
where $C$ only depends on $\Omega$, $d$ and $\theta_0$.
\end{lem}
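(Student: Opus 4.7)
The plan is to localize to each cell, pull back to the reference cube via the $Q_1$ mapping, and then invoke the equivalence of two seminorms on the finite-dimensional space of degrees of freedom attached to the reference element.

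\textbf{Step 1: Reduction to the reference element.} Let $\hat K=(0,1)^d$, and for each $K\in\mesh$, let $F_K:\hat K\to K$ be the $Q_1$ mapping, so that every $\tilde\bfu \in \tilde Q_1(K)$ pulls back to $\hat\bfu = \tilde\bfu\circ F_K \in \tilde Q_1(\hat K)$. The face degrees of freedom of $\bfu$ on $\edges(K)$ are in one-to-one correspondence with those on $\edges(\hat K)$, and they are preserved under this pullback (by the face-integral definition \eqref{eq:def_zeta}). Therefore, componentwise, the quantity $\sum_{\edge,\edge'\in\edges(K)} |\bfu_\edge-\bfu_{\edge'}|^2$ depends only on the vector of reference dof.

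\textbf{Step 2: Equivalence on the reference element.} On the finite-dimensional space of dof vectors $\hat\bfu = (\hat\bfu_{\hat\edge})_{\hat\edge\in\edges(\hat K)} \in \xR^{2d}$, consider the two seminorms
\[
N_1(\hat\bfu)^2 = \sum_{\hat\edge,\hat\edge'\in\edges(\hat K)} |\hat\bfu_{\hat\edge}-\hat\bfu_{\hat\edge'}|^2,
\qquad
N_2(\hat\bfu)^2 = \int_{\hat K} |\gradi \hat\bfu|^2,
\]
where $\hat\bfu = \sum_{\hat\edge} \hat\bfu_{\hat\edge}\,\hat\zeta_{\hat\edge}$. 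Both vanish precisely when all dof coincide (for $N_2$ this uses that $\tilde Q_1(\hat K)\supset\xR$ and that the face averages determine $\hat\bfu$), so they are equivalent on the quotient by constants; in particular $N_1(\hat\bfu) \leq C_0\, N_2(\hat\bfu)$ for a universal constant $C_0$.

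\textbf{Step 3: Scaling back to $K$.} Under the regularity assumption $\theta_\disc \leq \theta_0$, standard estimates for the $Q_1$ mapping give $\|DF_K\|_\infty \leq C h_K$ and $|\det DF_K|\geq c h_K^d$, uniformly in $K$. From $\gradi \hat\bfu = (DF_K)^{\top}\,(\gradi\tilde\bfu)\circ F_K$ and the change-of-variable formula,
\[
\int_{\hat K}|\gradi \hat\bfu|^2 \,\mathrm d\hat\bfx
\leq C\, h_K^2 \int_{\hat K} |\gradi\tilde\bfu\circ F_K|^2\,\mathrm d\hat\bfx
\leq C\, h_K^{2-d}\int_K |\gradi\tilde\bfu|^2\dx.
\]
Combining this with Step 2 and the dof identification of Step 1 yields
\[
h_K^{d-2}\sum_{\edge,\edge'\in\edges(K)} |\bfu_\edge-\bfu_{\edge'}|^2
= h_K^{d-2}\,N_1(\hat\bfu)^2
\leq C\, h_K^{d-2}\,N_2(\hat\bfu)^2
\leq C \int_K |\gradi\tilde\bfu|^2\dx,
\]
with $C$ depending only on $\Omega$, $d$ and $\theta_0$. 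Summing over $K\in\mesh$ gives the desired inequality.

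\textbf{Main obstacle.} The only non-routine point is controlling the $Q_1$ mapping for general (possibly non-parallelogram) quadrilaterals or hexahedra: one must verify that $\theta_\disc\leq\theta_0$ (in particular the bounds on $\theta_\mesh$ and $\theta_{\edges,1}$) implies the uniform bilipschitz bounds on $F_K$ used in the scaling argument. Once this is in hand, the rest is a standard finite-element norm-equivalence argument, and vector-valued functions are handled component by component.
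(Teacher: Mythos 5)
Your proof is correct and follows exactly the route the paper indicates for this lemma (the paper only sketches it in one sentence: pull back via the $Q_1$ mapping and invoke a norm-equivalence argument on the finite-dimensional reference space), with the seminorm kernel identification and the scaling of $DF_K$ carried out properly. The caveat you flag about uniform bilipschitz bounds for non-affine $Q_1$ maps under $\theta_\disc\leq\theta_0$ is indeed the only technical point, and it is the standard shape-regularity estimate the paper implicitly relies on.
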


We begin by giving a crucial discrete Sobolev embedding property, which is a consequence of Lemma \ref{lmm:H1ns} and of the results stated in \cite["{\em Discrete functional analysis}" appendix]{eym-10-sushi}.

\begin{lem}[Discrete Sobolev embedding] \label{lmm:injsobolev_br}
Let $\disc=(\mesh,\edges)$ be a staggered discretization of $\Omega$ in the sense of Definition \ref{def:disc}, such that $\theta_\disc \leq \theta_0$, with $\theta_\disc$ defined by \eqref{eq:reg}.
Then there exists $C(q,d,\theta_0)>0$ such that
\[
\norm{\bfu}_{\xL^q(\Omega)^d} \leq C(q,d,\theta_0)\, \norm{\bfu}_\fv, \quad \forall \bfu \in \xH_{\edges,0}(\Omega)^d,
\]
for all $q\in [1,2^*]$ (with $2^*=6$) if $d=3$ and for all $q\in[1,\infty)$ if $d=2$.
\end{lem}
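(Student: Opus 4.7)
The plan is to identify any element $\bfu \in \xH_{\edges,0}(\Omega)^d$ with a piecewise constant function on the dual mesh $\lbrace D_\edge \rbrace_{\edge \in \edges}$ (which vanishes on the dual half-cells $D_{K,\edge}$ with $\edge \in \edgesext$), and then to invoke the classical discrete Sobolev inequality for such piecewise constant functions, as proved in the "\emph{Discrete functional analysis}" appendix of \cite{eym-10-sushi}. Since Lemma \ref{lmm:H1ns} has already reduced finite-element type norms to the finite-volume norm $\norm{\cdot}_\fv$, it only remains to compare $\norm{\bfu}_\fv$ with a standard finite-volume $\xH^1$-seminorm on the dual mesh.

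More precisely, consider the seminorm naturally attached to the dual mesh,
\[
|\bfu|_{1,\edgesd}^2 = \sum_{\edged \in \edgesdint,\ \edged=D_\edge|D_{\edge'}} \frac{|\edged|}{d_\edged}\, |\bfu_\edge-\bfu_{\edge'}|^2,
\]
where $d_\edged$ denotes the distance between the centers of the dual cells $D_\edge$ and $D_{\edge'}$, plus the analogous boundary contributions generated by the homogeneous Dirichlet condition $\bfu_\edge=0$ on $\edge \in \edgesext$. Under the assumption $\theta_\disc \leq \theta_0$, the boundedness of $\theta_{\edges,1}$, $\theta_{\edges,2}$ and $\theta_{\edges,3}$ gives the uniform two-sided comparisons $|\edged| \leq C\, h_K^{d-1}$, $d_\edged \geq C^{-1}\, h_K$ and $h_K \leq C\, h_{K'}$ for any primal cells $K,K'$ containing or adjacent to $\edged$, so that $|\edged|/d_\edged \leq C\, h_K^{d-2}$ with $C=C(d,\theta_0)$. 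Grouping the dual faces by the primal cells in which they lie, and observing that for every primal cell $K$ each pair of half-diamond cells $D_{K,\edge}, D_{K,\edge'}$ contributes a jump $|\bfu_\edge-\bfu_{\edge'}|^2$ that appears in $\norm{\bfu}_\fv^2$ with weight $h_K^{d-2}$, I would deduce $|\bfu|_{1,\edgesd} \leq C(d,\theta_0)\, \norm{\bfu}_\fv$.

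The final step is to apply to $\bfu$ the embedding $\norm{\bfu}_{\xL^q(\Omega)^d} \leq C(q,d,\theta_0)\, |\bfu|_{1,\edgesd}$, valid for $q \in [1,2^*]$ with $2^*=6$ if $d=3$ and $q \in [1,\infty)$ if $d=2$, which is the content of the discrete Sobolev inequality for piecewise constant functions on an admissible mesh in the "\emph{Discrete functional analysis}" appendix of \cite{eym-10-sushi}; composition with the previous comparison gives the announced inequality. I expect the only real obstacle to be a technical one, namely verifying that the dual mesh $\lbrace D_\edge \rbrace$, whose individual cells are not fully specified geometrically, still satisfies the shape-regularity hypotheses required by the cited embedding uniformly in the discretization. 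This is precisely the reason for introducing the three parameters $\theta_{\edges,1}$, $\theta_{\edges,2}$ and $\theta_{\edges,3}$ in Section \ref{sec:meshes}, so the argument reduces to a careful bookkeeping of the constants.
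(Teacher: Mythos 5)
Your proposal is correct and follows essentially the same route as the paper, which gives no detailed argument for this lemma beyond observing that $\norm{\cdot}_\fv$ dominates (up to mesh regularity) the usual finite volume $\xH^1$-norm on the dual mesh and then citing the discrete Sobolev embeddings of the ``\emph{Discrete functional analysis}'' appendix of \cite{eym-10-sushi}. Your bookkeeping of the comparison $|\bfu|_{1,\edgesd}\leq C(d,\theta_0)\norm{\bfu}_\fv$ (grouping dual faces by the primal cell containing them and using the boundedness of $\theta_{\edges,1}$, $\theta_{\edges,2}$, $\theta_{\edges,3}$) supplies exactly the details the paper leaves implicit, including the caveat about the shape regularity of the only partially specified dual cells.
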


\begin{rmrk}
Actually, the important regularity assumption for proving Lemma \ref{lmm:injsobolev_br} is the boundedness of the parameters $\theta_{\edges,1}$ and $\theta_{\edges,2}$ defined in \eqref{eq:theta_edges1}-\eqref{eq:theta_edges2}. Note that for d=2, one has $C(q,d,\theta_0)\to\infty$ as $q\to \infty$.
\end{rmrk}

The following Lemma is a consequence of Lemma \ref{lmm:H1ns} and an adaptaion of the proof of \cite[Lemma 3.3]{eym-00-book}.
\begin{lem} \label{lmm:controle_sauts_br}
Let $\disc=(\mesh,\edges)$ be a staggered discretization of $\Omega$ such that $\theta_\disc \leq \theta_0$, with $\theta_\disc$ defined by \eqref{eq:reg}.
For any $\bfu\in \xH_{\edges,0}(\Omega)^d$, we define its extension $\bfu^\sharp$ to the whole space $\xR^d$ by setting $\bfu^\sharp=0$ on $\xR^d \setminus \Omega$. 
Then there exists $C$, only depending on $\Omega$, $d$ and $\theta_0$ such that
\begin{equation} \label{eq:controle_sauts_br}
\norm{\bfu^\sharp(.+\eta)-\bfu^\sharp}_{\xL^2(\xR^d)^d}^2 
\leq C \norm{\bfu}_\brok^2\, |\eta|\, (|\eta|+h_\disc), \quad \forall \eta \in \xR^d,~ \forall \bfu\in \xH_{\edges,0}(\Omega)^d.
\end{equation}
\end{lem}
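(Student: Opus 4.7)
The plan is to follow the classical finite volume translate estimate strategy, after first reducing the broken $\xH^1$-norm to the finite volume $\xH^1$-norm via Lemma \ref{lmm:H1ns}. It suffices to prove
\[
\norm{\bfu^\sharp(\cdot+\eta)-\bfu^\sharp}_{\xL^2(\xR^d)^d}^2 \leq C\,\norm{\bfu}_\fv^2\,|\eta|\,(|\eta|+h_\disc),
\]
since combining with Lemma \ref{lmm:H1ns} yields \eqref{eq:controle_sauts_br}.

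The function $\bfu^\sharp$ is piecewise constant on the dual cells $D_\edge$, with value zero on the ``ghost'' dual cells attached to external faces (so that jumps across boundary dual faces correctly register boundary values). For a pair $(\bfx,\eta)\in \xR^d\times\xR^d$ and each dual face $\edged$, introduce the indicator $c_\edged(\bfx,\eta)\in\{0,1\}$ which is $1$ iff the open segment $(\bfx,\bfx+\eta)$ crosses $\edged$, and denote by $D_\edged\bfu^\sharp$ the jump of $\bfu^\sharp$ across $\edged$ and by $d_\edged$ the distance between the centers of the two (possibly ghost) dual cells on either side of $\edged$. First I would write the telescoping bound
\[
|\bfu^\sharp(\bfx+\eta)-\bfu^\sharp(\bfx)| \leq \sum_\edged c_\edged(\bfx,\eta)\,|D_\edged\bfu^\sharp|,
\]
and then apply the Cauchy--Schwarz inequality with weight $d_\edged$:
\[
|\bfu^\sharp(\bfx+\eta)-\bfu^\sharp(\bfx)|^2 \leq \Bigl(\sum_\edged c_\edged(\bfx,\eta)\,d_\edged\Bigr)\Bigl(\sum_\edged c_\edged(\bfx,\eta)\,\tfrac{|D_\edged\bfu^\sharp|^2}{d_\edged}\Bigr).
\]

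The first factor can be controlled pointwise: the crossed faces are traversed in order along the segment, and each contribution $d_\edged$ corresponds essentially to the chord length of the segment inside the successive dual cell, so telescoping gives $\sum_\edged c_\edged(\bfx,\eta)\,d_\edged \leq |\eta| + C\,h_\disc$, the extra $h_\disc$ absorbing the incomplete first and last cells. This step crucially uses the regularity bound $\theta_\disc\leq \theta_0$, in particular the uniformity parameters $\theta_{\edges,1}$ and $\theta_{\edges,3}$, which prevent degenerate slivers of dual cells from breaking the geometric estimate. Next I would integrate the second factor over $\bfx\in\xR^d$ using the Fubini-type identity
\[
\int_{\xR^d} c_\edged(\bfx,\eta)\dx = |\edged|\,|\bfn_\edged\cdot\eta| \leq |\edged|\,|\eta|,
\]
which gives
\[
\norm{\bfu^\sharp(\cdot+\eta)-\bfu^\sharp}_{\xL^2}^2 \leq C\,(|\eta|+h_\disc)\,|\eta| \sum_\edged \tfrac{|\edged|}{d_\edged}\,|D_\edged\bfu^\sharp|^2.
\]
It remains to identify the sum on the right with (a constant multiple of) $\norm{\bfu}_\fv^2$, which follows from the definition of $\norm{\cdot}_\fv$ rewritten on the dual mesh up to the equivalence that the regularity of $\disc$ guarantees, together with the fact that boundary dual faces (where $\bfu^\sharp$ jumps from a boundary value to $0$) are naturally included once one recalls that $\bfu_\edge=0$ for $\edge\in\edgesext$.

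The main obstacle I anticipate is the geometric estimate $\sum_\edged c_\edged(\bfx,\eta)\,d_\edged\leq |\eta|+Ch_\disc$: making it rigorous requires a careful ordering of the crossed faces along the segment and a uniform control of the local geometry of the dual mesh (needed because the segment may at times graze corners of dual cells and because $\Omega$ need not be convex, so the segment may traverse the complement of $\Omega$, where the convention $\bfu^\sharp=0$ must be handled). The second point deserving attention is the passage between the dual-face gradient quantity $\sum_\edged (|\edged|/d_\edged)|D_\edged\bfu^\sharp|^2$ and $\norm{\bfu}_\fv^2$ as defined in the excerpt (a sum over primal cells of squared differences of face values with weight $h_K^{d-2}$): the equivalence is routine under $\theta_\disc\leq\theta_0$, but has to be invoked cleanly.
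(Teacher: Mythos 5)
Your proposal is correct and is essentially the paper's own argument: the paper establishes this lemma precisely by combining Lemma \ref{lmm:H1ns} with an adaptation of the classical finite volume translate estimate of \cite[Lemma 3.3]{eym-00-book}, which is exactly the chord-counting/Fubini argument you describe, carried out on the dual mesh with the zero extension handling the boundary jumps. The one refinement worth importing from that classical proof is to weight the Cauchy--Schwarz step by $d_\edged\,|\bfn_\edged\cdot\eta|/|\eta|$ rather than by $d_\edged$ alone: the directional factor makes your ``grazing segment'' estimate a clean telescoping bound and cancels exactly against the Fubini factor $|\edged|\,|\bfn_\edged\cdot\eta|$ (note that your unweighted first factor is only bounded by $C(\theta_0)\,(|\eta|+h_\disc)$, not by $|\eta|+C h_\disc$, though this does not affect the conclusion).
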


Finally, an important consequence of Lemma \ref{lmm:controle_sauts_br} is the following compactness result, whose proof is similar to that of \cite[Theorem 3.10]{eym-00-book}.

\begin{lem}[Discrete Rellich theorem] \label{lmm:compact_br}
Let $(\disc\m)_{m\in\xN}$ be a regular sequence of discretizations in the sense of Definition \ref{def:reg_disc}.
For $m\in\xN$, let $\bfu\m \in \xH_{\edges\m,0}(\Omega)^d$ and assume that there exists $C \in \xR$ such that, for all $m\in\xN$, $\norm{\bfu\m}_{\edges\m,\rm b} \leq C$.
Then, there exists $\bar \bfu$ in $\xH_0^1(\Omega)^d$ and a subsequence of $(\bfu\m)_{m\in\xN}$ (not relabeled) such that $\bfu\m$ converges strongly in $\xL^2(\Omega)^d$ towards $\bar \bfu$ as $m\to\infty$.
\end{lem}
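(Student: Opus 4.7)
The plan is to apply the Kolmogorov--Riesz--Fréchet compactness theorem in $\xL^2(\xR^d)^d$ to the sequence of zero-extensions $(\bfu^{\sharp,(m)})_{m\in\xN}$, and then identify the limit as an element of $\xH^1_0(\Omega)^d$. This is exactly the strategy of \cite[Theorem 3.10]{eym-00-book}, adapted to the staggered broken norm via the translate estimate already available as Lemma~\ref{lmm:controle_sauts_br}.

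First I would verify the three hypotheses of Kolmogorov--Riesz--Fréchet. Uniform $\xL^2$ bound: by the discrete Poincaré inequality (Lemma~\ref{lmm:poincare_brok}), $\norm{\bfu^{\sharp,(m)}}_{\xL^2(\xR^d)^d} = \norm{\bfu\m}_{\xL^2(\Omega)^d} \leq C\norm{\bfu\m}_\brok$ is bounded uniformly in $m$. Uniform compact support: by construction each $\bfu^{\sharp,(m)}$ vanishes on $\xR^d\setminus\overline\Omega$, and $\overline\Omega$ is a fixed bounded set. Uniform equicontinuity of translations: Lemma~\ref{lmm:controle_sauts_br} gives
\[
\norm{\bfu^{\sharp,(m)}(\cdot+\eta)-\bfu^{\sharp,(m)}}_{\xL^2(\xR^d)^d}^2 \leq C\,\norm{\bfu\m}_\brok^2\, |\eta|\,(|\eta|+h_{\disc\m}).
\]
Since $\norm{\bfu\m}_\brok$ is bounded and $h_{\disc\m}\to 0$ by the regularity of the sequence of discretizations (Definition~\ref{def:reg_disc}), this bound tends to zero as $|\eta|\to 0$, uniformly in $m$.

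Applying Kolmogorov--Riesz--Fréchet extracts a subsequence (not relabeled) and a limit $\bar\bfu^\sharp \in \xL^2(\xR^d)^d$ such that $\bfu^{\sharp,(m)} \to \bar\bfu^\sharp$ strongly in $\xL^2(\xR^d)^d$. Since each $\bfu^{\sharp,(m)}$ vanishes outside $\Omega$, so does $\bar\bfu^\sharp$, hence $\bar\bfu := \bar\bfu^\sharp|_\Omega$ satisfies $\bfu\m\to\bar\bfu$ in $\xL^2(\Omega)^d$.

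It remains to establish that $\bar\bfu\in \xH^1_0(\Omega)^d$. Passing to the limit $m\to\infty$ in the translate estimate (using again that $h_{\disc\m}\to 0$ and that $\norm{\bfu\m}_\brok$ is bounded) yields
\[
\norm{\bar\bfu^\sharp(\cdot+\eta)-\bar\bfu^\sharp}_{\xL^2(\xR^d)^d}^2 \leq C'\, |\eta|^2, \qquad \forall\eta\in\xR^d.
\]
This uniform bound on the $\xL^2$-modulus of continuity characterizes $\xH^1(\xR^d)^d$, so $\bar\bfu^\sharp \in \xH^1(\xR^d)^d$; combined with $\bar\bfu^\sharp=0$ a.e. outside $\Omega$, this gives $\bar\bfu \in \xH^1_0(\Omega)^d$. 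The main delicate point is the uniformity in $m$ of the translate equicontinuity: the right-hand side of Lemma~\ref{lmm:controle_sauts_br} contains a term linear in $h_{\disc\m}$ that does not vanish when $|\eta|$ alone is small, and one must exploit the assumption $h_{\disc\m}\to 0$ (together with the bound $|\eta|(|\eta|+h_{\disc\m}) \leq (|\eta|+h_{\disc\m})^2$) to conclude the uniform small-translate estimate that Kolmogorov--Riesz--Fréchet requires.
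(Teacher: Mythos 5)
Your proposal is correct and follows essentially the same route as the paper, which proves this lemma by invoking the argument of \cite[Theorem 3.10]{eym-00-book}: Kolmogorov--Riesz--Fr\'echet compactness applied to the zero extensions, with the space-translate estimate of Lemma \ref{lmm:controle_sauts_br} supplying both the equicontinuity of translations (uniform in $m$ since $h_{\disc\m}$ is bounded) and, after passing to the limit using $h_{\disc\m}\to 0$, the $\xH^1_0$ regularity of the limit.
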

%
%
\subsection{Estimates on the dual mass convection term}
In the following, when confusion is possible, if the dual fluxes $\fluxd$ are computed from the fields $\rho\in \xH_\mesh(\Omega)$ and $\bfu\in \xH_{\edges,0}(\Omega)^d$ through $\fluxK$, we denote $\fluxd(\rho,\bfu)$ and $\fluxK(\rho,\bfu)$ for the sake of clarity. 
Let us define the mapping:
\begin{equation} \label{eq:mass_form_edges}
\mcal{Q}^{\rm mass}_\edges : \left \lbrace \begin{array}{cccl}
		  &  \hspace{-0.5cm} \xH_\mesh(\Omega) \times (\xH_{\edges,0}(\Omega)^d)^3 &\hspace{-0.3cm} \longrightarrow \hspace{-0.3cm} & \xR \\
		  &  (\rho,\bfu,\bfv,\bfw) & \hspace{-0.3cm} \longmapsto \hspace{-0.3cm} & \ds \mcal{Q}^{\rm mass}_\edges(\rho,\bfu,\bfv,\bfw) =
		     \sum_{\edge\in\edgesint}  (\bfv_\edge\cdot\bfw_\edge) \sum_{\edged \in\edgesd(D_\edge)} \fluxd(\rho,\bfu).
\end{array} \right.
\end{equation}
The mapping $\mcal{Q}^{\rm mass}_\edges$ is a discrete counterpart of the variational formulation of the convection term in the mass conservation equation  $\int_\Omega \dive(\rho \bfu) \phi$, for test functions $\phi$ of the form $\phi=\bfv\cdot\bfw$ where $\bfv,\, \bfw\in \xH^1_0(\Omega)^d$. This discrete variational formulation is obtained from the dual mass conservation equation \eqref{eq:mass_D} by multiplying  the local mass flux by the test function $\phi_\edge=(\bfv_\edge\cdot\bfw_\edge)$ and summing over $\edge\in\edges$ (we recall that $\fluxd(\rho,\bfu)$ vanishes at external faces).

The next lemma provides an estimate on the function $\mcal{Q}^{\rm mass}_\edges$, which is a discrete counterpart of a similar estimate satisfied in the continuous setting.
\begin{lem}[Estimate on $\mcal{Q}^{\rm mass}_\edges$] \label{lmm:estimates_Qmass}
Let $\disc=(\mesh,\edges)$ be a staggered discretization of $\Omega$ in the sense of Definition \ref{def:disc}, such that $\theta_\disc \leq \theta_0$.
Then, there exists a constant $C$, only depending on $\Omega$, $d$ and $\theta_0$, such that
\begin{equation} \label{eq:estimates_Qmass1}
|\mcal{Q}^{\rm mass}_\edges(\rho,\bfu,\bfv,\bfw)| \leq C \norm{\rho}_{\xL^{\infty}(\Omega)} \norm{\bfu}_{\xL^6(\Omega)^d} \left ( \norm{\bfv}_{\xL^3(\Omega)^d} \norm{\bfw}_{\brok} + \norm{\bfw}_{\xL^3(\Omega)^d} \norm{\bfv}_{\brok}\right ).
\end{equation}
for all $\rho$ in $\xH_\mesh(\Omega)$ and $\bfu$, $\bfv$, $\bfw$ in $\xH_{\edges,0}(\Omega)^d$.
\end{lem}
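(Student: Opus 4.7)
The plan is to mimic the continuous estimate $|\int_\Omega \mathrm{div}(\rho\bfu)\,(\bfv\cdot\bfw) \dx| \lesssim \|\rho\|_\infty \|\bfu\|_{\xL^6}(\|\bfv\|_{\xL^3}\|\nabla\bfw\|_{\xL^2}+\|\bfw\|_{\xL^3}\|\nabla\bfv\|_{\xL^2})$, and to do so at the discrete level by first integrating by parts (using the conservativity of dual fluxes), then applying Hölder inequality with the triplet of exponents $(6,2,3)$.

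First I would use assumption (H2) in Definition \ref{def:dualfluxes} to reorganize the double sum defining $\mcal{Q}^{\rm mass}_\edges$ as a sum over dual faces: for any $\bfphi\in\xH_{\edges,0}(\Omega)$,
\[
\sum_{\edge\in\edgesint}\phi_\edge\sum_{\edged \in \edgesd(D_\edge)}\fluxd(\rho,\bfu) = \sum_{\edged\in\edgesdint} \fluxd(\rho,\bfu)\,(\phi_\edge-\phi_{\edge'}),\qquad \edged=D_\edge|D_{\edge'}.
\]
Applied with $\phi_\edge=\bfv_\edge\cdot\bfw_\edge$ together with the algebraic identity $\bfv_\edge\cdot\bfw_\edge-\bfv_{\edge'}\cdot\bfw_{\edge'}=(\bfv_\edge-\bfv_{\edge'})\cdot\bfw_\edge+\bfv_{\edge'}\cdot(\bfw_\edge-\bfw_{\edge'})$, this splits $\mcal{Q}^{\rm mass}_\edges$ into two structurally identical sums. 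It then suffices to treat one of them, say $T:=\sum_{\edged}|\fluxd|\,|\bfv_\edge-\bfv_{\edge'}|\,|\bfw_\edge|$, the other being obtained by exchanging $(\bfv,\bfw)$.

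Next, using (H3) together with the bound $|\fluxK|\le |\edge|\|\rho\|_\infty |\bfu_\edge|$ and mesh regularity (so that $|\edge|\le C h_K^{d-1}$), I would write $|\fluxd|\le C\|\rho\|_\infty h_K^{d-1}\max_{\edge''\in\edges(K)}|\bfu_{\edge''}|$ whenever $\edged\subset K$. A discrete Cauchy--Schwarz then gives
\[
T\le \Bigl(\sum_\edged h_K^{d-2}|\bfv_\edge-\bfv_{\edge'}|^2\Bigr)^{1/2} \Bigl(\sum_\edged h_K^{-(d-2)}|\fluxd|^2|\bfw_\edge|^2\Bigr)^{1/2}.
\]
The first factor is bounded by $\|\bfv\|_\fv\le C\|\bfv\|_\brok$ by Lemma \ref{lmm:H1ns}, after noting that each inner-cell squared difference $|\bfv_\edge-\bfv_{\edge'}|^2$ appears once in the $\fv$-norm. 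For the second factor, substituting the bound on $|\fluxd|$ gives a sum of the form $\|\rho\|_\infty^2\sum_\edged h_K^d |\bfu|_K^2|\bfw_\edge|^2$, to which I apply Hölder with exponents $(3,3/2)$ to get
\[
\sum_\edged h_K^d|\bfu|_K^2|\bfw_\edge|^2\le \Bigl(\sum_\edged h_K^d|\bfu|_K^6\Bigr)^{1/3}\Bigl(\sum_\edged h_K^d|\bfw_\edge|^3\Bigr)^{2/3}.
\]

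Finally, invoking the regularity parameter $\theta_0$ (which controls $h_K^d/|D_\edge|$ and the number of dual faces per primal cell), I would identify the last two discrete sums with $\xL^6$ and $\xL^3$ norms up to a constant depending only on $\Omega,d,\theta_0$, e.g. $\sum_\edged h_K^d|\bfu|_K^6\le C\sum_\edge |D_\edge| |\bfu_\edge|^6=C\|\bfu\|_{\xL^6(\Omega)^d}^6$. Collecting, $T\le C\|\rho\|_\infty\|\bfu\|_{\xL^6}\|\bfw\|_{\xL^3}\|\bfv\|_\brok$, and symmetrically for the second piece, which yields \eqref{eq:estimates_Qmass1}. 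The only delicate point is the careful shepherding of mesh-scale factors and the conversion between primal/dual enumerations; everything else is linear algebra plus discrete Hölder.
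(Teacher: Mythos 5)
Your proof is correct, and it reaches the same Cauchy--Schwarz/H\"older endgame as the paper, but the initial reduction is genuinely different. The paper never performs a summation by parts on the dual mesh: it invokes (H1) with the explicit weight $\xi_K^\edge=1/(2d)$, together with the conservativity of the \emph{primal} fluxes, to rewrite $\sum_{\edged\in\edgesd(D_\edge)}\fluxd$ exactly as $\frac{1}{2d}\bigl(\sum_{\edge'\in\edges(K)}F_{K,\edge'}+\sum_{\edge'\in\edges(L)}F_{L,\edge'}\bigr)$, then reorders over primal faces and inserts $\bfv_\edge\cdot\bfw_\edge$ (using that both inner sums have $2d$ terms) to produce differences of $\bfv\cdot\bfw$ within $\edges(K)\cup\edges(L)$; assumption (H3) is never used. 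You instead keep the dual fluxes, sum by parts across dual faces via (H2), and then control $|\fluxd|$ through (H3) combined with the explicit form of the primal fluxes. Your route buys robustness: it only needs the boundedness (H3) and conservativity (H2), not the precise value of $\xi_K^\edge$ nor the exact structure of system \eqref{eq:F_syst}; the paper's route buys an exact identity (no inequality is taken until the very end) at the cost of using the specific weights. After that, both proofs split $\bfv_\edge\cdot\bfw_\edge-\bfv_{\edge'}\cdot\bfw_{\edge'}$ the same way, apply Cauchy--Schwarz with the $h_K^{d-2}$ weighting to recover $\norm{\cdot}_\fv\leq C\norm{\cdot}_\brok$ via Lemma \ref{lmm:H1ns}, and use H\"older with exponents $(3,3/2)$ to produce the $\xL^6$--$\xL^3$ pairing. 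One bookkeeping point you should make explicit: your summation-by-parts identity must include the dual faces $\edged=D_\edge|D_{\edge'}$ with $\edge'\in\edgesext$ (these are interior to $\Omega$ and carry nonzero fluxes), which is harmless only because $\bfv_{\edge'}=\bfw_{\edge'}=0$ there for functions of $\xH_{\edges,0}(\Omega)^d$, so the convention $\phi_{\edge'}=0$ is consistent and those differences are still absorbed by the $\fv$-norm.
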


\begin{proof}
By definition of $\mcal{Q}^{\rm mass}_\edges(\rho,\bfu,\bfv,\bfw)$, we have, recalling that $ (\fluxd)_{\edged\subset K}$ solve \eqref{eq:F_syst} with $\xi_K^\edge=1/(2d)$:
\[
\begin{aligned}
 \mcal{Q}^{\rm mass}_\edges(\rho,\bfu,\bfv,\bfw) 
 & = \frac{1}{2d}\sum_{\substack{\edge\in\edgesint \\ \edge= K|L}}  (\bfv_\edge\cdot\bfw_\edge) \Big( \sum_{\edge' \in\edges(L)} F_{L,\edge'}(\rho,\bfu) +   \sum_{\edge' \in\edges(K)} F_{K,\edge'}(\rho,\bfu)  \Big ) \\
 & = \frac{1}{2d}\sum_{\substack{\edge\in\edgesint \\ \edge= K|L}}  (\bfv_\edge\cdot\bfw_\edge) \Big( \sum_{\edge' \in\edges(L)} |\edge'|\rho_{\edge'} \bfu_{\edge'} \cdot \bfn_{L,\edge'} +   \sum_{\edge' \in\edges(K)} |\edge'|\rho_{\edge'} \bfu_{\edge'} \cdot \bfn_{K,\edge'}  \Big ). 
\end{aligned}
\]
Reordering the sum, one gets:
\[
\begin{aligned}
 \mcal{Q}^{\rm mass}_\edges(\rho,\bfu,\bfv,\bfw) 
  & = -\frac{1}{2d}\sum_{\substack{\edge\in\edgesint \\ \edge= K|L}}  |\edge|\rho_{\edge} \bfu_{\edge} \cdot \bfn_{K,\edge} \Big( \sum_{\edge' \in\edges(L)} (\bfv_{\edge'}\cdot\bfw_{\edge'}) -   \sum_{\edge' \in\edges(K)} (\bfv_{\edge'}\cdot\bfw_{\edge'})  \Big ) \\
  & = -\frac{1}{2d}\sum_{\substack{\edge\in\edgesint \\ \edge= K|L}}  |\edge|\rho_{\edge} \bfu_{\edge} \cdot \bfn_{K,\edge} \Big( \sum_{\edge' \in\edges(L)} (\bfv_{\edge'}\cdot\bfw_{\edge'}-\bfv_{\edge}\cdot\bfw_{\edge}) +  \sum_{\edge' \in\edges(K)} (\bfv_{\edge}\cdot\bfw_{\edge}-\bfv_{\edge'}\cdot\bfw_{\edge'})  \Big ).
\end{aligned}
\]
Hence,
\[
\begin{aligned}
 |\mcal{Q}^{\rm mass}_\edges(\rho,\bfu,\bfv,\bfw)| 
  & \leq \frac{1}{2d}\sum_{\substack{\edge\in\edgesint \\ \edge= K|L}}  |\edge|\, |\rho_{\edge}|\, |\bfu_{\edge}|\,\sum_{\edge' \in\edges(K)\cup\edges(L)} |\bfv_{\edge'}\cdot\bfw_{\edge'}-\bfv_{\edge}\cdot\bfw_{\edge}|\\
  & \leq \frac{1}{2d} \norm{\rho}_{\xL^\infty(\Omega)}\,\sum_{\substack{\edge\in\edgesint \\ \edge= K|L}}  |\edge|\, |\bfu_{\edge}|\,\sum_{\edge' \in\edges(K)\cup\edges(L)} |\bfv_{\edge'}\cdot\bfw_{\edge'}-\bfv_{\edge}\cdot\bfw_{\edge}|.
\end{aligned}
\]
Now, observing that $\bfv_{\edge'}\cdot\bfw_{\edge'}-\bfv_{\edge}\cdot\bfw_{\edge} = \bfv_{\edge'}\cdot (\bfw_{\edge'}-\bfw_{\edge})+\bfw_{\edge} \cdot(\bfv_{\edge'}-\bfv_{\edge})$, one obtains:
$
 |\mcal{Q}^{\rm mass}_\edges(\rho,\bfu,\bfv,\bfw)| 
  \leq \frac{1}{2d} \norm{\rho}_{\xL^\infty(\Omega)} (T_1+T_2)
$
with :
\[
 T_1 = \sum_{\substack{\edge\in\edgesint \\ \edge= K|L}}  |\edge|\, |\bfu_{\edge}|\sum_{\edge' \in\edges(K)\cup\edges(L)} |\bfv_{\edge'}| \,|\bfw_{\edge'}-\bfw_{\edge}|, \qquad  T_2 = \sum_{\substack{\edge\in\edgesint \\ \edge= K|L}}  |\edge|\, |\bfu_{\edge}|\sum_{\edge' \in\edges(K)\cup\edges(L)} |\bfw_{\edge}| \,|\bfv_{\edge'}-\bfv_{\edge}|.
\]
Let us estimate $T_1$, a similar calculation yields the estimate on $T_2$. Using twice the Cauchy-Schwarz inequality yields:
\[
 \begin{aligned}
 T_1 
 &\leq \sum_{\substack{\edge\in\edgesint \\ \edge= K|L}}  |\edge|\, |\bfu_{\edge}| \Big ( \sum_{\edge' \in\edges(K)\cup\edges(L)} |\bfv_{\edge'}|^{2} \Big )^\frac 12  \Big ( \sum_{\edge' \in\edges(K)\cup\edges(L)} |\bfw_{\edge'}-\bfw_{\edge}|^2 \Big )^\frac 12 \\
  &=  \sum_{\substack{\edge\in\edgesint \\ \edge= K|L}}  |D_\edge|^{\frac 12}\, |\bfu_{\edge}| \Big ( \sum_{\edge' \in\edges(K)\cup\edges(L)} |\bfv_{\edge'}|^{2} \Big )^\frac 12  \Big ( \frac{|\edge|^2}{|D_\edge|} \sum_{\edge' \in\edges(K)\cup\edges(L)} |\bfw_{\edge'}-\bfw_{\edge}|^2 \Big )^\frac 12 \\
  &\leq \underbrace{\bigg ( \sum_{\substack{\edge\in\edgesint \\ \edge= K|L}}  |D_\edge|\, |\bfu_{\edge}|^2  \sum_{\edge' \in\edges(K)\cup\edges(L)} |\bfv_{\edge'}|^{2} \bigg )^{\frac 12} }_{T_{1,1}} \, \underbrace{\bigg ( \sum_{\substack{\edge\in\edgesint \\ \edge= K|L}}  \frac{|\edge|^2}{|D_\edge|} \sum_{\edge' \in\edges(K)\cup\edges(L)} |\bfw_{\edge'}-\bfw_{\edge}|^2 \bigg )^{\frac 12}}_{T_{1,2}}.
 \end{aligned}
\]
In $T_{1,1}$, H\"older's inequality with powers $3$ and $3/2$ gives:
\[
 \begin{aligned}
  T_{1,1}
  & = \bigg ( \sum_{\substack{\edge\in\edgesint \\ \edge= K|L}}  |D_\edge|^{\frac 13}\, |\bfu_{\edge}|^2 \, |D_\edge|^{\frac 23}\sum_{\edge' \in\edges(K)\cup\edges(L)} |\bfv_{\edge'}|^{2} \bigg )^{\frac 12}\\
  & \leq \bigg ( \sum_{\substack{\edge\in\edgesint \\ \edge= K|L}}  |D_\edge|\, |\bfu_{\edge}|^6  \bigg )^{\frac 16} 
  \bigg ( \sum_{\substack{\edge\in\edgesint \\ \edge= K|L}} |D_\edge| \Big (\sum_{\edge' \in\edges(K)\cup\edges(L)} |\bfv_{\edge'}|^{2} \Big )^{\frac 32} \bigg )^{\frac 13}\\
  &\leq C(\theta_0)\,\norm{\bfu}_{\xL^6(\Omega)^d} \, \bigg ( \sum_{\substack{\edge\in\edgesint \\ \edge= K|L}}  \Big (\sum_{\edge' \in\edges(K)\cup\edges(L)} |D_{\edge'}|^{\frac 23} \, |\bfv_{\edge'}|^{2} \Big )^{\frac 32} \bigg )^{\frac 13}
 \end{aligned}
\]
Using again H\"older's inequality with powers $3$ and $3/2$  in the sum over $\edge'$ and recalling that ${\rm card} \lbrace \edges(K)\cup\edges(L)\rbrace=4d-1$
yields $T_{1,1}\leq  C(d,\theta_0)\norm{\bfu}_{\xL^6(\Omega)^d} \norm{\bfv}_{\xL^3(\Omega)^d}$. It remains to estimate $T_{1,2}$, which is done as follows:
\[
\begin{aligned}
T_{1,2}^2= \sum_{\substack{\edge\in\edgesint \\ \edge= K|L}}  \frac{|\edge|^2}{|D_\edge|} & \sum_{\edge' \in\edges(K)\cup\edges(L)} |\bfw_{\edge'}-\bfw_{\edge}|^2 \\
& =  \sum_{\substack{\edge\in\edgesint \\ \edge= K|L}}  \frac{|\edge|^2}{|D_\edge|} \Big ( \sum_{\edge' \in\edges(K)} |\bfw_{\edge'}-\bfw_{\edge}|^2 + \sum_{\edge' \in\edges(L)} |\bfw_{\edge'}-\bfw_{\edge}|^2 \Big ) \\
& \leq \sum_{\substack{\edge\in\edgesint \\ \edge= K|L}}  \frac{|\edge|^2}{|D_\edge|} \Big ( \sum_{\edge',\edge'' \in\edges(K)} |\bfw_{\edge'}-\bfw_{\edge''}|^2 + \sum_{\edge',\edge'' \in\edges(L)} |\bfw_{\edge'}-\bfw_{\edge''}|^2 \Big ).
\end{aligned}
\]
Reordering the sum and using the regularity of the discretization yields:
\[
 T_{1,2}^2\leq C(\theta_0) \sum_{K\in\mesh} \sum_{\edge \in\edges(K)} h_K^{d-2}\sum_{\edge',\edge'' \in\edges(K)} |\bfw_{\edge'}-\bfw_{\edge''}|^2 =C(\theta_0) \norm{\bfw}_\fv^2 \leq C(d,\Omega,\theta_0) \norm{\bfw}_\brok^2,
\]
by Lemma \ref{lmm:H1ns}. Hence $T_1 \leq C(d,\Omega,\theta_0)\norm{\bfu}_{\xL^6(\Omega)^d} \norm{\bfv}_{\xL^3(\Omega)^d} \norm{\bfw}_\brok$, and a similar calculation gives $T_2 \leq C(d,\Omega,\theta_0)\norm{\bfu}_{\xL^6(\Omega)^d} \norm{\bfw}_{\xL^3(\Omega)^d} \norm{\bfv}_\brok$, which concludes the proof.
\end{proof}

%
%
\subsection{Estimates on the momentum convection term}

We define a discrete counterpart of $\int_\Omega \divv(\rho\bfu\otimes\bfv)\cdot\bfw$, the variational formulation of the convection term in the momentum balance equation as follows:
\begin{equation} \label{eq:quadrilin_form_edges}
\mcal{Q}^{\rm mom}_\edges : \left \lbrace \begin{array}{cccl}
		  &  \hspace{-0.5cm} \xH_\mesh(\Omega) \times (\xH_{\edges,0}(\Omega)^d)^3 &\hspace{-0.3cm} \longrightarrow \hspace{-0.3cm} & \xR \\
		  &  (\rho,\bfu,\bfv,\bfw) & \hspace{-0.3cm} \longmapsto \hspace{-0.3cm} & \ds \mcal{Q}^{\rm mom}_\edges(\rho,\bfu,\bfv,\bfw) =
		     \sum_{\edge\in\edgesint} \bfw_\edge \cdot \sum_{\edged \in\edgesd(D_\edge)} \fluxd(\rho,\bfu)\, \bfv_\edged,
\end{array} \right.
\end{equation}
where $\bfv_\edged=\frac 12 (\bfv_\edge+\bfv_{\edge'})$ for $\edged=D_\edge|D_{\edge'}$ (and $\fluxd(\rho,\bfu)$ vanishes at external faces).

\medskip
This section is devoted to derive some estimates on $\mcal{Q}^{\rm mom}_\edges(\rho,\bfu,\bfv,\bfw)$.
First, the analysis of the scheme actually requires an equivalent (or nearly equivalent) re-formulation of this form on the primal mesh $\mesh$, that makes use of the primal fluxes $\fluxK(\rho,\bfu)$.
Indeed, contrary to the dual fluxes $\fluxd(\rho,\bfu)$, the expression of  $\fluxK(\rho,\bfu)$ with respect to the unknowns $\rho\in \xH_\mesh(\Omega)$ and $\bfu\in \xH_{\edges,0}(\Omega)^d$ is simple.
This motivates the introduction of the following auxiliary mapping:
\[
\mcal{Q}^{\rm mom}_\mesh : \left \lbrace \begin{array}{cccl}
		  &  \hspace{-0.5cm}\xH_\mesh(\Omega) \times (\xH_{\edges,0}(\Omega)^d)^3 &\hspace{-0.3cm} \longrightarrow \hspace{-0.3cm}& \xR \\
		  &  (\rho,\bfu,\bfv,\bfw) & \hspace{-0.3cm}\longmapsto \hspace{-0.3cm} & \ds \mcal{Q}^{\rm mom}_\mesh(\rho,\bfu,\bfv,\bfw)
		     =\hspace{-0.1cm}\sum_{K\in \mesh} \bfw_K \cdot \hspace{-0.2cm}\sum_{\edge \in \edges(K)} \fluxK(\rho,\bfu)\ \bfv_\edge,
\end{array} \right.
\]
where $\bfw_K =\sum_{\edge \in \edges(K)} \xi_K^\edge\, \bfw_\edge$ is a convex combination of $(\bfw_\edge)_{\edge \in \edges(K)}$, whose coefficients are defined in \eqref{eq:xiksigma} (thus, as said before, we have in practice $\xi_K^\edge = 1/{\rm card}\, \edges(K)=1/(2d)$).
The following lemma provides a bound of the error made when replacing $\mcal{Q}^{\rm mom}_\edges$ by $\mcal{Q}^{\rm mom}_\mesh$ in the weak formulation of the scheme.

\begin{lem} \label{lmm:convconv}
Let $\disc=(\mesh,\edges)$ be a staggered discretization of $\Omega$ in the sense of Definition \ref{def:disc}, such that $\theta_\disc \leq \theta_0$.
Let $\rho \in \xH_\mesh(\Omega)$, and $\bfu$ and $\bfv$ be two elements of $\xH_{\edges,0}(\Omega)^d$.
Then, for any $\eps\in(0,1]$ if $d=2$ and $\eps \in [1/2,1]$ if $d=3$, there exists $C$ depending only on $\Omega$, $d$, $\theta_0$ and $\eps$ such that:
\begin{equation} \label{eq:convconv}
|\mcal{Q}^{\rm mom}_\edges(\rho,\bfu,\bfu,\bfv)-\mcal{Q}^{\rm mom}_\mesh(\rho,\bfu,\bfu,\bfv)|
\leq C\, \norm{\rho}_{\xL^{\infty}(\Omega)}  \, \norm{\bfu}_\brok^2\, \norm{\bfv}_\brok\, h_\disc^{1-\eps}.
\end{equation}
\end{lem}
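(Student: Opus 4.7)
The plan is to recast $\mcal{Q}^{\rm mom}_\edges-\mcal{Q}^{\rm mom}_\mesh$ as a sum in which every summand is proportional to some difference $(\bfv_\edge-\bfv_K)$, and then to bound this sum using the inverse inequality together with the discrete Sobolev embedding of Lemma~\ref{lmm:injsobolev_br} and the norm equivalence of Lemma~\ref{lmm:H1ns}. The extra factor $\bfv_\edge-\bfv_K$ is what will produce the power $h_\disc^{1-\eps}$ in the final estimate.

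First I would split $\bfu_\edged=\bfu_\edge+\tfrac{1}{2}(\bfu_{\edge'}-\bfu_\edge)$ in the definition of $\mcal{Q}^{\rm mom}_\edges$, and for each primal cell $K$ invoke relation \eqref{eq:F_syst} to collapse the dual fluxes via $\sum_{\edged\in\edgesd(D_\edge),\,\edged\subset K}F_{\edge,\edged}=\xi_K^\edge G_K-F_{K,\edge}$, where $G_K:=\sum_{\edge'\in\edges(K)}F_{K,\edge'}$. The conservativity of the primal fluxes $F_{K,\edge}=-F_{L,\edge}$ makes the terms $(\bfv_\edge\cdot\bfu_\edge)F_{K,\edge}$ vanish after summing over $K$, and a reorganisation leads to
\[
\mcal{Q}^{\rm mom}_\edges-\mcal{Q}^{\rm mom}_\mesh=\sum_{K\in\mesh}\sum_{\edge,\edge'\in\edges(K)}\xi_K^\edge F_{K,\edge'}\,\bfv_\edge\cdot(\bfu_\edge-\bfu_{\edge'})+A,
\]
where $A$ collects the residual terms proportional to $F_{\edge,\edged}(\bfu_{\edge'}-\bfu_\edge)/2$. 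Writing $\bfv_\edge=\bfv_K+(\bfv_\edge-\bfv_K)$ in the two pieces above, the $\bfv_K$-contributions exactly cancel: they reorganise into $\pm\bfv_K\cdot[\tfrac{G_K}{2d}\sum_\edge\bfu_\edge-\sum_\edge F_{K,\edge}\bfu_\edge]$, with the opposite signs coming from one further use of \eqref{eq:F_syst} and relying crucially on the fact that $\xi_K^\edge=1/(2d)$ does not depend on $\edge$. Each remaining summand therefore carries the factor $\bfv_\edge-\bfv_K$.

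To conclude, I would apply Cauchy--Schwarz globally, using $|F_{K,\edge'}|\leq C h_K^{d-1}\|\rho\|_{\xL^\infty}|\bfu_{\edge'}|$ and the analogous bound (H3) for $|F_{\edge,\edged}|$, to factor out the $|\bfu_\edge-\bfu_{\edge'}|$ terms, whose square sum is controlled by $\|\bfu\|_\fv^2\leq C\|\bfu\|_\brok^2$ (Lemma~\ref{lmm:H1ns}). The companion factor is
\[
\Big(\sum_{K\in\mesh}h_K^d\sum_{\edge,\edge'\in\edges(K)}|\bfu_{\edge'}|^2|\bfv_\edge-\bfv_K|^2\Big)^{1/2}.
\]
Exploiting $|\bfv_\edge-\bfv_K|^2\leq C\sum_{\edge'\in\edges(K)}|\bfv_\edge-\bfv_{\edge'}|^2$ (since $\bfv_K$ is a convex combination), together with Lemma~\ref{lmm:H1ns} on each primal cell, the bound $\sum_{\edge\in\edges(K)}|\bfu_\edge|^2\leq C|K|^{-1}\int_K|\bfu|^2\dx$, and the quasi-uniformity $h_K\sim h_\disc$ coming from $\theta_{\edges,3}\leq\theta_0$, this quantity is bounded by $C\, h_\disc\|\bfu\|_{\xL^\infty(\Omega)}\|\bfv\|_\brok$. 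An inverse inequality $\|\bfu\|_{\xL^\infty(\Omega)}\leq C h_\disc^{-d/q}\|\bfu\|_{\xL^q(\Omega)}$ (valid on a quasi-uniform dual mesh since $\bfu$ is piecewise constant on the diamond cells) combined with the discrete Sobolev embedding $\|\bfu\|_{\xL^q(\Omega)}\leq C\|\bfu\|_\brok$ of Lemma~\ref{lmm:injsobolev_br} finally gives
\[
|\mcal{Q}^{\rm mom}_\edges(\rho,\bfu,\bfu,\bfv)-\mcal{Q}^{\rm mom}_\mesh(\rho,\bfu,\bfu,\bfv)|\leq C\|\rho\|_{\xL^\infty}\, h_\disc^{1-d/q}\,\|\bfu\|_\brok^2\,\|\bfv\|_\brok,
\]
which is the claimed estimate with $\eps=d/q$. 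Since Lemma~\ref{lmm:injsobolev_br} is available for every $q\in[1,\infty)$ when $d=2$ and for $q\in[1,6]$ when $d=3$, this covers the ranges $\eps\in(0,1]$ for $d=2$ and $\eps\in[1/2,1]$ for $d=3$.

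The main obstacle is the $\bfv_K$-cancellation in the second step: each of the two pieces individually scales only as $O(1)$ (by the same Hölder/Sobolev argument), and it is only after identifying this algebraic identity, which relies simultaneously on primal flux conservation, on the dual compatibility relation \eqref{eq:F_syst}, and on $\xi_K^\edge$ being independent of $\edge$, that one actually gains the power of $h_\disc$ which survives the subsequent inverse inequality.
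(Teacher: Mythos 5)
Your proposal is correct and follows essentially the same route as the paper's proof: isolate the $\bfv_K$-contribution (which is exactly where $\mcal{Q}^{\rm mom}_\edges$ and $\mcal{Q}^{\rm mom}_\mesh$ coincide, by dual-flux conservativity and \eqref{eq:F_syst}), so that every surviving term carries a factor $\bfv_\edge-\bfv_K$, then bound via (H3), Cauchy--Schwarz, the $\norm{\cdot}_\fv$-to-$\norm{\cdot}_\brok$ comparison of Lemma \ref{lmm:H1ns}, and finally trade $\norm{\bfu}_{\xL^\infty}$ for $h_\disc^{-\eps}\norm{\bfu}_\brok$ through the inverse inequality and the discrete Sobolev embedding of Lemma \ref{lmm:injsobolev_br}. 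The only difference is presentational: the paper shows directly that the $\bfv_K$-part of $\mcal{Q}^{\rm mom}_\edges$ equals $\mcal{Q}^{\rm mom}_\mesh$ (its $T_1$), whereas you form the difference first and exhibit the cancellation afterwards.
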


\begin{proof}
Let us denote $R(\rho,\bfu,\bfu,\bfv) =  \mcal{Q}^{\rm mom}_\edges(\rho,\bfu,\bfu,\bfv)-\mcal{Q}^{\rm mom}_\mesh(\rho,\bfu,\bfu,\bfv)$.
In the expression \eqref{eq:quadrilin_form_edges},
for $\edge=K|L$, let us split the sum over the fluxes through the faces of $D_\edge$ in the sum over the dual faces, on one side, included in $K$ and, on the other side, included in $L$.
We get by conservativity (\ie\ using $\fluxK=-F_{L,\edge}$):
\[
\mcal{Q}^{\rm mom}_\edges(\rho,\bfu,\bfu,\bfv)=
\sum_{K \in \mesh}\ \sum_{\edge \in \edges(K)} \bfv_\edge \cdot \Bigl(
\fluxK(\rho,\bfu)\ \bfu_\edge + \hspace{-3ex}
\sum_{\begin{array}{c} \scriptstyle \edged\in\edgesd(D_\edge),\\[-0.5ex] \scriptstyle \edged\subset K,\ \edged=D_\edge|D_\edge' \end{array}}
\fluxd(\rho,\bfu)\ \frac{\bfu_\edge+ \bfu_{\edge'}} 2 \Bigr).
\]
Let us write $\mcal{Q}^{\rm mom}_\edges(\rho,\bfu,\bfu,\bfv)=T_1(\rho,\bfu,\bfu,\bfv)+T_2(\rho,\bfu,\bfu,\bfv)$ with:
\[
\begin{aligned}
& \begin{array}{ll}
\displaystyle
   T_1(\rho,\bfu,\bfu,\bfv) = 
\sum_{K \in \mesh} \bfv_K \cdot   \sum_{\edge \in \edges(K)} \Bigl(
\fluxK(\rho,\bfu)\ \bfu_\edge + \hspace{-4ex}
\sum_{\begin{array}{c} \scriptstyle \edged\in\edgesd(D_\edge),\\[-0.5ex] \scriptstyle \edged\subset K,\ \edged=D_\edge|D_\edge' \end{array}}
\hspace{-3ex} \fluxd(\rho,\bfu)\ \frac{\bfu_\edge+ \bfu_{\edge'}} 2 \Bigr),
  \end{array}
\\
&\begin{array}{ll}
\displaystyle
 T_2(\rho,\bfu,\bfu,\bfv)= 
\sum_{K \in \mesh}\ \sum_{\edge \in \edges(K)} (\bfv_\edge-\bfv_K)\cdot &\hspace{-1ex} \Bigl(
\fluxK(\rho,\bfu)\ \bfu_\edge
\\ &  \displaystyle+
\sum_{\begin{array}{c} \scriptstyle \edged\in\edgesd(D_\edge),\\[-0.5ex] \scriptstyle \edged\subset K,\ \edged=D_\edge|D_\edge' \end{array}}
\hspace{-3ex} \fluxd(\rho,\bfu)\ \frac{\bfu_\edge+ \bfu_{\edge'}} 2 \Bigr).
\end{array}
\end{aligned}
\]
By assumption $(H2)$ in Def. \ref{def:dualfluxes}, we remark that  $T_1(\rho,\bfu,\bfu,\bfv)=\mcal{Q}^{\rm mom}_\mesh(\rho,\bfu,\bfu,\bfv)$ so that $R(\rho,\bfu,\bfu,\bfv)=T_2(\rho,\bfu,\bfu,\bfv)$. Using now $(H1)$, we write $R(\rho,\bfu,\bfu,\bfv)=R_1(\rho,\bfu,\bfu,\bfv) + R_2(\rho,\bfu,\bfu,\bfv)$ with:
\[ \begin{aligned}
R_1(\rho,\bfu,\bfu,\bfv)= &
\sum_{K \in \mesh}\ \sum_{\edge \in \edges(K)} (\bfv_\edge-\bfv_K) \cdot \Big (
\sum_{\begin{array}{c} \scriptstyle \edged\in\edgesd(D_\edge) ,\\[-0.5ex] \scriptstyle \edged\subset K,\ \edged=D_\edge|D_\edge' \end{array}}
\fluxd(\rho,\bfu)\ \frac{\bfu_{\edge'}-\bfu_\edge} 2\Big ),
\\
R_2(\rho,\bfu,\bfu,\bfv)= &
\sum_{K \in \mesh}\ \sum_{\edge \in \edges(K)} (\bfv_\edge-\bfv_K)\cdot \ \bfu_\edge\ \xi_K^\edge \Bigl(\sum_{\edge'\in\edges(K)} F_{K,\edge'}(\rho,\bfu) \Bigr).
\end{aligned}\]
The assumption $(H3)$ yields $|\fluxd(\rho,\bfu)| \leq C \norm{\rho}_{\xL^{\infty}(\Omega)} \norm{\bfu}_{\xL^\infty(\Omega)^d}\ h_K^{d-1}$.
As a consequence, since $\bfv_K$ is a convex combination of the $(\bfv_\edge)_{\edge\in\edges(K)}$, we have for any $K \in \mesh$:
\begin{multline*} 
\Bigl|\sum_{\edge \in \edges(K)} (\bfv_\edge-\bfv_K) \cdot \Big (
\sum_{\begin{array}{c} \scriptstyle \edged\in\edgesd(D_\edge),\\[-0.5ex] \scriptstyle \edged\subset K,\ \edged=D_\edge|D_\edge' \end{array}}
\fluxd(\rho,\bfu)\ \frac{\bfu_{\edge'}-\bfu_\edge} 2  \Big ) \Bigr| 
\\ 
\leq C\, \norm{\rho}_{\xL^{\infty}(\Omega)}\, \norm{\bfu}_{\xL^\infty(\Omega)^d}\, h_\disc \sum_{\edge,\,\edge',\,\edge'',\,\edge''' \in \edges(K)}
h_K^{d-2} \ |\bfv_\edge-\bfv_{\edge'}|\ |\bfu_{\edge''}-\bfu_{\edge'''}|,
\end{multline*}
and, for $\edge,\ \edge' \in \edges(K)$, the quantity $|\bfu_\edge-\bfu_{\edge'}|$ (or $|\bfv_\edge-\bfv_{\edge'}|$) appears in the sum a finite number of times which depends of the dimension $d$. Hence, by the Cauchy-Schwarz inequality:
\begin{equation}\label{eq:R1}
\begin{aligned}
|R_1(\rho,\bfu,\bfu,\bfv)| & 
\leq C\,  \norm{\rho}_{\xL^{\infty}(\Omega)}\, \norm{\bfu}_{\xL^{\infty}(\Omega)^d}\, \norm{\bfu}_\fv\,   \norm{\bfv}_\fv\,   h_\disc
\\ & 
\leq C'\, \norm{\rho}_{\xL^{\infty}(\Omega)}\, \norm{\bfu}_{\xL^{\infty}(\Omega)^d}\, \norm{\bfu}_\brok\, \norm{\bfv}_\brok\, h_\disc,
\end{aligned}
\end{equation}
by Lemma \ref{lmm:H1ns}. Let us now turn to $R_2(\rho,\bfu,\bfu,\bfv)$. By definition of $\bfv_K$, we have  $\sum_{\edge \in \edges(K)} \xi_K^\edge\ (\bfv_\edge-\bfv_K)=0$, and we obtain that:
\[
R_2(\rho,\bfu,\bfu,\bfv)=
\sum_{K \in \mesh}\ \sum_{\edge \in \edges(K)} (\bfv_\edge-\bfv_K) \cdot \xi_K^\edge\ (\bfu_\edge-\bfu_K)\ \Bigl(\sum_{\edge'\in\edges(K)} F_{K,\edge'}(\rho,\bfu) \Bigr),
\]
so, once again:
\begin{equation}\label{eq:R2}
\begin{aligned}
|R_2(\rho,\bfu,\bfu,\bfv)| & \leq C\norm{\rho}_{\xL^{\infty}(\Omega)}  \norm{\bfu}_{\xL^{\infty}(\Omega)^d}\ h_\disc
\sum_{K \in \mesh}\ h_K^{d-2} \sum_{\edge \in \edges(K)}  |\bfv_\edge-\bfv_K|\ |\bfu_\edge-\bfu_K| \\ 
 & \leq C\,  \norm{\rho}_{\xL^{\infty}(\Omega)}\, \norm{\bfu}_{\xL^{\infty}(\Omega)^d}\, \norm{\bfu}_\fv\,  \norm{\bfv}_\fv\,  h_\disc \\
 & \leq C'\, \norm{\rho}_{\xL^{\infty}(\Omega)}\, \norm{\bfu}_{\xL^{\infty}(\Omega)^d}\, \norm{\bfu}_\brok\, \norm{\bfv}_\brok\, h_\disc.
\end{aligned}
\end{equation}
We conclude thanks to an inverse inequality. Let $q$ and $q'$ in $[1,+\infty]$ such that $1/q+1/q'=1$.
We may write $\norm{\bfu}_{\xL^{\infty}(\Omega)^d} = |\bfu_\Sigma|$ for some $\Sigma\in\edgesint$.
Hence,
\[
\norm{\bfu}_{\xL^{\infty}(\Omega)^d}= |D_\Sigma|^{-1} \int_{D_\Sigma}|\bfu(\bfx)| \dx 
\leq C(d)\, |D_\Sigma|^{1/q'-1}\, \norm{\bfu}_{\xL^q(D_\Sigma)^d}
\]
by H\"older's inequality, and therefore $\norm{\bfu}_{\xL^{\infty}(\Omega)^d} \leq C(d,\theta_0)\, h_\disc^{d(1/q'-1)}\, \norm{\bfu}_{\xL^q(\Omega)^d}$. 
Now by Lemma \ref{lmm:injsobolev_br}, we obtain (in particular) that $\norm{\bfu}_{\xL^q(\Omega)^d}\leq C(q)\, \norm{\bfu}_{\brok}$ for $q \in [2,+\infty)$ in 2D and for $q \in [3,6]$ in 3D, thus for $d(1/q'-1) \in [-1,0)$ in 2D and $d(1/q'-1) \in [-1,-1/2]$ in 3D.
Combining these bounds with \eqref{eq:R1} and \eqref{eq:R2} yields the inequality that we are seeking.
\end{proof}

Let us now give some estimates on the auxiliary form $\mcal{Q}^{\rm mom}_\mesh(\rho,\bfu,\bfu,\bfv)$, which are discrete counterparts to classical continuous estimates.

\begin{lem}[Estimates on $\mcal{Q}^{\rm mom}_\mesh$] \label{lmm:estimates_Bbar}
Let $\disc=(\mesh,\edges)$ be a staggered discretization of $\Omega$ in the sense of Definition \ref{def:disc}, such that $\theta_\disc \leq \theta_0$.
Then, there exists two constants $C_1$ and $C_2$, only depending on $\Omega$, $d$ and $\theta_0$, such that
\begin{equation} \label{eq:estimates_Bbar1}
|\mcal{Q}^{\rm mom}_\mesh(\rho,\bfu,\bfv,\bfw)| \leq C_1 \norm{\rho}_{\xL^{\infty}(\Omega)} \norm{\bfu}_{\xL^4(\Omega)^d} \norm{\bfv}_{\xL^4(\Omega)^d} \norm{\bfw}_\brok
\leq C_2 \norm{\rho}_{\xL^{\infty}(\Omega)} \norm{\bfu}_\brok \norm{\bfv}_\brok\ \norm{\bfw}_\brok,
\end{equation}
for all $\rho$ in $\xH_\mesh(\Omega)$ and $\bfu$, $\bfv$, $\bfw$ in $\xH_{\edges,0}(\Omega)^d$.
\end{lem}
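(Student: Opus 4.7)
The strategy is to mimic the proof of Lemma \ref{lmm:estimates_Qmass}: reorder the sum from cells to faces using conservativity of the primal fluxes, rewrite the resulting cell differences $\bfw_K-\bfw_L$ as a finite combination of face-to-face differences that can be controlled by the broken $\xH^1$-norm, and finally apply Cauchy–Schwarz and H\"older to identify the $\xL^4$-norms of $\bfu$ and $\bfv$.

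First I would expand $\mcal{Q}^{\rm mom}_\mesh(\rho,\bfu,\bfv,\bfw)=\sum_{K\in\mesh}\sum_{\edge\in\edges(K)} F_{K,\edge}(\rho,\bfu)\,(\bfw_K\cdot\bfv_\edge)$ and gather the two contributions associated to each internal face $\edge=K|L$. By conservativity of the primal fluxes ($F_{K,\edge}=-F_{L,\edge}$) and the vanishing of $F_{K,\edge}$ for $\edge\in\edgesext$, this gives
\[
\mcal{Q}^{\rm mom}_\mesh(\rho,\bfu,\bfv,\bfw)
=\sum_{\substack{\edge\in\edgesint\\ \edge=K|L}} F_{K,\edge}(\rho,\bfu)\,(\bfw_K-\bfw_L)\cdot\bfv_\edge.
\]
Since $\bfw_K=\frac1{2d}\sum_{\edge'\in\edges(K)}\bfw_{\edge'}$ and similarly for $\bfw_L$, one can rewrite $\bfw_K-\bfw_L=\frac1{2d}\sum_{\edge'\in\edges(K)}(\bfw_{\edge'}-\bfw_\edge)+\frac1{2d}\sum_{\edge''\in\edges(L)}(\bfw_\edge-\bfw_{\edge''})$, i.e., as a finite sum (of bounded cardinality depending only on $d$) of differences between values of $\bfw$ at faces of a common primal cell.

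Next, using the explicit form $|F_{K,\edge}(\rho,\bfu)|\leq|\edge|\,\norm{\rho}_{\xL^\infty(\Omega)}|\bfu_\edge|$ together with the regularity estimates $|\edge|\leq C(\theta_0)\,|D_\edge|^{1-1/d}$ and $|D_\edge|\sim h_K^d$, I split by Cauchy–Schwarz as
\[
|\mcal{Q}^{\rm mom}_\mesh(\rho,\bfu,\bfv,\bfw)|
\leq \norm{\rho}_{\xL^\infty(\Omega)}\bigg(\sum_{\edge=K|L} |D_\edge|\,|\bfu_\edge|^2|\bfv_\edge|^2\bigg)^{\!1/2}
\bigg(\sum_{\edge=K|L}\frac{|\edge|^2}{|D_\edge|}|\bfw_K-\bfw_L|^2\bigg)^{\!1/2}.
\]
The first factor is (up to a regularity constant) $\norm{|\bfu||\bfv|}_{\xL^2(\Omega)}\leq\norm{\bfu}_{\xL^4(\Omega)^d}\norm{\bfv}_{\xL^4(\Omega)^d}$ by H\"older's inequality. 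For the second factor, substitution of the decomposition of $\bfw_K-\bfw_L$ and reordering the sum over primal cells bounds it by $C(d,\theta_0)\,\norm{\bfw}_\fv$, and Lemma \ref{lmm:H1ns} yields $\norm{\bfw}_\fv\leq C\norm{\bfw}_\brok$. This proves the first inequality in \eqref{eq:estimates_Bbar1}.

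The second inequality then follows immediately from the discrete Sobolev embedding of Lemma \ref{lmm:injsobolev_br}: for $d\in\{2,3\}$ one has $4\leq 2^\star=6$, so $\norm{\bfu}_{\xL^4(\Omega)^d}\leq C(\theta_0)\,\norm{\bfu}_\fv\leq C(\theta_0)\,\norm{\bfu}_\brok$ and similarly for $\bfv$. No step should be seriously delicate here; the main bookkeeping obstacle is simply a careful tracking of the geometric constants (the ratios $|\edge|^2/|D_\edge|$ and the cardinality of faces of a cell) so that all constants involved depend only on $\Omega$, $d$ and $\theta_0$, which is where the regularity assumption $\theta_\disc\leq\theta_0$ is used.
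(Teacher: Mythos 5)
Your proposal is correct and follows essentially the same route as the paper's proof: reorder the sum over internal faces using conservativity of the primal fluxes, apply Cauchy--Schwarz to split off $\bigl(\sum_\edge |D_\edge|\,|\bfu_\edge|^2|\bfv_\edge|^2\bigr)^{1/2}\leq C\norm{\bfu}_{\xL^4(\Omega)^d}\norm{\bfv}_{\xL^4(\Omega)^d}$, control $|\bfw_K-\bfw_L|$ by intra-cell face differences so that the remaining factor is bounded by $\norm{\bfw}_\fv\leq C\norm{\bfw}_\brok$ via Lemma \ref{lmm:H1ns}, and conclude the second inequality from the discrete Sobolev embedding of Lemma \ref{lmm:injsobolev_br}. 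The only cosmetic difference is your insertion of the common face value $\bfw_\edge$ as an intermediate in the decomposition of $\bfw_K-\bfw_L$, which is equivalent to the paper's direct bound by pairwise differences within each cell.
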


\begin{proof}
By definition of $\mcal{Q}^{\rm mom}_\mesh(\rho,\bfu,\bfv,\bfw)$, we have
\[
\begin{aligned}
 \mcal{Q}^{\rm mom}_\mesh(\rho,\bfu,\bfv,\bfw) 
& =\sum_{K\in \mesh} \bfw_K \cdot \sum_{\edge \in \edges(K)} \fluxK(\rho,\bfu)\ \bfv_\edge
& =\sum_{K\in \mesh} \bfw_K \cdot \sum_{\edge \in \edges(K)}|\edge| (\rho_\edge \bfu_\edge \cdot \bfn_{K,\edge}) \ \bfv_\edge.
\end{aligned}
\]
Reordering the sum and applying the Cauchy-Schwarz inequality twice, we get 
\[ \begin{aligned}
|\mcal{Q}^{\rm mom}_\mesh(\rho,\bfu,\bfu,\bfv)|
&
= \Bigl| \sum_{\substack{\edge \in \edgesint \\ \edge=K|L}} \sqrt{|D_\edge|} (\rho_\edge \bfu_\edge
\cdot \bfn_{K,\edge})\ \bfv_\edge \cdot \frac{|\edge|}{\sqrt{|D_\edge|}}(\bfw_L-\bfw_K) \Bigr|
\\ &
\leq \norm{\rho}_{\xL^\infty(\Omega)}
\Bigl( \sum_{\substack{\edge \in \edgesint \\ \edge=K|L}} |D_\edge| |\bfu_\edge |^2|\bfv_\edge |^2 \Bigr)^{\frac 12}
\Bigl( \sum_{\substack{\edge \in \edgesint \\ \edge=K|L}}   \frac{|\edge|^2}{|D_\edge|}|\bfw_L-\bfw_K|^2  \Bigr)^{\frac 12}
\\ &
\leq \norm{\rho}_{\xL^\infty(\Omega)}\, \norm{\bfu}_{\xL^4(\Omega)^d}\, \norm{\bfv}_{\xL^4(\Omega)^d}
\Bigl( \sum_{\substack{\edge \in \edgesint \\ \edge=K|L}}   \frac{|\edge|^2}{|D_\edge|}|\bfw_L-\bfw_K|^2  \Bigr)^{\frac 12}.
\end{aligned}\]
Now, as $\bfw_L$ and $\bfw_K$ are convex combinations of $(\bfw_\edge)_{\edge \in \edges(L)}$ and $(\bfw_\edge)_{\edge \in \edges(K)}$ respectively, we get, using the fact that the number of faces of an element is equal to $2d$:
\[
 \begin{aligned}
|\bfw_L-\bfw_K|^2 & \leq (2d)^2 \sum_{\substack{\edge' \in \edges(L) \\ \edge'' \in \edges(K)}} |\bfw_{\edge'}-\bfw_{\edge''}|^2
& \leq (2d)^3 \Bigl( \sum_{\edge',\edge'' \in \edges(L) } |\bfw_{\edge'}-\bfw_{\edge''}|^2 
+ \sum_{\edge',\edge'' \in \edges(K) } |\bfw_{\edge'}-\bfw_{\edge''}|^2  \Bigr).
\end{aligned}
\]
Hence, 
\[\begin{aligned}
\sum_{\substack{\edge \in \edgesint \\ \edge=K|L}}  \frac{|\edge|^2}{|D_\edge|}\, |\bfw_L-\bfw_K|^2
&
\leq  C(d) \sum_{\substack{\edge \in \edgesint \\ \edge=K|L}}   \frac{|\edge|^2}{|D_\edge|}
\Bigl( \sum_{\edge',\edge'' \in \edges(L) } |\bfw_{\edge'}-\bfw_{\edge''}|^2 
+ \sum_{\edge',\edge'' \in \edges(K) } |\bfw_{\edge'}-\bfw_{\edge''}|^2  \Bigr)
\\ &
\leq 2\, C(d) \sum_{K \in \mesh} \sum_{\edge \in\edges(K)} \frac{|\edge|^2}{|D_\edge|}
\sum_{\edge',\edge'' \in \edges(K) } |\bfw_{\edge'}-\bfw_{\edge''}|^2
\\[0.5ex] &
\leq  C(d,\theta_0) \sum_{K \in \mesh} \sum_{\edge \in \edges(K)} h_K^{d-2} \sum_{\edge',\edge'' \in \edges(K) } |\bfw_{\edge'}-\bfw_{\edge''}|^2
\\[0.5ex] &
\leq  C(d,\Omega,\theta_0)\ \norm{\bfw}_\brok^2,
\end{aligned} \]
by Lemma \ref{lmm:H1ns}.
This proves the first inequality in \eqref{eq:estimates_Bbar1}.
The second inequality follows from the fact that, by a discrete H\"older inequality, $\norm{\bfu}_{\xL^4(\Omega)^d} \leq |\Omega|^{1/12} \norm{\bfu}_{\xL^6(\Omega)^d}$ and from the discrete Sobolev inequality $\norm{\bfu}_{\xL^6(\Omega)^d} \leq C(d,\theta_0)\, \norm{\bfu}_\brok$ stated in Lemma \ref{lmm:injsobolev_br}.
\end{proof}
%
%
\section{Main properties of the scheme} \label{sec:prop}

We first establish stability properties enjoyed by the scheme (Section \ref{subsec:estimates}), which are the discrete analogues of estimates satisfied by the exact solutions of problem \eqref{eq:pb}: maximum principle for the density, and $\xL^\infty(\xL^2)$- and $\xL^2(\xH^1)$-estimates for the velocity.
This latter estimate is an easy consequence of a discrete kinetic energy balance, which is stated in Lemma \ref{lmm:kinetic_energy}.
In a second step (Section \ref{subsec:existence}), we show that these estimates yield the existence of a solution to the scheme, by an argument issued from the topological degree theory.
%
%
\subsection{Estimates on the discrete solution} \label{subsec:estimates}

Let us start by stating a discrete equivalent of the following transport equation satisfied by $\rho^2/2$, if $(\rho,\bfu)$ is solution to problem \eqref{eq:pb}:
\[
\dv_t ( \frac{\rho^2} 2  )  + \dive (\frac{\rho^2} 2 \bfu) = 0.
\]
This discrete identity is rather classical (see \eg\ \cite{eym-00-book}) and we only sketch its proof.

\begin{lem} \label{lmm:rho2}
Any solution to the scheme \eqref{eq:impl_scheme} satisfies the following equality, for all $K \in \mesh$ and $1 \leq n \leq N$:
\begin{equation} \label{eq:rho2}
\dfrac{|K|}{2 \delta t} \Big ( (\rho^n_K)^2-(\rho_K^{n-1})^2 \Big ) 
+ \frac 1 2 \sum_{\edge \in\edges(K)} |\edge|\, (\rho_\edge^n)^2 \, (\bfu_\edge^n \cdot\bfn_{K,\edge}) + R_K^n = 0, 
\end{equation}
where
\begin{equation} \label{eq:R_rho2}
R_K^n=\frac{|K|}{2\delta t} (\rho_K^n-\rho_K^{n-1})^2 
- \frac 1 2 \sum_{\edge \in\edges(K)} |\edge|\, (\rho_\edge^n-\rho_K^n)^2\, (\bfu_\edge^n \cdot\bfn_{K,\edge}) \geq 0.
\end{equation}
\end{lem}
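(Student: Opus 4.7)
The plan is to derive \eqref{eq:rho2} by multiplying the discrete mass balance \eqref{eq:sch_mass} on cell $K$ by $|K|\,\rho_K^n$ and splitting the two resulting terms via standard algebraic identities, then to identify the remainder $R_K^n$ and show its positivity by using the discrete incompressibility constraint \eqref{eq:sch_div} together with the upwind choice \eqref{eq:rho_upwind}.

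First I would treat the time derivative term. Using the elementary identity $a(a-b)=\frac{1}{2}(a^2-b^2)+\frac{1}{2}(a-b)^2$ with $a=\rho_K^n$ and $b=\rho_K^{n-1}$ immediately produces the desired discrete time derivative of $(\rho_K^n)^2/2$ plus the nonnegative numerical dissipation term $\frac{|K|}{2\delta t}(\rho_K^n-\rho_K^{n-1})^2$ which constitutes the first half of $R_K^n$. For the convection term, after substituting $\fluxK^n = |\edge|\,\rho_\edge^n\,\bfu_\edge^n\cdot\bfn_{K,\edge}$, I would write $\rho_K^n\rho_\edge^n = \frac{1}{2}[(\rho_\edge^n)^2+(\rho_K^n)^2-(\rho_\edge^n-\rho_K^n)^2]$. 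The contribution of the $(\rho_K^n)^2$ piece factors out to $\frac{1}{2}(\rho_K^n)^2\sum_{\edge\in\edges(K)}|\edge|\,\bfu_\edge^n\cdot\bfn_{K,\edge} = \frac{|K|}{2}(\rho_K^n)^2 (\dive\bfu)_K^n$, which vanishes by \eqref{eq:sch_div}. What remains is exactly the flux term $\frac{1}{2}\sum_\edge |\edge|(\rho_\edge^n)^2\bfu_\edge^n\cdot\bfn_{K,\edge}$ appearing in \eqref{eq:rho2}, together with the residual $-\frac{1}{2}\sum_\edge |\edge|(\rho_\edge^n-\rho_K^n)^2\bfu_\edge^n\cdot\bfn_{K,\edge}$ that gets incorporated into $R_K^n$.

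The only non-routine step is the positivity of $R_K^n$. Since the time-derivative contribution is obviously nonnegative, it suffices to show that $-\sum_\edge |\edge|(\rho_\edge^n-\rho_K^n)^2\bfu_\edge^n\cdot\bfn_{K,\edge}\geq 0$, and this is precisely where the upwind choice \eqref{eq:rho_upwind} for $\rho_\edge^n$ is used: on every face where $\bfu_\edge^n\cdot\bfn_{K,\edge}\geq 0$, the upwind rule sets $\rho_\edge^n = \rho_K^n$, killing the summand; on faces where $\bfu_\edge^n\cdot\bfn_{K,\edge}<0$, the summand $(\rho_\edge^n-\rho_K^n)^2$ is nonnegative and is multiplied by a negative velocity, producing a nonnegative contribution after the minus sign. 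This concludes the proof and highlights the role of the upwinding as the source of the numerical dissipation guaranteeing $R_K^n\geq 0$; since the argument is essentially the classical one for upwind finite volume schemes (cf.\ \cite{eym-00-book}), no difficulty is expected beyond carefully tracking the signs.
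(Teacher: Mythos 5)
Your proposal is correct and follows essentially the same route as the paper: multiply \eqref{eq:sch_mass} by $|K|\,\rho_K^n$, apply the identities $2(a^2-ab)=(a^2-b^2)+(a-b)^2$ and $2ab=a^2+b^2-(a-b)^2$ to the time-derivative and convection terms respectively, and use $(\dive\bfu)_K^n=0$ to eliminate the $(\rho_K^n)^2$ contribution. Your explicit verification that the upwind choice \eqref{eq:rho_upwind} makes each face contribution to $R_K^n$ nonnegative is correct and merely fills in a sign check that the paper leaves implicit.
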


\begin{proof}
Multiply \eqref{eq:sch_mass} by $|K|\, \rho_K^n$.
In the discrete time derivative term, use the identity $2\,(a^2-a b)= (a^2-b^2)+(a-b)^2$ with $a=\rho_K^n$ and $b=\rho_K^{n-1}$.
In the discrete convection term, use the identity $2\, a b = a^2+ b^2 - (a-b)^2$ with $a=\rho_K^n$ and $b=\rho_\edge^n$.
The quantity $\sum_{\edge \in\edges(K)} |\sigma| (\rho_K^n)^2 (\bfu_\edge^n \cdot\bfn_{K,\edge})$ vanishes because $(\dive\bfu)_K^n=0$.
\end{proof}

\begin{rmrk}
A similar result may be obtained for the partial differential equation satisfied by $\psi(\rho)$, where $\psi$ is any convex real function, and generalized to the case where the velocity field is not divergence-free (see the appendices of \cite{her-14-imp}).
\end{rmrk}

\medskip
We now prove a discrete equivalent of the kinetic energy balance.
Recall that in the continuous setting, this relation is formally obtained by taking the scalar product between $\bfu$ and the momentum balance equation \eqref{eq:mom}; this yields, using the mass balance equation \eqref{eq:mass} twice:
\[
\partial_t (\rho\frac {|\bfu|^2} 2) + \dive \bigl(\rho\frac{|\bfu|^2} 2 \bfu \bigr) - \lapi\bfu \cdot \bfu+ \gradi p \cdot \bfu = 0.
\]
In the discrete setting, this computation must be mimicked on the mesh used for the discretization of the momentum balance equation, namely the dual mesh.
This is the reason why we chose the density on the diamond cells and the mass fluxes on the faces of the dual mesh in such a way that the discrete mass balance equation \eqref{eq:mass_D} holds on the dual cells.
Thanks to this choice, we obtain the following identity.

\begin{lem}[Discrete kinetic energy balance] \label{lmm:kinetic_energy}
Any solution to the scheme \eqref{eq:impl_scheme} satisfies the following equality, for all $\edge \in \edgesint$ and $1 \leq n \leq N$:
\begin{equation} \label{eq:imp_kin}
\begin{aligned}
\dfrac{1}{2\delta t} \Bigl( \rho^n_\Ds\, |\bfu^n_\edge|^2-\rho^{n-1}_\Ds\, |\bfu_\edge^{n-1}|^2 \Bigr)
+ \dfrac{1}{2|D_\edge|} \ & \sum_{\substack{\edged \in \edgesd(D_\edge) \\ \edged=\edgeedgeprime }} \fluxd^n \ \bfu^n_\edge \cdot \bfu^n_{\edge'} \\[1ex]
& -(\lapi\bfu)^n_\edge \cdot \bfu_\edge^n
+ (\gradi p)^n_\edge \cdot \bfu_\edge^n+R_\edge^n=0, 
\end{aligned}
\end{equation}
where
\[
R_\edge^n= \frac 1 { 2 \delta t}\ \rho^{n-1}_\Ds\ | \bfu^n_\edge - \bfu^{n-1}_\edge |^2.
\]
\end{lem}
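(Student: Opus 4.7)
The plan is to multiply the discrete momentum balance \eqref{eq:sch_mom} by $\bfu_\edge^n$, and then transform the two terms issued from the discrete time derivative and the discrete convection operator into a form that reveals a discrete kinetic energy derivative plus the announced non-negative remainder $R_\edge^n$. The diffusion and pressure terms are left untouched because they are already of the stated form after dotting with $\bfu_\edge^n$. The key ingredient which makes the computation work is the fact that, by construction, the dual density $\rho^n_\Ds$ and the dual mass fluxes $F^n_{\edge,\edged}$ satisfy the discrete mass balance \eqref{eq:mass_D} on the dual cell $D_\edge$; this is the exact discrete counterpart of the algebraic manipulation ``use $\partial_t\rho+\dive(\rho\bfu)=0$ twice'' performed at the continuous level.

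For the time derivative, I would apply the elementary identity $a(a-b)=\tfrac12(a^2-b^2)+\tfrac12(a-b)^2$ componentwise, or more directly use
\[
(\rho^n_\Ds\bfu_\edge^n-\rho^{n-1}_\Ds\bfu_\edge^{n-1})\cdot\bfu_\edge^n
= \tfrac12\bigl(\rho^n_\Ds|\bfu_\edge^n|^2-\rho^{n-1}_\Ds|\bfu_\edge^{n-1}|^2\bigr)
+\tfrac12(\rho^n_\Ds-\rho^{n-1}_\Ds)|\bfu_\edge^n|^2
+\tfrac12\rho^{n-1}_\Ds|\bfu_\edge^n-\bfu_\edge^{n-1}|^2,
\]
which is the discrete analogue of $\partial_t(\rho\bfu)\cdot\bfu=\tfrac12\partial_t(\rho|\bfu|^2)+\tfrac12(\partial_t\rho)|\bfu|^2+\cdots$. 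For the convection term, using the centered choice $\bfu_\edged^n=\tfrac12(\bfu_\edge^n+\bfu_{\edge'}^n)$, I would write
\[
\sum_{\edged\in\edgesd(D_\edge)} F^n_{\edge,\edged}\,\bfu_\edged^n\cdot\bfu_\edge^n
= \tfrac12|\bfu_\edge^n|^2\sum_{\edged\in\edgesd(D_\edge)} F^n_{\edge,\edged}
+\tfrac12\sum_{\substack{\edged\in\edgesd(D_\edge)\\ \edged=\edgeedgeprime}}F^n_{\edge,\edged}\,\bfu_\edge^n\cdot\bfu_{\edge'}^n.
\]

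Then I would inject the dual mass balance \eqref{eq:mass_D}, which reads $\sum_{\edged\in\edgesd(D_\edge)}F^n_{\edge,\edged}=-|D_\edge|(\rho^n_\Ds-\rho^{n-1}_\Ds)/\delta t$. This makes the ``diagonal'' piece $\tfrac12|\bfu_\edge^n|^2\sum_\edged F^n_{\edge,\edged}$, once divided by $|D_\edge|$, exactly cancel the unwanted term $\tfrac{1}{2\delta t}(\rho^n_\Ds-\rho^{n-1}_\Ds)|\bfu_\edge^n|^2$ coming from the time derivative manipulation. What remains is precisely the left-hand side of \eqref{eq:imp_kin} together with the residual $R_\edge^n=\tfrac{1}{2\delta t}\rho^{n-1}_\Ds|\bfu_\edge^n-\bfu_\edge^{n-1}|^2$, which is manifestly non-negative since $\rho^{n-1}_\Ds\ge 0$ (by \eqref{eq:pd2} and the maximum principle on the primal density).

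There is no real obstacle here, the computation being essentially algebraic; the only subtle point—and the reason this identity was not available for an arbitrary choice of $\rho_\Ds$ and $F_{\edge,\edged}$—is that \eqref{eq:mass_D} must hold exactly on the dual mesh, which is why the constraints (H1)--(H3) of Definition \ref{def:dualfluxes} and the weighted average \eqref{eq:pd2} were imposed in the first place. Once those are in force, the proof is a one-line use of the identity above, followed by substitution of \eqref{eq:mass_D}.
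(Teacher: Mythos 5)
Your proposal is correct and follows essentially the same route as the paper's proof: multiply \eqref{eq:sch_mom} by $\bfu_\edge^n$, apply the algebraic identity $2(\rho|\bfa|^2-\rho^*\bfa\cdot\bfb)=\rho|\bfa|^2-\rho^*|\bfb|^2+\rho^*|\bfa-\bfb|^2+(\rho-\rho^*)|\bfa|^2$ to the time-derivative term, split the centered convective term, and cancel the residual $\bigl(\tfrac1{\delta t}(\rho^n_\Ds-\rho^{n-1}_\Ds)+|D_\edge|^{-1}\sum_\edged \fluxd^n\bigr)\tfrac{|\bfu_\edge^n|^2}{2}$ via the dual mass balance \eqref{eq:mass_D}. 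Your identification of \eqref{eq:mass_D} as the crucial ingredient justifying the construction (H1)--(H3) and \eqref{eq:pd2} is exactly the point the paper makes.
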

\begin{proof}
Let us take the scalar product of the discrete momentum balance equation \eqref{eq:sch_mom} by the corresponding velocity unknown $\bfu_\edge^n$, which gives the relation $T_1 - (\lapi \bfu)^n_\edge \cdot \bfu_\edge^n + (\gradi p)^n_\edge \cdot \bfu_\edge^n=0$, with:
\[
T_1 =
\Bigl(
\dfrac 1 {\delta t} \bigl(\rho^n_\Ds \bfu^n_\edge-\rho^{n-1}_\Ds \bfu_\edge^{n-1} \bigr)
+ \dfrac 1 {2|D_\edge|}\ \sum_{\substack{\edged \in \edgesd(D_\edge) \\ \edged=\edgeedgeprime}}  \fluxd^n\ (\bfu^n_\edge+\bfu^n_{\edge'})
\Bigr)\cdot \bfu_\edge^n.
\]
Now, using the identity $2\, (\rho|\bfa|^2-\rho^*\bfa\cdot\bfb) = \rho|\bfa|^2 - \rho^*|\bfb|^2 + \rho^* |\bfa-\bfb|^2 + (\rho-\rho^*)|\bfa|^2$ with $\rho=\rho_\Ds^n$, $\rho^*=\rho_\Ds^{n-1}$, $\bfa=\bfu_\edge^n$ and $\bfb=\bfu_\edge^{n-1}$, we obtain
\[\begin{aligned}
T_1
&
= \dfrac 1 {2\delta t} \Bigl( \rho^n_\Ds |\bfu^n_\edge|^2-\rho^{n-1}_\Ds |\bfu_\edge^{n-1}|^2 \Bigr)
+ \dfrac 1 {2\delta t}\ \rho^{n-1}_\Ds\ | \bfu^n_\edge - \bfu^{n-1}_\edge |^2
+ \dfrac 1 {2 |D_\edge|}\ \sum_{\edged=\edgeedgeprime} \fluxd^n\ \bfu^n_\edge \cdot \bfu^n_{\edge'}
\\ &
+ \Bigl(\frac 1 {\delta t} \ (\rho^n_\Ds-\rho^{n-1}_\Ds)
+ |D_\edge|^{-1}\sum_{\edged\in\edges(D_\edge)} \fluxd^n \Bigr)\, \frac{|\bfu_\edge^n|^2} 2.
\end{aligned}\]
The last term is equal to zero since the dual densities $\rho_\Ds$ and the dual fluxes $\fluxd$ are chosen so as to satisfy the discrete mass conservation equation \eqref{eq:mass_D} on the cells of the dual mesh.
This concludes the proof.
\end{proof}

Lemmas \ref{lmm:rho2} and \ref{lmm:kinetic_energy} allow to prove the following proposition, which gathers the "local in time" estimates satisfied by the discrete solutions.
The first three inequalities readily provide by induction uniform (\ie\ independent from the time and space steps) bounds for the solution.
On the opposite, the right hand side of inequality \eqref{eq:p_stab} blows up when the time step tends to zero, which is consistent with the fact that no estimate is known for the pressure in the continuous case; this bound is thus useful only for the proof of the existence of a solution to the scheme.

\begin{prop}[Estimates on the discrete solutions] \label{prop:bvfaible}
Let $\disc$ be a staggered discretization of $\Omega$ in the sense of Definition \ref{def:disc}, such that $\theta_\disc\leq \theta_0$ where $\theta_\disc$ is defined in \eqref{eq:reg}.
For $n\in\lbrace1,..,N\rbrace$, assume that the density $\rho^{n-1}$ is such that $0<\rhomin \leq \rho_K^{n-1}\leq \rhomax$ for all $K$ in $\mesh$.
Then, any solution $(\rho^n,\bfu^n,p^n)$ to the scheme \eqref{eq:impl_scheme} satisfies the following relations:
\begin{align} &
\label{eq:imp_maxprinc}
\rhomin \leq \rho_K^n \leq \rhomax, \quad  \forall K \in \mesh,
\\[0.5ex] & \label{eq:rho_bv} 
\displaystyle \frac 1 2 \sum_{K \in \mesh} |K|\, (\rho^n_K)^2 
+ \frac{\delta t} 2 \sum_{\substack{\edge \in \edgesint \\ \edge=K|L}} |\edge|\, (\rho_L^n-\rho_K^n)^2  \displaystyle |\bfu_\edge^n \cdot\bfn_{K,\edge}|
+ \mathcal{R}_\rho^n
= \frac 1 2 \sum_{K \in \mesh}|K|\, (\rho^{n-1}_K)^2 ,
\\ & \label{eq:imp_stab} \displaystyle
\dfrac 1 2 \sum_{\edge \in \edgesint} |D_\edge|\, \rho^n_\Ds\, |\bfu^n_\edge|^2
+ \delta t \norm{\bfu^n}_\brok^2 + \mathcal{R}^n_\bfu
= \frac 1 2 \sum_{\edge \in \edgesint} |D_\edge|\, \rho^{n-1}_\Ds\, |\bfu^{n-1}_\edge|^2, 
\\[0.5ex] & \label{eq:p_stab}
\norm{p^n}_{L^2(\Omega)} \leq C_p^{n-1},
\end{align}
where $C_p^{n-1}$ only depends on $\rho^{n-1}$, $\bfu^{n-1}$, $\theta_0$, $d$, $\Omega$, $\delta t$ and $h_\disc$.
The terms $\mathcal{R}^n_\rho$ and $\mathcal{R}^n_\bfu$ are the following non-negative remainders which depend on differences of time translates of the density and the velocity respectively:
\[
\mathcal{R}^n_\rho = \frac 1 2 \sum_{K \in \mesh} |K|\  (\rho^n_K-\rho^{n-1}_K)^2, \qquad
\mathcal{R}^n_\bfu = \frac 1 2 \sum_{\edge \in \edgesint} |D_\edge|\ \rho^{n-1}_\Ds\ |\bfu^n_\edge-\bfu^{n-1}_\edge|^2.
\]
\end{prop}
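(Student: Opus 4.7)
The four estimates will be established in order, each exploiting the structure already introduced in Lemmas \ref{lmm:rho2} and \ref{lmm:kinetic_energy} together with the inf-sup stability of Lemma \ref{lmm:inf-sup}.

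For the maximum principle \eqref{eq:imp_maxprinc}, the plan is to rewrite the discrete mass balance \eqref{eq:sch_mass} using the upwind formula \eqref{eq:rho_upwind} and to split the normal velocities at each face into their positive and negative parts. Invoking the incompressibility constraint \eqref{eq:sch_div}, one checks that the resulting linear system for $(\rho_K^n)_{K\in\mesh}$ is driven by an M-matrix with row sums equal to one (positive diagonal, non-positive off-diagonal entries, and the incompressibility-induced cancellation gives row sum one). Its inverse is therefore non-negative with row sums also equal to one, so $\rho^n$ lies in the convex hull of the values of $\rho^{n-1}$, and the bounds follow by induction on $n$.

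For the density $\xL^2$-identity \eqref{eq:rho_bv}, I would sum Lemma \ref{lmm:rho2} over $K\in\mesh$ and multiply by $\delta t$. The convective sum $\sum_K\sum_{\edge\in\edges(K)}|\edge|(\rho_\edge^n)^2\,\bfu_\edge^n\cdot\bfn_{K,\edge}$ vanishes by conservativity of the primal fluxes, each internal face being counted twice with opposite signs. For the residual $R_K^n$, the key observation is that, because of upwinding, $(\rho_\edge^n-\rho_K^n)^2$ vanishes on outflow faces, while on inflow faces $\rho_\edge^n=\rho_L^n$ and $-\bfu_\edge^n\cdot\bfn_{K,\edge}=|\bfu_\edge^n\cdot\bfn_{K,\edge}|$. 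Each internal face thus contributes exactly once (as an inflow face for one of its two adjacent cells), producing the $\xBV$-type sum over $\edgesint$ in \eqref{eq:rho_bv}, whereas the time-difference part of $R_K^n$ assembles into the non-negative remainder $\mathcal{R}_\rho^n$. The kinetic energy estimate \eqref{eq:imp_stab} is obtained by an entirely analogous summation, now of \eqref{eq:imp_kin} over $\edge\in\edgesint$ multiplied by $\delta t$: the convective term collapses by the conservativity of the dual fluxes (assumption (H2) of Definition \ref{def:dualfluxes}), since on each internal dual face $\edged=D_\edge|D_{\edge'}$ the two contributions cancel thanks to $F_{\edge',\edged}=-F_{\edge,\edged}$; the diffusive term re-assembles into $\|\bfu^n\|_\brok^2$ by the very definition of the Rannacher-Turek stiffness matrix; and the discrete duality \eqref{eq:dual_divgrad1} combined with the discrete divergence-free constraint \eqref{eq:sch_div} kills the pressure--velocity coupling: $\sum_\edge |D_\edge|\,\bfu_\edge^n\cdot(\gradi p)^n_\edge=-\int_\Omega p^n\,\dive_\mesh\bfu^n\,\dx=0$.

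The pressure bound \eqref{eq:p_stab} is the most delicate step. Since the scheme enforces $m(p^n)=0$, Lemma \ref{lmm:inf-sup} yields a test function $\bfv\in\xH_{\edges,0}(\Omega)^d$ with $\|\bfv\|_\brok=1$ and $\int_\Omega p^n\,\dive_\mesh\bfv\,\dx\ge\beta\,\|p^n\|_{\xL^2(\Omega)}$. Taking the scalar product of the discrete momentum balance \eqref{eq:sch_mom} with $|D_\edge|\,\bfv_\edge$ and summing over $\edge\in\edgesint$, the pressure contribution is bounded below in absolute value by $\beta\,\|p^n\|_{\xL^2(\Omega)}$ (by \eqref{eq:dual_divgrad1}), so I only need to bound the three remaining terms. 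The diffusion term is controlled by $\|\bfu^n\|_\brok\,\|\bfv\|_\brok$, the convective term by Lemmas \ref{lmm:convconv} and \ref{lmm:estimates_Bbar} (in terms of $\|\bfu^n\|_\brok^2$), and the time-derivative term, which is the source of the $\delta t^{-1}$ and $h_\disc$-dependency, by a Cauchy-Schwarz argument combined with the uniform $\xL^\infty$-bound on $\rho^n,\rho^{n-1}$ (already provided by \eqref{eq:imp_maxprinc}), Lemmas \ref{lmm:poincare_brok} and \ref{lmm:injsobolev_br} applied to $\bfv$, and an inverse inequality for $\bfu^{n-1}$; the quantities $\rho^{n-1}$ and $\bfu^{n-1}$ enter in $C_p^{n-1}$ through their discrete $\xL^\infty$ and $\xL^2$ norms.

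The main obstacle is precisely this last estimate, because each term in the momentum equation must be tested against an abstract inf-sup function and several of the bounds must be allowed to depend on $\delta t$ and $h_\disc$; this dependency is however harmless in the sequel, since \eqref{eq:p_stab} will only be used to derive the existence of discrete solutions by a topological degree argument in Section \ref{subsec:existence}, and it is the remaining three mesh-uniform estimates that will enter the convergence analysis.
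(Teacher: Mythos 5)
Your proposal is correct and takes essentially the same route as the paper: \eqref{eq:rho_bv} and \eqref{eq:imp_stab} by summing Lemmas \ref{lmm:rho2} and \ref{lmm:kinetic_energy} and letting conservativity, the gradient--divergence duality \eqref{eq:dual_divgrad1} and the constraint \eqref{eq:sch_div} kill the convective and pressure contributions, and \eqref{eq:p_stab} via the \emph{inf-sup} test function with Lemmas \ref{lmm:convconv} and \ref{lmm:estimates_Bbar} for the convection term. The only point where you add detail is the maximum principle, which the paper dismisses as classical and you justify by the M-matrix/convex-combination argument --- correct, up to the cosmetic remark that the raw row sums equal $|K|/\delta t$ (they become one only after normalizing each equation), and that no inverse inequality is actually needed to bound the time-derivative term in the pressure estimate, Cauchy--Schwarz together with \eqref{eq:imp_stab} and the discrete Poincar\'e inequality sufficing.
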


\begin{proof}
The maximum principle for the density \eqref{eq:imp_maxprinc} is a classical consequence of the upwind choice \eqref{eq:sch_mass} and the discrete divergence-free constraint \eqref{eq:sch_div}. 
Relation \eqref{eq:rho_bv} is obtained by summing \eqref{eq:rho2} over the cells of the mesh.
As usual, the convective terms (\ie\ the second term in \eqref{eq:rho2}) vanishes by conservativity; the second term in \eqref{eq:rho_bv} is obtained by summing over the cells the second term of the remainder \eqref{eq:R_rho2}, and using the definition of the upwind approximation of the density at the face.
Similarly, summing Equation \eqref{eq:imp_kin} over the faces $\edge \in \edgesint$ yields \eqref{eq:imp_stab}, since the discrete gradient and divergence operators are dual with respect to the $\xL^2$-inner product (see \eqref{eq:dual_divgrad2}) and the convection term vanishes in the summation once again by conservativity (assumption $(H2)$ of Definition \ref{def:dualfluxes}).

\medskip
Finally, we prove the estimate on the pressure.
The Rannacher-Turek finite element discretization satisfies an \textit{inf-sup} condition, which implies that for $p^n$ solution of \eqref{eq:impl_scheme} (observing that $\int_\Omega p^n(\bfx) \dx=0$), there exists $\bfv$ in $\xH_{\edges,0}(\Omega)$, with $\norm{\bfv}_\brok=1$ and a positive real number $\beta$, depending only on $\Omega$ and $\theta_0$, such that
\[
\int_\Omega p^n(\bfx)\, \dive_{\mesh} \bfv (\bfx) \dx \geq \beta\ \norm{p^n}_{L^2(\Omega)}.
\]
Hence, taking the scalar product of \eqref{eq:sch_mom} by $|D_\edge|\,\bfv_\edge$ and summing over $\edge$ in $\edgesint$, we get $\beta \ \norm{p^n}_{L^2(\Omega)} \leq T_1+T_2+T_3$ with
\[ \begin{aligned} &
T_1 =\dfrac 1 {\delta t}\sum_{\edge\in\edgesint}|D_\edge|(\rho^n_\Ds \bfu^n_\edge-\rho^{n-1}_\Ds \bfu_\edge^{n-1})\cdot\bfv_\edge,
\\ &
T_2 =\sum_{\edge\in\edgesint}\bfv_\edge \cdot \sum_{\edged \in\edgesd(D_\edge)} \fluxd^n \bfu^n_\edged,
\\ &
T_3 =- \sum_{\edge\in\edgesint}|D_\edge|(\lapi \bfu)_\edge^n\cdot\bfv_\edge.
\end{aligned} \]
We prove that each one of these terms is controlled by a constant depending only on $\rho^{n-1}$, $\bfu^{n-1}$, $\theta_0$, $d$, $\Omega$, $\delta t$ and $h_\disc$. 
For the first term we have, by the Cauchy-Schwarz inequality:
\[\begin{aligned}
|T_1| 
&
\leq \dfrac 1 {\delta t}\sum_{\edge \in \edgesint}|D_\edge|(\rho^n_\Ds |\bfu^n_\edge\cdot \bfv_\edge|+\rho^{n-1}_\Ds |\bfu_\edge^{n-1} \cdot \bfv_\edge| )
\\ &
\leq \dfrac{\rhomax^{1/2}}{\delta t} 
\Bigl( \sum_{\edge \in \edgesint}|D_\edge|\, (\rho^n_\Ds)^{1/2}\, |\bfu^n_\edge \cdot \bfv_\edge|
+ \sum_{\edge \in \edgesint}|D_\edge|\, (\rho^{n-1}_\Ds)^{1/2}\, |\bfu_\edge^{n-1} \cdot \bfv_\edge| \Bigr)
\\ &
\leq \dfrac{\rhomax^{1/2}}{\delta t}\ \norm{\bfv}_{L^2(\Omega)}
\Bigl(\bigl( \sum_{\edge \in \edgesint}|D_\edge|\, \rho^n_\Ds\, |\bfu^n_\edge|^2 \bigr)^{1/2}
+ \bigl(\sum_{\edge \in \edgesint}|D_\edge|\, \rho^{n-1}_\Ds\, |\bfu^{n-1}_\edge|^2 \bigr)^{1/2} \Bigr).
\end{aligned}\]
We thus obtain the expected control on $T_1$ by invoking \eqref{eq:imp_stab} and the Poincar\'e inequality of Lemma \ref{lmm:poincare_brok}, which yields $\norm{\bfv}_{L^2(\Omega)} \leq C\ \norm{\bfv}_\brok=C$, with $C$ only depending on $d$, $\Omega$ and $\theta_0$.
For the second term, we have $T_2= \mcal{Q}^{\rm mom}_\mesh(\rho^n, \bfu^n,\bfu^n,\bfv) + \mcal{Q}^{\rm mom}_\edges(\rho^n, \bfu^n,\bfu^n,\bfv) -\mcal{Q}^{\rm mom}_\mesh(\rho^n ,\bfu^n,\bfu^n,\bfv)$ which by Lemma \ref{lmm:convconv} (with $\eps =1$) and Lemma \ref{lmm:estimates_Bbar} yields:
\[
|T_2| \leq C\, \rhomax \norm{\bfu^n}_\brok^2 \norm{\bfv}_\brok
+ C'\rhomax \norm{\bfu^n}_\brok^2 \norm{\bfv}_\brok
\leq C'',
\]
where $C''$ depends on $\Omega$, $d$, $\delta t$, $\rho^{n-1}$, $\bfu^{n-1}$, $\theta_0$ and $h_\disc$.
Finally, to control the last term $T_3$, we observe that:
\[
T_3 =	\sum_{K \in \mesh} \int_K \gradi \tilde{\bfu}^n(\bfx): \gradi \tilde{\bfv} (\bfx) \dx.
\]
Using first the Cauchy-Schwarz inequality for the integration on the cell $K$ and then the discrete Cauchy-Schwarz inequality for the summation over $K$ in $\mesh$, we easily obtain $|T_3|\leq  \norm{\bfu^n}_\brok\norm{\bfv}_\brok$, and we conclude once again by \eqref{eq:imp_stab}. 
\end{proof} 
%
%
\subsection{Existence of a solution to the scheme} \label{subsec:existence}

The existence of a solution to the scheme \eqref{eq:impl_scheme}, which consists in an algebraic non-linear system, is obtained by a topological degree argument.
Its proof is based on an abstract theorem stated in Appendix \ref{thrm:degree}.

\begin{thrm}[Existence of a solution] \label{thrm:existence}
For $n\in\lbrace1,..,N\rbrace$, assume that the density $\rho^{n-1}$ is such that $0 < \rhomin \leq \rho_K^{n-1}\leq \rhomax$ for all $K$ in $\mesh$. Then the non-linear system \eqref{eq:impl_scheme} admits at least one solution $(\rho^n, \bfu^n, p^n)$ in $\xH_\mesh(\Omega) \times \xH_{\edges,0}(\Omega)^d \times \xH_\mesh(\Omega)$, and any possible solution satisfies the estimates of Proposition \ref{prop:bvfaible}.
\end{thrm}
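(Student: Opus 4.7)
The second statement follows at once from Proposition \ref{prop:bvfaible}, so the whole task is existence. My plan is to use the topological degree argument of the appendix, with a homotopy that switches the convective terms on and off. I would work on the finite-dimensional vector space $V=\xH_\mesh(\Omega)\times\xH_{\edges,0}(\Omega)^d\times\xH_\mesh(\Omega)$, imposing the zero-mean constraint $\sum_{K\in\mesh}|K|\,p_K=0$ on the pressure component. For $\lambda\in[0,1]$, the $\lambda$-scheme is obtained from \eqref{eq:impl_scheme} by replacing the primal fluxes $\fluxK^n$ in \eqref{eq:sch_mass} by $\lambda\,\fluxK^n$ and the dual fluxes $\fluxd^n$ in the convective term of \eqref{eq:sch_mom} by $\lambda\,\fluxd^n$, the relations \eqref{eq:pd2}, \eqref{eq:rho_upwind} and \eqref{eq:F_syst} being left unchanged. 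This defines a continuous map $H:[0,1]\times V\to V$ whose zeros at $\lambda=1$ are exactly the solutions to \eqref{eq:impl_scheme}.

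At $\lambda=0$, the mass equation forces $\rho^n_K=\rho^{n-1}_K$, hence $\rho^n_\Ds=\rho^{n-1}_\Ds\geq\rhomin$ by \eqref{eq:pd2}, and the momentum/divergence block reduces to a linear generalized Stokes problem on the Rannacher-Turek space with positive zeroth-order coefficient $\rho^{n-1}_\Ds/\delta t$. This linear problem has a unique solution thanks to Lemmata \ref{lmm:poincare_brok} and \ref{lmm:inf-sup}, so the topological degree of $H(0,\cdot)$ on any open ball containing that solution equals $\pm 1$. To transport this degree to $\lambda=1$, the key step is to show that every zero of $H(\lambda,\cdot)$, for $\lambda\in[0,1]$, lies in a fixed bounded open subset of $V$ depending only on the data at step $n-1$. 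I would do so by rerunning the proof of Proposition \ref{prop:bvfaible} and checking that none of its arguments is specific to $\lambda=1$: (i) the M-matrix structure of the upwind mass equation, combined with $(\dive\bfu)^n_K=0$, preserves \eqref{eq:imp_maxprinc} for every $\lambda\in[0,1]$; (ii) the identity \eqref{eq:F_syst} and the conservativity of the primal fluxes promote the primal mass balance into the dual mass balance $|D_\edge|(\rho^n_\Ds-\rho^{n-1}_\Ds)/\delta t + \lambda\sum_\edged\fluxd^n=0$ on each interior diamond cell, so that the manipulation of Lemma \ref{lmm:kinetic_energy} still yields \eqref{eq:imp_stab}, the extra $\lambda$ in the momentum convection being exactly compensated by the $\lambda$ in the dual mass balance; (iii) once \eqref{eq:imp_maxprinc} and \eqref{eq:imp_stab} hold uniformly in $\lambda$, the inf-sup argument for the pressure in Proposition \ref{prop:bvfaible} goes through verbatim, using Lemmata \ref{lmm:convconv} and \ref{lmm:estimates_Bbar} to bound the convective contributions uniformly in $\lambda$.

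With these uniform bounds in hand, the homotopy invariance of the degree delivered by the appendix yields a zero of $H(1,\cdot)$, that is, a solution $(\rho^n,\bfu^n,p^n)$ of \eqref{eq:impl_scheme}, which then automatically satisfies the estimates of Proposition \ref{prop:bvfaible}. The main obstacle I foresee is precisely point (ii) above: the derivation of the kinetic energy identity rests on an exact cancellation between the time-derivative term of the momentum balance and a dual mass balance, and this cancellation survives only because the chosen homotopy scales the momentum convection and the primal mass convection by the same $\lambda$. Any asymmetric scaling would leave an uncontrollable residue of the form $[(\rho_\Ds^n-\rho_\Ds^{n-1})/\delta t]\,|\bfu^n_\edge|^2/2$ in the kinetic energy identity and destroy the uniform velocity estimate. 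Once this compatibility is secured, the remaining verifications are essentially rewrites of the arguments already carried out in Section \ref{subsec:estimates}.
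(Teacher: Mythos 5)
Your proposal is correct and follows essentially the same route as the paper: the same homotopy scaling both the primal mass fluxes and the dual momentum convection fluxes by $\lambda$, the same reduction at $\lambda=0$ to a uniquely solvable generalized Stokes problem on the Rannacher--Turek pair, and the same transport of the degree via $\lambda$-uniform versions of the estimates of Proposition \ref{prop:bvfaible}. The only cosmetic difference is that you impose the zero-mean pressure constraint directly on the ambient space, whereas the paper encodes it by adding $\frac{1}{|K|}\sum_{L}|L|\,p_L$ to the divergence equation; your explicit check that the $\lambda$-scaled dual mass balance still cancels against the $\lambda$-scaled momentum convection is precisely the ``easy verification'' the paper leaves implicit.
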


\begin{proof}
This proof makes use of Theorem \ref{thrm:degree}.
Let $N_\mesh={\rm card}(\mesh)$ and $N_\edges=d\ {\rm card}(\edgesint)$; we identify $\xH_\mesh(\Omega)$ with $\xR^{N_\mesh}$ and $\xH_{\edges,0}(\Omega)^d$ with $\xR^{N_\edges}$.
Let $V=\xR^{N_\mesh} \times \xR^{N_\edges} \times \xR^{N_\mesh}$.
We consider the function $F:\ V\times [0,1] \rightarrow V$ given by:
\[
F(\rho,\bfu, p,\lambda)=
\left| \begin{array}{ll} \displaystyle
\frac 1 {\delta t}(\rho_K-\rho_K^{n-1}) + \lambda \frac 1 {|K|} \sum_{\edge \in\edges(K)} \fluxK,
&
K \in \mesh
\\[4ex] \displaystyle
\frac 1 {\delta t}(\rho_\Ds \bfu_\edge-\rho^{n-1}_\Ds \bfu_\edge^{n-1})
+ \lambda \frac 1 {|D_\edge|} \sum_{\edged \in\edgesd(D_\edge)} \fluxd \bfu_\edged
\\[4ex] \hspace{5cm} - (\lapi\bfu)_\edge
+ (\gradi p)_\edge,
 &
\edge \in \edgesint
\\[2ex] \displaystyle
-(\dive\bfu)_K + \frac 1 {|K|} \sum_{L \in \mesh} |L|\ p_L,
&
K \in \mesh.
\end{array} \right.
\]
The function $F$ is continuous from $V\times[0,1]$ to $V$ and the problem $F(\rho,\bfu,p,1)=0$ is equivalent to system \eqref{eq:impl_scheme}.
Indeed, multiplying the third line in the above formula by $|K|$, summing and using the fact that $\bfu_\edge=0$ for $\edge\in\edgesext$ yields $ \sum_{L \in \mesh} |L|\ p_L=0$ and therefore $(\dive\bfu)_K=0$ for all $K\in\mesh$.
Moreover, an easy verification shows that the problem $F(\rho,u,p,\lambda)=0$ for $\lambda$ in $[0,1]$, satisfies the same estimates as stated in Proposition \ref{prop:bvfaible} uniformly  in $\lambda$.
Hence, defining
\[
{\mathcal O}= \Bigl\lbrace (\rho,\bfu,p) \in V \mbox{ s.t. } \frac \rhomin 2 < \rho < 2\,\rhomax,\ \norm{\bfu}_\brok< C \mbox{ and } \norm{p}_{L^2(\Omega)}< C \Bigr\rbrace,
\]
with $C$ (strictly) larger than the right-hand sides of \eqref{eq:imp_stab} and \eqref{eq:p_stab}, the second hypothesis of Theorem \ref{thrm:degree} is also satisfied.
Therefore, in order to prove the existence of at least one solution to the scheme \eqref{eq:impl_scheme}, it remains to show that the topological degree of $F(\rho,\bfu,p,0)$ with respect to $0_V$ and ${\mathcal O}$ is non-zero.
The function  $G:(\rho,\bfu,p) \mapsto F(\rho,\bfu,p,0)$ is clearly differentiable on ${\mathcal O}$, and its jacobian matrix is given by
\[
{\rm Jac}~ G(\rho,\bfu,p)= 
\left [ \begin{array}{c|c}
\ds \frac 1 {\delta t} \text{Id}_{\xR^{N_\mesh\times N_\mesh}} 	& 0
\\[1.5ex] \hline 
\ds A	& \multirow{2}{*}{$S(\bfu,p)$}
\\
0  & 
\end{array} \right ], 
\]
where $A$ is some matrix in $\xR^{N_\edges \times N_\mesh}$ and $S(\bfu,p)\in\xR^{(N_\edges+N_\mesh)\times(N_\edges+ N_\mesh)}$ is the jacobian matrix associated with the \emph{inf-sup} stable Rannacher-Turek finite element discretization of the following Stokes problem:

\emph{find $(\bfu,p)$ such that $\ds \int_\Omega p(\bfx) \dx =0$ and}
\begin{equation}\label{eq:gen_stokes}
\begin{array}{ll} \ds
\frac 1 {\delta t} \rho(\bfx)\, \bfu - \lapi \bfu + \gradi p = 0, & \text{in $\Omega$,}
\\[2ex]
\dive \bfu = 0, & \text{in $\Omega$,}
\\[1ex]
\bfu = 0, & \text{on $\dv \Omega$.}
\end{array}
\end{equation}
With $\lambda=0$, the density unknowns $(\rho_K)_{K\in \mesh}$ are set to $(\rho^{n-1}_K)_{K\in \mesh}$ by the first block of equations, so the values of the density at the faces (which is computed over each dual cell $D_\edge$ as a linear combination of the density in the neighbour cells) are also known, and positive.
The generalized Stokes problem \eqref{eq:gen_stokes} thus also has one solution and only one, and there exists one and only one point of ${\mathcal O}$ such that $F(\rho,\bfu,p,0)= 0_V$.
Since the Jacobian matrix at this point is invertible (since $\text{Id}_{\xR^{N_\mesh\times N_\mesh}}$ and $S(\bfu,p)$ are invertible), this implies that the topological degree of $F(\rho,\bfu,p,0)$ with respect to ${\mathcal O}$ and $0_V$ is non-zero.
Therefore, by Theorem \ref{thrm:degree}, there exists at least one solution $(\rho,\bfu,p)$ to the equation $F(\rho,\bfu,p,1)=0$, \emph{i.e.} to the scheme \eqref{eq:impl_scheme}.
\end{proof}
%
%
\section{Proof of the convergence result}\label{sec:proof}

We begin by proving discrete analogues to the classical estimates satisfied by the exact solutions of problem \eqref{eq:pb}  (Section \ref{subsec:uniform_estimates}).
These are uniform estimates in the sense that they only depend on the parameters of the problem and on the upper bound $\theta_0$ on the discretization regularity.
Then, in Sections \ref{subsec:compactness_u} and \ref{subsec:compactness_rho}, we prove strong compactness for the discrete velocity and weak compactness for the discrete density.
We then conclude the proof by passing successively to the limit in the mass and momentum balance equations (Sections \ref{subsec:convergence_rho} and \ref{subsec:convergence_u} respectively).
In this last step, we actually need strong convergence for the sequence of discrete densities, which is proved in Section \ref{subsec:strong_convergence_rho}.
%
%
\subsection{Uniform estimates} \label{subsec:uniform_estimates}
We define $E_\disc(\Omega)=\lbrace \bfu \in \xH_{\edges,0}(\Omega)^d,~ \dive_\mesh \bfu = 0 \rbrace$ and we endow $E_\disc(\Omega)$ with the norm $\norm{.}_\brok$.
For $q$ in $[1 ,\infty)$, we define on the space of discrete velocity functions (see expression \eqref{eq:def_rho_u_e_p}) the following 
$\xL^q((0,T);E_\disc(\Omega))$ norm:
\[
\norm{\bfu}_{\xL^q((0,T);E_\disc(\Omega))}^q=\sum_{n=1}^N \delta t\ \norm{\bfu^n}^q_\brok.
\]

\begin{prop}[Uniform estimates for discrete solutions] \label{prop:estimates} \ \\
Let $\disc$ be a given staggered discretization such that $\theta_\disc \leq \theta_0$ for some positive real number $\theta_0$ and let $\delta t$ be a given time step.
Let $(\rho,\bfu,p)$ be the corresponding discrete solution given by the scheme \eqref{eq:impl_scheme}, as defined in \eqref{eq:def_rho_u_e_p}.
Then the following estimates hold:
\begin{eqnarray} \label{eq:estimate1}
& (i) & \qquad
\rhomin \leq \rho(\bfx,t) \leq \rhomax, \quad \text{for all $t\geq 0$ and for a.e. $\bfx$ in $\Omega$},
\\ \label{eq:estimate2}
& (ii) & \qquad \sum_{n=1}^N \delta t  \sum_{\substack{\edge \in \edgesint \\ \edge=K|L}} |\edge| (\rho_L^n -\rho_K^n)^2  | \bfu_\edge^n\cdot \bfn_{K,\edge}|\leq C_0,
\\ \label{eq:estimate3}
& (iii) & \qquad
\norm{\bfu }_{\xL^\infty((0,T);\xL^2(\Omega))}
+ \norm{\bfu }_{\xL^2((0,T);E_\disc(\Omega))} \leq C_1. 
\end{eqnarray}
where $\rhomin$ and $\rhomax$ stand for the minimum and maximum values of the initial density $\rho_0$, as defined in assumption \eqref{eq:H_rho} of Section \ref{sec:cont}, and $C_0$, and $C_1$ are  positive real numbers depending only on $T$, $\Omega$, $d$, $\rho_0$, $\bfu_0$ and $\theta_0$.
\end{prop}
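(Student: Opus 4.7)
The plan is to extract each of the three estimates from the local-in-time identities of Proposition~\ref{prop:bvfaible} by a straightforward induction/telescoping argument, so that the proof amounts to bookkeeping on the sign of each term.

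For $(i)$, I would observe that assumption \eqref{eq:H_rho} together with the definition $\rho_K^0=|K|^{-1}\int_K\rho_0\,\dx$ yields $\rhomin\le\rho_K^0\le\rhomax$ for all $K\in\mesh$, and then iterate the discrete maximum principle \eqref{eq:imp_maxprinc}: if the bound holds at time $t_{n-1}$, then Proposition~\ref{prop:bvfaible} propagates it to $t_n$. Since $\rho(\bfx,t)$ is piecewise constant equal to $\rho_K^n$ on $K\times(t_{n-1},t_n]$, the pointwise bound follows.

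For $(ii)$, I would sum the identity \eqref{eq:rho_bv} over $n=1,\ldots,N$. All the terms on the left are non-negative (the weighted density-jump term, the telescoping tail $\frac12\sum_K|K|(\rho_K^N)^2$ that survives, and the remainders $\mathcal{R}_\rho^n$), while the right-hand side telescopes to $\frac12\sum_K|K|(\rho_K^0)^2\le\tfrac{|\Omega|}{2}\rhomax^2$ by $(i)$. Dropping all non-negative terms except the weighted-jumps sum yields the claim with $C_0$ depending only on $|\Omega|$ and $\rhomax$.

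For $(iii)$, I would fix $n^\star\in\{1,\ldots,N\}$ and sum the kinetic energy identity \eqref{eq:imp_stab} over $n=1,\ldots,n^\star$. Dropping the non-negative remainders $\mathcal{R}_\bfu^n$ and bounding $\rho_\Ds^{n^\star}\ge\rhomin$ (via the convex-combination formula \eqref{eq:pd2} together with $(i)$) gives
\[
\frac{\rhomin}{2}\sum_{\edge\in\edgesint}|D_\edge|\,|\bfu_\edge^{n^\star}|^2 + \sum_{n=1}^{n^\star}\delta t\,\norm{\bfu^n}_\brok^2 \le \frac12\sum_{\edge\in\edgesint}|D_\edge|\,\rho_\Ds^0\,|\bfu_\edge^0|^2.
\]
The right-hand side is controlled by $\frac{\rhomax}{2}\norm{\bfu_0}_{\xL^2(\Omega)^d}^2$, using $\rho_\Ds^0\le\rhomax$ and Jensen's inequality on the face averages defining $\bfu_\edge^0$. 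Taking the supremum over $n^\star$ delivers the $\xL^\infty((0,T);\xL^2(\Omega))$ bound, while $n^\star=N$ gives the $\xL^2((0,T);E_\disc(\Omega))$ bound.

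There is no real obstacle here, since Proposition~\ref{prop:bvfaible} has done the analytic work. The only point deserving care is to confirm that the dual density stays bounded below by $\rhomin>0$ uniformly in $n$ and in the mesh, which is needed in order to extract the plain $\xL^2$-norm of the velocity from the density-weighted kinetic-energy functional; this is a direct consequence of $(i)$ and the explicit convex combination in \eqref{eq:pd2}.
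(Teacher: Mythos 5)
Your proposal is correct and follows essentially the same route as the paper: all three estimates are obtained by telescoping the local-in-time identities \eqref{eq:imp_maxprinc}, \eqref{eq:rho_bv} and \eqref{eq:imp_stab} of Proposition \ref{prop:bvfaible} over $n$, dropping the non-negative remainders, and using $\rhomin\leq\rho_\Ds^n\leq\rhomax$ (which indeed follows from \eqref{eq:pd2} and the maximum principle) to pass from the density-weighted kinetic energy to the plain $\xL^2$-norm. The only cosmetic slip is that $\bfu_\edge^0$ in \eqref{eq:inicond} is the average of $\bfu_0$ over the dual cell $D_\edge$ rather than over the face $\edge$, but your Jensen argument applies verbatim to that average.
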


\begin{proof}
Estimate \eqref{eq:estimate1} is a direct consequence of equation \eqref{eq:imp_maxprinc} in Proposition \ref{prop:bvfaible}.
Inequality \eqref{eq:estimate2} is obtained from \eqref{eq:rho_bv} after summing over $n$.
In the same way, the estimates on $\norm{\bfu }_{\xL^\infty((0,T);\xL^2(\Omega))}$ and $\norm{\bfu }_{\xL^2((0,T);E_\disc(\Omega))}$ are obtained through \eqref{eq:imp_stab} after summing over $n$ and using the fact that $\rho$ is positive and bounded from below by $\rhomin>0$.
\end{proof}

%
%
\medskip

\subsection{Compactness of the sequence of discrete velocities} \label{subsec:compactness_u}
In this section and in the following one,  $(\rho\m,\bfu\m)_{m\in \xN}$ are the discrete densities and velocities solutions of the scheme \eqref{eq:inicond}-\eqref{eq:impl_scheme} associated with $(\disc\m,\delta t\m)_{m\in\xN}$ a regular sequence of staggered discretizations and time steps. In this section, we prove the following compactness result on the sequence of velocities $(\bfu\m)_{m\in \xN}$:

\begin{prop} \label{prop:compactness_u}
Under the assumptions of Theorem \ref{thrm:convergence}, there exists
\[
\bar \bfu \in \xL^{\infty}((0,T);\xL^2(\Omega)^d)\cap \xL^2((0,T);\xH_0^1(\Omega)^d), \quad \text{with }\dive (\bar \bfu) =0,
\]
and a subsequence of $(\bfu\m)_{m\in\xN}$, still denoted $(\bfu\m)_{m\in\xN}$, which converges to $\bar \bfu$ strongly in $\xL^q((0,T);\xL^2(\Omega)^d)$ for all $q\in[1,\infty)$.
\end{prop}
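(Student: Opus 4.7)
The plan is to establish strong $\xL^2((0,T)\times\Omega)^d$ compactness of $(\bfu\m)$ by a Kolmogorov--Riesz--Fréchet type argument, using the broken $H^1$ structure to control space translates and the discrete momentum equation to control time translates, and then to upgrade the mode of convergence by interpolating with the uniform $\xL^\infty(\xL^2)$ bound. Proposition~\ref{prop:estimates} provides uniform bounds for $(\bfu\m)$ in $\xL^\infty((0,T); \xL^2(\Omega)^d)$ and in $\xL^2((0,T); E_{\disc\m}(\Omega))$, so Banach--Alaoglu and weak compactness supply a subsequence converging weakly-$\star$ in $\xL^\infty(\xL^2)$ to some $\bar\bfu$. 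The divergence-free property of $\bar\bfu$ will then follow from passing to the limit in $\dive_\mesh\bfu\m=0$ using Lemma~\ref{lmm:dual_divgrad} against smooth test functions.

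For space translates, extending $\bfu\m(\cdot,t)$ by zero outside $\Omega$ to obtain $(\bfu\m)^\sharp(\cdot,t)$ and integrating Lemma~\ref{lmm:controle_sauts_br} in time yields, for every $\eta\in\xR^d$,
\begin{equation*}
\int_0^T \norm{(\bfu\m)^\sharp(\cdot+\eta,t)-(\bfu\m)^\sharp(\cdot,t)}_{\xL^2(\xR^d)^d}^2 \dt \leq C\,|\eta|\,(|\eta|+h\m)\, \int_0^T\norm{\bfu\m(\cdot,t)}_\brok^2 \dt,
\end{equation*}
which tends to zero uniformly in $m$ as $|\eta|\to 0$ thanks to Proposition~\ref{prop:estimates}$(iii)$ and $h\m\to 0$.

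The main obstacle is the time translate estimate. Writing $\tau=k\,\delta t\m$ for some positive integer $k$ (up to an error that vanishes with $\delta t\m$), I would bound
\begin{equation*}
A_m(\tau) = \int_0^{T-\tau}\norm{\bfu\m(\cdot,t+\tau)-\bfu\m(\cdot,t)}_{\xL^2(\Omega)^d}^2 \dt
\end{equation*}
by expanding $\rho^{n+k}_\Ds\bfu^{n+k}_\edge - \rho^n_\Ds\bfu^n_\edge$ telescopically and invoking the discrete momentum equation \eqref{eq:sch_mom} at each step. Using the dual mass balance \eqref{eq:mass_D} in the form $\rho^j_\Ds\bfu^j - \rho^{j-1}_\Ds\bfu^{j-1} = \rho^{j-1}_\Ds(\bfu^j-\bfu^{j-1}) + (\rho^j_\Ds-\rho^{j-1}_\Ds)\bfu^j$ and exploiting $\rho^{j-1}_\Ds\geq\rhomin>0$, the scalar product with $\bfu^{n+k}_\edge-\bfu^n_\edge$ summed over $\edge$ and over $n$ will give, after reordering, the desired bound $A_m(\tau)\leq C\,\tau$ uniformly in $m$. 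The convective, diffusive and pressure contributions can be controlled via Lemmas~\ref{lmm:convconv} and~\ref{lmm:estimates_Bbar} combined with the uniform bounds of Proposition~\ref{prop:estimates}, together with a time-averaged pressure bound $\delta t\m\sum_n\norm{p\m^n}_{\xL^2}^2 \leq C$ obtained by testing \eqref{eq:sch_mom} against an inf-sup partner of $p\m^n$ (Lemma~\ref{lmm:inf-sup}); notice that only this weaker, time-integrated bound is needed here, since the pointwise-in-time control of Proposition~\ref{prop:bvfaible} blows up as $\delta t\m\to 0$.

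Combining the two translate estimates, a space-time Kolmogorov--Riesz--Fréchet argument extracts a subsequence converging strongly in $\xL^2((0,T); \xL^2(\Omega)^d)$ to some limit, which by uniqueness of weak limits coincides with $\bar\bfu$. The regularity $\bar\bfu\in\xL^2((0,T); \xH^1_0(\Omega)^d)$ follows from weak lower semicontinuity applied to the broken norm together with Lemmas~\ref{lmm:compact_br} and~\ref{lmm:compact_fv} used slice-wise. Finally, for every $q\in[2,\infty)$,
\begin{equation*}
\norm{\bfu\m-\bar\bfu}_{\xL^q(0,T;\xL^2)}^q \leq \norm{\bfu\m-\bar\bfu}_{\xL^2(0,T;\xL^2)}^2\, \norm{\bfu\m-\bar\bfu}_{\xL^\infty(0,T;\xL^2)}^{q-2},
\end{equation*}
so the uniform $\xL^\infty(\xL^2)$ bound upgrades the convergence to $\xL^q(\xL^2)$ for every $q\in[1,\infty)$, the range $q\in[1,2)$ being immediate by Hölder.
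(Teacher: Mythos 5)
Your overall architecture is the one the paper uses: uniform bounds from Proposition \ref{prop:estimates}, a time-translate estimate extracted from the discrete momentum balance combined with the dual mass balance \eqref{eq:mass_D} and the lower bound $\rho_\Ds\geq\rhomin$, compactness by a Kolmogorov-type theorem, and the interpolation $\norm{\cdot}_{\xL^q(\xL^2)}\leq\norm{\cdot}_{\xL^1(\xL^2)}^{1/q}\norm{\cdot}_{\xL^\infty(\xL^2)}^{1-1/q}$ to upgrade to all $q<\infty$. The only structural difference is that you run a space-time Kolmogorov argument (space translates from Lemma \ref{lmm:controle_sauts_br} integrated in time, plus time translates), whereas the paper applies the time-only version (Theorem \ref{thrm:kolmogorov} with $B=\xL^2(\Omega)^d$) and feeds the spatial compactness into hypothesis $(h2)$ via the discrete Rellich theorem applied to the time averages $\int_\xR(P\bfu\m)\phi\dt$. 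Both routes are legitimate, and your space-translate bound is exactly the one the paper uses later to identify $\bar\bfu\in\xL^2((0,T);\xH^1_0(\Omega)^d)$.

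There is, however, one genuine gap: the pressure. You propose to control the pressure contribution in the time-translate estimate by a time-averaged bound $\delta t\m\sum_n\norm{p^{n}}_{\xL^2(\Omega)}^2\leq C$ obtained from the \emph{inf-sup} condition. This bound is not available. Testing \eqref{eq:sch_mom} against an \emph{inf-sup} partner of $p^n$ produces the discrete time-derivative term $\frac{1}{\delta t}\sum_\edge|D_\edge|(\rho^n_\Ds\bfu^n_\edge-\rho^{n-1}_\Ds\bfu^{n-1}_\edge)\cdot\bfv_\edge$, which is only $O(1/\delta t)$ for each $n$ and does not telescope once absolute values are taken; this is precisely why \eqref{eq:p_stab} carries a constant that blows up as $\delta t\to 0$, and why no time-integrated $\xL^2$ pressure estimate holds (consistently with the continuous theory, where $p$ is only in $\xW^{-1,\infty}((0,T);\xL^2_0(\Omega))$). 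The repair is immediate and is what the paper does: your test function $\bfu^{n+k}-\bfu^n$ lies in $E_{\disc}(\Omega)$, so by the duality of Lemma \ref{lmm:dual_divgrad} the pressure term vanishes \emph{identically} and no pressure estimate of any kind is required. A secondary, non-fatal point: the rate $A_m(\tau)\leq C\tau$ you announce is stronger than what the sketched argument yields. The convection term, handled through Lemmas \ref{lmm:convconv}, \ref{lmm:estimates_Bbar} and \ref{lmm:estimates_Qmass}, together with the fact that $\norm{\bfu^{n+k}-\bfu^n}_\brok$ is only square-integrable in time, gives a modulus of the form $C(\tau^{1/4}+\delta t^{1/4}+h_\disc^{1/2})$ (Lemma \ref{lmm:time-trans}); any such vanishing modulus suffices for Kolmogorov, but you should not claim the linear rate.
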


The proof of Proposition \ref{prop:compactness_u} relies on estimates of the time translations of the velocity. The following lemma provides an estimate on the $\xL^2(\xL^2)$-norm of the time translations of the discrete velocity $\bfu$ for a given discretization $\disc$, which leads to strong compactness in $\xL^2((0,T);\xL^2(\Omega)^d)$  through Kolomogorov's compactness Theorem stated in Appendix \ref{sec:kolmogorov}.

\begin{lem}[Time translations of the discrete velocity] \label{lmm:time-trans} \ \\
Let $\disc$ be a given staggered discretization such that $\theta_\disc \leq \theta_0$ for some positive real number $\theta_0$ and let $\delta t$ be a given time step satisfying $\delta t \leq 1$. Let $(\rho,\bfu,p)$ be the corresponding discrete solution given by the scheme \eqref{eq:impl_scheme}, as defined in \eqref{eq:def_rho_u_e_p}. The discrete velocity $\bfu$ satisfies:
\begin{equation} 
\label{eq:time-trans}
 \int_{\tau}^{T}\norm{\bfu(.,t)-\bfu(.,t-\tau)}^2_{\xL^2(\Omega)^d}\dt \leq C_2 \left ( \tau^{\frac 14} + \delta t^{\frac 14} + h_\disc^{\frac 12} \right ), \qquad \text{for all} \quad \tau \in(0,\min(T,1)),
\end{equation}
where $C_2$ is a positive constant, depending only on $T$, $\Omega$, $d$, $\rho_0$, $\bfu_0$ and $\theta_0$.
\end{lem}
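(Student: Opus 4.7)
The plan is to exploit $\rho^n_\Ds \geq \rhomin > 0$ together with the identity
\[
\rho^n_\Ds\,(\bfu^n - \bfu^m) = (\rho^n_\Ds \bfu^n - \rho^m_\Ds \bfu^m) - (\rho^n_\Ds - \rho^m_\Ds)\,\bfu^m
\]
to split the estimate into a \emph{momentum} and a \emph{density} contribution. For $t \in (t_{n-1}, t_n]$ and $t-\tau \in (t_{m-1}, t_m]$, both $\bfu^n$ and $\bfu^m$ satisfy \eqref{eq:sch_div}, so $\Delta\bfu := \bfu^n-\bfu^m$ is discretely divergence-free; testing the identity above against $\Delta\bfu$ gives $\rhomin \int_\Omega |\Delta\bfu|^2 \dx \leq T_1(t) + T_2(t)$ with
\[
T_1(t) = \int_\Omega (\rho^n_\Ds \bfu^n - \rho^m_\Ds \bfu^m)\cdot \Delta\bfu \dx, \qquad T_2(t) = -\int_\Omega (\rho^n_\Ds - \rho^m_\Ds)\,\bfu^m\cdot \Delta\bfu \dx.
\]

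\textbf{Step 1 (momentum bound on $T_1$).} I telescope $\rho^n_\Ds \bfu^n - \rho^m_\Ds \bfu^m$ along the steps $k = m+1, \ldots, n$ and test the discrete momentum balance \eqref{eq:sch_mom} against $\Delta\bfu$; the pressure term drops by \eqref{eq:dual_divgrad1} since $\dive_\mesh \Delta\bfu=0$. For the convection I combine the sharp bound of Lemma \ref{lmm:estimates_Bbar}, $|\mcal{Q}^{\rm mom}_\mesh(\rho^k, \bfu^k,\bfu^k,\Delta\bfu)| \leq C \rhomax \|\bfu^k\|_{L^4}^2\|\Delta\bfu\|_\brok$, with the Gagliardo--Nirenberg interpolation $\|\bfu^k\|_{L^4}^2 \leq C \|\bfu^k\|_{L^2}^{1/2}\|\bfu^k\|_\brok^{3/2}$ obtained from Lemma \ref{lmm:injsobolev_br}; the discrepancy $\mcal{Q}^{\rm mom}_\edges - \mcal{Q}^{\rm mom}_\mesh$ is handled by Lemma \ref{lmm:convconv} with $\eps = 1/2$, producing the $h_\disc^{1/2}$ term. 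A H\"older inequality in $k$ with exponents $(4,4/3)$ applied to $\sum_k \delta t\,\|\bfu^k\|_\brok^{3/2}$ yields the $(\tau+\delta t)^{1/4}$ factor, and Cauchy--Schwarz in $k$ takes care of the diffusive contribution. The outcome is $|T_1(t)| \leq C\,[(\tau+\delta t)^{1/4}+h_\disc^{1/2}]\,\|\Delta\bfu(t)\|_\brok$; integration in $t$, using $\|\Delta\bfu\|_{L^2((0,T);E_\disc)} \leq 2\|\bfu\|_{L^2((0,T);E_\disc)}$ (bounded by Proposition \ref{prop:estimates}), gives $\int_\tau^T |T_1(t)|\,dt \leq C\,[(\tau+\delta t)^{1/4}+h_\disc^{1/2}]$.

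\textbf{Step 2 (mass bound on $T_2$).} Since $\rho^n_\Ds-\rho^m_\Ds$ is a convex combination of primal-cell density differences (cf.~\eqref{eq:pd2}), introducing the primal-cell average $\bar f \in \xH_\mesh$ with $\bar f_K = \sum_{\edge\in\edges(K)}\xi_K^\edge\,\bfu^m_\edge\cdot\Delta\bfu_\edge$, one rewrites $T_2 = -\int_\Omega (\rho^n-\rho^m)\bar f \dx$. Telescoping the mass balance \eqref{eq:sch_mass} and applying conservative reordering of the primal fluxes so that they pair with $\gradi_\edges \bar f$ (in the spirit of Lemma \ref{lmm:dual_divgrad}) gives
\[
|T_2(t)| \leq \rhomax \sum_{k=m+1}^n \delta t\,\|\bfu^k\|_{L^2}\,\|\gradi_\edges \bar f\|_{L^2} \leq C\,(\tau+\delta t)^{1/2}\,\|\gradi_\edges \bar f\|_{L^2}.
\]
The factor $\|\gradi_\edges \bar f\|_{L^2}$ is controlled by the FV-seminorm $\|\bfu^m\cdot\Delta\bfu\|_\fv$ of the product, and a discrete Leibniz rule applied face-by-face, paired with a H\"older/Sobolev argument at the $L^4$ endpoint of Lemma \ref{lmm:injsobolev_br}, yields $\|\gradi_\edges \bar f\|_{L^2} \leq C\,\|\bfu^m\|_\brok\|\Delta\bfu\|_\brok$. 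A final Cauchy--Schwarz in $t$ then produces $\int_\tau^T |T_2(t)|\,dt \leq C\,(\tau+\delta t)^{1/2}$.

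\textbf{Conclusion and main obstacle.} Combining Steps 1 and 2, dividing by $\rhomin$, using $(\tau+\delta t)^{1/2}\leq (\tau+\delta t)^{1/4}$ under $\tau,\delta t \leq 1$, and then $(\tau+\delta t)^{1/4}\leq \tau^{1/4}+\delta t^{1/4}$, yields the desired estimate. The hard part is Step 2: bounding $\|\gradi_\edges \bar f\|_{L^2}$ by $\|\bfu^m\|_\brok\|\Delta\bfu\|_\brok$ in three dimensions is delicate because a naive discrete Leibniz rule produces an $L^\infty$-norm of one velocity factor, which is not uniform in $h_\disc$; one must instead pair jumps of one factor with pointwise values of the other via the $L^4$-endpoint of Lemma \ref{lmm:injsobolev_br} in order to absorb all mesh-dependent constants.
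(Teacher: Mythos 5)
Your Step 1 is essentially the paper's argument: telescoping the momentum balance, killing the pressure by discrete duality, treating the convection via Lemma \ref{lmm:convconv} with $\eps=1/2$ together with Lemma \ref{lmm:estimates_Bbar} and the interpolation $\norm{\bfu}_{\xL^4}\leq \norm{\bfu}_{\xL^2}^{1/4}\norm{\bfu}_{\xL^6}^{3/4}$, and H\"older in the time index to produce $(\tau+\delta t)^{1/4}$. That part is sound.

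Step 2 contains a genuine gap. After reordering onto the primal mesh and pairing the fluxes with $\gradi_\edges \bar f$, you invoke the inequality $\norm{\gradi_\edges \bar f}_{\xL^2}\leq C\,\norm{\bfu^m}_\brok\,\norm{\Delta\bfu}_\brok$ for the product $\bar f_K=\sum_{\edge\in\edges(K)}\xi_K^\edge\,\bfu^m_\edge\cdot\Delta\bfu_\edge$. This inequality is false uniformly in $h_\disc$: its continuous analogue is $\norm{\gradi(u\cdot v)}_{\xL^2}\lesssim\norm{u}_{\xH^1}\norm{v}_{\xH^1}$, which fails in both $d=2$ and $d=3$ (the Leibniz term $(\gradi u)\,v$ lies only in $\xL^{3/2}$ when $\gradi u\in\xL^2$ and $v\in\xL^6$). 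At the discrete level the same obstruction appears: after the discrete Leibniz rule one faces $\sum_K h_K^{d-2}\sum_{\edge',\edge''}|\bfu^m_{\edge'}|^2\,|\Delta\bfu_{\edge'}-\Delta\bfu_{\edge''}|^2$, and landing exactly on $\norm{\Delta\bfu}_\fv^2$ forces the other factor into $\xL^\infty$, whose discrete Sobolev constant blows up as $h_\disc\to0$. The $\xL^4$ endpoint you mention does not repair this, because you have already spent Cauchy--Schwarz placing the transporting velocity $\bfu^k$ in $\xL^2$; there is then no integrability left to distribute over the product. The correct accounting is trilinear, $\xL^6\times\xL^3\times(\text{gradient in }\xL^2)$, i.e.\ the transporting field must be kept in $\xL^6$ and the broken norm must fall on only \emph{one} of the two factors of the product at a time. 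This is precisely what the paper does: it expresses $\rho^n_\Ds-\rho^{n-k}_\Ds$ through the \emph{dual} mass balance \eqref{eq:mass_D}, writes the resulting term as $\sum_p\delta t\,\mcal{Q}^{\rm mass}_\edges(\rho^p,\bfu^p,\bfu^n,\bfv)$, and applies Lemma \ref{lmm:estimates_Qmass}, whose bound $C\norm{\rho}_{\xL^\infty}\norm{\bfu}_{\xL^6}(\norm{\bfv}_{\xL^3}\norm{\bfw}_\brok+\norm{\bfw}_{\xL^3}\norm{\bfv}_\brok)$ realizes exactly that exponent bookkeeping. Your Step 2 needs to be replaced by this (or an equivalent trilinear) argument; as written it cannot be closed.
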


\begin{proof}
Let $\tau$ be a given real number in $(0,\min(T,1))$. For every $t\in(\tau,T)$, let $n,k$ be the two integers defined by $n=\Ent{\frac{t}{\delta t}}$ and $n-k=\Ent{\frac{t-\tau}{\delta t}}$. It is easily seen that $0\leq n-k \leq n \leq N$ and $k \delta t \leq \tau+\delta t$. We have for all $\edge$ in $\edgesint$: 
\[\begin{aligned}
\rho^{n-k}_\Ds ( \bfu^n_\edge- \bfu_\edge^{n-k}) 
&
= (\rho^n_\Ds \bfu^n_\edge-\rho^{n-k}_\Ds \bfu_\edge^{n-k}) -   \bfu^n_\edge(\rho^n_\Ds -\rho^{n-k}_\Ds)
\\[1ex]
&
= \sum_{p=n-k+1}^{n}(\rho^{p}_\Ds \bfu^{p}_\edge-\rho^{p-1}_\Ds \bfu_\edge^{p-1}) -  \bfu^n_\edge \sum_{p=n-k+1}^{n}(\rho^p_\Ds -\rho^{p-1}_\Ds)
\\[1ex]
&
=  \sum_{p=n-k+1}^{n} \delta t \Big [ -{|D_\edge|}^{-1} \sum_{\edged \in\edgesd(D_\edge)} \fluxd^p \bfu^p_\edged
+ (\lapi \bfu)_\edge^p
- (\gradi p)_\edge^p \Big ]
\\[1ex]
& \hspace{5cm}
+ \bfu^n_\edge \sum_{p=n-k+1}^{n} \delta t {|D_\edge|}^{-1}\sum_{\edged \in\edgesd(D_\edge)} \fluxd^p,
\end{aligned}
\]
by the discrete momentum balance equations \eqref{eq:sch_mom} and the discrete mass balance over the dual cells \eqref{eq:mass_D}.
Let $\bfv(.,t)$ be a time-dependent element of $E_\disc(\Omega)$ which we denote $\bfv(\bfx,t)=\sum_{\edge\in\edges}\bfv_\edge(t)\chi_\Ds(\bfx)$ and denote $\tilde{\rho}(\bfx,t-\tau)=\sum_{\edge\in\edges}\rho_\Ds^{n-k}\chi_\Ds(\bfx)$. Taking the scalar product of the above equation with $|D_\edge|\,\bfv_\edge(t)$ and summing over $\edge \in \edges$ (recall that $\bfv_\edge(t)=0$ for $\edge\in\edgesext$), we obtain: 
\begin{multline*}
\int_{\Omega}\tilde{\rho}(.,t-\tau)(\bfu(.,t)-\bfu(.,t-\tau)) \cdot \bfv(.,t) =\sum_{\edge \in\edgesint}  |D_\edge|\, \rho^{n-k}_\Ds (\bfu^n_\edge - \bfu_\edge^{n-k}) \cdot \bfv_\edge(t) 
\\ = T_1(t) +T_2(t) + T_3(t) + T_4(t),
\end{multline*}
where:
\[
 \begin{array}{ll}
  \ds T_1(t) = -\sum_{p=n-k+1}^{n} \delta t \sum_{\edge \in\edgesint} |D_\edge|\, (\gradi p)_\edge^p \cdot \bfv_\edge(t),
  & \quad \ds T_2(t) = \sum_{p=n-k+1}^{n} \delta t \sum_{\edge \in\edgesint} |D_\edge|\, (\lapi \bfu)_\edge^p \cdot \bfv_\edge(t),\\
  \ds T_3(t) = - \sum_{p=n-k+1}^{n} \delta t \sum_{\edge \in\edgesint}\bfv_\edge(t) \cdot\sum_{\edged \in\edgesd(D_\edge)} \fluxd^p \bfu^p_\edged, 
  & \quad \ds T_4(t) =  \sum_{p=n-k+1}^{n} \delta t \sum_{\edge \in\edgesint}(\bfu^n_\edge \cdot \bfv_\edge(t)) \sum_{\edged \in\edgesd(D_\edge)} \fluxd^p.
 \end{array}
\]

Since $\dive_\mesh \bfv(.,t)=0$, by the discrete gradient-divergence duality (see Lemma \ref{lmm:dual_divgrad}), one has $T_1(t)=0$.

The second term is controlled as follows:
\[
\begin{aligned}
|T_2(t)| 
= \Big |  \sum_{p=n-k+1}^{n} \delta t \sum_{K\in\mesh} \int_K \gradi \tilde \bfu^p(\bfx) : \gradi \tilde \bfv(t)(\bfx) \dx \Big | 
&\leq  \sum_{p=n-k+1}^{n} \delta t \norm{\bfu^p}_{\brok}\norm{\bfv(.,t)}_{\brok}
\\
&\leq \Big ( \sum_{p=n-k+1}^{n} \delta t \norm{\bfu^p}_{\brok}^2 \Big)^{\frac 12} ( k \delta t  )^{\frac 12} \norm{\bfv(.,t)}_{\brok},
\end{aligned} 
\]
by the Cauchy-Schwarz inequality. Hence $|T_2(t)|\leq C_1 (\tau + \delta t)^{\frac 12} \norm{\bfv(.,t)}_{\brok}$ by estimate \eqref{eq:estimate3}, which gives $|T_2(t)|\leq C_1 (\tau^{\frac 14}+\delta t^{\frac 14}) \norm{\bfv(.,t)}_{\brok}$ since $\tau,\,\delta t<1$.

For the third term, we remark that
$
T_3(t)= -\sum_{p=n-k+1}^{n} \delta t \, \mcal{Q}^{\rm mom}_\edges(\rho^p,\bfu^p,\bfu^p,\bfv(.,t))  = T_{3,1}(t)+T_{3,2}(t)
$
with:
\[
 \begin{aligned}
T_{3,1}(t) &= -\sum_{p=n-k+1}^{n} \delta t \, \big (\mcal{Q}^{\rm mom}_\edges(\rho^p,\bfu^p,\bfu^p,\bfv(.,t))  - \mcal{Q}^{\rm mom}_\mesh(\rho^p,\bfu^p,\bfu^p,\bfv(.,t)) \big ), \\
T_{3,2}(t) &= -\sum_{p=n-k+1}^{n} \delta t \, \mcal{Q}^{\rm mom}_\mesh(\rho^p,\bfu^p,\bfu^p,\bfv(.,t)).
 \end{aligned}
\]
By Lemma  \ref{lmm:convconv}, and estimates \eqref{eq:estimate1} and \eqref{eq:estimate3}, the first term is controlled as follows:
\[
|T_{3,1}(t)| \leq  C  \sum_{p=n-k+1}^{n} \delta t \ \norm{\rho^p}_{\xL^{\infty}(\Omega)}\, \norm{\bfu^p}_\brok^2\, \norm{\bfv(.,t)}_\brok h_\disc^{1-\eps}  \leq C \norm{\rho_0}_{\xL^{\infty}(\Omega)} C_1^2\norm{\bfv(.,t)}_\brok h_\disc^{1-\eps}, 
\]
where $C$ is a positive constant depending only on $\Omega$, $d$ and $\theta_0$. One may choose $\eps=\frac 12$ for both dimensions $d=2$ and $d=3$. By Lemma \ref{lmm:estimates_Bbar}, the second term $T_{3,2}(t)$ is controlled as follows:
\[
\begin{aligned}
|T_{3,2}(t)| 
  &\leq \sum_{p=n-k+1}^{n} \delta t  \norm{\rho^p}_{\xL^{\infty}(\Omega)^d}\, \norm{\bfu^p}_{\xL^4(\Omega)}^2\, \norm{\bfv(.,t)}_\brok 
  \\
  &\leq  \norm{\rho_0}_{\xL^{\infty}(\Omega)} \norm{\bfv(.,t)}_\brok (k\delta t)^{\frac 14} 
         \Big ( \sum_{p=n-k+1}^{n} \delta t  \norm{\bfu^p}_{\xL^4(\Omega)}^{\frac 83} \Big )^{\frac 34}, 
\end{aligned}
\]
thanks to  H\"older's inequality with powers $4$ and $4/3$. We then remark that 
\[
\norm{\bfu^p}_{\xL^4(\Omega)}\leq \norm{\bfu^p}_{\xL^2(\Omega)}^{\frac 14} \norm{\bfu^p}_{\xL^6(\Omega)}^{\frac 34} \leq C_1^{\frac 14}\norm{\bfu^p}_{\xL^6(\Omega)}^{\frac 34} \leq  C_1^{\frac 14} C(d,\Omega,\theta_0) \norm{\bfu^p}_{\brok}^{\frac 34}
\]
by the Cauchy-Schwarz inequality, estimate \eqref{eq:estimate3} and the discrete Sobolev embedding given in Lemmas \ref{lmm:injsobolev_br} and \ref{lmm:H1ns}. Injecting this in the above inequality on $|T_{3,2}(t)|$, and invoking once again estimate \eqref{eq:estimate3} one gets:
\[
 \begin{aligned}
  |T_{3,2}(t)| 
  &\leq  C_1^{\frac 12}C(d,\Omega,\theta_0)^2 \norm{\rho_0}_{\xL^{\infty}(\Omega)} \norm{\bfv(.,t)}_\brok (k\delta t)^{\frac 14} 
         \Big ( \sum_{p=n-k+1}^{n} \delta t  \norm{\bfu^p}_{\brok}^{2} \Big )^{\frac 34} 
  \\ &\leq C_1^{2}C(d,\Omega,\theta_0)^2 \norm{\rho_0}_{\xL^{\infty}(\Omega)}(\tau+\delta t)^{\frac 14} \norm{\bfv(.,t)}_\brok.
 \end{aligned}
\]
Hence, $|T_{3}(t)|\leq C \big (\tau^{\frac 14}+\delta t^{\frac 14}  + h_\disc^{\frac 12} \big )\norm{\bfv(.,t)}_\brok$ for some constant $C$ depending only on the initial data and on $\Omega$, $T$, $d$ and $\theta_0$.

The fourth term can be estimated as follows. We first remark that 
\[
T_4(t)=  \sum_{p=n-k+1}^{n} \delta t \mcal{Q}^{\rm mass}_\edges(\rho^p,\bfu^p,\bfu^n,\bfv(.,t)).
\]
where, by Lemma \ref{lmm:estimates_Qmass}, on has
\[
\begin{aligned}
|\mcal{Q}^{\rm mass}_\edges(\rho^p,\bfu^p,\bfu^n,\bfv(.,t))| 
&\leq 
C\norm{\rho^p}_{\xL^{\infty}(\Omega)} \norm{\bfu^p}_{\xL^6(\Omega)^d} \left ( \norm{\bfu^n}_{\xL^3(\Omega)^d} \norm{\bfv(.,t)}_{\brok} + \norm{\bfv(.,t)}_{\xL^3(\Omega)^d} \norm{\bfu^n}_{\brok}\right ).
\end{aligned}
\]
Using estimate \eqref{eq:estimate1} on the density, the fact that $\bfu^n=\bfu(.,t)$, and  the now familiar continuous embeddings $\xH_{\edges,0}(\Omega)^d \subset \xL^6(\Omega)^d \subset \xL^3(\Omega)^d$, one gets:
\[
\begin{aligned}
 |T_4(t)| & \leq C' \sum_{p=n-k+1}^{n} \delta t \norm{\bfu^p}_{\brok} \norm{\bfu(.,t)}_{\brok} \norm{\bfv(.,t)}_{\brok}
	  \\ &\leq C' (k\delta t)^{\frac 12} \Big ( \sum_{p=n-k+1}^{n} \delta t  \norm{\bfu^p}_{\brok}^{2} \Big )^{\frac 12}  \norm{\bfu(.,t)}_{\brok} \norm{\bfv(.,t)}_{\brok}.
\end{aligned}
\]

Using once again estimate \eqref{eq:estimate3}, and the fact that $\tau,\,\delta t<1$ yields  $|T_4(t)| \leq C (\tau^{\frac 14}+\delta t^{\frac 14}) \norm{\bfu(.,t)}_{\brok} \norm{\bfv(.,t)}_{\brok}$ for some constant $C$ depending only on the initial data and on $\Omega$, $T$, $d$ and $\theta_0$.

Collecting the estimates on $T_1(t)$, $T_2(t)$, $T_3(t)$ and $T_4(t)$, one gets:
\[
 \int_{\Omega}\tilde{\rho}(.,t-\tau)(\bfu(.,t)-\bfu(.,t-\tau)) \cdot \bfv(.,t)\leq C \big (\tau^{\frac 14}+\delta t^{\frac 14} + h_\disc^{\frac 12} \big ) \big ( 1+  \norm{\bfv(.,t)}_{\brok}^2 + \norm{\bfu(.,t)}_{\brok}^2 \big ).
\]
Finally, selecting $\bfv(.,t):=\bfu(.,t)-\bfu(.,t-\tau)$ and recalling that $\widetilde{\rho}(.,t)\geq\rho_{\rm min}>0$ almost everywhere for all $t\in(0,T)$ yields:
\[
 \norm{\bfu(.,t)-\bfu(.,t-\tau)}^2_{\xL^2(\Omega)^d} \leq C \big (\tau^{\frac 14}+\delta t^{\frac 14} + h_\disc^{\frac 12} \big )\big ( 1+  2\norm{\bfu(.,t-\tau)}_{\brok}^2 + 3\norm{\bfu(.,t)}_{\brok}^2 \big ).
\]
Integrating for $t$ in $(\tau,T)$ and invoking once again estimate \eqref{eq:estimate3} yields the expected result \eqref{eq:time-trans}.
\end{proof}

We may now give the proof of Proposition  \ref{prop:compactness_u}.
\begin{proof}[Proof of Proposition \ref{prop:compactness_u}]
We proceed in four steps.
\begin{list}{-}{\itemsep=2ex \topsep=1ex \leftmargin=0.5cm \labelwidth=0.7cm \labelsep=0.3cm \itemindent=0.cm}
\item \textbf{Step 1: compactness in $\xL^2(\xL^2)$.}
The first step consists in extracting a strongly converging subsequence from $(\bfu\m)_{m\in\xN}$ thanks to Kolmogorov's compactness Theorem.
This result is recalled in appendix \ref{sec:kolmogorov} (Theorem \ref{thrm:kolmogorov}).
In our setting, the Banach space $B$ of Theorem \ref{thrm:kolmogorov} is $\xL^2(\Omega)^d$, $p=2$, and the subset $A$ is defined by $\ds A=\cup_{m\in\xN}\lbrace \bfu\m\rbrace$. Any $\bfu\m\in A$ satisfies $\norm{\bfu\m}_{\xL^2((0,T);\xL^2(\Omega)^d)}\leq C\norm{\bfu\m}_{\xL^2((0,T);E_{\disc\m})}\leq CC_1$ by the Poincar\'e inequality of Lemma \eqref{lmm:poincare_brok} and estimate \eqref{eq:estimate3} so that $A\subset\xL^2((0,T);\xL^2(\Omega)^d)$.
We now check the three assumptions $(h1)$-$(h3)$ of the Theorem.
\begin{list}{-}{\itemsep=0.5ex \topsep=0.5ex \leftmargin=1.cm \labelwidth=0.7cm \labelsep=0.3cm \itemindent=0.cm}
\item $(h1)$ -- The operator $P$ is defined by extending any function $\bfu\in A$ by zero outside the interval $(0,T)$. Clearly, one has $\norm{P\bfu}_{\xL^2(\xR;\xL^2(\Omega)^d)}= \norm{\bfu}_{\xL^2((0,T);\xL^2(\Omega)^d)}\leq C_1$ for all $\bfu\in A$.
\item $(h2)$ -- For all $\phi \in \mcal{C}_c^\infty(\xR,\xR)$, and $\bfu\m\in A$, the quantity $\int_{\xR}(P\bfu\m)\phi \dt$ is an element of $\xH_{\edges\m,0}(\Omega)^d$ with: 
\[\hspace{1.8cm}
 \norm{\int_\xR(P\bfu\m)\phi \dt}_{\edges\m,{\rm b}}\leq\int_0^T \norm{\bfu\m(.,t)}_{\edges\m,{\rm b}} |\phi(t)| \dt \leq C_1 \norm{\phi}_{\xL^2((0,T);\xR)},
\]
by the Cauchy-Scharz inequality and estimate \eqref{eq:estimate3}. Hence, by the discrete Rellich Theorem \ref{lmm:compact_br}, the family $\lbrace \int_{\xR}(P\bfu)\phi \dt, \, \bfu\in A\rbrace$ is relatively compact in $\xL^2(\Omega)^d$.
\item $(h3)$ -- It remains to prove that $\parallel P\bfu-P\bfu(\square-\tau) \parallel_{\xL^2(\xR,\xL^2(\Omega))^d} \to 0$ as $\tau\to 0^+$ uniformly with respect to $\bfu\in A$, where the square stands for the time variable $t$. For all $\bfu \in A$ and $\tau\in(0,T)$ we have:
\begin{multline*}
 \hspace{2cm} \parallel P\bfu-P\bfu(\square-\tau) \parallel^2_{\xL^2(\xR,\xL^2(\Omega)^d)} 
 \\ 
 = \int_0^\tau \norm{\bfu(.,t)}_{\xL^2(\Omega)^d}^2\dt + \int_\tau^T \norm{\bfu(.,t)-\bfu(.,t-\tau)}_{\xL^2(\Omega)^d}^2\dt + \int^T_{T-\tau} \norm{\bfu(.,t)}_{\xL^2(\Omega)^d}^2\dt.
\end{multline*}
%
Thanks to the $\xL^\infty(\xL^2)$-estimate \eqref{eq:estimate3}, the first and third terms are each controlled by $C_1^2\,\tau$, and the second term is controlled thanks to Lemma \ref{lmm:time-trans} on the time translations of the velocity. Let $\eps >0$ be a small real number (smaller than $1$). There exists $M\in\xN$ such that, $(\delta t\m)^{\frac 14} \leq \eps$ and $(h\m)^{\frac12}\leq \eps$ for all $m\geq M$. Let $\bar \tau$ be a real number in $(0,\min(T,1))$ such that $\bar \tau^{\frac 14}\leq \eps$. Thanks to Lemma \ref{lmm:time-trans}, we have, for all $m\geq M$:
\[
\parallel P\bfu\m-P\bfu\m(\square-\tau) \parallel^2_{\xL^2(\xR,\xL^2(\Omega)^d)}  \leq  (3\,C_2\,+2\,C_1^2)\, \eps, \qquad  \forall \tau\in (0,\bar\tau).
\]
Now, for a fixed $m < M$, we have, by a classical result that $\parallel P\bfu\m-P\bfu\m(\square-\tau) \parallel^2_{\xL^2(\xR,\xL^2(\Omega)^d)}$ tends to zero as $\tau\to 0^+$. Hence, for all $m< M$, there exists $\tau\m>0$ such that
\[
 \parallel P\bfu\m-P\bfu\m(\square-\tau) \parallel^2_{\xL^2(\xR,\xL^2(\Omega)^d)} \leq \eps, \qquad  \forall \tau\in (0,\tau\m).
\]
Defining $\tilde \tau = \min (\bar \tau, \min \lbrace \tau\m, \, 0\leq m < M \rbrace )$ and $C=\max(1,3\,C_2\,+2\,C_1^2)$, two numbers which are independent of $\bfu\in A$, we have proven that:
\[
  0 < \tau \leq \tilde \tau \qquad  \Longrightarrow  \qquad \parallel P\bfu-P\bfu(\square-\tau) \parallel^2_{\xL^2(\xR,\xL^2(\Omega)^d)} \leq C \eps, \quad  \forall \bfu \in A,
\]
which expresses that $\parallel P\bfu-P\bfu(\square-\tau) \parallel_{\xL^2(\xR,\xL^2(\Omega))^d} \to 0$ as $\tau\to 0^+$ uniformly for $\bfu\in A$.
\end{list}
Hence, Kolmogorov's Theorem \ref{thrm:kolmogorov} applies and there exists $\bar \bfu \in \xL^2((0,T);\xL^2(\Omega)^d)$ and a subsequence of $(\bfu\m)_{m\in\xN}$, still denoted $(\bfu\m)_{m\in\xN}$, which converges towards $\bar \bfu$ in $\xL^2((0,T);\xL^2(\Omega)^d)$ as m tends to infinity.
\item \textbf{Step 2: convergence in $\xL^q(\xL^2)$.}
As $\bfu\m\to\bar u$ in $\xL^2((0,T);\xL^2(\Omega)^d)$ we clearly have convergence in $\xL^q((0,T);\xL^2(\Omega)^d)$ for $1\leq q \leq 2$. Thanks to \eqref{eq:estimate3}, we have $\norm{\bfu\m}_{\xL^\infty((0,T);L^2(\Omega)^d)} \leq C$, for all $m\in\xN$.
Hence, there exists $\hat \bfu \in \xL^\infty((0,T);\xL^2(\Omega)^d)$ and a subsequence $(\bfu^{\phi(m)})_{m\in\xN}$ such that $\bfu^{\phi(m)} \weak^* \hat \bfu$ in $\xL^\infty((0,T);\xL^2(\Omega)^d)$.
As $\bfu^{\phi(m)} \to \bar \bfu$ in $\xL^1((0,T);\xL^2(\Omega)^d)$, the uniqueness of the limit in the sense of distributions implies that $\bar \bfu=\hat \bfu$, which means that $\bar \bfu \in \xL^\infty((0,T);\xL^2(\Omega)^d)$.
Now, using a classical interpolation result on $\xL^p(0,T)$ spaces, we have for all $q\in [1,\infty)$
\[
\norm{\bar \bfu - \bfu^{(m)}}_{\xL^q((0,T);\xL^2(\Omega)^d)}
\leq \norm{\bar \bfu - \bfu^{(m)}}_{\xL^1((0,T);\xL^2(\Omega)^d)}^{\frac 1 q} \norm{\bar \bfu - \bfu^{(m)}}_{\xL^\infty((0,T);\xL^2(\Omega)^d)}^{1-\frac 1 q},
\]
which implies that $\bfu^{(m)}$ converges towards $\bar \bfu$ in $\xL^q((0,T);L^2(\Omega)^d)$  for all $q\in [1,\infty)$ as $m$ tends to infinity.

\item \textbf{Step 3: regularity of the limit.}
According to \eqref{eq:estimate3}, $\norm{\bfu\m}_{\xL^2((0,T);E_{\disc\m}(\Omega))} \leq C_1$, for all $m\in\xN$.
Integrating equation \eqref{eq:controle_sauts_br} of Lemma \ref{lmm:controle_sauts_br} along the time variable, we get
\[
\norm{{\bfu^\sharp}\m(.+\eta,\square)-{\bfu^\sharp}\m(.,\square)}_{\xL^2(\xR^d\times\xR)^d}^2 \leq
C\, C_1^2\, |\eta|\ (|\eta|+h\m),
\quad \forall \eta \in \xR^d,~ \forall m\in\xN,
\]
where the dot stands for the space variable $\bfx$, the square stands for the time variable $t$, $C$ is independent of $m$, and ${\bfu^\sharp}\m$ is the extension of $\bfu\m$ to $\xR^d\times\xR$ by setting ${\bfu^\sharp}\m=0$ on $(\xR^d\times\xR) \setminus (\Omega\times(0,T))$.
Let $\bar \bfu^\sharp$ be the extension by zero of $\bar \bfu$ outside $\Omega\times(0,T)$.
Since ${\bfu^\sharp}\m \to \bar\bfu^\sharp$ in $L^2(\xR^d\times\xR)^d$ and $h\m\to0$ as $m\to\infty$, passing to the limit in the above inequality yields
\[
\norm{\bar\bfu^\sharp(.+\eta,\square)-\bar\bfu^\sharp(.,\square)}_{\xL^2(\xR^d\times\xR)^d}^2 \leq C' |\eta|^2, \quad \forall \eta \in \xR^d.
\]
This implies that $\gradi \bar \bfu \in \xL^2( \Omega\times(0,T))$ and that $\bar \bfu=0$ on $\dv\Omega$ since $\bar\bfu^\sharp=0$ outside $\Omega\times(0,T)$.
Hence, $\bar \bfu \in \xL^2((0,T);H_0^1(\Omega)^d)$.

\item  \textbf{Step 4: the limit is solenoidal.}
It remains to prove that $\dive \, \bar \bfu (\bfx,t)=0$ for \textit{a.e.} $(\bfx,t)$ in $\Omega\times(0,T)$.
We have $\bfu\m \to \bar \bfu$ in $\xL^2(\Omega\times(0,T))^d$.
By the partial reciprocal of the dominated convergence theorem, there exists a subsequence still denoted $(\bfu\m)_{m\in\xN}$ such that for \textit{a.e.} $t>0$, $\bfu\m(.,t) \to \bar \bfu(.,t)$ in $\xL^2(\Omega)^d$.
Now let $\phi\in \xC^\infty_c( \Omega\times(0,T))$ and for $t>0$, let $\phi_K(t)=|K|^{-1}\int_K\phi(\bfx,t) \dx$ for every $K$ in $\mesh\m$.
For a fixed $m\in\xN$, denote by $\phi\m(t)$ the function of $H_{\mesh\m}(\Omega)$ defined by  $\phi\m(t)(\bfx)=\phi_K(t)$ for all $\bfx$ in $K$, $K$ in $\mesh\m$.
Since $\phi$ is smooth, we easily prove that for \textit{a.e.} $t>0$, $\phi\m(t) \to \phi(.,t)$ strongly in $\xL^2(\Omega)^d$ as $m\to+\infty$ and $\norm{\gradi_{\edges\m} \phi\m(t)}_{\xL^2(\Omega)^d}\leq C \norm{\gradi \phi(.,t)}_{\xL^\infty(\Omega)^d} $ for all $m$ in $\xN$, where $C$ depends only on $d$, $\Omega$ and $\theta_0$.
By Lemma \ref{lmm:compact_fv}, for \textit{a.e.} $t>0$, $\gradi_{\edges\m} \phi\m(t)$ converges weakly in $\xL^2(\Omega)^d$ towards $\gradi \phi(.,t)$ as $m$ tends to infinity. Finally, we may write for \textit{a.e.} $t>0$:
\[
\begin{aligned}
0 &= \int_\Omega  \phi\m(t)(\bfx) \dive_{\mesh\m} \bfu\m(\bfx,t) \dx \\
  &= - \int_\Omega \bfu\m(\bfx,t) \cdot \gradi_{\edges\m} \phi\m(t)(\bfx) \dx \\
  &\to - \int_\Omega \bar \bfu(\bfx,t) \cdot \gradi \phi(\bfx,t) \dx \quad \text{as $m\to+\infty$.}
\end{aligned}
\]
Since $\phi$ is arbitrarily chosen, this proves that, for \textit{a.e.} $t>0$, $\dive\, \bar \bfu(.,t)=0$ as a function of $\xL^2(\Omega)^d$ and therefore $\dive \bar \bfu=0$ \textit{a.e.} in $\in \Omega\times (0,T)$.
\end{list}
\end{proof}

%
%
\medskip
\subsection{Compactness of the sequence of discrete densities} \label{subsec:compactness_rho}

\begin{prop} \label{prop:compactness_rho}
Under the assumptions of Theorem \ref{thrm:convergence}, there exists $\bar \rho$ in $\xL^\infty(\Omega \times (0,T))$ with $\rhomin \leq \bar \rho \leq \rhomax$ a.e. in $\Omega \times(0,T)$, and a subsequence of $(\rho\m)_{m\in\xN}$, still denoted $(\rho\m)_{m\in\xN}$, which converges towards $\bar \rho$  in  $\xL^\infty(\Omega\times (0,T))$ weak-$\star$.
\end{prop}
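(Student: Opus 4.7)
The plan is to exploit the uniform $L^\infty$ bound on the discrete densities (estimate \eqref{eq:estimate1} of Proposition \ref{prop:estimates}) together with the sequential weak-$\star$ compactness of bounded sets in $L^\infty$, which is the Banach-Alaoglu theorem applied to the dual of the separable Banach space $L^1(\Omega\times(0,T))$. This is a one-step argument: no translation estimates are needed, and no additional regularity on $\rho^{(m)}$ must be extracted beyond the pointwise bounds already in hand.

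More precisely, I would first observe that, by \eqref{eq:estimate1}, for every $m \in \xN$ one has $\rhomin \leq \rho\m(\bfx,t) \leq \rhomax$ for a.e. $(\bfx,t) \in \Omega\times(0,T)$, so in particular $\norm{\rho\m}_{\xL^\infty(\Omega\times(0,T))}\leq \rhomax$. Since $\xL^\infty(\Omega\times(0,T))$ is the topological dual of the separable space $\xL^1(\Omega\times(0,T))$, the sequential Banach-Alaoglu theorem provides a subsequence, still denoted $(\rho\m)_{m\in\xN}$, and a function $\bar\rho \in \xL^\infty(\Omega\times(0,T))$ such that $\rho\m \weak^\star \bar\rho$ in $\xL^\infty(\Omega\times(0,T))$ weak-$\star$ as $m\to\infty$, meaning that
\[
\lim_{m\to\infty} \int_0^T\int_\Omega \rho\m(\bfx,t)\, \varphi(\bfx,t) \dx \dt = \int_0^T\int_\Omega \bar\rho(\bfx,t)\, \varphi(\bfx,t) \dx \dt, \quad \forall \varphi \in \xL^1(\Omega\times(0,T)).
\]

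It remains to show that the pointwise bounds are preserved in the limit. Setting $A = \{(\bfx,t)\in\Omega\times(0,T) : \bar\rho(\bfx,t) > \rhomax\}$ and testing the weak-$\star$ convergence against $\varphi = \mathcal{X}_A \in \xL^1(\Omega\times(0,T))$, one gets
\[
\int_A \bar\rho(\bfx,t) \dx \dt = \lim_{m\to\infty} \int_A \rho\m(\bfx,t) \dx \dt \leq \rhomax\, |A|,
\]
which forces $|A|=0$; the analogous argument with $A' = \{\bar\rho < \rhomin\}$ gives $\bar\rho \geq \rhomin$ a.e. Hence $\rhomin \leq \bar\rho \leq \rhomax$ a.e. in $\Omega\times(0,T)$, which concludes the proof. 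There is no real obstacle here; the statement is essentially a direct corollary of the maximum principle \eqref{eq:imp_maxprinc} and the softness of weak-$\star$ topology on $\xL^\infty$, and strong compactness (which is genuinely needed to pass to the limit in the nonlinear mass balance) is postponed to Section \ref{subsec:strong_convergence_rho}.
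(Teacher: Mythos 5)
Your proposal is correct and follows essentially the same route as the paper: the uniform bound $\rhomin\leq\rho\m\leq\rhomax$ from \eqref{eq:estimate1} gives weak-$\star$ sequential compactness in $\xL^\infty(\Omega\times(0,T))$, and testing the limit against characteristic functions of Borel sets shows the pointwise bounds pass to $\bar\rho$. Nothing further is needed.
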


\begin{proof}
By \eqref{eq:estimate1}, we have $\norm{\rho\m}_{\xL^\infty(\Omega\times(0,T))} \leq \rhomax$ for all $m$ in $\xN$, which implies the weak-star convergence of a subsequence of $(\rho\m)_{m\in\xN}$, still denoted $(\rho\m)_{m\in\xN}$, towards some function $\bar \rho$ in $\xL^\infty(\Omega\times(0,T))$, \ie\ for all $\phi$ in $\xL^1(\Omega \times (0,T))$:
\begin{equation} \label{eq:rho_weakconv}
\lim_{m\to\infty} \int_0^T \int_\Omega \rho\m(\bfx,t)\, \phi(\bfx,t) \dx \dt
= \int_0^T \int_\Omega \bar \rho(\bfx,t)\, \phi(\bfx,t)\dx\dt. 
\end{equation}
Furthermore, an easy consequence of \eqref{eq:estimate1} and \eqref{eq:rho_weakconv} is the non-negativity of the integrals
\[
\int_0^T \int_\Omega (\bar \rho(\bfx,t)-\rhomin)\, \mathcal{X}_A(\bfx,t)\dx\dt \quad \mbox{and} \quad
\int_0^T \int_\Omega (\rhomax -\bar \rho(\bfx,t))\, \mathcal{X}_A(\bfx,t)\dx\dt
\]
for any borelian set $A$ of $ \Omega\times(0,T)$, which is equivalent to $\rhomin \leq \bar \rho (\bfx,t) \leq \rhomax $ a.e. in $\Omega \times (0,T)$.
\end{proof}
%
%
\medskip
\subsection{Passing to the limit in the mass balance equation} \label{subsec:convergence_rho}

\begin{prop} \label{prop:convergence_rho}
Under the assumptions of Theorem \ref{thrm:convergence}, the weak star limit in $\xL^\infty(\Omega\times(0,T))$  of $(\rho\m)_{m\in\xN}$, $\bar \rho$, and the strong limit of $(\bfu\m)_{m\in\xN}$ in $\xL^2( \Omega\times(0,T))^d$, $\bar \bfu$, satisfy:
\[
- \int_0^T  \int_\Omega \bar\rho(\bfx,t) 
\ \Bigl(\dv_t \phi(\bfx,t) + \bar\bfu(\bfx,t) \cdot  \gradi\phi(\bfx,t) \Bigr) \dx \dt = \int_\Omega \rho_0(\bfx) \phi(\bfx,0) \dx,
\]
for all $\phi$ in $ \xC^{\infty}_c(\Omega \times [0,T))$.
\end{prop}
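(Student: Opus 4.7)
The strategy is the standard discrete weak-formulation argument used in finite-volume convergence analysis: test the discrete mass balance \eqref{eq:sch_mass} against a well-chosen interpolate of $\phi$, perform discrete integration by parts (Abel summation in time, conservativity of the mass flux in space), and pass to the limit in each resulting term. Concretely, given $\phi \in \xC_c^\infty(\Omega \times [0,T))$, I would set $\phi_K^n := \phi(\bfx_K, t_n)$ where $\bfx_K$ is the mass centre of $K$, multiply \eqref{eq:sch_mass} by $|K|\,\delta t\m\,\phi_K^{n-1}$, and sum over $K\in\mesh\m$ and $n\in\{1,\dots,N\}$. Using Abel summation in time (whose terminal boundary term vanishes for $m$ large enough thanks to the compact support of $\phi$ in $[0,T)$) together with the conservativity relation $\fluxK^n = -\fluxL^n$ for $\edge=K|L$ and the vanishing of $\fluxK$ on external faces, one obtains the identity $T_1\m + T_2\m + T_3\m = 0$ with
\[
T_1\m = -\sum_{K\in\mesh\m} |K|\,\rho_K^0\, \phi_K^0, \qquad T_2\m = -\sum_{n=1}^{N-1}\sum_{K\in\mesh\m} |K|\,\rho_K^n\,(\phi_K^n - \phi_K^{n-1}),
\]
\[
T_3\m = \sum_{n=1}^{N} \delta t\m \sum_{\substack{\edge\in\edgesint\m \\ \edge=K|L}} |\edge|\,\rho_\edge^n\, \bfu_\edge^n\cdot\bfn_{K,\edge}\,(\phi_K^{n-1}-\phi_L^{n-1}).
\]

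The limits of $T_1\m$ and $T_2\m$ are routine. For $T_1\m$, the initial values $\rho_K^{0}$ are cell averages of $\rho_0$ and so converge to $\rho_0$ in $\xL^1(\Omega)$, while $\phi_K^0 \to \phi(\cdot, 0)$ uniformly, hence $T_1\m \to -\int_\Omega \rho_0(\bfx)\phi(\bfx, 0)\,\dx$. For $T_2\m$, I would rewrite it as an integral $-\int_0^T\int_\Omega \rho\m\, D_t\phi\m\,\dx\,\dt$ where $D_t\phi\m$ is the piecewise constant reconstruction on the cells $K$ and time slabs $(t_{n-1},t_n]$ of the backward difference quotient $(\phi_K^n - \phi_K^{n-1})/\delta t\m$; the smoothness of $\phi$ yields $D_t\phi\m \to \partial_t\phi$ uniformly on $\Omega\times (0,T)$, so combining with the weak-$\star$ convergence $\rho\m\to\bar\rho$ in $\xL^\infty(\Omega\times(0,T))$ provided by Proposition \ref{prop:compactness_rho} gives $T_2\m \to -\int_0^T\int_\Omega \bar\rho\,\partial_t\phi\,\dx\,\dt$.

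The technical heart of the proof is the convective term $T_3\m$. I would decompose the upwind face density as
\[
\rho_\edge^n = \tfrac{1}{2}(\rho_K^n + \rho_L^n) + \tfrac{1}{2}(\rho_K^n - \rho_L^n)\,\mathrm{sgn}(\bfu_\edge^n\cdot\bfn_{K,\edge}),
\]
splitting $T_3\m = T_{3,c}\m + T_{3,d}\m$ accordingly. For the upwind-stabilisation part $T_{3,d}\m$, smoothness of $\phi$ gives $|\phi_K^{n-1}-\phi_L^{n-1}| \leq C\,h\m\,\norm{\nabla\phi}_{\xL^\infty}$, and a Cauchy-Schwarz inequality isolating the factor $|\rho_K^n-\rho_L^n|\,|\bfu_\edge^n\cdot\bfn_{K,\edge}|^{1/2}$, together with the BV-type estimate \eqref{eq:estimate2} and the mesh-regularity bound $|\edge|/|D_\edge|\leq C/h\m$, yields $|T_{3,d}\m|\leq C\,(h\m)^{1/2}\to 0$. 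For the centered part $T_{3,c}\m$, I would compare it with the continuous-type expression $-\int_0^T\int_\Omega \rho\m\,\bfu\m\cdot\nabla\phi\,\dx\,\dt$ (using a suitable piecewise representation at time $t_{n-1}$); the comparison error is controlled by Taylor-expanding $\phi_K^{n-1}-\phi_L^{n-1}$ about the face centre and invoking the uniform $\xL^2((0,T);E_{\disc\m}(\Omega))$ bound \eqref{eq:estimate3} on $\bfu\m$. Passing to the limit in this reformulated expression rests on a weak-strong pairing: the $\xL^\infty$-bound $\rhomin\leq\rho\m\leq\rhomax$ and the weak-$\star$ convergence $\rho\m\weak\bar\rho$ in $\xL^\infty$, combined with the strong $\xL^2((0,T);\xL^2(\Omega)^d)$-convergence of $\bfu\m$ to $\bar\bfu$ (Proposition \ref{prop:compactness_u}), imply that $\rho\m\bfu\m \weak \bar\rho\bar\bfu$ weakly in $\xL^2((0,T);\xL^2(\Omega)^d)$, so integrating against the smooth $\nabla\phi$ gives $T_{3,c}\m \to -\int_0^T\int_\Omega \bar\rho\bar\bfu\cdot\nabla\phi\,\dx\,\dt$. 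The main obstacle is precisely this convective term: one must simultaneously (i) extract the upwind numerical diffusion, which is only controlled by the weak estimate \eqref{eq:estimate2}, and (ii) pass to the limit in a product of two sequences with incompatible convergence modes, which is why the strong $\xL^2$-compactness of $\bfu\m$ established earlier is indispensable.
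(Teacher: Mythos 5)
Your overall architecture is the paper's: test \eqref{eq:sch_mass} against $\phi_K^{n-1}$, integrate by parts discretely in time and space, split the upwind face density into a centered average plus an upwind correction, kill the correction with the Cauchy--Schwarz inequality and the weak BV estimate \eqref{eq:estimate2}, and pass to the limit in the centered part by pairing the weak-$\star$ convergence of $\rho\m$ with the strong $\xL^2$ convergence of $\bfu\m$. The treatment of $T_1\m$, $T_2\m$ and of the upwind remainder $T_{3,d}\m$, as well as the final weak--strong limit passage, are all correct and match the paper.

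There is, however, a genuine gap in the consistency step for the centered convective term. You propose to compare $T_{3,c}\m$ with $-\int_0^T\int_\Omega\rho\m\,\bfu\m\cdot\gradi\phi\dx\dt$ by ``Taylor-expanding $\phi_K^{n-1}-\phi_L^{n-1}$ about the face centre.'' Per face this gives $|\edge|\,(\phi_L^{n-1}-\phi_K^{n-1})\,\bfn_{K,\edge}\approx |\edge|\,\bigl(\bfn_{K,\edge}\otimes(\bfx_L-\bfx_K)\bigr)\gradi\phi(\bfx_\edge)$, and for the comparison to close face by face you would need the rank-one matrix $|\edge|\,\bfn_{K,\edge}\otimes(\bfx_L-\bfx_K)$ to behave like $|D_\edge|\,\mathds{1}$, which is false on any mesh in $d\geq 2$ (and a fortiori on general non-orthogonal quadrilaterals, where $\bfx_L-\bfx_K$ need not even be aligned with $\bfn_{K,\edge}$). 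The consistency here is a \emph{cell-wise}, not a face-wise, phenomenon: the exact Green--Gauss identity $\sum_{\edge\in\edges(K)}|\edge|\,\phi_\edge\,\bfn_{K,\edge}=|K|\,(\gradi\phi)_K$ (with $\phi_\edge$ the \emph{face averages}, cf.\ Lemma \ref{lmm:test_mass}) is what must be invoked. To get there from $T_{3,c}\m$ the paper needs three additional manipulations, each producing a remainder you do not account for: (i) reorder the sum over faces into a sum over primal cells so that a single $\rho_K^n$ multiplies all the fluxes of $K$, using the discrete constraint $(\dive\bfu)_K^n=0$ to recentre $\phi$ by $\phi_K^{n-1}$; (ii) replace the two-point average $\hat\phi_\edge^{n-1}=(\phi_K^{n-1}+\phi_L^{n-1})/2$ by the face average $\phi_\edge^{n-1}$ (remainder $R_{2,2}\m$, controlled like the upwind term because it again carries a factor $\rho_L^n-\rho_K^n$); and (iii) replace $\bfu_\edge^n$ by the cell value $\bfu_K^n=\sum_{\edge}\xi_K^\edge\bfu_\edge^n$ so that the gradient factors out of the face sum (remainder $R_{2,3}\m$, controlled by the broken $\xH^1$ norm via Lemma \ref{lmm:H1ns} and \eqref{eq:estimate3}, and requiring the verification that the resulting $\tilde\bfu\m$ still converges strongly to $\bar\bfu$). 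Without these steps the error between $T_{3,c}\m$ and $-\int\rho\m\bfu\m\cdot\gradi\phi$ is $O(1)$, not $o(1)$; with them your argument becomes the paper's proof.
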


Before proving this proposition, we first state the following lemma, the proof of which is easy and relies on Taylor's inequalities for functions with at least $C^2$-regularity.
\begin{lem} \label{lmm:test_mass}
Let $\disc$ be a given staggered discretization such that $\theta_\disc \leq \theta_0$ for some positive real number $\theta_0$ and let $\delta t$ be a given time step.
Let $\phi\in \xC^\infty_c(\Omega \times [0,T))$ and, for $n$ in $\lbrace 0,..,N\rbrace$, let us define:
\begin{list}{-}{\itemsep=0.5ex \topsep=0.5ex \leftmargin=1.5cm \labelwidth=0.7cm \labelsep=0.3cm \itemindent=0.cm}
\item $\phi_K^n=\phi(\bfx_K,t_n)$ for all $K$ in $\mesh$, with $\bfx_K$ the mass center of $K$.
\item $\displaystyle \phi_\edge^n=|\edge|^{-1}\int_\edge \phi(\bfx,t_n) \dedge(\bfx)$ for all $\edge \in \edges$.
\end{list}
We denote by $\phi^0_\mesh$ the function $\phi^0_\mesh(\bfx) = \sum_{K \in \mesh} \phi_K^0\,  \mathcal{X}_K(\bfx)$, and we define the discrete time derivative and gradient of $\phi$ by:
\[\begin{aligned} & 
\partd_t \phi_\mesh(\bfx,t) = \sum_{n=1}^N \sum_{K \in \mesh}
\frac 1 {\delta t} \left ( \phi_K^n - \phi_K^{n-1}  \right )\,  \mathcal{X}_K(\bfx) \mathcal{X}_{(n-1,n]}(t),
\\ & 
\gradi \phi_\mesh(\bfx,t) = \sum_{n=1}^N \sum_{K \in \mesh} (\gradi \phi)^n_K \, \mathcal{X}_K(\bfx) \mathcal{X}_{(n-1,n]}(t), 
\\ &
\mbox{with }
(\gradi \phi)^n_K = \frac 1 {|K|} \sum_{\edge\in\edges(K)} |\edge|\,  \phi_\edge^{n-1}\, \bfn_{K,\edge}.
\end{aligned}\]
Then for all $q$ in $[1,\infty]$, 
\begin{align} \label{eq:interpolate_t} & 
\norm{\partd_t \phi_\mesh-\dv_t\phi}_{\xL^q( \Omega\times(0,T))}
+ \norm{\gradi \phi_\mesh-\gradi\phi}_{\xL^q( \Omega\times(0,T))^d} \leq C_1 (\delta t +h_\disc),
\\[1ex] \label{eq:interpolate_s} &
\norm{\phi^0_\mesh-\phi(.,0)}_{\xL^q(\Omega)} \leq C_2 h_\disc.
\end{align}
where $C_1$ and $C_2$ are two positive constants only depending on $T$, $\Omega$, $\theta_0$, $q$ and $\phi$.
\end{lem}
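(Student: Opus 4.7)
The plan is to produce pointwise bounds, valid on each pair $(K,(t_{n-1},t_n])$, for the discrete approximations of $\dv_t \phi$, $\gradi \phi$ and $\phi(\cdot,0)$ respectively, and then integrate in space and time. Since $\phi \in \xC^{\infty}_c(\Omega \times [0,T))$, we may set $M = \norm{\phi}_{\xC^2(\overline\Omega \times [0,T])} < +\infty$, and we denote by $S$ the compact support of $\phi$ in $\Omega \times [0,T)$.

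First, for the discrete time derivative, write for $\bfx \in K$ and $t \in (t_{n-1}, t_n]$
\[
\partd_t \phi_\mesh(\bfx,t) = \frac{\phi(\bfx_K, t_n) - \phi(\bfx_K, t_{n-1})}{\delta t} = \frac{1}{\delta t}\int_{t_{n-1}}^{t_n} \dv_t \phi(\bfx_K, s)\dt.
\]
Subtracting $\dv_t \phi(\bfx,t)$, using $|\bfx_K - \bfx| \leq h_K$ and $|s - t| \leq \delta t$ and applying a first-order Taylor expansion of $\dv_t \phi$ then yields the pointwise bound $|\partd_t \phi_\mesh(\bfx,t) - \dv_t \phi(\bfx,t)| \leq M(h_K + \delta t) \leq M(h_\disc + \delta t)$.

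Second, for the discrete gradient, the essential observation is that by the divergence theorem applied coordinate-wise to the field $\bfy \mapsto \phi(\bfy, t_{n-1})\, \bfe_i$ on $K$,
\[
\sum_{\edge\in\edges(K)} |\edge|\, \phi_\edge^{n-1}\, \bfn_{K,\edge}
= \sum_{\edge\in\edges(K)} \int_\edge \phi(\bfy, t_{n-1})\, \bfn_{K,\edge} \dedge(\bfy)
= \int_K \gradi \phi(\bfy, t_{n-1}) \dx,
\]
so $(\gradi\phi)_K^n$ is simply the cell-average of $\gradi\phi(\cdot, t_{n-1})$ on $K$. Expanding $\gradi\phi(\bfy, t_{n-1})$ around $(\bfx, t)$ for $\bfx \in K$, $t \in (t_{n-1}, t_n]$, and using that $|\bfy - \bfx| \leq h_K$, $|t - t_{n-1}| \leq \delta t$, and the $\xC^2$-regularity of $\phi$ gives $|\gradi \phi_\mesh(\bfx,t) - \gradi \phi(\bfx, t)| \leq M(h_K + \delta t)$. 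For the initial interpolate, $|\phi^0_\mesh(\bfx) - \phi(\bfx, 0)| = |\phi(\bfx_K, 0) - \phi(\bfx, 0)| \leq M h_K$ for $\bfx \in K$.

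To pass from these pointwise bounds to the $\xL^q$ bounds \eqref{eq:interpolate_t}--\eqref{eq:interpolate_s}, it suffices to raise to the $q$-th power and integrate over $\Omega \times (0,T)$ (respectively $\Omega$). The case $q = +\infty$ is already covered. For $q < \infty$, the integration gives a factor at most $|\Omega \times (0,T)|^{1/q}$ in front of $M(\delta t + h_\disc)$, yielding the claimed constants $C_1, C_2$ that depend only on $T$, $\Omega$, $q$ and $\phi$ (via $M$); the dependence on $\theta_0$ is trivial here as no regularity of the mesh is actually needed. There is no real obstacle: the only non-routine step is the divergence-theorem rewriting of $(\gradi\phi)_K^n$ as the mean of $\gradi\phi(\cdot, t_{n-1})$ on $K$, after which everything reduces to elementary Taylor estimates.
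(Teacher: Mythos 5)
Your proof is correct and follows exactly the route the paper intends: the paper omits the argument, stating only that it "relies on Taylor's inequalities for functions with at least $\xC^2$-regularity", and your pointwise Taylor bounds followed by integration are precisely that. The one step worth spelling out — the divergence-theorem identity showing that $(\gradi \phi)_K^n$ is the mean value of $\gradi \phi(\cdot,t_{n-1})$ over $K$ (valid here because the faces are planar by Definition \ref{def:disc}) — is handled correctly, and your observation that no mesh-regularity parameter is actually needed is accurate.
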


We now prove Proposition \ref{prop:convergence_rho}.
\begin{proof}
Let $m \in \xN$ and let $\phi_K^{n-1}=\phi(\bfx_K,t_{n-1})$, for $K \in \mesh\m$ and $n\in\lbrace 1,..,N\m\rbrace$.
Multiplying the discrete mass balance equation \eqref{eq:sch_mass} by $\delta t\, |K|\, \phi_K^{n-1}$ and summing over $K\in\mesh$ and $n\in\lbrace 1,..,N\m\rbrace$, we get $T_1\m+T_2\m=0$ with
\[
T_1\m = \sum_{n=1}^N \sum_{K\in\mesh} |K|\, (\rho^n_K-\rho_K^{n-1}) \phi_K^{n-1},\qquad
T_2\m = \sum_{n=1}^N \delta t \sum_{K\in\mesh}  \phi_K^{n-1} \sum_{\edge \in\edges(K)} \fluxK^n,
\]
where we have dropped for short the superscript $\m$ for the number of time steps and the mesh.
We observe that, for $m$ large enough, $\phi^{N-1}_K=0$ for all $K\in\mesh$ since $\phi$ has a compact support in $\Omega \times [0,T)$; we suppose throughout this proof that we are in this case.
Performing a discrete integration by parts in $T_1\m$, we get:
\[ \begin{aligned}
 T_1\m  &= - \sum_{n=1}^N \sum_{K\in\mesh} |K|\,\rho_K^n (\phi^n_K-\phi_K^{n-1})  - \sum_{K\in\mesh} |K|\, \rho_K^0 \phi^0_K
\\
 &= - \int_0^T \int_\Omega \rho\m(\bfx,t)\, \partd_t \phi_{\mesh\m}(\bfx,t) \dx \dt
 - \int_\Omega \rho_0(\bfx)\, \phi_{\mesh\m}^0(\bfx) \dx. 
\end{aligned}
\]
By \eqref{eq:interpolate_t}, since $\delta t \m\to 0$ and $h\m\to 0$ as  $m\to \infty$, we obtain that $\partd_t \phi_{\mesh\m}$ strongly converges towards $\dv_t \phi$ in $\xL^1( \Omega\times(0,T))$ as $m\to \infty$; in addition, by \eqref{eq:interpolate_s}, $\phi^0_{\mesh\m}$ strongly converges to $\phi(\cdot, 0)$ in $\xL^1(\Omega)$.
Thus:
\[
\lim_{m\to\infty} T_1\m = - \int_0^T \int_\Omega \bar \rho(\bfx,t)\, \dv_t \phi(\bfx,t) \dx \dt
- \int_\Omega \rho_0(\bfx)\, \phi(\bfx,0) \dx.
\]
Let us now turn to the second term $T_2\m$.
Rearranging the terms, we get:
\begin{equation} \label{eq:first_term} 
\begin{aligned}
T_2\m &= -\sum_{n=1}^N \delta t \sum_{\substack{\edge \in \edgesint \\ \edge=K|L}}
|\edge|\, \rho_\edge^n \, (\phi_L^{n-1}-\phi_K^{n-1})\,\bfu_\edge^n \cdot \bfn_{K,\edge}
\\
&= -\sum_{n=1}^N \delta t \sum_{\substack{\edge \in \edgesint \\ \edge=K|L}}
|\edge|\, \frac{\rho_K^n+\rho_L^n} 2\, (\phi_L^{n-1}-\phi_K^{n-1})\, \bfu_\edge^n \cdot \bfn_{K,\edge} + R_{2,1}\m,
\end{aligned}
\end{equation}
where:
\[
R_{2,1}\m=\sum_{n=1}^N \delta t\sum_{\substack{\edge \in \edgesint \\ \edge=K|L}}
|\edge|\ \Bigl( \frac{\rho_K^n+\rho_L^n} 2 -\rho_\edge^n \Bigr)\ (\phi_L^{n-1}-\phi_K^{n-1})\ \bfu_\edge^n\cdot \bfn_{K,\edge}.
\]
Reordering the sum in the first term of \eqref{eq:first_term}, we get:
\[
T_2\m
=  - \frac 1 2 \sum_{n=1}^N \delta t \sum_{K\in\mesh} \rho_K^n \sum_{\substack{\edge \in \edges(K) \\ \edge=K|L}}
|\edge|\, (\phi_L^{n-1}-\phi_K^{n-1})\,\bfu_\edge^n \cdot \bfn_{K,\edge}
+ R_{2,1}\m.
\]
Since $(\dive \bfu)_K^n=0$ for $K \in \mesh$ and $1 \leq n \leq N$, the quantity $ \sum_{\edge \in \edges(K)} |\edge|\, \rho_K^n \,\phi_K^{n-1}\, \bfu_\edge^n \cdot \bfn_{K,\edge}$ vanishes.
Let us introduce the notation $\hat \phi_\edge^{n-1}=(\phi_K^{n-1}+\phi_L^{n-1})/2$, for $\edge \in \edgesint$, $\edge=K|L$, and for $1 \leq n \leq N$.
We thus get:
\[
T_2\m
=  - \sum_{n=1}^N \delta t \sum_{K\in\mesh} \rho_K^n \sum_{\substack{\edge \in \edges(K) \\ \edge=K|L}}
|\edge|\, \hat \phi_\edge^{n-1}\,\bfu_\edge^n \cdot \bfn_{K,\edge}
+ R_{2,1}\m
\]
and the term $T_2\m$ can be written as follows:
\[
T_2\m
=-\sum_{n=1}^N \delta t \sum_{K\in\mesh} \rho_K^n \bfu_K^n \cdot \sum_{\edge\in\edges(K)} |\edge|\, \phi_\edge^{n-1} \bfn_{K,\edge}
+ R_{2,1}\m+ R_{2,2}\m+ R_{2,3}\m,
\]
where $\displaystyle \bfu_K^n=\sum_{\edge\in\edges(K)}\xi_K^\edge \bfu_\edge^n$ with $\xi_K^\edge$ defined in \eqref{eq:xiksigma}, and $R_{2,2}\m$ and $R_{2,3}\m$ are defined by
\[\begin{aligned} &
R_{2,2}\m=  \sum_{n=1}^N \delta t \sum_{K\in\mesh} \rho_K^n
\sum_{\edge\in\edges(K)} |\edge|\, ( \phi_\edge^{n-1} - \hat \phi_\edge^{n-1})\, \bfu_\edge^n \cdot \bfn_{K,\edge},
\\ &
R_{2,3}\m=  \sum_{n=1}^N \delta t \sum_{K\in\mesh} \rho_K^n
\sum_{\edge\in\edges(K)} |\edge|\, \phi_\edge^{n-1}\, (\bfu_K^n-\bfu_\edge^n) \cdot \bfn_{K,\edge}.
\end{aligned}\]
Let us assume for now that $R_{2,1}\m + R_{2,2}\m+ R_{2,3}\m = {\mathcal O}((h\m)^{1/2})$ as $m$ tends to infinity.
We may then write
\begin{equation} \label{eq:T2lim}
\begin{aligned}
T_2\m &= - \sum_{n=1}^N \delta t \sum_{K\in\mesh}
|K|\, \rho_K^n\, \bfu_K^n \cdot \frac 1 {|K|} \sum_{\edge\in\edges(K)} |\edge|\,\phi_\edge^{n-1}\, \bfn_{K,\edge} + {\mathcal O}((h\m)^{1/2})
\\
&= - \int_0^T \int_\Omega \rho\m(\bfx,t)\, \tilde\bfu\m(\bfx,t) \cdot \gradi\phi_{\mesh\m}(\bfx,t) \dx \dt + {\mathcal O}((h\m)^{1/2}), 
\end{aligned}
\end{equation}
where $\tilde\bfu\m(\bfx,t)= \sum_{n=1}^N \sum_{K\in\mesh} \bfu_K^n\ \mathcal{X}_K(\bfx)\ \mathcal{X}_{(n-1,n]}(t)$.
The function $\tilde \bfu\m$ converges towards $\bar \bfu$ strongly in $\xL^2( \Omega\times(0,T))^d$ as $m$ tends to infinity.
Indeed,  
\[
\begin{aligned}
\norm{\tilde \bfu\m - \bfu\m}_{\xL^2( \Omega\times(0,T))^d}^2 
&=  \sum_{n=1}^N \delta t \sum_{K\in\mesh} \sum_{\edge\in\edges(K)} |D_{K,\edge}||\bfu_K^n -\bfu_\edge^n|^2 \\
&\leq {h\m}^2 \sum_{n=1}^N \delta t \sum_{K\in\mesh} h_K^{d-2}\sum_{\edge\in\edges(K)} |\bfu_K^n -\bfu_\edge^n|^2 \\
& \leq {h\m}^2 \sum_{n=1}^N \delta t\ \norm{\bfu(.,t_n)}_\fv^2,
\end{aligned}
\]
since $\bfu_K^n$ is a convex combination of  $(\bfu_\edge^n)_{\edge\in\edges(K)}$.
Hence, by Lemma \ref{lmm:H1ns} and the uniform estimate \eqref{eq:estimate3}, the difference $\tilde \bfu\m - \bfu\m$ converges to zero in $\xL^2( \Omega\times(0,T))^d$.
Since $\bfu\m\to\bar\bfu$ in $\xL^2( \Omega\times(0,T))^d$, so does $\tilde \bfu\m$.
Moreover, $\rho\m \weak^* \bar \rho$  in $\xL^\infty( \Omega\times(0,T))$ and, by \eqref{eq:interpolate_t}, $\gradi\phi_{\mesh\m} \to \gradi\phi$ strongly in $\xL^q( \Omega\times(0,T))$ for all $q$ in $[1,\infty]$.
Hence, \eqref{eq:T2lim} implies that
\[
\lim_{m\to\infty} T_2\m=  - \int_0^T \int_\Omega \bar \rho(\bfx,t)\, \bar\bfu(\bfx,t) \cdot \gradi \phi (\bfx,t) \dx \dt.
\]
Let us now prove that  $R_{2,1}\m + R_{2,2}\m+ R_{2,3}\m = {\mathcal O}((h\m)^{1/2})$ as $m$ tends to infinity.
For the first term $R_{2,1}\m$, we have
\begin{equation}\label{eq:R21}
|R_{2,1}\m| \leq \frac 1 2 \sum_{n=1}^N \delta t \sum_{\substack{\edge \in \edgesint \\ \edge=K|L}}
|\edge|\ | \rho_L^n -\rho_K^n|\ |\phi_L^{n-1}-\phi_K^{n-1}|\ | \bfu_\edge^n\cdot \bfn_{K,\edge}|,
\end{equation}
by the upwind definition \eqref{eq:rho_upwind} of $\rho_\edge^n$.
Hence, applying the Cauchy-Schwarz inequality, we get 
\[\begin{aligned}
|R_{2,1}\m| & \leq \frac 12 \Bigl(\sum_{n=1}^N \delta t \sum_{\substack{\edge \in \edgesint \\ \edge=K|L}}
|\edge|\, (\rho_L^n -\rho_K^n)^2  |\bfu_\edge^n\cdot \bfn_{K,\edge}|\Bigr)^{1/2} 
\\
& \hspace{20ex} \times \Bigl(\sum_{n=1}^N \delta t \sum_{\substack{\edge \in \edgesint \\ \edge=K|L}}
|\edge|\, (\phi_L^{n-1} -\phi_K^{n-1})^2  | \bfu_\edge^n \cdot \bfn_{K,\edge}|\Bigr)^{1/2}
\\
& \leq \frac 1 2\, C_0^{1/2}  \Bigl(\sum_{n=1}^N \delta t \sum_{\substack{\edge \in \edgesint \\ \edge=K|L}}
|\edge|\, (\phi_L^{n-1} -\phi_K^{n-1})^2  |\bfu_\edge^n\cdot \bfn_{K,\edge}|\Bigr)^{1/2}, 
\end{aligned}\]
by the estimate \eqref{eq:estimate2}.
By Taylor's inequality applied to $\phi_L^{n-1}-\phi_K^{n-1}$, there exists $C_4$ only depending on $T$, $\Omega$, $d$, $\theta_0$ and $\phi$ such that
\[
 |R_{2,1}\m| 
\leq C_4\ (h\m)^{1/2} \Bigl( \sum_{n=1}^N \delta t \sum_{\substack{\edge \in \edgesint \\ \edge=K|L}}
|D_\edge|\ |\bfu_\edge^n|\Bigr)^{1/2}
\leq  C_4\ (h\m)^{1/2} \Bigl( \sum_{n=1}^N \delta t\ \norm{\bfu(.,t_n)}_{\xL^1(\Omega)}\Bigr)^{1/2}.
\]
In addition, by the $\xL^p$-$\xL^q$ inequalities and the discrete Sobolev inequality of Lemma \ref{lmm:injsobolev_br}, we have for all $n$ in $\lbrace 1,..,N \rbrace$, $\norm{\bfu(.,t_n)}_{\xL^1(\Omega)^d}\leq C_5 \norm{\bfu(.,t_n)}_{\brok}$ where $C_5$ only depends on $\Omega$, $d$ and $\theta_0$.
Hence,
\[
|R_{2,1}\m| \leq C_6 \, (h\m)^{1/2}\, T^{1/4} \Bigl(\sum_{n=1}^N \delta t \norm{\bfu(.,t_n)}^2_{\brok}\Bigr)^{1/4}
\leq C_7\, T^{1/4}\ (h\m)^{1/2},
\]
by the estimate \eqref{eq:estimate3}, where $C_6$ and $C_7$ are independent of $m$.
The second term $R_{2,2}\m$ can be rearranged as follows
\[ \begin{aligned}
|R_{2,2}\m| &= \Bigl| \sum_{n=1}^N \delta t \sum_{\substack{\edge \in \edgesint \\ \edge=K|L}}
|\edge|\, (\rho_K^n-\rho_L^n)\, (\phi_\edge^{n-1} -\hat\phi_\edge^{n-1})\, \bfu_\edge^n \cdot \bfn_{K,\edge}  \Bigr|
\\
&\leq \sum_{n=1}^N \delta t \sum_{\substack{\edge \in \edgesint \\ \edge=K|L}}
|\edge|\, |\rho_K^n-\rho_L^n|\, |\phi_\edge^{n-1} -\hat \phi_\edge^{n-1}|\, |\bfu_\edge^n \cdot \bfn_{K,\edge}|.
\end{aligned}
\]
This last expression is now similar to the left-hand side of \eqref{eq:R21}, with $\phi_\edge^{n-1} -\hat \phi_\edge^{n-1}$ instead of $\phi_L^{n-1} - \phi_K^{n-1}$, but these terms both vary as $h\m$ by Taylor's inequality; following the same lines, we thus conclude that $R_{2,2}$ behaves as $R_{2,1}$, \ie\ $R_{2,2}\leq C_8 \,(h\m)^{1/2}$ for some $C_8$ independent of $m$.
The third term $R_{2,3}\m$ may be recast as follows:
\[
R_{2,3}\m = \sum_{n=1}^N \delta t \sum_{K\in\mesh} \rho_K^n 
\sum_{\edge\in\edges(K)} |\edge|\,(\phi_\edge^{n-1}-\phi_K^{n-1})\, (\bfu_K^n-\bfu_\edge^n)\cdot\bfn_{K,\edge},
\]
since $(\dive \bfu)_K^n=0$ and $\sum_{\edge\in\edges(K)} |\edge|\, \bfn_{K,\edge}=0$.
Hence,
\[\begin{aligned}
|R_{2,3}\m|
&
\leq  \sum_{n=1}^N \delta t \sum_{K\in\mesh} \rho_K^n \sum_{\edge\in\edges(K)}
|\edge|\ |\phi_\edge^{n-1}-\phi_K^{n-1}|\ |\bfu_K^n-\bfu_\edge^n|
\\ &
\leq \norm{\rho^n}_{\xL^\infty(\Omega)}
\Bigl(\sum_{n=1}^N \delta t\sum_{K\in\mesh}\ \sum_{\edge\in\edges(K)} |\edge|\ |\phi_\edge^{n-1}-\phi_K^{n-1}|^2 \Bigr)^{1/2} \\
& \hspace{20ex}
\times \Bigl(\sum_{n=1}^N \delta t\sum_{K\in\mesh}\ \sum_{\edge\in\edges(K)} |\edge|\ |\bfu_K^n-\bfu_\edge^n|^2 \Bigr)^{1/2} .
\end{aligned}\]
There exists $C_9(\theta_0)$ such that $ |\edge|\,|\phi_\edge^{n-1}-\phi_K^{n-1}|^2\leq C_9(\theta_0)\, |D_\edge|\ \norm{\gradi\phi}_{\xL^\infty(\Omega\times[0,T))^d}^2 h\m$ and  $|\edge|\leq h\m h_K^{d-2}$.
Moreover, since  $\norm{\rho^n}_{\xL^\infty(\Omega)} \leq \norm{\rho_0}_{\xL^\infty(\Omega)}$, we obtain that there exists $C_{10}$ independent of $m$ such that:
\[
|R_{2,3}\m| 
\leq C_{10}\ h\m  \Bigl( \sum_{n=1}^N \delta t\sum_{K\in\mesh}
h_K^{d-2}  \sum_{\edge\in\edges(K)} |\bfu_K^n-\bfu_\edge^n|^2 \Bigr)^{1/2}
\leq C_{10}\ h\m  \Bigl( \sum_{n=1}^N \delta t\ \norm{\bfu(.,t_n)}_\fv^2 \Bigr)^{1/2}, 
\]
since $\bfu_K^n$ is a convex combination of  $(\bfu_\edge^n)_{\edge\in\edges(K)}$.
Hence, by Lemma \ref{lmm:H1ns} and the uniform estimate \eqref{eq:estimate3}, there exists $C_{11}$ independent of $m$ such that $|R_{2,3}\m|\leq C_{11}\, h\m$, which concludes the proof of Proposition \ref{prop:convergence_rho}.
\end{proof}
%
%
\subsection{Strong convergence of the approximate densities} \label{subsec:strong_convergence_rho}

\begin{prop} \label{prop:strong_rho}
Under the assumptions of Theorem \ref{thrm:convergence}, the sequence $(\rho\m)_{m\in\xN}$ strongly converges in $\xL^q(\Omega\times(0,T))$, for all $q$ in $[1,\infty)$, towards its weak star limit in $\xL^\infty(\Omega\times(0,T))$, $\bar \rho$.
\end{prop}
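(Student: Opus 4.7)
The goal is to upgrade the weak-$\star$ convergence $\rho\m \overset{\star}{\weak} \bar\rho$ in $\xL^\infty(\Omega\times(0,T))$ obtained in Proposition \ref{prop:compactness_rho} to strong convergence in $\xL^q(\Omega\times(0,T))$ for every $q\in[1,\infty)$. The plan is a classical Minty-type argument: (a) pass to the limit in the discrete identity \eqref{eq:rho2} for $\rho^2$ using a nonnegative test function to obtain a transport inequality for the weak-$\star$ limit $\overline{\rho^2}$ of $(\rho\m)^2$; (b) compare with the equality satisfied by $\bar\rho^2$, which comes from the DiPerna--Lions renormalization theory applied to Proposition \ref{prop:convergence_rho}; (c) deduce $\overline{\rho^2}=\bar\rho^2$ a.e., which classically implies strong $\xL^2$-convergence.

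\textbf{Step 1 (inequality for $\overline{\rho^2}$).} By \eqref{eq:estimate1}, up to a further subsequence, $(\rho\m)^2 \overset{\star}{\weak} \overline{\rho^2}$ in $\xL^\infty$, and the convexity of $s\mapsto s^2$ yields $\overline{\rho^2}\geq\bar\rho^2$ almost everywhere. Fix a \emph{nonnegative} $\phi\in\xC_c^\infty(\Omega\times[0,T))$, multiply \eqref{eq:rho2} by $\delta t\m\, \phi_K^{n-1}$, and sum over $K\in\mesh\m$ and $n\in\{1,\dots,N\m\}$. The remainder term is nonnegative, so
\begin{equation*}
T_1\m + T_2\m \leq 0,
\end{equation*}
where $T_1\m$ and $T_2\m$ mimic the expressions manipulated in the proof of Proposition \ref{prop:convergence_rho}, but with $\rho$ replaced by $\rho^2$. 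After discrete integration by parts in $T_1\m$ and after reordering $T_2\m$ (replacing the upwind value $(\rho_\edge^n)^2$ by $\tfrac{1}{2}((\rho_K^n)^2+(\rho_L^n)^2)$ up to a consistency remainder of the form $\pm\tfrac{1}{2}(\rho_L^n-\rho_K^n)(\rho_L^n+\rho_K^n)$, controlled by the Cauchy--Schwarz inequality, $\rho\leq\rhomax$ and the BV-type estimate \eqref{eq:estimate2}), one passes to the limit exactly as in Proposition \ref{prop:convergence_rho} to obtain
\begin{equation*}
-\int_0^T\!\!\int_\Omega \overline{\rho^2}\bigl(\dv_t\phi + \bar\bfu\cdot\gradi\phi\bigr)\dx\dt \leq \int_\Omega \rho_0^2(\bfx)\,\phi(\bfx,0)\dx, \quad \forall \phi\in\xC_c^\infty(\Omega\times[0,T)),\ \phi\geq 0.
\end{equation*}
The limit in the convective contribution uses that $\bfu\m\to\bar\bfu$ strongly in $\xL^2(\Omega\times(0,T))^d$ (Proposition \ref{prop:compactness_u}) while $(\rho\m)^2\overset{\star}{\weak}\overline{\rho^2}$ in $\xL^\infty$, which suffices to pass to the limit in $\int(\rho\m)^2(\bfu\m\cdot\gradi\phi_{\mesh\m})$.

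\textbf{Steps 2 and 3 (renormalization, comparison, strong convergence).} By Proposition \ref{prop:convergence_rho}, the pair $(\bar\rho,\bar\bfu)$ is a weak solution of the transport equation with $\bar\rho\in\xL^\infty$, $\bar\bfu\in\xL^2((0,T);\xH_0^1(\Omega)^d)$ and $\dive\bar\bfu=0$. The DiPerna--Lions theory \cite{dip-89-ord} asserts that such a solution is renormalized; applied to the nonlinearity $s\mapsto s^2$ it gives
\begin{equation*}
-\int_0^T\!\!\int_\Omega \bar\rho^2\bigl(\dv_t\phi + \bar\bfu\cdot\gradi\phi\bigr)\dx\dt = \int_\Omega \rho_0^2(\bfx)\,\phi(\bfx,0)\dx,\quad \forall \phi\in\xC_c^\infty(\Omega\times[0,T)).
\end{equation*}
Subtracting the two relations and setting $\psi=\overline{\rho^2}-\bar\rho^2\geq 0$ yields $\int_0^T\!\!\int_\Omega \psi\,(\dv_t\phi+\bar\bfu\cdot\gradi\phi)\geq 0$ for every nonnegative $\phi\in\xC_c^\infty(\Omega\times[0,T))$. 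Choosing $\phi(\bfx,t)=\eta(t)$ with $\eta\in\xC_c^\infty([0,T))$ nonnegative, we obtain $\int_0^T \eta'(t)\,\Psi(t)\dt\geq 0$ with $\Psi(t)=\int_\Omega\psi(\bfx,t)\dx\geq 0$. Approximating the characteristic function of $[0,s]$ by such $\eta$'s (so that $\eta'$ approaches $-\delta_s$) gives $\Psi(s)\leq 0$ for a.e.\ $s\in(0,T)$, hence $\overline{\rho^2}=\bar\rho^2$ a.e. Then
\begin{equation*}
\int_0^T\!\!\int_\Omega (\rho\m-\bar\rho)^2\dx\dt = \int_0^T\!\!\int_\Omega (\rho\m)^2\dx\dt - 2\int_0^T\!\!\int_\Omega \rho\m\,\bar\rho\dx\dt + \int_0^T\!\!\int_\Omega \bar\rho^2\dx\dt \longrightarrow 0,
\end{equation*}
which gives strong convergence in $\xL^2$; combined with $\rhomin\leq\rho\m\leq\rhomax$, H\"older's inequality extends the strong convergence to $\xL^q$ for all $q\in[1,\infty)$.

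The main obstacle is Step 1: controlling the upwind quadratic face term $(\rho_\edge^n)^2$ and exploiting the precise sign of the numerical dissipation $R_K^n$ in \eqref{eq:R_rho2}. Both difficulties are overcome thanks to the weighted BV-type bound \eqref{eq:estimate2}, which is precisely tailored to make the consistency remainders vanish as $h\m\to 0$.
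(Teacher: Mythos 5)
Your strategy is sound in outline but it is genuinely different from the paper's, and it contains one concrete gap in Step 3. The paper does not pass to the limit in the discrete identity \eqref{eq:rho2} against local test functions at all. Instead it observes that summing \eqref{eq:rho_bv} in time gives the global decay $\norm{\rho\m(\cdot,t)}_{\xL^2(\Omega)}\leq\norm{\rho_0}_{\xL^2(\Omega)}$ for all $t$, while the DiPerna--Lions renormalization theory applied to the limit transport equation gives the exact conservation $\norm{\bar\rho(\cdot,t)}_{\xL^2(\Omega)}=\norm{\rho_0}_{\xL^2(\Omega)}$; combining this with the weak lower semicontinuity of the $\xL^2$-norm yields $\lim_m\norm{\rho\m}_{\xL^2(\Omega\times(0,T))}=\norm{\bar\rho}_{\xL^2(\Omega\times(0,T))}$, hence strong convergence. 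Both proofs exploit the same two facts (the scheme dissipates the $\xL^2$-norm of the density, the limit equation conserves it), but the paper tests only against the constant function on $\Omega$, so it needs neither the consistency analysis of the upwind quadratic flux $(\rho_\edge^n)^2$, nor the BV-type bound \eqref{eq:estimate2}, nor the sign of $R_K^n$ beyond what is already encoded in \eqref{eq:rho_bv}. Your localized route is workable but substantially longer, since all the remainder estimates of Proposition \ref{prop:convergence_rho} must be redone for $\rho^2$.

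The gap is in your de-localization step: the function $\phi(\bfx,t)=\eta(t)$ is not an admissible test function, because the inequality for $\overline{\rho^2}$ and the renormalized identity for $\bar\rho^2$ are only established for $\phi\in\xC^{\infty}_c(\Omega\times[0,T))$, which must vanish near $\dv\Omega$. To conclude you must take $\phi(\bfx,t)=\eta(t)\chi_\eps(\bfx)$ with $\chi_\eps$ a cutoff equal to $1$ away from an $\eps$-neighbourhood of $\dv\Omega$, and show that the boundary-layer term $\int_0^T\int_\Omega\psi\,\eta\,\bar\bfu\cdot\gradi\chi_\eps\dx\dt$ vanishes as $\eps\to0$; this uses $|\gradi\chi_\eps|\leq C/\eps\leq C'/d(\cdot,\dv\Omega)$ on its support together with Hardy's inequality for $\bar\bfu\in\xL^2((0,T);\xH^1_0(\Omega)^d)$ and dominated convergence. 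The step is standard but it is precisely where the homogeneous Dirichlet condition on $\bar\bfu$ enters, and it cannot be skipped; in the paper's argument the analogous passage is absorbed into the cited renormalization result, whereas in yours it must also be carried out for the \emph{inequality} satisfied by $\overline{\rho^2}$, which no cited result covers. Once this is added, your Steps 1 and 2 (the sign of $R_K^n$, the control of the quadratic upwind consistency error by $\rhomax|\rho_L^n-\rho_K^n|$ and \eqref{eq:estimate2}, the convexity inequality $\overline{\rho^2}\geq\bar\rho^2$, and the final expansion of $\norm{\rho\m-\bar\rho}_{\xL^2}^2$) are correct.
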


\begin{proof}
As $(\rho\m)_{m\in\xN}$ is bounded $\xL^\infty(\Omega\times(0,T))$, it is sufficient, by interpolation, to prove the strong convergence of $\rho\m$ towards $\bar \rho$ in  $\xL^2(\Omega\times(0,T))$. As $\rho\m \weak^* \bar \rho$  in $\xL^\infty( \Omega\times(0,T))$, we also have $\rho\m \weak \bar \rho$  in $\xL^2( \Omega\times(0,T))$, which implies that $\norm{\bar \rho}_{\xL^2(\Omega\times(0,T))} \leq \liminf_{m \to \infty} \norm{\rho\m}_{\xL^2(\Omega\times(0,T))}$.
By estimate \eqref{eq:rho_bv} of Proposition \ref{prop:bvfaible}, we have for all $n$ in $\lbrace 1,..,N \rbrace$:
\[
\sum_{K \in \mesh}|K|  (\rho^n_K)^2 \leq  \sum_{K \in \mesh}|K|  (\rho^0_K)^2 \leq  \norm{\rho_0}_{\xL^2(\Omega)}^2,
\]
which yields $\norm{\rho\m(.,t)}_{\xL^2(\Omega)}^2 \leq \norm{\rho_0}_{\xL^2(\Omega)}^2$ for all $t\in(0,T)$ and all $m$ in $\xN$.
Moreover, $\bar \rho$ is a weak solution of the transport equation :
\[
\dv_t \bar \rho + \bar \bfu \cdot \gradi\bar \rho =0,
\]
where $\bar \bfu$ is a divergence-free function in $\xL^2((0,T),\xH^1_0(\Omega)^d)$. Thanks to the theory of renormalized solutions \cite{dip-89-ord}, it is possible to prove that $\bar \rho$ is unique (once $\bar \bfu$ and $\rho_0$ are given), and belongs to the space $C^0((0,T);\xL^2(\Omega))$. In addition, for any scalar function $\beta:\xR\to\xR$ in $\xC^\infty(\xR)$, the function $\beta(\bar \rho)$ is also a weak solution of the transport equation, which yields (taking $\beta(x)=x^2$, and using the boundary conditions on $\bar \bfu$) that  $\norm{\bar \rho(.,t)}_{\xL^2(\Omega)}=\norm{\rho_0}_{\xL^2(\Omega)}$ for all $t\in(0,T)$.
Therefore, we have $\norm{\rho\m(.,t)}_{\xL^2(\Omega)}^2 \leq \norm{\bar \rho(.,t)}_{\xL^2(\Omega)}^2$ for all $t\in[0,T)$ and all $m$ in $\xN$.
Integrating this last inequality for $t\in[0,T)$, we obtain $\norm{\rho\m}_{\xL^2(\Omega\times(0,T))}^2 \leq \norm{\bar \rho}_{\xL^2(\Omega\times(0,T))}^2$ for all $m$ in $\xN$, and passing to the limit as $m$ goes to infinity yields:
\[
\limsup_{m \to \infty} \norm{\rho\m}_{\xL^2(\Omega\times(0,T))} \leq \norm{\bar \rho}_{\xL^2(\Omega\times(0,T))}.
\]
This proves that $\lim_{m \to \infty} \norm{\rho\m}_{\xL^2(\Omega\times(0,T))} =\norm{\bar \rho}_{\xL^2(\Omega\times(0,T))}$ and so that $\rho\m$ converges strongly to $\bar \rho$ in $\xL^2(\Omega\times(0,T))$ as $m$ tends to infinity.
\end{proof}

\begin{rmrk}
 In the pioneering work by Di Perna and Lions \cite{dip-89-ord}, the authors deal with more general convection fields satisfying only $\dive \, \bar\bfu$ in $\xL^1((0,T);\xL^\infty)$, with some restriction on the class of functions $\beta$. In the present case of the transport equation with divergence-free velocity, the result holds for any smooth scalar function $\beta$. We refer to \cite[Theorem VI.1.6]{boy-13-math} for a detailed proof.  
\end{rmrk}

%
%
\subsection{Passing to the limit in the momentum balance equation} \label{subsec:convergence_u}

\begin{prop}\label{prop:convergence_u}
Under the assumptions of Theorem \ref{thrm:convergence}, the limit in $\xL^q( \Omega\times(0,T))^d$, $q$ in  $[1, \infty)$, of the sequence $(\rho\m)_{m\in\xN}$, $\bar \rho$, and the limit in $\xL^2( \Omega\times(0,T))^d$ of the sequence $(\bfu\m)_{m\in\xN}$, $\bar \bfu$, satisfy
\begin{multline*}
\int_0^T  \int_\Omega \Bigl( - \bar\rho(\bfx,t) \bar\bfu(\bfx,t) \cdot \dv_t \bfv(\bfx,t)
- (\bar\rho(\bfx,t) \bar\bfu(\bfx,t) \otimes \bar\bfu(\bfx,t)) : \gradi \bfv(\bfx,t)
\\
+ \gradi \bar\bfu(\bfx,t) : \gradi \bfv(\bfx,t) \Bigr) \dx \dt
= \int_\Omega \rho_0(\bfx) \bfu_0(\bfx) \cdot \bfv(\bfx,0) \dx,
\end{multline*}
for all $\bfv$ in $ \xC^{\infty}_c(\Omega \times [0,T))^d$ such that $\dive\bfv=0$.
\end{prop}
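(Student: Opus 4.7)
The plan is to test the discrete momentum equation \eqref{eq:sch_mom} against the Rannacher--Turek interpolant of a smooth divergence-free $\bfv \in \xC^\infty_c(\Omega\times[0,T))^d$, namely $\bfv_\edge^{n-1} = |\edge|^{-1}\int_\edge \bfv(\cdot,t_{n-1})\dedge$ (so that $\bfv^{n-1} = r_{\edges\m}\bfv(\cdot,t_{n-1})$), to multiply by $\delta t\,|D_\edge|\,\bfv_\edge^{n-1}$ and to sum over $\edge\in\edgesint$ and $n\in\{1,\dots,N\}$. For $m$ large enough, the compact support of $\bfv$ in $[0,T)$ kills the boundary contribution at $t_N$. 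This yields $T_1^{(m)}+T_2^{(m)}+T_3^{(m)}+T_4^{(m)}=0$ corresponding to the time-derivative, convection, diffusion, and pressure-gradient terms. The pressure term $T_4^{(m)}$ vanishes identically: indeed, Lemma \ref{lmm:dual_divgrad} applied to the continuously divergence-free $\bfv$ gives $\dive_{\mesh\m}(r_{\edges\m}\bfv(\cdot,t_{n-1}))=0$, so \eqref{eq:dual_divgrad1} makes $T_4^{(m)}=0$.

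For the diffusion term I would rewrite $T_3^{(m)}$ as the finite-element bilinear form $\sum_n\delta t\sum_K\int_K\gradi\tilde{\bfu}^n:\gradi\widetilde{\bfv^{n-1}}\dx$. The approximation property in Lemma \ref{lmm:RT} yields strong $\xL^2((0,T)\times\Omega)^{d\times d}$-convergence of $\gradi\widetilde{\bfv^{n-1}}$ to $\gradi\bfv$, while the uniform $\xL^2((0,T);E_{\disc\m})$-bound of Proposition \ref{prop:estimates} combined with the strong $\xL^2(\xL^2)$-convergence of $\bfu\m$ to $\bar\bfu$ (to identify the limit) gives weak $\xL^2$-convergence of $\gradi\tilde{\bfu}\m$ to $\gradi\bar\bfu$, so $T_3^{(m)}\to\int_0^T\int_\Omega\gradi\bar\bfu:\gradi\bfv$. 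For $T_1^{(m)}$ I would apply an Abel summation in $n$, use the dual-density splitting $|D_\edge|\rho^n_\Ds = \xi_K^\edge|K|\rho_K^n+\xi_L^\edge|L|\rho_L^n$ to reorder the $\edge$-sum into a $K$-sum, and Taylor-expand $(\bfv_\edge^n-\bfv_\edge^{n-1})/\delta t$ against $\dv_t\bfv$. Thanks to the strong $\xL^q$-convergence of $\rho\m$ (Proposition \ref{prop:strong_rho}) and the strong $\xL^2$-convergence of the cell-centered average $\tilde{\bfu}\m$ (already established in the proof of Proposition \ref{prop:convergence_rho}), this term converges to $-\int_0^T\int_\Omega\bar\rho\bar\bfu\cdot\dv_t\bfv$ plus the initial contribution $-\int_\Omega\rho_0\bfu_0\cdot\bfv(\cdot,0)$.

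The main obstacle is the convection term $T_2^{(m)}=\sum_n\delta t\,\mcal{Q}^{\rm mom}_\edges(\rho^n,\bfu^n,\bfu^n,\bfv^{n-1})$. I would first apply Lemma \ref{lmm:convconv} with $\eps=1/2$: combined with the $\brok$-stability of $r_\edges$ (Lemma \ref{lmm:RT}) and estimate \eqref{eq:estimate3}, the difference with $T_2'^{(m)}=\sum_n\delta t\,\mcal{Q}^{\rm mom}_\mesh(\rho^n,\bfu^n,\bfu^n,\bfv^{n-1})$ is bounded by $C(h\m)^{1/2}\to 0$. Reordering the primal form gives
\[
\mcal{Q}^{\rm mom}_\mesh=\sum_{\edge=K|L\in\edgesint}|\edge|\rho_\edge^n(\bfu_\edge^n\cdot\bfn_{K,\edge})(\bfv_K^{n-1}-\bfv_L^{n-1})\cdot\bfu_\edge^n.
\]
Replacing the upwind $\rho_\edge^n$ by the centered value $(\rho_K^n+\rho_L^n)/2$ produces an error controlled, via Cauchy--Schwarz, by the weak-BV estimate \eqref{eq:estimate2}, the $\xL^3((0,T)\times\Omega)^d$-bound on $\bfu\m$ (obtained from the discrete Sobolev embedding in Lemma \ref{lmm:injsobolev_br} combined with energy interpolation between $\xL^\infty(\xL^2)$ and $\xL^2(\xL^6)$), and the Lipschitz bound $|\bfv_K^{n-1}-\bfv_L^{n-1}|\leq C\,h\m\,\norm{\gradi\bfv}_\infty$; the ratio $|\edge|/|D_\edge|\lesssim 1/h\m$ from mesh regularity then yields an $O((h\m)^{1/2})$ bound. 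After this replacement, a symmetric reordering back to primal cells and a Taylor expansion of $\bfv_L^{n-1}-\bfv_K^{n-1}$ against $\gradi\bfv$ --- this is where the geometric hypothesis $\alpha_\disc\to 0$ and the consistency of the discrete cell gradient $|K|^{-1}\sum_\edge|\edge|\bfv_\edge^{n-1}\otimes\bfn_{K,\edge}$ with $\gradi\bfv$ intervene --- identifies $T_2'^{(m)}$ with $-\int_0^T\int_\Omega\rho\m\,\tilde{\bfu}\m\otimes\tilde{\bfu}\m:\gradi\bfv\dx\dt$ up to vanishing errors. Passing to the limit then uses the strong convergence of $\rho\m$ and of $\tilde{\bfu}\m$ in $\xL^2((0,T)\times\Omega)^d$, together with the $\xL^r$-bound for some $r>2$ which promotes $\tilde\bfu\m\otimes\tilde\bfu\m\to\bar\bfu\otimes\bar\bfu$ to strong $\xL^1$-convergence, giving the target limit $-\int_0^T\int_\Omega(\bar\rho\bar\bfu\otimes\bar\bfu):\gradi\bfv$. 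The most delicate part is precisely this geometric conversion of the face-centered discrete convection into the cell-centered tensor form on non-parallelogram quadrilaterals or hexahedra.
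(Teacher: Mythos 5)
Your proposal is correct and follows the paper's overall skeleton (same test functions $\bfv_\edge^{n-1}$, same four-term splitting with $T_4^{(m)}=0$ by discrete duality, Lemma \ref{lmm:convconv} to pass to the primal convection form, the weak-BV estimate \eqref{eq:estimate2} to trade the upwind density for a centered one, and strong $\xL^1$-convergence of $\rho\m\bfu\m\otimes\bfu\m$ via the $\xL^{5/3}$-type interpolation and the strong convergence of $\rho\m$), but you handle two of the terms by a genuinely different route. For the diffusion term, the paper integrates by parts twice on each cell to put $\lapi\bfv$ against $\bfu\m$ and controls the resulting face-jump remainders with Lemma \ref{lmm:controle_sauts_EF}, whereas you pair the weakly convergent broken gradient $\gradi\tilde\bfu\m$ with the strongly convergent interpolated test gradient; this is shorter on paper, but the step you gloss over --- identifying the weak $\xL^2$-limit of the nonconforming gradient with $\gradi\bar\bfu$ --- requires exactly the same jump-control lemma (test against a smooth tensor, integrate by parts, and use the zero-mean-jump property \eqref{eq:def_saut_EF} plus \eqref{eq:controle_sauts_EF} to kill the interface terms), so the two arguments are of comparable length once written out. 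For the convection term, the paper keeps the face-based form and recognizes it as $-\int\rho\m\,\bfu\m\otimes\bfu\m:\gradi\bfv_{\edges\m}$ with the dual-cell discrete gradient of Lemma \ref{lmm:test_mom}, which converges only weakly-$\star$ but is paired with a strongly $\xL^1$-convergent factor; you instead freeze $\bfu_\edge$ and $\rho_\edge$ at cell values and use the exact identity $\sum_{\edge\in\edges(K)}|\edge|\,\bfv_\edge^{n-1}\otimes\bfn_{K,\edge}=\int_K\gradi\bfv(\cdot,t_{n-1})\dx$, which is a legitimate alternative provided you control the extra replacement errors $|\bfu_\edge-\bfu_K|$ and $|\rho_\edge-\rho_K|$ (the first by $\norm{\bfu}_\fv$ via Cauchy--Schwarz, the second again by \eqref{eq:estimate2}). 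One small correction: the hypothesis $\alpha_\disc\to 0$ plays no role in that cell-gradient consistency, which holds exactly by the divergence theorem on any cell since $\bfv_\edge^{n-1}$ is the exact face average; $\alpha\m\to 0$ is needed only for the finite-element approximation property of Lemma \ref{lmm:RT} used in the diffusion term. Likewise, for the time-derivative term the weak-$\star$ convergence of $\rho\m$ already suffices (the companion factor $\bfu\m\cdot\partd_t\bfv_{\edges\m}$ converges strongly in $\xL^1$), so invoking Proposition \ref{prop:strong_rho} there is harmless but unnecessary; the strong convergence of the density is genuinely needed only in the convection term.
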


Before proving this proposition, we first state the following preliminary Lemma.

\begin{lem} \label{lmm:test_mom}
Let $\disc$ be a given staggered discretization such that $\theta_\disc \leq \theta_0$ for some positive real number $\theta_0$ and let $\delta t$ be a given time step.
Let $\bfv\in \xC^\infty_c(\Omega\times[0,T))^d$ with $\dive\bfv=0$ and let us define, for all $\edge$ in $\edges$, $K \in \mesh$ and $n$ in $\lbrace 0,..,N\rbrace$:
\[
\bfv_\edge^n=\frac 1 {|\edge|}\ \int_\edge \bfv(\bfx,t_n) \dedge(\bfx), \qquad
\bfv_K^n=\sum_{\edge\in\edges(K)}\xi_K^\edge \bfv_\edge^n,
\]
with $\xi_K^\edge$ defined in \eqref{eq:xiksigma}.
We denote $\bfv^0_\edges$ the piecewise constant function defined by 
\[
\bfv^0_\edges(\bfx) = \sum_{\edge \in \edges} \bfv_\edge^0\, \mathcal{X}_{D_\edge}(\bfx), 
\]
and we define the following discrete (or time partially discrete) interpolates of $\dv_t \bfv$, $\gradi \bfv$ and $\lapi \bfv$:
\[ \begin{aligned} &
\partd_t \bfv_\edges(\bfx,t) = \sum_{n=1}^N \sum_{\edge \in \edges} \frac 1 {\delta t}
\bigl( \bfv_\edge^n - \bfv_\edge^{n-1}  \bigr) \ \mathcal{X}_{D_\edge}(\bfx) \mathcal{X}_{(n-1,n]}(t),
\\ & 
\gradi \bfv_\edges(\bfx,t) = \sum_{n=1}^N \sum_{\substack{\edge \in \edgesint \\ \edge=K|L}}
\frac{|\edge|}{|D_\edge|} (\bfv_L^{n-1}-\bfv_K^{n-1}) \otimes \bfn_{K,\edge}\, \mathcal{X}_{D_\edge}(\bfx) \mathcal{X}_{(n-1,n]}(t),
\\ &
\lapi \bfv_{\delta t}(\bfx,t) = \sum_{n=1}^N \lapi \bfv(\bfx,t_{n-1})\mathcal{X}_{(n-1,n]}(t).
\end{aligned}\]
Then for all $q$ in $[1,\infty]$, there exists three positive real numbers $C_1$, $C_2$ and $C_3$ only depending on $T$, $\Omega$, $\theta_0$, $q$ and $\bfv$, such that
\begin{align} \label{eq:interpolate_tbis} &
\norm{\partd_t \bfv_\edges-\dv_t\bfv}_{\xL^q( \Omega\times(0,T))^d} \leq C_1 (\delta t +h_\disc),
\\[1ex] \label{eq:interpolate_sbis} &
\norm{\bfv^0_\edges-\bfv(.,0)}_{\xL^q(\Omega)^d} \leq C_2 h_\disc,
\\[1ex] \label{eq:interpolate_ster} &
\norm{\lapi\bfv_{\delta t}-\lapi \bfv}_{\xL^q( \Omega\times(0,T))^d} \leq C_3 \delta t.
\end{align}
Moreover, if $(\disc\m,\delta t\m)_{m\in\xN}$ is a regular sequence of staggered discretizations, for $q\in(1,\infty)$ (\emph{resp.} $q=\infty$) $\gradi\bfv_{\edges\m}$ converges weakly (\emph{resp.} weak-$\star$) towards $\gradi \bfv$ in $\xL^q( \Omega\times(0,T))^{d\times d}$.
\end{lem}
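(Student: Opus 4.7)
Estimates \eqref{eq:interpolate_tbis}--\eqref{eq:interpolate_ster} reduce to elementary Taylor expansions on each space-time cell, since $\bfv$ is smooth and compactly supported in $\Omega\times[0,T)$. For \eqref{eq:interpolate_sbis}, on any $D_\edge$ the two quantities $\bfv_\edge^0$ and $\bfv(\bfx,0)$ both involve $\bfv(\cdot,0)$ at points within $h_\disc$ of one another, hence $|\bfv_\edge^0-\bfv(\bfx,0)| \leq h_\disc\, \norm{\gradi\bfv}_{\xL^\infty}$ pointwise on $D_\edge$. For \eqref{eq:interpolate_tbis}, I would rewrite $(\bfv_\edge^n-\bfv_\edge^{n-1})/\delta t - \dv_t\bfv(\bfx,t)$ as the double average over $\edge\times(t_{n-1},t_n]$ of $\dv_t\bfv(\bfy,s) - \dv_t\bfv(\bfx,t)$ and apply Taylor's inequality to $\dv_t\bfv$, which yields a pointwise bound of order $\delta t + h_\disc$. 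Estimate \eqref{eq:interpolate_ster} is immediate from $\norm{\lapi\bfv(\cdot,t)-\lapi\bfv(\cdot,t_{n-1})}_{\xL^\infty} \leq \delta t\, \norm{\dv_t\lapi\bfv}_{\xL^\infty}$ on each $(t_{n-1},t_n]$. Raising to the $q$-th power and integrating over $\Omega\times(0,T)$ produces the stated bounds with constants only depending on a finite number of derivatives of $\bfv$, on $T$, $\Omega$, $\theta_0$ and $q$.

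The weak convergence of $\gradi\bfv_{\edges\m}$ is the substantive part, which I would split into a uniform a priori bound and the identification of the limit. For the bound, since $\bfv_K^{n-1}$ is a convex combination of the face averages $(\bfv_{\edge'}^{n-1})_{\edge'\in\edges(K)}$, each of which is a mean of $\bfv(\cdot,t_{n-1})$ on a face of $K$, one has $|\bfv_L^{n-1}-\bfv_K^{n-1}| \leq C\, h_\disc\, \norm{\gradi\bfv}_{\xL^\infty}$ for every internal face $\edge=K|L$. Combining this with the mesh regularity estimate $|D_\edge| \geq C(\theta_0)\, h_\edge\, |\edge|$, itself a consequence of $\theta_\disc\leq\theta_0$, yields
\[
\norm{\gradi\bfv_{\edges\m}}_{\xL^\infty(\Omega\times(0,T))^{d\times d}} \leq C(\theta_0)\, \norm{\gradi\bfv}_{\xL^\infty},
\]
hence boundedness of $(\gradi\bfv_{\edges\m})_{m\in\xN}$ in every $\xL^q$ and the existence of weak (or weak-$\star$) limits along subsequences.

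To identify the limit as $\gradi\bfv$, by density of $\xC^\infty_c$, it suffices to test against an arbitrary $\bfphi\in\xC^\infty_c(\Omega\times(0,T))^{d\times d}$. For $m$ large enough, the support of $\bfphi$ avoids boundary faces, so external contributions vanish. Letting $\overline{\bfphi}_\edge^n$ denote the mean of $\bfphi$ on $D_\edge\times(t_{n-1},t_n]$, a discrete integration by parts (in the same spirit as Lemma \ref{lmm:compact_fv}) rearranges
\[
\int_0^T\int_\Omega \gradi\bfv_{\edges\m} : \bfphi \dx \dt
= -\sum_{n=1}^N \delta t \sum_{K\in\mesh} \bfv_K^{n-1} \cdot \sum_{\edge\in\edges(K)} |\edge|\, \overline{\bfphi}_\edge^n\, \bfn_{K,\edge}.
\]
For smooth $\bfphi$, the inner sum is a consistent finite-volume approximation of $\int_K \divv\bfphi(\cdot,t_{n-1})\dx$ with error $O(|K|(h_\disc+\delta t))$ by Taylor, and $\bfv_K^{n-1} = \bfv(\bfx_K,t_{n-1}) + O(h_\disc\, \norm{\gradi\bfv}_{\xL^\infty})$ by the convex-combination structure. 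The resulting Riemann sum converges to $-\int_0^T\int_\Omega \bfv\cdot\divv\bfphi\dx\dt$, which by classical integration by parts on the smooth pair $(\bfv,\bfphi)$ equals $\int_0^T\int_\Omega \gradi\bfv:\bfphi\dx\dt$. Uniqueness of the weak limit then promotes convergence to the full sequence; the weak-$\star$ case is handled identically by approximating $\xL^1$ test functions by $\xC^\infty_c$ ones.

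The main obstacle lies in the uniform $\xL^\infty$-bound for $\gradi\bfv_{\edges\m}$: the pointwise expression $(|\edge|/|D_\edge|)(\bfv_L-\bfv_K)\otimes\bfn_{K,\edge}$ is a rank-one tensor whose value does not resemble $\gradi\bfv$ locally, and to tame the a priori singular factor $|\edge|/|D_\edge|\sim 1/h_\disc$ one must crucially exploit both the boundedness of that ratio afforded by $\theta_\disc\leq\theta_0$ and the specific convex-combination definition of $\bfv_K$, which ensures that $\bfv_K^{n-1}$ is genuinely $O(h_\disc)$-close to $\bfv(\bfx_K,t_{n-1})$ (any other reconstruction might have spoiled the cancellation). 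Once this bound is in hand, the discrete integration by parts and the Riemann-sum passage to the limit are routine under the smoothness of $\bfv$ and the compact support of $\bfphi$.
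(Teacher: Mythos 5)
Your Taylor-expansion arguments for \eqref{eq:interpolate_tbis}--\eqref{eq:interpolate_ster}, and your uniform $\xL^\infty$-bound on $\gradi\bfv_{\edges\m}$ (via $|\bfv_L^{n-1}-\bfv_K^{n-1}|\leq C\,h_\disc\,\norm{\gradi\bfv}_{\xL^\infty}$ together with the bound on $|\edge|\,h_\disc/|D_\edge|$ afforded by $\theta_\disc\leq\theta_0$), are correct. The gap is in the identification of the limit. After the discrete integration by parts you claim that $\sum_{\edge\in\edges(K)}|\edge|\,\overline{\bfphi}_\edge^n\,\bfn_{K,\edge}$ approximates $\int_K\divv\bfphi\dx$ with error $O(|K|(h_\disc+\delta t))$. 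This is false on a general mesh: since $\overline{\bfphi}_\edge^n$ is the mean of $\bfphi$ over $D_\edge\times(t_{n-1},t_n]$ rather than over the face $\edge$, the leading error on a single face is $|\edge|\,\gradi\bfphi(\bfx_\edge)\,(\bfx_{D_\edge}-\bfx_\edge)$, where $\bfx_{D_\edge}-\bfx_\edge$ is a generic $O(h_\disc)$ vector (it vanishes only when the diamond cell is symmetric about $\edge$, e.g.\ on a uniform Cartesian grid). These $O(|\edge|\,h_\disc)=O(|K|)$ contributions do not cancel within a cell, so the per-cell error is $O(|K|)$ rather than $O(|K|(h_\disc+\delta t))$, and summing over the $O(h_\disc^{-d})$ cells leaves a total error of order one: the Riemann-sum step does not close as written.

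Two standard repairs are available. Either reorder your error term by faces: it becomes $\sum_{\edge=K|L}(\bfv_K^{n-1}-\bfv_L^{n-1})\cdot b_\edge\,\bfn_{K,\edge}$ with $|b_\edge|\leq C\,|\edge|\,(h_\disc+\delta t)$ and $|\bfv_K^{n-1}-\bfv_L^{n-1}|\leq C\,h_\disc$, so that the extra factor $h_\disc$ compensates $\sum_{\edge\in\edgesint}|\edge|=O(h_\disc^{-1})$ and the total error is $O(h_\disc+\delta t)$. Or, as the paper intends by referring to Lemma \ref{lmm:compact_fv}, pair $\gradi\bfv_{\edges\m}$ with the face-average interpolate $P_{\edges\m}\bfphi$ instead of the dual-cell average: observing that on each time slab $\gradi\bfv_{\edges\m}$ is exactly the discrete gradient $\gradi_{\edges\m}$ of the piecewise constant field $K\mapsto\bfv_K^{n-1}$ applied componentwise, the discrete duality \eqref{eq:dual_divgrad1} and the exact identity \eqref{eq:div_stab} produce $\int_K\dive\bfphi$ with no consistency error at all, while the replacement error $\int\gradi\bfv_{\edges\m}:(P_{\edges\m}\bfphi-\bfphi)$ is absorbed by the uniform $\xL^\infty$-bound you already proved combined with $\norm{P_{\edges\m}\bfphi-\bfphi}_{\xL^1}=O(h\m)$. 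With either fix your architecture (uniform bound, testing, discrete integration by parts, uniqueness of the weak limit) goes through.
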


\begin{rmrk}
The proof of the weak (or weak star) convergence of the discrete gradient $\gradi\bfv_{\edges\m}$ towards $\gradi \bfv$  follows similar steps as the proof of Lemma \ref{lmm:compact_fv}.
\end{rmrk}

We are now in position to prove Proposition \ref{prop:convergence_u}.

\begin{proof}
Let $m \in \xN$ and let $\bfv_\edge^{n-1}$ be the mean value of $\bfv(\cdot,t_{n-1})$ over $\edge$, for $\edge \in \edges\m$ and $n\in\lbrace 1,..,N\m\rbrace$.
Taking the scalar product of the discrete momentum balance equation \eqref{eq:sch_mom} by $\delta t\, |D_\edge|\, \bfv_\edge^{n-1}$ and summing over $\edge\in\edgesint\m$ and $n\in\lbrace 1,..,N\m\rbrace$ we get $T_1\m+T_2\m+T_3\m+T_4\m=0$ with
\[\begin{aligned} &
T_1\m = \sum_{n=1}^N \sum_{\edge \in \edgesint} |D_\edge| (\rho_\Ds^n \bfu_\edge^n
- \rho_\Ds^{n-1} \bfu_\edge^{n-1}) \cdot \bfv_\edge^{n-1},
\\ &
T_2\m = \sum_{n=1}^N \delta t\sum_{\edge \in \edgesint} \sum_{\edged \in\edgesd(D_\edge)} \fluxd^n \bfu^n_\edged \cdot \bfv_\edge^{n-1},
\\ &
T_3\m =-  \sum_{n=1}^N \delta t \sum_{\edge \in \edgesint} |D_\edge| (\lapi \bfu)_\edge^n \cdot \bfv_\edge^{n-1},
\\ &
T_4\m = \sum_{n=1}^N \delta t \sum_{\edge \in \edgesint} |D_\edge|(\gradi p)_\edge^n \cdot \bfv_\edge^{n-1},
\end{aligned}\]
where we have dropped for short the superscript $\m$ on the number of time steps and the set of internal faces.
Since $\dive\bfv=0$ and by Relations \eqref{eq:div_stab} and \eqref{eq:dual_divgrad1} in Lemma \ref{lmm:dual_divgrad}, we get that $T_4\m=0$.
The three other terms, $T_1\m$, $T_2\m$ and $T_3\m$, stand respectively for the time derivative, the convection and the diffusion term.

\medskip
\noindent {\bf The time derivative term} --
Since the support of $\bfv$ is compact in $\Omega\times [0,T)$, $\bfv^{N-1}_\edge=0$ for all $\edge\in\edgesint$, at least for $m$ large enough; we suppose that we are in this case.
Performing a discrete integration by parts in $T_1\m$, we get:
\[ \begin{aligned}
T_1\m 
= & -  \sum_{n=1}^N \sum_{\edge\in\edgesint} |D_\edge|\, \rho_\Ds^n \bfu_\edge^n \cdot (\bfv^n_\edge-\bfv_\edge^{n-1})  - \sum_{\edge\in\edgesint} |D_\edge|\, \rho_\Ds^0 \bfu_\edge^0 \cdot \bfv^0_\edge
\\ 
= & -  \sum_{n=1}^N \sum_{\substack{\edge \in \edgesint \\ \edge=K|L}}
(|D_{K,\edge}|\, \rho_K^n +|D_{L,\edge}|\, \rho_L^n)\ \bfu_\edge^n \cdot (\bfv^n_\edge-\bfv_\edge^{n-1})
- \sum_{\substack{\edge \in \edgesint \\ \edge=K|L}} (|D_{K,\edge}|\, \rho_K^0 +|D_{L,\edge}|\, \rho_L^0)\ \bfu_\edge^0 \cdot \bfv^0_\edge, 
\end{aligned}\]
by the definition \eqref{eq:pd2} of $\rho_\Ds$.
Hence,
\[
T_1\m = - \int_0^T \int_\Omega \rho\m(\bfx,t)\, \bfu\m(\bfx,t) \cdot \partd_t \bfv_{\edges \m}(\bfx,t) \dx \dt 
- \int_\Omega (\rho\m)^0(\bfx)\ (\bfu\m)^0(\bfx) \cdot \bfv_{\edges\m}^0(\bfx) \dx.
\]
We have $\bfu\m\to\bar \bfu$ in $\xL^2( \Omega\times(0,T))^d$ and, by \eqref{eq:interpolate_tbis}, since $\delta t\m\to 0$ and $h\m\to 0 $ as $m\to\infty$, $\partd_t \bfv_{\edges\m}$ converges to $\dv_t\bfv$ strongly in $\xL^2( \Omega\times(0,T))^d$.
By the Cauchy-Schwarz inequality, $\bfu\m \cdot \partd_t \bfv_{\edges\m}$ thus converges towards $\bar \bfu \cdot \dv_t \bfv$ strongly in $\xL^1( \Omega\times(0,T))$.
Since $\rho\m\weak\bar\rho$ in $\xL^\infty( \Omega\times(0,T))$ weak-$\star$, we obtain the convergence of the first term.
In addition, from the initialization \eqref{eq:inicond} of the scheme and the assumed regularity of the initial data (\ie\ $\rho_0 \in \xL^\infty(\Omega)$ and $\bfu_0 \in \xL^2(\Omega)^d$), $(\rho\m)^0$ converges to $\rho_0$ in $\xL^q(\Omega)$ for all $q$ in $[1,\infty)$ and $(\bfu\m)^0$ converges to $\bfu_0$ in $\xL^q(\Omega)^d$ for all $q$ in $[1,2]$.
Finally, from Inequality \eqref{eq:interpolate_sbis}, $\bfv_{\edges\m}^0$ converges to $\bfv_0$ in $\xL^q(\Omega)^d$ for all $q$ in $[1,\infty]$.
Hence,
\[
\lim_{m\to \infty} T_1\m = - \int_0^T \int_\Omega \bar\rho(\bfx,t)\ \bar\bfu(\bfx,t) \cdot \dv_t \bfv(\bfx,t) \dx \dt
- \int_\Omega  \rho_0(\bfx)\ \bfu_0(\bfx) \cdot \bfv(\bfx,0) \dx \dt.
\]

\medskip
\noindent {\bf The convection term} --
Denoting $\bfv_K^{n-1}=\sum_{\edge\in\edges(K)}\xi_K^\edge \bfv_\edge^{n-1}$ where $\xi_K^\edge$ is defined in \eqref{eq:xiksigma}, thanks to Lemma \ref{lmm:convconv}, the second term $T_2\m$ may be written as
\begin{equation} \label{eq:T2_a}
T_2\m = \sum_{n=1}^N \delta t \sum_{K\in \mesh}
\bfv_K^{n-1} \cdot \sum_{\edge \in \edges(K)} |\edge|\, \rho_\edge^n\, (\bfu_\edge^n \cdot\bfn_{K,\edge})\, \bfu_\edge^n + R_{2,1}\m, 
\end{equation}
where the remainder term $R_{2,1}\m$ reads:
\[
 R_{2,1}\m = \sum_{n=1}^N \delta t
 \ \bigl(\mathcal{Q}_\mesh(\rho^n,\bfu^n,\bfu^n,\bfv^{n-1})-\mathcal{Q}_\edges(\rho^n,\bfu^n,\bfu^n,\bfv^{n-1})\bigr),
\]
with the notation for fixed-time discrete functions introduced in Definition \ref{def:disc_space}.
Rearranging the sum in \eqref{eq:T2_a}, we get:
\[\begin{aligned}
T_2\m
= & - \sum_{n=1}^N \delta t \sum_{\substack{\edge \in \edgesint \\ \edge=K|L}}
|\edge|\,\rho_\edge^n\, (\bfu_\edge^n\cdot\bfn_{K,\edge})\ \bfu_\edge^n \cdot(\bfv_L^{n-1}-\bfv_K^{n-1}) + R_{2,1}\m
\\  
= & -\sum_{n=1}^N \delta t \sum_{\substack{\edge \in \edgesint \\ \edge=K|L}}
|\edge|\, \Bigl( \frac{|D_{K,\edge}|}{|D_\edge|} \rho_K^n+\frac{|D_{L,\edge}|}{|D_\edge|} \rho_L^n \Bigr)
(\bfu_\edge^n\cdot\bfn_{K,\edge})\ \bfu_\edge^n \cdot (\bfv_L^{n-1}-\bfv_K^{n-1})
+ R_{2,2}\m + R_{2,1}\m,
\end{aligned}\]
where 
\[
R_{2,2}\m
= \sum_{n=1}^N \delta t \sum_{\substack{\edge \in \edgesint \\ \edge=K|L}}
|\edge|\, \Bigl(\frac{|D_{K,\edge}|}{|D_\edge|} \rho_K^n+\frac{|D_{L,\edge}|}{|D_\edge|} \rho_L^n - \rho_\edge^n \Bigr)  (\bfu_\edge^n\cdot\bfn_{K,\edge})\ \bfu_\edge^n \cdot(\bfv_L^{n-1}-\bfv_K^{n-1}).
\]
Assuming that $R_{2,1}\m+R_{2,2}\m={\mathcal O}\bigl((h\m)^{1/2}\bigr)$ as $m$ tends to infinity, we obtain
\[\begin{aligned}
T_2\m 
= & -\sum_{n=1}^N \delta t \sum_{\substack{\edge \in \edgesint \\ \edge=K|L}}
( |D_{K,\edge}| \rho_K^n+|D_{L,\edge}|\rho_L^n )\ (\bfu_\edge^n\otimes\bfu_\edge^n):\frac{|\edge|}{|D_\edge|}(\bfv_L^{n-1}-\bfv_K^{n-1})\otimes\bfn_{K,\edge}
+{\mathcal O}\bigl((h\m)^{1/2}\bigr)
\\ 
= & -\int_0^T \int_\Omega \rho\m(\bfx,t)\, \bfu\m(\bfx,t)\otimes\bfu\m(\bfx,t):\gradi \bfv_{\edges\m}(\bfx,t) \dx \dt
+ {\mathcal O}\bigl((h\m)^{1/2}\bigr).
\end{aligned}\]
We have, by Lemma \ref{lmm:test_mom} that $\gradi \bfv_{\edges\m}$ converges towards $\gradi \bfv$ in $\xL^\infty( \Omega\times(0,T))^{d\times d}$ weak-$\star$. Hence, if we prove that $\rho\m\bfu\m\otimes\bfu\m$ strongly converges towards $\bar \rho \bar \bfu\otimes\bar\bfu$ in $\xL^1((0,T)\times\Omega)^{d\times d}$, then we obtain that
\[
\lim_{m\to\infty} T_2\m =
-\int_0^T \int_\Omega \bar \rho(\bfx,t) \bar \bfu(\bfx,t)\otimes\bar\bfu(\bfx,t):\gradi \bfv(\bfx,t) \dx \dt.
\]
Since $\bfu\m\to\bar\bfu$ in $\xL^1(0,T);\xL^2(\Omega)^d)$, we have for all $1\leq i,j \leq d$,  $u_i\m u_j\m\to\bar u_i \bar u_j$ in $\xL^1(0,T);\xL^1(\Omega))$ where $u_i$ is the $i^{\rm th}$ component of $\bfu$. Moreover, since $\bar \bfu \in \xL^{\infty}((0,T);\xL^2(\Omega)^d) \cap \xL^2((0,T);\xH_0^1(\Omega)^d)$, we have for all $1\leq i,j \leq d$, $\bar u_i \bar u_j \in \xL^{\infty}((0,T);\xL^1(\Omega)) \cap \xL^1((0,T);\xL^3(\Omega))$ by the Sobolev injection $\xH^1_0(\Omega) \subset \xL^6(\Omega)$.
Thanks to the following interpolation inequality (see \cite[Theorem II.5.5]{boy-13-math}):
\[
\norm{\bar u_i \bar u_j}_{\xL^{5/3}((0,T);\xL^{5/3}(\Omega))} \leq 
\norm{\bar u_i \bar u_j}_{\xL^{\infty}((0,T);\xL^1(\Omega))}^{2/5} 
\norm{\bar u_i \bar u_j}_{\xL^1((0,T);\xL^3(\Omega))}^{3/5}, 
\]
we get $\bar u_i \bar u_j \in  \xL^{5/3}((0,T)\times \Omega)$. We may now prove that $\rho\m u_i\m u_j\m$ strongly converges towards $\bar \rho \bar u_i\bar u_j$ in $\xL^1((0,T)\times\Omega)$. Indeed:
\[
\begin{aligned} &
\norm{\rho\m u_i\m u_j\m -\bar \rho \bar u_i\bar u_j}_{\xL^1((0,T)\times\Omega)}
\\ & \hspace{5ex}
\leq \norm{\rho\m (u_i\m u_j\m-\bar u_i\bar u_j)}_{\xL^1((0,T)\times\Omega)} 
+ \norm{\bar u_i \bar u_j(\rho\m-\bar \rho)}_{\xL^1((0,T)\times\Omega)}
\\ & \hspace{5ex}
\leq \norm{\rho\m}_{\xL^\infty((0,T)\times\Omega)} \norm {u_i\m u_j\m-\bar u_i\bar u_j}_{\xL^1((0,T)\times\Omega)}
+ \norm{\bar u_i \bar u_j}_{\xL^{5/3}((0,T)\times\Omega)} \norm{\rho\m-\bar \rho}_{\xL^{5/2}((0,T)\times\Omega)}.
\end{aligned}
\]
By the maximum principle on $\rho\m$ and the strong convergence of $\rho\m$ towards $\bar \rho$ in $\xL^{q}((0,T)\times\Omega)$ for all $q\in[1,\infty)$, we obtain the expected convergence of $\rho\m u_i\m u_j\m$. Note that the strong convergence of the sequence $(\rho\m)_{m\in\xN}$ is needed here; this is why Proposition \ref{prop:strong_rho} must be proved before Proposition \ref{prop:convergence_u}. 

\medskip
Let us now prove that $R_{2,1}\m+R_{2,2}\m={\mathcal O}\bigl((h\m)^{1/2}\bigr)$ as $m$ tends to infinity.
For the term $R_{2,1}\m$, we have
\[ \begin{aligned}
|R_{2,1}\m|
&
\leq \sum_{n=1}^N \delta t~|\mathcal{Q}_\mesh(\rho^n,\bfu^n,\bfu^n,\bfv^{n-1})-\mathcal{Q}_\edges(\rho^n,\bfu^n,\bfu^n,\bfv^{n-1})|
\\ &
\leq C\ ({h\m})^{1/2} \sum_{n=1}^N \delta t \norm{\rho^n}_{\xL^{\infty}(\Omega)}\ \norm{\bfu^n}_\brok^2\ \norm{\bfv^{n-1}}_\brok, 
\end{aligned}
\]
by the estimate \eqref{eq:convconv} in Lemma \ref{lmm:convconv}.
By \eqref{eq:estimate1}, $\norm{\rho^n}_{\xL^{\infty}(\Omega)} \leq \norm{\rho_0}_{\xL^{\infty}(\Omega)}$ for all $n$ in $\lbrace1,..,N\rbrace$; in addition, $\norm{\bfv^n}_\brok\leq C(\Omega,d,\theta_0) \norm{\gradi \bfv}_{\xL^{\infty}( \Omega\times(0,T))^{d\times d}}$ for all $n$ in $\lbrace1,..,N\rbrace$, so, by estimate \eqref{eq:estimate3}, we obtain that there exists $C$ independent of $m$ such that $|R_{2,1}\m|\leq C ({h\m})^{1/2}$.
For the second remainder term, $R_{2,2}\m$, we may write, by the definition of $\rho_\edge^n$,
\[\begin{aligned}
|R_{2,2}\m|
&
= \Bigl| \sum_{n=1}^N \delta t \sum_{\substack{\edge \in \edgesint \\ \edge=K|L}} |\edge|\, (\rho_L^n-\rho_K^n)
 \Bigl( \frac{|D_{L,\edge}|}{|D_\edge|}(\bfu_\edge^n\cdot\bfn_{K,\edge})^+ 
-\frac{|D_{K,\edge}|}{|D_\edge|}(\bfu_\edge^n\cdot\bfn_{K,\edge})^-  \Bigr) \,\bfu_\edge^n \cdot(\bfv_L^{n-1}-\bfv_K^{n-1}) \Bigr|
\\ &
\leq \sum_{n=1}^N \delta t \sum_{\substack{\edge \in \edgesint \\ \edge=K|L}}
|\edge|\ |\rho_L^n-\rho_K^n|\ |\bfu_\edge^n\cdot\bfn_{K,\edge}|\ |\bfu_\edge^n|\ |\bfv_L^{n-1}-\bfv_K^{n-1}|.
\end{aligned}\]
Applying the Cauchy-Schwarz inequality, we therefore obtain:
\[
|R_{2,2}\m|
\leq \Bigl( \sum_{n=1}^N \delta t \sum_{\substack{\edge \in \edgesint \\ \edge=K|L}}
|\edge| (\rho_L^n-\rho_K^n)^2 |\bfu_\edge^n\cdot\bfn_{K,\edge}|\Bigr)^{1/2}
\Bigl( \sum_{n=1}^N \delta t \sum_{\substack{\edge \in \edgesint \\ \edge=K|L}}
|\edge| |\bfu_\edge^n\cdot\bfn_{K,\edge}|\ |\bfu_\edge^n|^2 |\bfv_L^{n-1}-\bfv_K^{n-1}|^2 \Bigr)^{1/2}. 
\]
The first term of the product at the right-hand side is controlled by the uniform estimate \eqref{eq:estimate2}.
Since, for any cell $M\in\mesh$, $\bfv_M^{n-1}$ is a convex combination of $\bfv_\edge^{n-1}$ for $\edge$ in $\edges(M)$, we obtain that $|\edge|\, |\bfv_L^n-\bfv_K^n|^2\leq C\ |D_\edge|\ \norm{\gradi\bfv}_{\xL^\infty(\Omega\times[0,T))^{d\times d}}^2\, h\m$, where $C$ only depends on $\theta_0$.
Hence, there exists $C$ independent of $m$ such that
\[
|R_{2,2}\m| \leq 
C \Bigl( h\m \sum_{n=1}^N \delta t  \sum_{\substack{\edge \in \edgesint \\ \edge=K|L}} |D_\edge| |\bfu_\edge^n|^3 \Bigr)^{1/2}
\hspace{-2ex}= C (h\m)^{1/2} \Bigl( \norm{\bfu\m}_{\xL^3((0,T);\xL^3(\Omega)^d)}\Bigr)^{3/2}.
\]
Now, as a consequence of H\"older's inequality, we have
\[
\norm{\bfu\m}_{\xL^3((0,T);\xL^3(\Omega)^d)} \leq |\Omega|^{1/6}
\ \norm{\bfu\m}_{\xL^\infty((0,T);\xL^2(\Omega)^d)}\ \norm{\bfu\m}_{\xL^2((0,T);\xL^6(\Omega)^d)}.
\]
Since $\norm{ \bfu^{(m)}}_{\xL^2((0,T);\xL^6(\Omega)^d)}\leq\norm{ \bfu^{(m)}}_{\xL^2((0,T);E_{\disc\m}(\Omega))}\leq C_1$ by the discrete Sobolev inequality of Lemma \ref{lmm:injsobolev_br} and the uniform estimate \eqref{eq:estimate3}, we obtain that $|R_{2,2}|\leq C (h\m)^{1/2}$.

\bigskip
\noindent {\bf The diffusion term} --
Denoting $\tilde \bfu^n(\bfx)=\sum_{\edge\in\edgesint}\bfu_\edge^n\, \zeta_\edge(\bfx)$ and similarly $\tilde \bfv^{n-1}(\bfx) = \sum_{\edge\in\edgesint} \bfv_\edge^{n-1}\, \zeta_\edge(\bfx)$ (so the function $\tilde \bfv^{n-1}$ is defined by $\tilde \bfv^{n-1}(\bfx) = \widetilde{r_\edges \bfv}(\bfx,t_{n-1})$, where $r_\edges$ is the projection operator defined in \eqref{eq:rh}), we have:
\begin{equation} \label{eq:t333}
\begin{aligned}
T_3\m &= \sum_{n=1}^N \delta t \sum_{K\in\mesh} \int_K \gradi \tilde \bfu^n(\bfx): \gradi  \tilde \bfv^{n-1}(\bfx) \dx
\\ &
= \sum_{n=1}^N \delta t \sum_{K\in\mesh} \int_K \gradi \tilde \bfu^n(\bfx): \gradi \bfv(\bfx,t_{n-1}) \dx + R_{3,1}\m,
\end{aligned}
\end{equation}
where
\[
R_{3,1}\m   =  \sum_{n=1}^N \delta t \sum_{K\in\mesh}
\int_K \gradi \tilde \bfu^n (\bfx): \bigl(\gradi \tilde \bfv^{n-1}(\bfx) -\gradi  \bfv (\bfx,t_{n-1})\bigr) \dx.
\]
Performing an integration by parts over each element $K$ in \eqref{eq:t333}, we obtain
\[
T_3\m = -\sum_{n=1}^N \delta t \sum_{K\in\mesh} \int_K \tilde \bfu^n(\bfx) \cdot \lapi \bfv(\bfx,t_{n-1}) \dx + R_{3,2}\m+R_{3,1}\m,
\]
where
\[
R_{3,2}\m = \sum_{n=1}^N \delta t \sum_{K\in\mesh} \sum_{\edge\in\edges(K)}
\int_\edge [\tilde \bfu^n]_\edge(\bfx) \cdot \gradi \bfv(\bfx,t_{n-1}) \cdot \bfn_{K,\edge} \dedge(\bfx). 
\]
Let us now replace $\tilde \bfu^n$ by the piecewise constant function $\bfu^n$ over each diamond cell, which yields:
\[ \begin{aligned}
T_3\m  &=  -\sum_{n=1}^N \delta t \sum_{K\in\mesh}\sum_{\edge\in\edges(K)}
\int_{D_{K,\edge}} \tilde \bfu^n (\bfx) \cdot \lapi\bfv (\bfx,t_{n-1}) \dx + R_{3,2}\m+R_{3,1}\m
\\
&=-\sum_{n=1}^N \delta t \sum_{K\in\mesh}\sum_{\edge\in\edges(K)}
\int_{D_{K,\edge}} \bfu_\edge^n \cdot \lapi \bfv (\bfx,t_{n-1}) \dx + R_{3,3}\m+ R_{3,2}\m+R_{3,1}\m,
\end{aligned}
\]
where
\[
R_{3,3}\m = \sum_{n=1}^N \delta t \sum_{K\in\mesh}\sum_{\edge\in\edges(K)}
\int_{D_{K,\edge}} ( \bfu_\edge^n -\tilde \bfu^n(\bfx)) \cdot \lapi \bfv(\bfx,t_{n-1}) \dx.
\]
Assuming that $R_{3,3}\m+ R_{3,2}\m = {\mathcal O}(h\m)$ and $R_{3,1}\m = {\mathcal O}(h\m+\alpha\m)$ as $m$ tends to infinity, we may write
\[\begin{aligned}
 T_3\m 
 &
 =-\sum_{n=1}^N \delta t \sum_{K\in\mesh}\sum_{\edge\in\edges(K)} \int_{D_{K,\edge}} \bfu_\edge^n \cdot \lapi\bfv (\bfx,t_{n-1}) \dx
+{\mathcal O}({h\m}+\alpha\m)
\\ &
= -\int_0^T \int_\Omega \bfu\m(\bfx,t) \cdot \lapi \bfv_{\delta t \m}(\bfx,t) \dx \dt +{\mathcal O}({h\m}+\alpha\m).
\end{aligned}\]
As $\bfu\m$ strongly converges towards $\bar \bfu$ in $\xL^2(\Omega\times(0,T))^d$ and, by Inequality \eqref{eq:interpolate_ster}, $\lapi \bfv_{\delta t \m}$ strongly converges towards $\lapi\bfv$ in $\xL^q(\Omega\times(0,T))^d$ for all $q$ in $[1,\infty]$, we obtain
\[
\lim_{m\to\infty}  T_3\m = -\int_0^T \int_\Omega \bar \bfu(\bfx,t) \cdot \lapi \bfv(\bfx,t) \dx \dt 
= \int_0^T \int_\Omega \gradi \bar \bfu(\bfx,t) : \gradi \bfv(\bfx,t) \dx \dt,
\]
since $\bar \bfu \in \xL^2((0,T);\xH^1_0(\Omega)^d)$.
Let us now prove that $R_{3,1}\m = {\mathcal O}(h\m+\alpha\m)$ as $m$ tends to infinity.
Applying the Cauchy-Schwarz inequality for every primal cell $K$, $R_{3,1}\m$ can be controlled as follows:
\[\begin{aligned}
|R_{3,1}\m|
&
\leq  \sum_{n=1}^N \delta t \sum_{K\in\mesh} \norm{\gradi \tilde \bfu^n}_{\xL^2(K)^{d\times d}}
\norm{\gradi \tilde \bfv^{n-1} -\gradi  \bfv (.,t_{n-1})}_{\xL^2(K)^{d\times d}}
\\ &
\leq (h\m+\alpha\m) \sum_{n=1}^N \delta t \sum_{K\in\mesh} \norm{\gradi \tilde \bfu^n}_{\xL^2(K)^{d\times d}}
\snorm{\bfv (.,t_{n-1})}_{\xH^2(K)^d},
\end{aligned}\]
by the approximation properties of the Rannacher-Turek finite element space stated in Lemma \ref{lmm:RT}.
Using the Cauchy-Schwarz inequality again, we obtain, by definition of the broken Sobolev norm,
\[
\begin{aligned}
|R_{3,1}\m|
&\leq (h\m+\alpha\m)\ \norm{\bfu\m}_{\xL^2((0,T);E_{\disc\m}(\Omega))} \Bigl(\sum_{n=1}^N \delta t\, \snorm{\bfv (.,t_{n-1})}_{\xH^2(\Omega)^d} \Bigr)^{1/2} \\
& \leq C(T,\Omega,\bfv)\ C_1 \,(h\m+\alpha\m),
\end{aligned}
\]
where $C_1$ is given by the uniform estimate \eqref{eq:estimate3}.
Let us now consider the second term $R_{3,2}\m$.
By the weak continuity requirement for the discrete finite element velocity fields, we may write
\[
R_{3,2}\m = \sum_{n=1}^N \delta t \sum_{K\in\mesh} \sum_{\edge\in\edges(K)}
\int_\edge [\tilde \bfu^n]_\edge(\bfx) \cdot ( \gradi \bfv (\bfx,t_{n-1})-\gradi \bfv (\bfx_\edge,t_{n-1}))
\cdot \bfn_{K,\edge} \dedge(\bfx),
\]
since $\gradi \bfv (\bfx_\edge,t_n)). \bfn_{K,\edge}$ is independent of the integration variable $\bfx$ in $\edge$.
Using first the Cauchy-Schwarz inequality in $\xL^2(\edge)^d$ and then the discrete Cauchy-Schwarz inequality, we get, denoting $h_\edge = {\rm diam}(\edge)$:
\[\begin{aligned}
|R_{3,2}\m|
&
\leq \sum_{n=1}^N \delta t  \sum_{\edge \in \edges} \Bigl( \frac 1 {h_\edge}
\int_\edge [\tilde \bfu^n]^2_\edge(\bfx) \dedge(\bfx) \Bigr)^{1/2}
\Bigl(h_\edge \int_\edge | \gradi \bfv(\bfx,t_{n-1}) - \gradi \bfv(\bfx_\edge,t_{n-1}) |^2 \dedge(\bfx) \Bigr)^{1/2}
\\ &
\leq \Bigl(\sum_{n=1}^N \delta t \sum_{\edge \in \edges} \frac 1 {h_\edge}
\int_\edge [\tilde \bfu^n]^2_\edge(\bfx) \dedge(\bfx) \Bigr)^{1/2}
\Bigl(\sum_{n=1}^N \delta t \sum_{\edge \in \edges} h_\edge
\int_\edge | \gradi \bfv(\bfx,t_{n-1}) - \gradi \bfv(\bfx_\edge,t_{n-1}) |^2 \dedge(\bfx) \Bigr)^{1/2}.
\end{aligned}\]
By the regularity of $\bfv$, we have 
\[
|\gradi \bfv(\bfx,t_{n-1}) - \gradi \bfv(\bfx_\edge,t_{n-1})| \leq
h_\edge\ |\bfv|_{\xW^{2,\infty}(\Omega \times (0,T))^d},
\]
for all $\bfx \in  \edge,\ \edge \in \edges$ and $n \in \lbrace 1,..,N\rbrace$, which yields
\[
|R_{3,2}\m|
\leq C(\theta_0,T,\Omega,\bfv)\ \Bigl(\sum_{n=1}^N \delta t \sum_{\edge \in \edges} \frac 1 {h_\edge}
\int_\edge [\tilde \bfu^n]^2_\edge(\bfx) \dedge(\bfx) \Bigr)^{1/2}  h\m.
\]
Invoking \eqref{eq:controle_sauts_EF} in Lemma \ref{lmm:controle_sauts_EF} and the uniform estimate \eqref{eq:estimate3}, we obtain that $|R_{3,2}\m|\leq C h\m$ where $C$ does not depend on $m$.
We finally turn to the last term $R_{3,3}\m$.
First, we have
\[
|R_{3,3}\m| \leq \norm{\lapi\bfv}_{\xL^\infty( \Omega\times(0,T))^d} \sum_{n=1}^N \delta t
\sum_{K\in\mesh}\ \sum_{\edge\in\edges(K)}\int_{D_{K,\edge}} | \bfu_\edge^n -\tilde \bfu^n(\bfx)| \dx. 
\]
Moreover, observing that for each primal cell $K$ in $\mesh$, $\forall \bfx \in K$, $\sum_{\edge\in\edges(K)} \zeta_\edge(\bfx)=1$, and that the Rannacher-Turek shape functions are uniformly bounded by a real number only depending on $\theta_0$, we may write :
\[\begin{aligned}
\sum_{\edge\in\edges(K)} \int_{D_{K,\edge}} | \bfu_\edge^n -\tilde \bfu^n(\bfx)| \dx
&
= \sum_{\edge\in\edges(K)}\int_{D_{K,\edge}} \Big | \sum_{\edge'\in\edges(K)}
(\bfu_\edge^n -\bfu_{\edge'}^n)\ \zeta_{\edge'}(\bfx) \Big |\dx
\\ &
\leq  C(d)\ |K|\ \sum_{\edge,\edge'\in\edges(K)}
|\bfu_\edge^n -\bfu_{\edge'}^n|.
\end{aligned}\]
By the Cauchy-Schwarz inequality, we thus get:
\[\begin{aligned}
|R_{3,3}\m|
&
\leq C(d)\ \norm{\lapi\bfv}_{\xL^\infty( \Omega\times(0,T))^d}
\ \Bigl(\sum_{n=1}^N \delta t \ \sum_{K\in\mesh} h_K^{d-2}
\sum_{\edge,\edge'\in\edges(K)} |\bfu_\edge^n -\bfu_{\edge'}^n|^2 \Bigr)^{1/2}
\Bigl(\sum_{n=1}^N \delta t \ \sum_{K\in\mesh} h_K^2\ |K|\Bigr)^{1/2}
\\ &
\leq C(d)\ \norm{\lapi\bfv}_{\xL^\infty( \Omega\times(0,T))^d}\ |\Omega|^{1/2}\, T^{\frac 12} \, h\m
\Bigl(\sum_{n=1}^N \delta t  \sum_{K\in\mesh} h_K^{d-2}
\sum_{\edge,\edge'\in\edges(K)} |\bfu_\edge^n -\bfu_{\edge'}^n|^2 \Bigr)^{1/2}.
\end{aligned}\]
Since the finite volume $\xH^1$-norm is controlled by the finite element  $\xH^1$-norm (see Lemma \ref{lmm:H1ns}), we conclude by the uniform estimates \eqref{eq:estimate3} that there exists $C$ independent of $m$ such that $|R_{3,3}\m|\leq C h\m$, which ends the proof of Proposition \ref{prop:convergence_u}.
\end{proof}
%
%
\section{Extension to other discretizations and models}

\subsection{Other space discretizations}

A straightforward extension may be obtained by considering more general meshes composed of both simplices and quadrilaterals ($d=2$) or hexahedra ($d=3$). 
We refer to \cite{her-13-pre} for the precise definition of the dual mesh in this case. The discretization of the time derivatives, the mass and momentum convection fluxes, the free-divergence constraint as well as the pressure gradient remains unchanged. The only difference lies in the treatment of the diffusion term. Now, a local shape function associated with an edge of a simplex identifies with a Crouzeix-Raviart shape function.
The Crouzeix-Raviart element \cite{cro-73-con} enjoys similar stability and approximation properties to those of the Rannacher-Turek element stated in Lemmata \ref{lmm:RT} and \ref{lmm:controle_sauts_EF}. Moreover, the pair of approximation spaces also satisfies a discrete \emph{inf-sup condition} as in Lemma \ref{lmm:inf-sup}. Finally, it is also possible to prove the control of the finite element $\xH^1$-norm over the finite volume $\xH^1$-norm as stated in Lemma \ref{lmm:H1ns}. For these reasons, the main result of the paper, namely the convergence theorem, readily extends for this mixed space discretization.

\medskip
Another extension of the present result is possible for the MAC (Marker-And-Cell) space discretization.
A forthcoming paper proves a similar convergence result of the implicit staggered MAC scheme for the incompressible variable density Navier-Stokes equations.

\subsection{Other time discretizations}

The scheme \eqref{eq:impl_scheme} is fully implicit, and thus the implementation of the algorithm implies to find the solution of a fully non-linear coupled system.
Consequently, using this scheme appears to be difficult in a real computational context, mainly due to the computational cost and lack of robustness.
In the following, we describe three other possible time discretizations which yield efficient schemes, obtained thanks to a partial decoupling of the discrete equations. We also discuss the conditions under which these schemes satisfy stability estimates similar to those satisfied by the implicit scheme, which eventually enables to extend the convergence result.
We give the corresponding time algorithms, keeping the same staggered space discretization as previously.

\medskip
\subsubsection{Mass and momentum transport with an explicit convection field}
The first alternate time discretization is obtained through an explicit treatment of the convective velocity in the mass transport equation:
\begin{subequations} \label{expl_conv} 
\begin{align} 
& \displaystyle \dfrac 1 {\delta t}(\rho^n-\rho^{n-1}) + \dive(\rho^{n}\bfu^{n-1})=0, \label{expl_conv:mass}
\\
& \displaystyle \dfrac 1 {\delta t}(\rho^n \bfu^n-\rho^{n-1} \bfu^{n-1})
+ \divv(\rho^n \bfu^{n-1} \otimes \bfu^{n})
-\lapi \bfu^n
+ \gradi p^n =0, \label{expl_conv:mom}
\\[1ex]
& \dive \, \bfu^n  =0.
\end{align}
\end{subequations}
This scheme satisfies similar estimates for the density and the velocity as the fully implicit scheme, with no restriction on the time-step. Indeed, the density is controlled in $\xL^\infty$ provided an upwind discretization of the mass convection term, and the velocity is controlled through a discrete kinetic energy equation, obtained when taking the scalar product of \eqref{expl_conv:mom} with $\bfu^n$. Easy computations, similar to those performed in the proof of Lemma \ref{lmm:kinetic_energy} in the implicit case, yield the desired kinetic energy equation, given here in a semi-discrete form:
\begin{multline*}
\frac{1}{2 \delta t} \big ( \rho^n |\bfu^n|^2-\rho^{n-1} |\bfu^{n-1}|^2\Big) + \frac 12\dive\big(\rho^{n}|\bfu^n|^2\bfu^{n-1}\big)-\lapi \bfu^n \cdot \bfu^n + \gradi p^n \cdot \bfu^n 
\\+ \frac{1}{2 \delta t} \rho^{n-1}|\bfu^n-\bfu^{n-1}|^2+\underbrace{\Big( \dfrac 1 {\delta t}(\rho^n-\rho^{n-1}) + \dive(\rho^{n}\bfu^{n-1})\Big )}_{\text{$=0$ by \ref{expl_conv:mass}}}\frac{|\bfu^n|^2}{2} = 0.
\end{multline*}
Hence, the convergence analysis still holds in this case.
The benefit from such a discretization comes from the decoupling of the mass balance and hydrodynamics; from a computational point of view, the difficulty is now reduced to compute the solution of the linear system associated with (linearized) Navier-Stokes equations, for which some techniques are available (SIMPLE-like methods, Augmented Lagrangian algorithms, \dots).

\medskip
\subsubsection{An (as much as possible) explicit scheme}
An interesting scheme for low viscosity flows (typically a viscosity $ \mu$ less than the space step $h$) is the following scheme, where only the pressure is treated in an implicit way (which is mandatory for stability reasons): 
\begin{subequations} \label{expl} 
\begin{align} 
& \displaystyle \dfrac 1 {\delta t}(\rho^n-\rho^{n-1}) + \dive(\rho^{n}\bfu^{n-1})=0, 
\label{expl:mass} \\
& \displaystyle \dfrac 1 {\delta t}(\rho^n \bfu^n-\rho^{n-1} \bfu^{n-1})
+ \divv(\rho^n \bfu^{n-1} \otimes \bfu_{\rm upw}^{n-1})
- \mu \lapi \bfu^{n-1}
+ \gradi p^n =0, 
\label{expl:mom} \\ 
& \dive \, \bfu^n  =0.
\label{expl:div}
\end{align}
\end{subequations}
Here, the mass equation can still be solved independently of the momentum equation, and the solution of the latter is obtained by solving an elliptic problem on the pressure $p^n$, which is not difficult from a computational point of view.
Indeed, computing the (discrete) divergence of (the non-conservative form of) \eqref{expl:mom} and using \eqref{expl:div}, yields a relation of the form $-\lapi p^n = F^{n-1}$.
The $\xL^2$-stability of this scheme (\ie~a discrete kinetic energy balance equation) is ensured under a CFL restriction on the time step of the form $\delta t \leq c(h/|\bfu^{n-1}|+h^2/\mu)$ (see \cite{her-13-expl}), provided that an upwind space discretization be used in the convection term of the momentum balance equation (contrary to what is done in the present paper).
Here again, the convergence analysis is the same as for the implicit scheme, with a slight difference due to the upwind choice in the momentum equation.
The assessment of the computational efficiency of this scheme, with applications to the Large Eddy Simulation of turbulent flows, is planned in a near future.

\medskip
\subsubsection{Projection method}
Finally, an interesting scheme from the computational point of view is the following pressure-correction scheme. Here again, it is possible to solve the correction step \eqref{corr1}-\eqref{corr2} thanks to an elliptic problem on the pressure.
\begin{subequations}
\label{eq:corr_press_scheme} \begin{align} 
& \displaystyle \qquad
\dfrac 1 {\delta t}(\rho^n-\rho^{n-1}) + \dive(\rho^{n}\bfu^{n-1})=0.
\\[2ex] & \nonumber
\text{\textit{Velocity prediction step:} }
\\  \label{eq:pred} & \displaystyle \qquad
\dfrac{1}{\delta t}(\rho^{n} \tilde \bfu^n-\rho^{n-1}\bfu^{n-1})
+  \divv(\rho^n \bfu^{n-1} \otimes \tilde \bfu^n)
- \lapi \tilde \bfu^n
+ \left (\frac{\rho^{n}}{\rho^{n-1}} \right )^{1/2}\gradi p^{n-1} =0.
\\[2ex] & \nonumber
\text{\textit{Velocity and pressure correction step:} }
\\ \label{corr1} & \displaystyle \qquad
\dfrac{1}{\delta t} \rho^n \, (\bfu^{n}-\tilde \bfu^n) + \gradi p^{n}- \left (\frac{\rho^{n}}{\rho^{n-1}} \right )^{1/2}\gradi p^{n-1}  =0,  
\\ \label{corr2} & \qquad
\dive \, \bfu^n  =0. 
\end{align} 
\end{subequations}

\smallskip
In addition, this scheme is unconditionally $\xL^2$-stable, \ie~ without any restriction on the time-step.
More precisely, thanks to the scaling factor $(\rho^{n}/\rho^{n-1} )^{1/2}$ applied to the pressure gradient in \eqref{eq:pred}, it is possible to prove that the beginning and end-of-step velocities $(\bfu^n)_{0\leq n \leq N}$ are controlled in a discrete $\xL^\infty(\xL^2)$-norm, while the intermediate velocities $(\tilde \bfu^n)_{0\leq n \leq N}$ are controlled in a discrete $\xL^2(\xH_0^1)$-norm thanks to the diffusion term (see \cite{her-14-imp} for a similar computation in the case of the compressible Euler's equations).
A consequence of these different estimates for $\tilde \bfu^n$ and $\bfu^n$ is that the velocity convergence obtained in the fully implicit case does not hold anymore, and the scheme convergence is still an open issue.
We are currently investigating this for the (simpler) constant density case. Note that still in the constant density case, error estimates, obtained when assuming that the solution to the continuous problem is smooth, have been the topic of a wide literature (see \eg\ \cite{she-92-one,she-92-oneII,gue-96-som}).

\medskip
\subsection{Extension to a density-dependent viscosity}
The model with density-dependent viscosity, for which a convergence analysis is carried out in \cite{liu-07-conv} for a discontinuous Galerkin method, reads as follows:
\[
\begin{aligned} &
\partial_t\rho +\dive (\rho\bfu)=0,
\\[0ex] &
\partial_t (\rho\bfu)+\dive (\rho\bfu \otimes \bfu)-\divv( \mu(\rho)\bfD(\bfu)) +\gradi p=0,
\\[0ex] &
\dive \, \bfu = 0,
\end{aligned}
\]
where $\bfD(\bfu)$ is the symmetric part of $\gradi \bfu $, and the viscosity $\mu$ is a continuous positive function of the density (typically $\mu(\rho)=\rho^{1/2}$ for an ideal gas). The convergence analysis performed here for the staggered implicit scheme is still valid, provided some stabilization term is added in the discretization of the diffusion term:
\[
 -(\divv \,\mu \bfD(\bfu))_\edge  = \frac 1 {|D_\edge|} \sum_{K \in \mesh} \int_K \mu(\rho_K)\sum_{\edge'\in \edges(K)}
\bfu_{\edge'} \, \bigl(\bfD( \zeta_{\edge'}) \cdot \gradi \zeta_\edge \bigr) \dx + \mbox{Stab}.
\]
The discrete kinetic energy, which is derived as in the constant viscosity case, yields a control on the $\xL^2((0,T);\xL^2)$-norm of $\bfD(\bfu)$ (actually of $D(\tilde \bfu)$ where $\tilde \bfu$ is the finite element approximation of $\bfu$). In order to infer from this an $\xL^2((0,T);\xH_0^1)$ control of the velocity, one needs a discrete Korn's inequality. In \cite{bre-03-kor}, the author proves discrete Korn's inequalities, which in our context may be written as follows:
\[
\norm{\bfu}_{\brok}^2 \leq C \left (  \norm{D(\tilde \bfu)}_{\xL^2(\Omega)}^2 +\norm{\tilde \bfu}_{\xL^2(\Omega)}^2+ \sum_{\edge\in\edges} \frac{1}{h_\edge} \norm{\pi_\edge [\tilde \bfu]_\edge}_{\xL^2(\edge)}^2\right),
\]
where $\pi_\edge$ is the orthogonal projection operator from $\xL^2(\edge)$ onto the space of vector polynomial functions on $\edge$ of degree less than one. Hence, the desired control on the velocity may be obtained, provided a stabilization term in the discrete diffusion that allows a control on the edge velocity jumps in the above discrete Korn's inequality. 
%
%
\appendix
\section{A topological degree result}\label{degree}

The following theorem follows from standard arguments of the topological degree theory (see \cite{deimling} for an overview of the theory and \eg\ \cite{eym-98-an, gal-08-unc} for other uses in the same objective as here, namely the proof of existence of a solution to a numerical scheme).

\begin{thrm} \label{thrm:degree} 
Let $N$ and $M$ be two positive integers and $V=\xR^N \times \xR^M \times \xR^N$. Let $b\in V$ and $f(\cdot)$ and $F(\cdot,\cdot)$ be two continuous functions respectively from $V$ and $V\times[0,1]$ to $V$ satisfying:
\begin{enumerate}
\item[(i)]$F(\cdot,\,1)=f(\cdot)$;
\item[(ii)] $\forall \alpha \in [0,1]$, if an element $v$ of $\bar {\mathcal O}$ (the closure of ${\mathcal O}$) is such that $F(v,\alpha)=b$, then $v \in {\mathcal O}$, where ${\mathcal O}$ is defined as follows:
$$
{\mathcal O}=\{ (x,y,z) \in V \mbox{ s.t. } C_0< x < C_1 \mbox{ and } \norm{y}_M< C_2 \mbox{ and } \norm{z}_N< C_3 \}
$$
where, for any real number $c$ and vector $x$, the notation $x>c$ means that each component of $x$ is larger than $c$; $C_0$, $C_1$, $C_2$ and $C_3$ are positive constants and $\norm{y}_M$ and $\norm{z}_N$ are two norms defined on $\xR^M$ and $\xR^N$ respectively;
\item[(iii)] the topological degree of $F(\cdot,0)$ with respect to $b$ and ${\mathcal O}$ is equal to $d_0 \neq 0$.
\end{enumerate}
Then the topological degree of $F(\cdot,1)$ with respect to $b$ and ${\mathcal O}$ is also equal to $d_0 \neq 0$; consequently, there exists at least one solution $v\in {\mathcal O}$ to the equation $f(v)=b$.
\end{thrm}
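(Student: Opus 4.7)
The plan is to recognize that the statement is essentially a direct application of two standard properties of the Brouwer topological degree: homotopy invariance and the existence (solution) property. First I would check that $\mathcal{O}$ is indeed a bounded open subset of the finite-dimensional vector space $V=\xR^N\times\xR^M\times\xR^N$ (it is the product of an open box in $\xR^N$ with open balls in the norms $\norm{\cdot}_M$ and $\norm{\cdot}_N$), so that its closure $\bar{\mathcal{O}}$ is compact and its boundary $\partial\mathcal{O}=\bar{\mathcal{O}}\setminus\mathcal{O}$ is well-defined. Since $F$ is continuous on $V\times[0,1]$, its restriction to $\bar{\mathcal{O}}\times[0,1]$ is a continuous homotopy.

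Next I would verify that this homotopy is admissible for the degree with respect to $b$ and $\mathcal{O}$, namely that $b\notin F(\partial\mathcal{O},\alpha)$ for every $\alpha\in[0,1]$. This is exactly the content of hypothesis (ii): any $v\in\bar{\mathcal{O}}$ solving $F(v,\alpha)=b$ must lie in $\mathcal{O}$, hence cannot belong to $\partial\mathcal{O}$. By the homotopy invariance property of the Brouwer degree, the map
\[
\alpha \in [0,1] \mapsto \text{deg}\bigl(F(\cdot,\alpha),\,\mathcal{O},\,b\bigr)
\]
is therefore constant, and by hypothesis (iii) this constant equals $d_0\neq 0$. In particular, using hypothesis (i), $\text{deg}(f,\mathcal{O},b)=\text{deg}(F(\cdot,1),\mathcal{O},b)=d_0\neq 0$.

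Finally, I would invoke the existence property of the degree: whenever $\text{deg}(f,\mathcal{O},b)\neq 0$, the equation $f(v)=b$ admits at least one solution $v\in\mathcal{O}$. This concludes the proof.

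There is no real obstacle: the statement is essentially a repackaging of the homotopy invariance and existence properties of the Brouwer degree, tailored to the product structure of $V$ and the specific shape of $\mathcal{O}$ used in Theorem \ref{thrm:existence}. The only point requiring mild care is making explicit that the three-part definition of $\mathcal{O}$ does yield a bounded open set (so that degree theory applies unambiguously) and that hypothesis (ii) is precisely the admissibility condition for the chosen homotopy; once these observations are made, the result is immediate from any standard reference on topological degree (such as \cite{deimling}).
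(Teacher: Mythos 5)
Your proof is correct and is precisely the standard argument the paper itself invokes: the paper gives no written proof of this theorem, stating only that it ``follows from standard arguments of the topological degree theory'' with a reference to \cite{deimling}, and your chain of reasoning (boundedness and openness of $\mathcal{O}$, admissibility of the homotopy on $\partial\mathcal{O}$ via hypothesis (ii), homotopy invariance of the Brouwer degree, and the existence property for a nonzero degree) is exactly that standard argument spelled out. No gaps.
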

%
%
\section{Kolmogorov's Theorem}
\label{sec:kolmogorov}
The proof of the following theorem can be found in \cite{gal-12-comp}.

\begin{thrm}[Kolmogorov's Theorem] \label{thrm:kolmogorov}
Let $B$ be a Banach space, $p$ a real number such that $1\leq p <+\infty$ and $T>0$. Let $A\subset\xL^p((0,T);B)$. The subset $A$ is relatively compact in $\xL^p((0,T);B)$ if $A$ satisfies the three following conditions:
 
\begin{enumerate}
 \item [$(h1)$] For all $u\in A$, there exists $Pu\in\xL^p(\xR;B)$ such that $Pu=u$ almost everywhere in $(0,T)$ and $\norm{Pu}_{\xL^p(\xR;B)}\leq C$, where $C$ depends only on $A$.
 \item [$(h2)$] For all $\phi \in \mcal{C}_c^\infty(\xR,\xR)$, the family $\lbrace \int_{\xR}(Pu)\phi \dt, \, u\in A\rbrace$ is relatively compact in $B$.
 \item [$(h3)$] $\norm{Pu-Pu(.-\tau)}_{\xL^p(\xR;B)}\to 0$, as $\tau\to 0^+$, uniformly with respect to $u\in A$.
\end{enumerate}
 
\end{thrm}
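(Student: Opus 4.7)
The plan is to prove this relative compactness criterion by the classical Kolmogorov--Riesz--Fréchet mollification strategy, adapted to Banach-valued functions. The overall scheme is: approximate every $u \in A$ uniformly by a time-mollified version $u_\eps$; show that for each fixed $\eps$ the family $\{u_\eps : u \in A\}$ is relatively compact in $\xL^p((0,T);B)$ via an Ascoli-type argument based on $(h2)$ and the smoothness of the mollifier; then conclude that $A$ is totally bounded, hence relatively compact in the complete metric space $\xL^p((0,T);B)$.

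Concretely, I would fix a nonnegative $\rho \in \mcal{C}_c^\infty(\xR)$ with $\int_\xR \rho\dt = 1$ and $\mathrm{supp}\,\rho \subset [-1,1]$, and set $\rho_\eps(t)=\eps^{-1}\rho(t/\eps)$. For $u \in A$, define $u_\eps := \rho_\eps * Pu$, which lies in $\mcal{C}^\infty(\xR;B) \cap \xL^p(\xR;B)$. Writing $Pu(t)-u_\eps(t)=\int_\xR \rho_\eps(\tau)[Pu(t)-Pu(t-\tau)]\dt(\tau)$ and applying Minkowski's integral inequality gives
\[
\norm{Pu - u_\eps}_{\xL^p(\xR;B)} \;\le\; \int_\xR \rho_\eps(\tau)\, \norm{Pu-Pu(\cdot-\tau)}_{\xL^p(\xR;B)}\dt(\tau).
\]
Since the integrand is supported in $|\tau|\le\eps$ and $(h3)$ provides uniform (in $u\in A$) smallness of $\norm{Pu-Pu(\cdot-\tau)}_{\xL^p(\xR;B)}$ as $\tau\to 0$, we deduce that $\sup_{u\in A}\norm{Pu-u_\eps}_{\xL^p(\xR;B)} \to 0$ as $\eps\to 0$. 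Restricting to $(0,T)$ then gives the same uniform approximation in $\xL^p((0,T);B)$.

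Next, for fixed $\eps>0$, I would establish relative compactness of $A_\eps := \{u_\eps\restriction_{[0,T]} : u\in A\}$ in $\xC([0,T];B)$ by a vector-valued Ascoli theorem. Pointwise relative compactness in $B$ for each $t\in[0,T]$ follows from $(h2)$ applied with the test function $\phi=\phi_{t,\eps}(\tau)=\rho_\eps(t-\tau)\in\mcal{C}_c^\infty(\xR,\xR)$, since $u_\eps(t)=\int_\xR Pu(\tau)\,\phi_{t,\eps}(\tau)\dt(\tau)$. Equicontinuity in $t$ follows from $u_\eps'(t)=\int_\xR \rho_\eps'(t-\tau)Pu(\tau)\dt(\tau)$, together with Hölder's inequality applied in $\tau$ and the uniform bound $\norm{Pu}_{\xL^p(\xR;B)}\le C$ coming from $(h1)$: this yields $\sup_{u\in A}\norm{u_\eps(t_1)-u_\eps(t_2)}_B \le C_\eps\,|t_1-t_2|$. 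Ascoli thus gives compactness of $A_\eps$ in $\xC([0,T];B)$, and a fortiori in $\xL^p((0,T);B)$.

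Finally, I would conclude with a standard totally-bounded-set argument: given $\delta>0$, pick $\eps$ so small that $\sup_{u\in A}\norm{Pu-u_\eps}_{\xL^p((0,T);B)} \le \delta/2$, then cover the totally bounded set $A_\eps$ by finitely many $\xL^p((0,T);B)$-balls of radius $\delta/2$; the same balls with doubled radius cover $A$. Hence $A$ is totally bounded, and since $\xL^p((0,T);B)$ is complete, $A$ is relatively compact. The most delicate part of the argument is the pointwise compactness step, where one must recognize that $(h2)$ is tailor-made to be applied to the translated mollifier $\phi_{t,\eps}$; everything else reduces to Minkowski/Hölder bookkeeping and the classical finite-dimensional Ascoli mechanism.
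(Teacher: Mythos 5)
The paper does not prove this theorem at all: it is stated in Appendix B with the proof deferred to the reference \cite{gal-12-comp}, so there is no in-paper argument to compare against. Your self-contained proof is the standard Kolmogorov--Riesz mollification argument in the Banach-valued setting, and it is correct: the Minkowski integral inequality converts $(h3)$ into uniform $\xL^p$-approximation of $Pu$ by $u_\eps=\rho_\eps * Pu$; the key observation that $u_\eps(t)=\int_\xR Pu(\tau)\,\rho_\eps(t-\tau)\,{\rm d}\tau$ is exactly an instance of the functionals in $(h2)$ gives pointwise relative compactness in $B$; the uniform bound of $(h1)$ plus H\"older gives equicontinuity; vector-valued Ascoli and a total-boundedness covering argument finish the proof. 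This is essentially the route taken in the cited reference. Two small points worth making explicit: $(h3)$ is stated only for $\tau\to 0^+$, while your mollifier is supported in $[-1,1]$, so you should note that $\norm{Pu-Pu(\cdot-\tau)}_{\xL^p(\xR;B)}=\norm{Pu-Pu(\cdot+\tau)}_{\xL^p(\xR;B)}$ by translation invariance of Lebesgue measure (or simply take a one-sided mollifier); and in the equicontinuity step the clean estimate is $\norm{u_\eps(t_1)-u_\eps(t_2)}_B\le \norm{\rho_\eps(t_1-\cdot)-\rho_\eps(t_2-\cdot)}_{\xL^{p'}(\xR)}\,\norm{Pu}_{\xL^p(\xR;B)}$, which is indeed $O_\eps(|t_1-t_2|)$ uniformly over $A$. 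Neither point affects the validity of the argument.
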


\bibliographystyle{plain}
\bibliography{JCLKS_INSVD.bib}

\begin{thebibliography}{10}

\bibitem{ans-11-anl}
G.~Ansanay-Alex, F.~Babik, J.-C. Latch{\'e}, and D.~Vola.
\newblock An {L}$^2$-stable approximation of the {N}avier-{S}tokes convection
  operator for low-order non-conforming finite elements.
\newblock {\em International Journal for Numerical Methods in Fluids},
  66:555--580, 2011.

\bibitem{BLPS-fvca7}
F.~Babik, J.-C. Latch{\'e}, B.~Piar, and K.~Saleh.
\newblock {A Staggered Scheme with Non-conforming Refinement for the
  Navier-Stokes Equations}.
\newblock {\em Finite Volumes for Complex Applications VII-Methods and
  Theoretical Aspects: FVCA 7, Berlin, June 2014}, pages 87--95, 2014.

\bibitem{boy-14-sta}
F.~Boyer, F.~Dardalhon, C.~Lapuerta, and J.-C. Latch\'e.
\newblock Stability of a {C}rank-{N}icolson pressure correction scheme based on
  staggered discretizations.
\newblock {\em International Journal for Numerical Methods in Fluids},
  74:34--58, 2014.

\bibitem{boy-13-math}
F.~Boyer and P.~Fabrie.
\newblock {\em Mathematical Tools for the Study of the Incompressible
  Navier-Stokes Equations and Related Models}.
\newblock Springer, 2013.

\bibitem{bre-03-kor}
S.~C. Brenner.
\newblock {K}orn's inequalities for piecewise ${H}^1$ vector fields.
\newblock {\em Mathematics of Computation}, 73:1067--1087, 2003.

\bibitem{cro-73-con}
M.~Crouzeix and P.A. Raviart.
\newblock Conforming and nonconforming finite element methods for solving the
  stationary {S}tokes equations.
\newblock {\em RAIRO S\'erie Rouge}, 7:33--75, 1973.

\bibitem{deimling}
K.~Deimling.
\newblock {\em Nonlinear Functional Analysis}.
\newblock Springer, 1980.

\bibitem{des-08-hig}
O.~Desjardins, G.~Blanquart, G.~Balarac, and H.~Pitsch.
\newblock High order conservative finite difference scheme for variable density
  low {M}ach number turbulent flows.
\newblock {\em Journal of Computational Physics}, 227:7125--7159, 2008.

\bibitem{dip-89-ord}
R.~J. Di~Perna and P.L. Lions.
\newblock Ordinary differential equations, transport theory and {S}obolev
  spaces.
\newblock {\em Inventiones mathematicae}, 98:511--547, 1989.

\bibitem{eym-98-an}
R.~Eymard, T.~Gallou{\"e}t, M.~Ghilani, and R.~Herbin.
\newblock Error estimates for the approximate solutions of a nonlinear
  hyperbolic equation given by finite volume schemes.
\newblock {\em IMA Journal of Numerical Analysis}, 18:563--594, 1998.

\bibitem{eym-00-book}
R.~Eymard, T.~Gallou{\"e}t, and R.~Herbin.
\newblock {\em The finite volume method}.
\newblock Handbook for Numerical Analysis, Ph. Ciarlet and J.L. Lions Editors,
  North Holland, 2000.

\bibitem{eym-10-sushi}
R.~Eymard, T.~Gallou{\"e}t, and R.~Herbin.
\newblock Discretisation of heterogeneous and anisotropic diffusion problems on
  general non-conforming meshes. {SUSHI}: a scheme using stabilisation and
  hybrid interfaces.
\newblock {\em IMA Journal of Numerical Analysis}, 30:1009--1043, 2010.

\bibitem{gal-08-unc}
T.~Gallou{\"e}t, L.~Gastaldo, R.~Herbin, and J.-C. Latch{\'e}.
\newblock An unconditionally stable pressure correction scheme for compressible
  barotropic {N}avier-{S}tokes equations.
\newblock {\em Mathematical Modelling and Numerical Analysis}, 42:303--331,
  2008.

\bibitem{gal-12-comp}
T.~Gallou{\"e}t and J.-C. Latch{\'e}.
\newblock Compactness of discrete approximate solutions to parabolic {PDE}s-
  {A}pplication to a turbulence model.
\newblock {\em Communications in Pure and Applied Analysis}, 11:2371--2391,
  2012.

\bibitem{gas-10-unc}
L.~Gastaldo, R.~Herbin, and J.-C. Latch{\'e}.
\newblock An unconditionally stable finite element-finite volume pressure
  correction scheme for the drift-flux model.
\newblock {\em Mathematical Modelling and Numerical Analysis}, 44:251--287,
  2010.

\bibitem{gas-11-dis}
L.~Gastaldo, R.~Herbin, and J.-C. Latch{\'e}.
\newblock A discretization of phase mass balance in fractional step algorithms
  for the drift-flux model.
\newblock {\em IMA Journal of Numerical Analysis}, 31:116--146, 2011.

\bibitem{gir-86-fin}
V.~Girault and P.A. Raviart.
\newblock {\em Finite element methods for {Navier-Stokes} equations. {T}heory
  and algorithms}.
\newblock Springer-Verlag, 1986.

\bibitem{gra-16-unc}
D.~Grapsas, R.~Herbin, W.~Kheriji, and J.-C. Latch{\'e}.
\newblock An unconditionnally stable pressure correction scheme for the
  compressible {N}avier-{S}tokes equations.
\newblock {\em SMAI Journal of Computational Mathematics}, 2:51--97, 2016.

\bibitem{gue-96-som}
J.-L. Guermond.
\newblock Some implementations of projection methods for {N}avier-{S}tokes
  equations.
\newblock {\em Mathematical Modelling and Numerical Analysis}, 30:637--667,
  1996.

\bibitem{har-68-num}
F.H. Harlow and A.A. Amsden.
\newblock Numerical calculation of almost incompressible flow.
\newblock {\em Journal of Computational Physics}, 3:80--93, 1968.

\bibitem{har-71-num}
F.H. Harlow and A.A. Amsden.
\newblock A numerical fluid dynamics calculation method for all flow speeds.
\newblock {\em Journal of Computational Physics}, 8:197--213, 1971.

\bibitem{har-65-num}
F.H. Harlow and J.E. Welsh.
\newblock Numerical calculation of time-dependent viscous incompressible flow
  of fluid with free surface.
\newblock {\em Physics of Fluids}, 8:2182--2189, 1965.

\bibitem{her-13-pre}
R.~Herbin, W.~Kheriji, and J.-C. Latch\'e.
\newblock Pressure correction staggered schemes for barotropic one-phase and
  two-phase flows.
\newblock {\em Computers and Fluids}, 88:524--542, 2013.

\bibitem{her-14-imp}
R.~Herbin, W.~Kheriji, and J.-C. Latch\'e.
\newblock On some implicit and semi-implicit staggered schemes for the shallow
  water and {E}uler equations.
\newblock {\em Mathematical Modelling and Numerical Analysis}, 48:1807--1857,
  2014.

\bibitem{her-10-kin}
R.~Herbin and J.-C. Latch\'e.
\newblock Kinetic energy control in the {MAC} discretization of the
  compressible {N}avier-{S}tokes equations.
\newblock {\em International Journal of Finites Volumes}, 7, 2010.

\bibitem{her-13-expl}
{Herbin, R.}, {Latch{\'e}, J.-C.}, and {Nguyen, T.T.}
\newblock {E}xplicit staggered schemes for the compressible {E}uler equations.
\newblock {\em ESAIM: Proc.}, 40:83--102, 2013.

\bibitem{isis}
ISIS.
\newblock A {CFD} computer code for the simulation of reactive turbulent flows.
\newblock \\ \texttt{https://gforge.irsn.fr/gf/project/isis}.

\bibitem{lar-91-how}
B.~Larrouturou.
\newblock How to preserve the mass fractions positivity when computing
  compressible multi-component flows.
\newblock {\em Journal of Computational Physics}, 95:59--84, 1991.

\bibitem{lio-96-mat}
P.-L. Lions.
\newblock Mathematical topics in fluid mechanics -- {V}olume 1 --
  {I}ncompressible models.
\newblock volume~3 of {\em Oxford Lecture Series in Mathematics and its
  Applications}. Oxford University Press, 1996.

\bibitem{liu-07-conv}
C.~Liu and N.J. Walkington.
\newblock Convergence of numerical approximations of the incompressible
  {N}avier-{S}tokes equations with variable density and viscosity.
\newblock {\em SIAM Journal on Numerical Analysis}, 45:1287--1304, 2007.

\bibitem{mor-10-ske}
Y.~Morinishi.
\newblock Skew-symmetric form of convective terms and fully conservative finite
  difference schemes for variable density low-{M}ach number flows.
\newblock {\em Journal of Computational Physics}, 229:276--300, 2010.

\bibitem{nic-00-con}
F.~Nicoud.
\newblock Conservative high-order finite-difference schemes for low-{M}ach
  number flows.
\newblock {\em Journal of Computational Physics}, 158:71--97, 2000.

\bibitem{ran-92-sim}
R.~Rannacher and S.~Turek.
\newblock Simple nonconforming quadrilateral {S}tokes element.
\newblock {\em Numerical Methods for Partial Differential Equations},
  8:97--111, 1992.

\bibitem{sch-89-non}
F.~Schieweck and L.~Tobiska.
\newblock A nonconforming finite element method of upstream type applied to the
  stationary {N}avier-{S}tokes equation.
\newblock {\em Mathematical Modelling and Numerical Analysis}, 23:627--647,
  1989.

\bibitem{sch-96-ano}
F.~Schieweck and L.~Tobiska.
\newblock An optimal order error estimate for an upwind discretization of the
  {N}avier-{S}tokes equations.
\newblock {\em Numerical Methods for Partial Differential Equations},
  12:407--421, 1996.

\bibitem{she-92-one}
J.~Shen.
\newblock On error estimates of projection methods for {N}avier-{S}tokes
  equations: First-order schemes.
\newblock {\em SIAM Journal on Numerical Analysis}, 29(1):57--77, 1992.

\bibitem{she-92-oneII}
Jie Shen.
\newblock On error estimates of some higher order projection and
  penalty-projection methods for {N}avier-{S}tokes equations.
\newblock {\em Numerische Mathematik}, 62:49--73, 1992.

\end{thebibliography}
\end{document}